\newcommand{\ol}[1]{\ensuremath{\overline{#1}}}
\newcommand{\cD}{\ensuremath{\mathcal{D}}}
\newcommand{\cP}{\ensuremath{\mathcal{P}}}
\newcommand{\cH}{\ensuremath{\mathcal{H}}}
\newcommand{\cL}{\ensuremath{\mathcal{L}}}
\newcommand{\cU}{\ensuremath{\mathcal{U}}}
\newcommand{\fg}{\ensuremath{\mathfrak{g}}}
\newcommand{\fk}{\ensuremath{\mathfrak{k}}}
\newcommand{\fr}{\ensuremath{\mathfrak{r}}}
\newcommand{\Ad}{\ensuremath{\operatorname{Ad}}}
\newcommand{\U}{\ensuremath{\operatorname{U}}}
\newcommand{\Aut}{\ensuremath{\operatorname{Aut}}}
\newcommand{\shalf}{\ensuremath{{\textstyle \frac{1}{2}}}}
\newcommand{\R}{\ensuremath{\mathbb{R}}}
\newcommand{\C}{\ensuremath{\mathbb{C}}}
\newcommand{\Z}{\ensuremath{\mathbb{Z}}}
\newcommand{\N}{\ensuremath{\mathbb{N}}}
\newcommand{\Q}{\ensuremath{\mathbb{Q}}}
\newcommand{\T}{\ensuremath{\mathbb{T}}}
\newcommand{\bv}{{\bf{v}}}
\newcommand{\bxi}{{\boldsymbol\xi}} 
\newcommand{\bP}{{\mathbb P}}
\newcommand{\bS}{{\mathbb S}}
\newcommand{\one}{\ensuremath{\mathbf{1}}}
\newcommand{\Exp}{\ensuremath{\operatorname{Exp}}}
\newcommand{\Diff}{\ensuremath{\operatorname{Diff}}}
\renewcommand{\tilde}{\widetilde}
\newcommand{\into}{\hookrightarrow}
\newcommand{\Spann}{\mathop{{\rm span}}\nolimits}
\newcommand{\subeq}{\subseteq}
\newcommand{\g}{{\mathfrak g}}
\newcommand{\PU}{\mathop{\rm PU{}}\nolimits}
\newcommand{\Ext}{\mathop{\rm Ext}\nolimits}
\newcommand{\End}{\mathop{\rm End}\nolimits}
\newcommand{\la}{\langle}
\newcommand{\ra}{\rangle}
\newcommand{\arr}{\hbox to 60pt{\rightarrowfill}}
\newcommand{\sssarr}{\hbox to 20pt{\rightarrowfill}}
\newcommand{\ssarr}{\hbox to 30pt{\rightarrowfill}}
\newcommand{\sarr}{\hbox to 40pt{\rightarrowfill}}
\newcommand{\sssmapright}[1]{\smash{\mathop{\sssarr}\limits^{#1}}}
\newcommand{\smapright}[1]{\smash{\mathop{\sarr}\limits^{#1}}}
\theoremstyle{plain}
\newtheorem{Theorem}{Theorem}[section]
\newtheorem{Lemma}[Theorem]{Lemma}
\newtheorem{Proposition}[Theorem]{Proposition}
\newtheorem{Corollary}[Theorem]{Corollary}
\theoremstyle{definition}
\newtheorem{Definition}[Theorem]{Definition}
\newtheorem{Remark}[Theorem]{Remark}
\newtheorem{Example}[Theorem]{Example}
\renewcommand{\:}{\colon}
\newcommand{\1}{\mathbf{1}}
\newcommand{\res}{\vert}
\renewcommand{\hat}{\widehat} 
\renewcommand{\phi}{\varphi}
\newcommand{\dd}{{\tt d}}
\newcommand\oline{\overline}
\begin{document}

\title{Projective unitary representations of infinite dimensional Lie groups} 
\author{Bas Janssens and  Karl-Hermann Neeb}
\maketitle

\begin{abstract}
For an infinite dimensional Lie group $G$ modelled on a locally convex Lie algebra $\fg$, 
we prove that every smooth \emph{projective} unitary representation of $G$ corresponds to a smooth
\emph{linear} unitary representation of a Lie group extension $G^{\sharp}$ of $G$.
(The main point is the smooth structure {on~$G^{\sharp}$}.)
For infinite dimensional Lie groups $G$ which are 1-connected, regular, and modelled on a barrelled 
Lie algebra $\fg$, we characterize the unitary $\fg$-representations which integrate to $G$.
Combining these results, we give a precise formulation of the correspondence
between smooth projective unitary representations of $G$, smooth linear
unitary representations of~$G^{\sharp}$, and the appropriate 
unitary representations of its Lie algebra $\fg^{\sharp}$.   
\end{abstract}


\tableofcontents

\section*{Introduction} 

The state space of a quantum mechanical system is described by the 
projective space $\bP(\cH)$ of a Hilbert space $\cH$.
If a 
topological group
$G$ is a 
symmetry group of this 
system, 
then, if $G$ is connected, we have a homomorphism $\oline\rho \: G \to \PU(\cH)$ into the 
projective unitary group of $\cH$, i.e., a projective unitary representation 
of~$G$. 
The natural continuity requirement for a projective representation 
is that all its orbit maps are continuous.
This is equivalent to continuity of the action of $G$ on $\bP(\cH)$,
and to continuity of $\ol\rho$ 
with 
respect to the quotient topology on $\PU(\cH) \cong \U(\cH)_s/\T \1$, where 
$\U(\cH)_s$ denotes the unitary group endowed with the strong 
operator topology (\cite{Ri79}, \cite{Ne14}).  Since $\U(\cH)_s$ is a central 
topological 
$\T$-extension of $\PU(\cH)$, we can always pull this extension  back to $G$  
to obtain a central $\T$-extension $G^\sharp$ of $G$ in the topological category, 
and a continuous unitary representation $\pi \: G^\sharp \to G$ lifting 
the projective representation $\oline\pi$. 
In this context, the classification 
of projective unitary representations of $G$ breaks up into two parts:
determining the group $\Ext_{\rm top}(G,\T)$ of all topological central 
$\T$-extensions $G^\sharp$ of $G$, 
and classifying the unitary representations 
of the individual central extensions $G^\sharp$. 
This strategy works particularly well if 
$G$ is a finite dimensional Lie group, because then $G^\sharp$ always carries a 
natural Lie group structure (\cite{Ba54, Va85}). On the Lie algebra 
level, it now suffices to study (infinitesimally) unitary representations 
on pre-Hilbert spaces and to develop criteria for their integrability 
to groups; see \cite{Nel69} for some of the most powerful criteria in 
this context. 

In this paper, we want to address these issues 
in a systematic way for infinite dimensional 
Lie groups, more precisely, for Lie groups $G$ modelled on a locally convex 
topological vector space. 
Because
the topological group $\U(\cH)_s$ does not carry a 
Lie group structure,
one traditionally deals with 
these groups simply as topological groups, using
the exponential function $\exp\: \g \to G$ only as a means to parametrise
one-parameter subgroups of $G$ (cf.\ \cite{Wa98}, \cite{Lo94}, \cite{Se81}). 
By Stone's Theorem, one-parameter subgroups of $\U(\cH)_s$ correspond to skew-adjoint, 
possibly unbounded operators on $\cH$, so that
every continuous unitary representation 
$\rho \: G \to \U(\cH)$ gives rise to a family 
$\dd\rho(\xi)$, $\xi \in \g$, of skew-adjoint unbounded operators 
with different domains $\cD_\xi$.

This context is not quite suitable for Lie theory though, because due to possible domain issues,
one may not get a representation at the infinitesimal
level.  
%
In order to obtain a Lie algebra representation,
it is natural to require that the representation
$(\rho,\cH)$ be {\it smooth}, in the sense that the space
$\cH^\infty$ of vectors with a smooth orbit map is dense in $\cH$.
This ensures that $\dd\rho$ becomes
a 
Lie algebra representation on the dense common domain $\cH^{\infty}$, and with that, the full power of 
Lie Theory, including infinitesimal methods, becomes available.
For finite dimensional Lie groups, smoothness 
follows from continuity by G\aa{}rding's Theorem \cite{Ga47},
but this is no longer true for infinite 
dimensional Lie groups (\cite{BN08, Ne10b}). 
For those,
smoothness has to be imposed as an extra technical condition, 
justified by virtually all important examples.
In the same vein, we define a projective unitary representation 
$\oline\rho \: G\to \PU(\cH)$ to be {\it smooth} 
if the set of rays with a smooth orbit map is dense in $\bP(\cH)$. 


The central results of this paper are the following: 
\begin{itemize}
\item If $G$ is a Lie group (modelled on an arbitrary
locally convex vector space), then
every smooth projective unitary representation of  $G$ 
corresponds to a smooth unitary representation of a central Lie group extension 
$G^\sharp$ of $G$ (Theorem~\ref{thm:1.4}). 
The nontrivial point here is that $G^{\sharp}$ is not just a topological group,
but a locally convex Lie group. 
\item For Lie groups $G$ that are $1$-connected, regular, and modelled on a barrelled 
locally convex space,
we characterise which
unitary representations of its Lie algebra $\fg$ are integrable to a group representation 
in terms of 
existence and smoothness of solutions to certain linear 
initial value problems (Theorem~\ref{heenenweer}).
%
\item We clarify the correspondence between 
smooth projective representations, unitary representations of the 
corresponding central extensions, and the corresponding data on the Lie algebra 
level in terms of suitable categories (Theorem~\ref{maintheorem}), 
with due care 
for the appropriate intertwiners.
\end{itemize}

We now describe our results and the structure of the paper in more 
detail. In Section~\ref{SectionPPER}, 
we introduce Lie groups modelled on locally convex spaces and in 
particular the notion of a regular Lie group. 
In Section~\ref{Sec2}, we 
introduce the appropriate notions of \emph{smoothness} and \emph{continuity}
for unitary and projective unitary representations of locally convex Lie groups. 

The original part of this paper starts in Section~\ref{sec:3}, where 
we discuss the passage back and forth between 
smooth unitary representations of a locally convex Lie group $G$ with 
Lie algebra $\g$  (the global level),
and the corresponding
unitary representations 
of $\g$ on a pre-Hilbert space (the infinitesimal 
level). 
A key observation is that a smooth unitary representation 
$(\rho, \cH)$ of a connected Lie group $G$ is always determined 
uniquely by  the derived representation of $\g$ 
on the space of smooth vectors. We give a short and direct proof in 
Proposition~\ref{LArulez}. 
A much harder problem is to 
characterise 
those unitary Lie algebra representations $(\pi,V)$ which 
can be integrated to a smooth group representation.
The main idea here is to equip $V$ with two locally convex topologies derived from $\pi$,
the \emph{weak} and the \emph{strong} topology, which are finer than
the \emph{norm topology} of the pre-Hilbert space $V$.
The advantage of these topologies on $V$ is that unlike the 
norm topology,
they make 
the infinitesimal action $\fg \times V \rightarrow V$, defined by 
$(\xi, \psi) \mapsto \pi(\xi)\psi$, into a
sequentially continuous map (Lemma~\ref{seqcont}). 
Although this map is continuous for Banach--Lie groups, 
this fails in general for Fr\'echet--Lie groups.
If $(\pi,V)$ is the derived $\fg$-representation of the smooth unitary $G$-representation $(\rho, \cH)$, 
then it is  {\it regular} 
in the sense that,  for every smooth curve $\xi \: [0,1] \to \g$ and $\psi \in V$, 
the initial value problem 
\[
{\textstyle \frac{d}{dt}} \psi_{t} = \pi(\xi_{t}) \psi_{t}, \quad \psi_0 = \psi
\]
has a smooth solution (in the weak or strong topology on $V$, which is a stronger
requirement than having a solution in the norm topology!),
 and $\psi_1$ depends smoothly on $\xi\in C^\infty([0,1],\g)$ with respect to 
the topology of uniform convergence of all derivatives. 
The main result of Section~\ref{sec:3} is Theorem~\ref{heenenweer}, 
which states that for regular, 1-connected Lie groups $G$ with a barrelled Lie algebra $\fg$, 
a unitary $\fg$-representation is integrable to a $G$-representation 
if and only if it is regular.
Along the way, we prove that if $(\rho, \cH)$ is a smooth unitary representation of a 
Fr\'echet--Lie group $G$, then its space $\cH^\infty$ of smooth vectors is complete with respect to both 
the weak and the strong topology (Proposition~\ref{completeness}).  
Sufficient criteria for regularity have been developed for various types of Lie algebras, 
such as loop algebras and the Virasoro algebra, in \cite{TL99}.

In Section~\ref{sec:4}, we introduce the concept of a smooth 
projective unitary representation $\oline\rho \: G \to \PU(\cH)$ of a 
locally convex Lie group $G$. We show that any such representation 
gives rise to a smooth unitary representation of a 
central Lie group extension $G^{\sharp}$ of $G$ by the circle group $\T$, and hence 
to a representation of the central Lie algebra 
extension $\fg^{\sharp}$ of $\fg$. This general result 
subsumes various weaker results and ad hoc considerations 
(\cite{Ne10a}, \cite{PS86}, \cite{Se81}, \cite{Mc85, Mc89}) 
and applies 
in particular to many important classes  of groups, such as 
loop groups, the Virasoro group, and restricted unitary groups.  
This breaks the problem of classifying smooth projective unitary representations 
into two smaller parts: 
determining the relevant central Lie group extensions $G^{\sharp}$ of $G$ by $\T$
(cf.~\cite{Ne02}), 
and classifying  the unitary representations 
of the individual central extensions $G^{\sharp}$.
In \cite{JN15}, we use this strategy for the classification 
of projective unitary positive energy representations of gauge groups.

In Section~\ref{AppendixA}, we
refine the techniques used in 
\cite{Ne10b, Ne14} 
to provide 
effective criteria for the smoothness of a ray $[\psi] \in \bP(\cH)$, and to 
determine the structure of the subset $\bP(\cH)^{\infty}$ of all smooth rays as a 
union of subsets $\cP(\cD_j)$, where $(\cD_j)_{j \in J}$ are mutually orthogonal 
subspaces of $\cH$. In particular, this shows that for smooth projective representations, 
the dense set of smooth rays is a projective space.

The Lie algebra extension $\g^\sharp \rightarrow \fg$ of the central $\T$-extension 
$G^{\sharp} \rightarrow G$ 
is a central extension of $\g$ by $\R$, hence 
determined by a cocycle $\omega \: \g \times \g \to \R$. 
In Section~\ref{sec:6}, we describe the Lie algebra cocycles
in terms of smooth 
rays, 
and formulate necessary conditions for a Lie algebra 
cocycle to come from a projective unitary representation. 
One is  the existence of an equivalent cocycle 
which is the imaginary part of a positive semi-definite hermitian form on 
$\g_\C$ (Proposition~\ref{polarisability}). 

In Section~\ref{sec:7}, we 
formulate the correspondence between 
smooth projective unitary representations of $G$, smooth unitary representations of 
$G^{\sharp}$, and the corresponding data on the Lie algebra 
side in terms of suitable categories, i.e., taking intertwiners into account.  

In Section~\ref{sec:8}, we briefly discuss the special case where 
the Lie group under consideration is a semidirect product 
$G \rtimes_\alpha R$. 
This setting, especially with $R = \R$, is frequently encountered 
in the representation theory of infinite dimensional Lie groups, 
in particular for loop groups and gauge groups 
(cf.\ \cite{PS86, JN15}).
For $R = \T$ or $R = \R$, we thus encounter Lie algebras of the form 
$\g \rtimes_D \R$, where $D \: \g \to \g$ is a continuous derivation. 
Such a derivation is called admissible if its kernel and cokernel 
split in a natural way, a requirement automatically fulfilled for $R = \T$. 
In Section~\ref{AppendixB} we show how the admissibility of 
$D$ can be used to compute $H^2(\g\rtimes_D \R,\R)$ in terms of 
$D$-invariant cocycles on $\g$. We further show that the Stone--von Neumann 
Theorem implies that cocycles on $\g \rtimes_D \R$ arising from 
projective positive energy representations lead to $D$-invariant cocycles on~$\g$. 

Section~\ref{sec:10} concludes this paper with a discussion of projective unitary 
representations for three classes of 
locally convex Lie groups:
abelian groups, whose central extensions are related to Heisenberg groups;
the group $\Diff(\bS^1)_+$ of orientation preserving diffeomorphisms of the circle,
whose central extensions are related to the Virasoro algebra;
and twisted loop groups, whose central extensions are related to affine Kac--Moody 
algebras.

\subsection*{Notation and terminology} 

We denote the unit circle $\{ z \in \C \: |z| =1\}$ by $\bS^1$ if we consider 
it as a manifold, and by $\T$ if we consider it as a Lie group.  
The exponential function $\exp \colon \R \rightarrow \T$ is given by
$\exp(t) = e^{2\pi i t}$ with kernel $\Z$.
For a complex Hilbert space $\cH$, we take
the scalar product $\langle \,\cdot\,,\,\cdot\, \rangle$ to be linear in the \emph{second} argument.
The projective space is denoted $\bP(\cH)$, and we write
$[\psi] := \C \psi$ for the ray generated by a nonzero vector $\psi \in \cH$. 
We denote the group of unitary operators of $\cH$ by
$\U(\cH)$, and write $\PU(\cH) := \U(\cH)/\T \1$ for the projective unitary group.
The image of $U \in \U(\cH)$ in $\PU(\cH)$ is denoted $\oline U$.
Locally convex vector spaces are always assumed to be Hausdorff.
 
\section{Representations of locally convex Lie groups}\label{SectionPPER}

In this section, we introduce Lie groups modelled on locally convex spaces, 
or \emph{locally convex Lie groups} for short. This is a
generalisation of the concept of a finite dimensional Lie group
that captures a wide range of interesting examples 
(cf.\ \cite{Ne06} for an overview). 

\subsection{Smooth functions}

Let $E$ and $F$ be locally convex spaces, $U
\subeq E$ open and $f \: U \to F$ a map. Then the {\it derivative
  of $f$ at $x$ in the direction $h$} is defined as 
$$ \partial_{h}f(x) := \lim_{t \to 0} \frac{1}{t}(f(x+th) -f(x)) $$
whenever it exists. We set $D f(x)(h) :=  \partial_{h}f(x)$. The function $f$ is called {\it differentiable at
  $x$} if $D f(x)(h)$ exists for all $h \in E$. It is called {\it
  continuously differentiable} if it is differentiable at all
points of $U$ and 
$$ D f \: U \times E \to F, \quad (x,h) \mapsto D f(x)(h) $$
is a continuous map. Note that this implies that the maps 
$D f(x)$ are linear (cf.\ \cite[Lemma~2.2.14]{GN15}). 
The map $f$ is called a {\it $C^k$-map}, $k \in \N \cup \{\infty\}$, 
if it is continuous, the iterated directional derivatives 
$$ D^{j}f(x)(h_1,\ldots, h_j)
:= (\partial_{h_j} \cdots \partial_{h_1}f)(x) $$
exist for all integers $1\leq j \leq k$, $x \in U$ and $h_1,\ldots, h_j \in E$, 
and all maps $D^j f \: U \times E^j \to F$ are continuous. 
As usual, $C^\infty$-maps are called {\it smooth}. 

\subsection{Locally convex Lie groups}

Once the concept of a smooth function 
between open subsets of locally convex spaces is established, it is clear how to define 
a locally convex smooth manifold (cf.\ \cite{Ne06}, \cite{GN15}). 
\begin{Definition}(Locally convex Lie groups)
A {\it locally convex Lie group} $G$ is a group equipped with a 
smooth manifold structure modelled on a locally convex space 
for which the group multiplication and the 
inversion are smooth maps. 
A {\it morphism of Lie groups} is a smooth group homomorphism.
\end{Definition}
We write $\1 \in G$ for the identity element. 
Its Lie algebra $\g$ is identified with 
the tangent space $T_\1(G)$, and the Lie bracket is obtained by identification with the 
Lie algebra of left invariant vector fields. It is a \emph{locally convex Lie algebra}
in the following sense.
\begin{Definition}(Locally convex Lie algebras)
A {\it locally convex Lie algebra} is a 
locally convex space $\fg$ with a continuous Lie bracket
$[\,\cdot\,,\,\cdot\,] \colon \fg \times \fg \rightarrow \fg$. 
Homomorphisms of locally convex Lie algebras are continuous Lie algebra 
homomorphisms.
\end{Definition}

The right action $R_{g} \colon h \mapsto hg$ of $G$ on itself 
induces a right action on $T(G)$,
which we denote $(v,g) \mapsto v \cdot g$.

\begin{Definition}(Logarithmic derivatives)
For a smooth map $\gamma \colon M \rightarrow G$,
the \emph{right logarithmic derivative} $\delta^{R}\gamma \colon TM \rightarrow \fg$ is
defined by 
\[\delta^{R}\gamma := T\gamma \cdot \gamma^{-1}, \quad i.e., \quad 
(\delta^R\gamma)(v_m) = T_m(\gamma)v_m \cdot \gamma(m)^{-1}.\] 
\end{Definition}
If $x$ is a coordinate on $M$, we will often write
$\delta^{R}_{x}\gamma$ instead of $\delta^{R}_{\partial_{x}}\gamma$.
The logarithmic derivative satisfies
the \emph{Maurer--Cartan equation} 
\begin{eqnarray}\label{MChammer}
T_{v}\delta^{R}_{w}\gamma - T_{w}\delta^{R}_{v}\gamma = [\delta^{R}_{v}\gamma , \delta^{R}_{w}\gamma]\,.
\end{eqnarray}

The notion of a \emph{regular} Lie group, introduced by Milnor~\cite{Mi84}, is 
especially useful if one wishes to pass from the infinitesimal to the 
global level. Regularity is satisfied by all major classes of locally convex Lie groups \cite{Ne06}. 
\begin{Definition}(Regularity) A locally convex Lie group $G$ is called \emph{regular} 
if for every smooth map $\xi \colon [0,1] \rightarrow \fg$, the differential equation
\[\delta^{R}_{t}  \gamma  =  \xi(t)\]
with initial condition $\gamma(0) = \one$ has a solution 
$\gamma \colon [0,1] \rightarrow G$ and $\gamma(1)$ depends smoothly on $\xi$.
If such a solution exists, then it is automatically unique~\cite[II.3.6(c)]{Ne06}.
\end{Definition}

Since the emphasis in the present paper is more on the global level and 
on the passage from the global to the infinitesimal level, 
the majority of our results will not require regularity.

%

\section{Projective unitary representations}\label{Sec2}

We introduce the appropriate notions of \emph{smoothness} and \emph{continuity}
for unitary and projective unitary representations of locally convex Lie groups. 

\subsection{Unitary representations}
Let $G$ be a locally convex Lie group with Lie algebra $\fg$.
A \emph{unitary representation} $(\rho,\cH)$ of $G$ 
is a group homomorphism $\rho\colon G \rightarrow \U(\cH)$.

\begin{Definition}(Continuous and smooth vectors)
A vector $\psi \in \cH$ is called \emph{continuous} if its orbit map 
$G \to \cH, 
g \mapsto \rho(g)\psi$ is continuous, and \emph{smooth} if its orbit map is smooth.
We denote the subspaces of continuous and smooth vectors by $\cH_{c}$ and 
$\cH^{\infty}$, respectively. 
\end{Definition}

\begin{Definition}(Continuous and smooth unitary representations) 
A unitary representation is called \emph{continuous} if $\cH_{c}$ is dense in $\cH$,
and \emph{smooth} if $\cH^{\infty}$ is dense in $\cH$.
\end{Definition}

\begin{Remark}(Smoothness is stronger than continuity)
Clearly, every 
smooth unitary representation is continuous.
For finite dimensional Lie groups, the converse also holds: every continuous unitary representation is
automatically  smooth (cf.~\cite{Ga47}). For infinite dimensional Lie groups, however,
this converse no longer holds; there exist unitary representations of Banach--Lie groups that are 
continuous, but not smooth (cf.~\cite{BN08}, \cite{Ne10b}). 
\end{Remark}

\begin{Remark}(Strongly continuous vs.\  norm continuous representations)
Since the linear subspace $\cH_{c} \subseteq \cH$ is closed, a unitary representation is continuous 
if and only if $\cH_{c} = \cH$. This is equivalent to the 
continuity of $\rho \colon G \rightarrow \U(\cH)$ 
with respect to the \emph{strong operator topology}, which coincides with the \emph{weak
operator topology} on $\U(\cH)$. 
The \emph{norm topology} on $\U(\cH)$ is finer than the weak or strong topology.
We call $(\rho,\cH)$ \emph{norm continuous} if $\rho \colon G \rightarrow \U(\cH)$
is continuous w.r.t.\ the norm topology. 
Norm continuity implies continuity, but many
interesting continuous representations, 
such as the regular representation of a finite dimensional Lie group $G$ on $L^2(G)$, 
are not norm continuous. 
\end{Remark}

\subsection{Projective unitary representations}

A \emph{projective unitary representation} $(\ol\rho,\cH)$ of a locally convex Lie group $G$ 
is a complex Hilbert space $\cH$ with
a group homomorphism $\ol{\rho}\colon G \rightarrow \PU(\cH)$. We call two 
projective unitary representations $(\ol\rho,\cH)$ and $(\ol\rho',\cH')$ 
\emph{unitarily equivalent} if there exists a 
unitary transformation $U \colon \cH \rightarrow \cH'$ such that $\ol U \circ \ol\rho(g) = \ol\rho(g)' \circ \ol U$
for all $g\in G$.

A projective unitary representation yields
an action of $G$ on the projective space $\bP(\cH)$.
Since $\bP(\cH)$ is a Hilbert manifold, we can define continuous and smooth projective representations.

\begin{Definition}(Continuous and smooth rays) \label{def:smoothray}
A ray $[\psi] \in \bP(\cH)$ is called \emph{continuous} if its orbit map 
$G \to \bP(\cH), g \mapsto \overline{\rho}(g)[\psi]$ is continuous, and \emph{smooth} if its orbit map is smooth.
We denote the sets of continuous and smooth rays by $\bP(\cH)_{c}$ and 
$\bP(\cH)^{\infty}$, respectively. 
\end{Definition}

\begin{Definition}(Continuous and smooth projective unitary representations)
A projective unitary representation $(\ol\rho,\cH)$ is called \emph{continuous} if $\bP(\cH)_{c}$ is dense in $\cH$,
and \emph{smooth} if $\bP(\cH)^{\infty}$ is dense in $\cH$.
\end{Definition}

\begin{Remark}(Topology of $\bP(\cH)$)
The topology of the Hilbert manifold $\bP(\cH)$ is easily seen to be the initial topology 
w.r.t.\ the functions $[\psi] \mapsto p([\phi]\,; [\psi])$, where 
 \[
 p([\phi] \,; [\psi]) := \frac{|\la\phi,\psi\ra|^2}{\|\phi\|^2\|\psi\|^2}
 \]
is the \emph{transition probability} between $[\phi], [\psi] \in \bP(\cH)$ when these 
are interpreted as states of a quantum mechanical system.
This in turn agrees with the topology induced by the Fubini--Study metric 
$d([\psi],[\phi]) = \arccos \sqrt{ p([\phi] \,; [\psi])}$ (\cite[Rem.~4.4(b)]{Ne14}). 
If $[\psi] \in \bP(\cH)$ is continuous (smooth), then, for all $[\phi] \in \bP(\cH)$, the transition 
probability $p([\phi]\,; \ol\rho(g)[\psi])$ varies continuously (smoothly) with $g\in G$.
\end{Remark}

\begin{Remark}(Alternative characterisations of continuity)
Since $\bP(\cH)_{c}$ is closed \cite[Lemma~5.8]{Ne14}, a projective unitary representation is continuous if and only if 
$\bP(\cH)_{c} = \bP(\cH)$. 
We endow $\PU(\cH) = \U(\cH)/\T\one$ with the pointwise topology derived from its action on $\bP(\cH)$.
This coincides with the quotient topology derived from the strong (or, equivalently, weak) topology on $\U(\cH)$ 
\cite[Prop.~5.1]{Ne14}, and
a projective unitary representation is continuous if and only if the map $\ol\rho \colon G \rightarrow \PU(\cH)$ is continuous
with respect to this topology.
The quotient topology on $\PU(\cH)$ derived from the \emph{norm} topology on $\U(\cH)$
is finer, and we call $(\ol\rho,\cH)$ \emph{norm continuous} 
if $\ol\rho$ is continuous w.r.t.\ this topology.
\end{Remark}

\section{Integration of Lie algebra representations}
\label{sec:3}

In this section, we turn to the new material in this paper. 
We discuss the passage back and forth between 
smooth unitary representations of a locally convex Lie group $G$ with 
Lie algebra $\g$  (the global level), and the corresponding
unitary representations $\g$ on a pre-Hilbert space (the infinitesimal 
level).

Subsection~\ref{enekantop}
is devoted to the following
key observation:
a smooth unitary representation $(\rho, \cH)$ 
of a locally convex Lie group $G$
is determined uniquely by its derived unitary Lie algebra representation $(\dd\rho, \cH^{\infty})$  
on the space of smooth vectors. 
In Subsection~\ref{anderekantop}, we tackle the
much harder question of finding necessary and sufficient criteria 
for unitary Lie algebra representations 
$(\pi,V)$ to integrate to the group level. 
This culminates in Theorem~\ref{heenenweer}, where
we give such conditions 
in the context of 1-connected regular Lie groups modelled on a barrelled Lie algebra.

\subsection{Derived representations}\label{enekantop}

A representation $(\pi,V)$ of a Lie algebra $\fg$ is a complex vector space $V$ with a Lie algebra 
homomorphism $\pi \colon \fg \rightarrow \mathrm{End(V)}$.
\begin{Definition}{\rm (Unitary Lie algebra representations)}
A representation $(\pi,V)$ of the Lie algebra $\g$ 
is called \emph{unitary} if $V$ is a pre-Hilbert space and
$\pi(\xi)$ is skew-symmetric for all $\xi \in \fg$. 
A \emph{unitary equivalence} between
$(\pi,V)$ and $(\pi',V')$ is
a bijective linear isometry $U \colon V \rightarrow V'$ such that $U \pi(\xi) = \pi'(\xi) U$
for all~$\xi \in \fg$.
We denote the Hilbert completion of $V$ by $\cH_{V}$.
\end{Definition}



If $(\rho,\cH)$ is a smooth unitary representation of a locally convex Lie group $G$, then
its \emph{derived representation} 
$(\dd\rho, \cH^\infty)$ is a unitary representation of its Lie algebra $\fg$. 

\begin{Definition}(Derived representations)\label{derep}
If $(\rho,\cH)$ is a smooth representation of $G$, then
its derived representation
${\dd\rho \: \fg \to \End(\cH^\infty)}$ is defined by
\[
\dd\rho(\xi)\psi :=      
 \frac{d}{dt} \Big\vert_{t=0}\rho(\gamma_t)\psi\,,\]
where $\gamma \colon \R \rightarrow G$ is a curve with 
$\gamma_0 = \one$ and $\frac{d}{dt}\gamma|_{t=0} = \xi$.
\end{Definition}  

\begin{Remark}(Selfadjoint generators)
Suppose that $G$ is regular (or, more generally, that $G$ admits a smooth 
exponential function).
Then we can take $\gamma_t = \exp(t\xi)$ to be the solution of $\delta^{R}_{t}\gamma = \xi$ 
starting in $\1$.
Since the closure of $\dd\rho(\xi)$ coincides with the infinitesimal generator of the 
unitary one-parameter group $t \mapsto \rho(\exp (t\xi))$, the operators
$i\cdot \dd\rho(\xi)$ are then essentially selfadjoint 
(cf.\ \cite[Thm.~VIII.10]{RS75}).
\end{Remark}

The derived representation $\dd\rho$ carries significant information in the sense that 
it determines the restriction of $\rho$ to $G_0$, the connected component of the 
identity. 

\begin{Proposition}{\rm(Infinitesimal representations determine global ones)}
\label{LArulez}
Let $G$ be a connected locally convex Lie group with Lie algebra $\fg$. 
Then $U \colon \cH \rightarrow \widetilde{\cH}$ is an isometric intertwiner
between unitary $G$-representations $(\rho,\cH)$ and  $(\tilde\rho,\tilde\cH)$ 
if and only if $U|_{\cH^{\infty}}$ is an isometric intertwiner between the
$\fg$-representations
$(\dd\rho,\cH^{\infty})$ and $(\dd\tilde\rho,\tilde\cH^{\infty})$.
In particular, $(\rho,\cH)$ is determined by $(\dd\rho, \cH^{\infty})$
up to unitary equivalence.
\end{Proposition}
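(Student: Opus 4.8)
The plan is to establish the biconditional directly: the implication from a $G$-intertwiner to a $\fg$-intertwiner is routine, while the converse carries the real content. The concluding assertion, that $(\rho,\cH)$ is determined up to unitary equivalence by $(\dd\rho,\cH^\infty)$, then follows at once by applying the biconditional to a unitary $U \colon \cH \to \tilde\cH$ extending a given unitary equivalence $\cH^\infty \to \tilde\cH^\infty$ of derived representations.

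For the easy direction, suppose $U\rho(g) = \tilde\rho(g)U$ for all $g$. First I would note that $U$ maps $\cH^\infty$ into $\tilde\cH^\infty$: for $\psi \in \cH^\infty$ the orbit map $g \mapsto \tilde\rho(g)U\psi = U(\rho(g)\psi)$ is the composite of the smooth map $g \mapsto \rho(g)\psi$ with the bounded operator $U$, hence smooth. Differentiating $U\rho(\gamma_t)\psi = \tilde\rho(\gamma_t)U\psi$ at $t=0$ along a curve $\gamma$ with $\gamma_0 = \1$ and $\dot\gamma_0 = \xi$ then yields $U\dd\rho(\xi)\psi = \dd\tilde\rho(\xi)U\psi$, so $U|_{\cH^\infty}$ intertwines the derived representations, and it is isometric as the restriction of an isometry.

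For the converse, assume $U|_{\cH^\infty}$ is an isometric $\fg$-intertwiner, so that $U\cH^\infty \subseteq \tilde\cH^\infty$ and $U\dd\rho(\xi) = \dd\tilde\rho(\xi)U$ on $\cH^\infty$. Fix $\psi \in \cH^\infty$ and a smooth path $\gamma \colon [0,1] \to G$ with $\gamma_0 = \1$, and compare the curves $\alpha(t) := U\rho(\gamma_t)\psi$ and $\beta(t) := \tilde\rho(\gamma_t)U\psi$ in $\tilde\cH$. Since smooth vectors are $G$-stable, $\rho(\gamma_t)\psi$ and $U\psi$ are smooth, so $\alpha(t), \beta(t) \in \tilde\cH^\infty$ for every $t$. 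Applying the standard identity $\frac{d}{dt}\rho(\gamma_t)\psi = \dd\rho(\delta^{R}_{t}\gamma)\rho(\gamma_t)\psi$, the continuity of $U$, and the intertwining relation on the smooth vector $\rho(\gamma_t)\psi$, I obtain $\dot\alpha(t) = \dd\tilde\rho(\delta^{R}_{t}\gamma)\alpha(t)$, and directly $\dot\beta(t) = \dd\tilde\rho(\delta^{R}_{t}\gamma)\beta(t)$; thus $\alpha$ and $\beta$ solve the same linear initial value problem with common initial value $U\psi$.

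The crux is uniqueness, which skew-symmetry of $\dd\tilde\rho$ settles cheaply: writing $\delta(t) := \alpha(t) - \beta(t) \in \tilde\cH^\infty$, the product rule gives $\frac{d}{dt}\|\delta(t)\|^2 = 2\,\mathrm{Re}\,\langle \delta(t), \dd\tilde\rho(\delta^{R}_{t}\gamma)\delta(t)\rangle = 0$, because $\langle v, \dd\tilde\rho(\xi)v\rangle$ is purely imaginary for $v \in \tilde\cH^\infty$. Hence $\|\delta(t)\| \equiv \|\delta(0)\| = 0$, so $U\rho(\gamma_t)\psi = \tilde\rho(\gamma_t)U\psi$ for all $t$, in particular at the endpoint $\gamma_1$. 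As $G$ is connected it is smoothly arcwise connected (in any chart points are joined by straight segments, so the points reachable from $\1$ by a smooth path form an open, hence closed, subgroup), so every $g \in G$ occurs as such an endpoint, giving $U\rho(g)\psi = \tilde\rho(g)U\psi$ for all $\psi \in \cH^\infty$; density of $\cH^\infty$ and boundedness of $U\rho(g)$ and $\tilde\rho(g)U$ then extend this to all of $\cH$. The only genuine obstacle is this differentiate-and-conserve step, and I expect the remaining work to be the routine verification that the orbit-map derivative formula and the product rule for the inner product are valid in the norm topology, which holds because every curve in sight is norm-smooth.
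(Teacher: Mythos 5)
Your proposal is correct and takes essentially the same route as the paper: both arguments differentiate along a smooth path from $\1$ to $g$, use the intertwining relation on smooth vectors together with skew-symmetry of $\dd\tilde\rho(\xi_t)$ to produce a conserved quantity, and conclude by density of $\cH^\infty$. The only cosmetic difference is that you conserve $\|U\rho(\gamma_t)\psi - \tilde\rho(\gamma_t)U\psi\|^2$ directly, whereas the paper conserves the cross inner product $\la \tilde\rho(\gamma_t)U\psi, U\rho(\gamma_t)\psi\ra$ and finishes with the equality case of Cauchy--Schwarz.
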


\begin{proof}
Let $\psi\in \cH^{\infty}$, and let
$\gamma \: [0,1] \to G$ be a smooth curve with 
$\gamma_0 =\1$ and $\gamma_1 = g$. If we define $\xi_t 
:= \delta^{R}_{t}\gamma_t $ in $\fg = T_\1(G)$,
then we have 
\begin{eqnarray*}
\frac{d}{dt} \rho(\gamma_t)\psi &=& \dd\rho(\xi_t) \rho(\gamma_t)\psi\,,\\
\frac{d}{dt} \tilde\rho(\gamma_t)U\psi &=& \dd\tilde\rho(\xi_t) \tilde\rho(\gamma_t)U\psi\,. 
\end{eqnarray*}
Skew-symmetry of the operators $\dd\tilde\rho(\xi_t)$, 
combined with the identity $\dd\tilde\rho(\xi_t)U = U \dd\rho(\xi_t)$ on $\cH^{\infty}$, 
then implies that the 
function 
\[ f(t) := \la \tilde\rho(\gamma_t)U\psi, U\rho(\gamma_t)\psi \ra \] 
is constant because 
\[ f'(t) = 
\la \dd\tilde\rho(\xi_t) \tilde\rho(\gamma_t)U\psi, U\rho(\gamma_t)\psi \ra 
+ \la \tilde\rho(\gamma_t)U\psi, U\dd\rho(\xi_t) \rho(\gamma_t)\psi \ra 
= 0.\] 
Since $f(1)=f(0)$ yields
$\la \tilde\rho(g)U\psi,U\rho(g)\psi \ra = \la \psi,\psi\ra$, 
we have
$U\rho(g)\psi = \tilde\rho(g)U\psi$ for all $\psi \in \cH^{\infty}$
by Cauchy--Schwarz.
As $\cH^{\infty}$ is dense in $\cH$, this yields
$\tilde\rho(g)U = U\rho(g)$ on $\cH$.
\end{proof}

\subsection{Globalisation of Lie algebra representations}\label{anderekantop}

The converse problem, existence and uniqueness of globalisations for a given unitary 
Lie algebra representation, is much harder.
There is a functional analytic and a topological aspect to this question, in the sense that one can ask
whether a given Lie algebra representation integrates to a local group representation, and, if so, 
whether or not this extends to a global representation.
Since the topological part  is less relevant for projective representations,  
we now isolate the functional analytic part.

For the integration problem, we will work in the context of regular Lie groups modelled on 
barrelled Lie algebras.  
\begin{Definition}(Barrelled Lie algebras)
A locally convex Lie algebra $\fg$ is called \emph{barrelled} 
if every barrel
(i.e.\ every closed, convex, circled, absorbing subset) of $\fg$
is a 0-neighbourhood.
\end{Definition}
The distinguishing feature of barrelled spaces is that the Banach--Steinhaus Theorem holds.
Every locally convex Baire space is barrelled, so in particular, all Fr\'echet spaces are barrelled.
Products, Hausdorff quotients and inductive limits of barrelled spaces are barrelled, 
so in particular, LF-spaces are barrelled.

\subsubsection{Weak and strong topology for LA representations}
\label{subsec:3.2}

Let $(\pi, V)$ be a unitary representation of a locally convex Lie algebra $\fg$.
In order to make headway with the integration of Lie algebra representations,
we will need a topology on $V$ for which the map 
$\fg \times V \rightarrow V,  (\xi,\psi) \mapsto \pi(\xi)\psi$ is sequentially continuous.
Since the \emph{norm topology} on $V \subseteq \cH_{V}$, induced by 
the Hilbert space norm on $\cH_{V}$, is too coarse for this to hold
even in the case of finite dimensional Lie algebras,
we introduce two topologies on~$V$, the \emph{weak} and the \emph{strong}
topology, that do fulfil this requirement.

\begin{Remark}(Notation)
The unitary $\fg$-representation $\pi \colon \fg \rightarrow \mathrm{End}(V)$
extends to a $*$-representation
of the universal enveloping 
algebra $\cU_{\C}(\fg)$, equipped with
the unique antilinear anti-involution that 
satisfies $\xi^* = -\xi$ on $\fg \subseteq \mathcal{U}_{\C}(\fg)$.
It will be denoted by the same letter, $\pi \colon \cU_{\C}(\fg) \rightarrow \mathrm{End}(V)$. 
In the same vein, we define the map 
\[ \pi_n \colon \fg^n \rightarrow \mathrm{End}(V), \quad 
\pi_n(\bxi) := \pi(\xi_n)\ldots \pi(\xi_1) \quad \mbox{ for } \quad 
\bxi = (\xi_{n},\ldots ,\xi_{1}) \in \fg^n.\] 
For $n=0$, this is interpreted as
the map on $\fg^0 = \R$ defined by $\pi_0(\lambda) = \lambda \one$, so that
$\pi_n$ is the concatenation of $\pi \colon \cU_{\C}(\fg) \rightarrow \mathrm{End}(V)$
with the $n$-linear multiplication map $\fg^n \rightarrow \cU_{\C}(\fg)$.
%
%
\end{Remark}

\begin{Definition}{\rm (Continuous Lie algebra representations)} \label{LAcontinuity}
We call $(\pi, V)$ \emph{continuous} if for all $n \in \N$ and $\psi \in V$,
the function $\pi_{n}^{\psi} \colon \fg^{n} \rightarrow \cH_{V}$ defined by 
$\pi_{n}^{\psi}(\bxi) := \pi_n(\bxi)\psi$ is continuous w.r.t.\ the 
norm
topology.
\end{Definition}

\begin{Remark}{\rm (Derived representations are continuous)} \label{deriscont}
If $(\rho,\cH)$ is a smooth unitary representation of a locally convex Lie group, then 
$\dd\rho_{n}^{\psi}$ is continuous, as it is the (restriction to $\fg^n$ of the) 
$n^{\mathrm{th}}$ derivative at $\one \in G$ of the 
smooth function $g \mapsto \rho(g)\psi$. It follows that $(\dd\rho,\cH^{\infty})$
is continuous.
\end{Remark}

The intuition behind the definition of the weak and strong topology 
on $V$ is to think of $\psi \in \cH^{\infty}$ as a `state' 
$\sigma_{\psi} \colon \cU_{\C}(\fg) \rightarrow \C$ on $\cU_{\C}(\fg)$, given by
$\sigma_{\psi}(A) = \langle\psi, \pi(A)\psi\rangle$. The weak topology 
of $V$ corresponds precisely to the weak-$*$ topology on $\cU_{\C}^*(\fg)$, 
and the strong topology is loosely based on the
topology of bounded convergence.

\begin{Definition}(Weak and strong topology on $V$)\label{wstopology}
The \emph{weak topology} on $V$ is the locally convex topology generated 
by the seminorms 
\[ p_{\bxi}(\psi) := \|\pi_n(\bxi)\psi\|\quad \mbox{ for } \quad \bxi \in \fg^n, n \in \N.\]
The \emph{strong topology} on $V$ is generated by the seminorms 
\[p_{B}(\psi) := \sup_{\bxi\in B}\|\pi_n(\bxi)\psi\|,\] where $B\subseteq \fg^n$
runs over the bounded subsets of the locally convex spaces~$\fg^n$, and
$n$ runs over $\N$.
\end{Definition}

Recall that for $n=0$, we have $\fg^0 = \R$ and $p_{\lambda}(\psi) = |\lambda|\|\psi\|$, 
so that both the strong and the weak topology are finer than the norm topology
of $V \subseteq \cH_{V}$.
As the name suggests, the strong topology is stronger than the weak topology.
If we have the strong, weak, or norm topology on $V$ in mind, we will call
a convergent sequence in $V$ `strongly', `weakly' or `norm'-convergent, and a
a continuous map 
into $V$ `strongly', `weakly', or `norm'-continuous.

\begin{Remark}{\rm (Comparison with existing definitions)}
For Banach--Lie groups, the strong topology on the set $\cH^{\infty}$
of smooth vectors coincides with the locally convex
topology introduced 
in \cite{Ne10b} for (not necessarily unitary) representations of Banach--Lie groups on locally convex spaces. 
For finite dimensional Lie groups $G$, the weak and the strong topology coincide with each other,
and with the usual topology on $\cH^{\infty}$, derived from the smooth compact open topology on 
$C^{\infty}(G,\cH)$ and the embedding 
$\cH^{\infty} \rightarrow C^{\infty}(G,\cH)$ defined by $\rho^\psi(g) = \rho(g)\psi$ 
(cf.\ \cite[Prop.~4.6]{Ne10b}).
\end{Remark}

\begin{Example}(A smooth representation of $\R^{\N}$)\label{U1example}
Consider the abelian Lie group $G = \R^{\N}$, whose 
Fr\'echet--Lie algebra
$\R^{\N}$ is equipped with the product topology and the trivial Lie bracket. 
The unitary $G$-representation $(\rho, \ell^{2}(\N,\C))$
defined by 
$(\rho(\phi)\psi)_{n} := e^{i\phi_{n}} \psi_{n}$
has the dense space $\cH^{\infty} = \C^{(\N)}$ 
of smooth vectors \cite[Ex.~4.8]{Ne10b}, and the derived Lie algebra representation $(\dd\rho, \C^{(\N)})$
is given by $(\dd\rho(\xi)\psi)_{n} := i \xi_{n}\psi_{n}$.
The weak topology on $\C^{(\N)}$ is the locally convex topology generated by the seminorms 
$p_{\bxi}(\psi) := \sqrt{\sum_{i\in \N} |\bxi_i \psi_{i}|^2}$, where 
$\bxi_i = \xi^{1}_{i} \ldots \xi^{n}_{i}$ if $\bxi = (\xi^1, \ldots, \xi^{n}) \in \fg^n$.
This is precisely the locally convex inductive limit topology on $\C^{(\N)}$, i.e., the strongest locally convex 
topology such that all inclusions $\C^n \hookrightarrow \C^{(\N)}$ are continuous.
Indeed, the opens $B_{\bxi}(0) := \{\psi \in \C^{(\N)}\,;\, p_{\bxi}(\psi) < 1\}$ that generate the weak 
topology are open for the locally convex inductive limit topology.
Conversely,
if $U\subseteq \C^{(\N)}$ is a convex 0-neighbourhood for the locally convex inductive limit topology,
then for each $i\in \N$, there exists an $r_i>0$ such that $\psi \in U$ for all 
$\psi \in \C e_i$ with
$\| \psi\| \leq r_i$. With $\bxi_i := 2^{i+1}/r_i$, the inequality 
$p_{\bxi}(\psi) <1$ implies $|2^{i+1}\psi_i| < r_i$, so that
the multiple $2^{i+1}\psi_i e_i$ of the $i^{\mathrm{th}}$ basis vector $e_i$ 
is an element of $U$. 
Since $\psi = \sum_{i=0}^{n}\psi_i e_i$ is a convex combination of $0$
and the vectors $2^{i+1}\psi_i e_i$,
it is an element of $U$, hence 
$B_{\bxi}(0) \subseteq U$
and $U$ is open for the weak topology.
In this case,
the strong topology yields the same result.
\end{Example}



\begin{Proposition}\label{isookcontinu}
{\rm(Strong continuity $\pi_{n}(\bxi)$)}
For every $\bxi\in \fg^n$,  
the map $\pi_n(\bxi) \: V \rightarrow V$ 
is continuous for the weak as well as for the strong topology.
\end{Proposition}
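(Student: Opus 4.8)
The plan is to reduce everything to a single bookkeeping identity: composing the operators $\pi_k$ corresponds to concatenating tuples. Concretely, for a tuple $\boldsymbol\eta = (\eta_m,\dots,\eta_1)\in\fg^m$ and the fixed $\bxi=(\xi_n,\dots,\xi_1)\in\fg^n$, unwinding the definition $\pi_k(\,\cdot\,)=\pi(\,\cdot\,)\cdots\pi(\,\cdot\,)$ gives
\[
\pi_m(\boldsymbol\eta)\,\pi_n(\bxi)
=\pi(\eta_m)\cdots\pi(\eta_1)\,\pi(\xi_n)\cdots\pi(\xi_1)
=\pi_{m+n}(\boldsymbol\eta,\bxi),
\]
where $(\boldsymbol\eta,\bxi):=(\eta_m,\dots,\eta_1,\xi_n,\dots,\xi_1)\in\fg^{m+n}$ denotes the concatenated tuple. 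Once this is in hand, the rest is just checking seminorms, since a linear map between locally convex spaces is continuous as soon as every generating seminorm of the target pulls back to a continuous seminorm of the source.

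For the weak topology, the generating seminorms on the target are the $p_{\boldsymbol\eta}$ with $\boldsymbol\eta\in\fg^m$, $m\in\N$. Using the identity above I would compute
\[
p_{\boldsymbol\eta}\big(\pi_n(\bxi)\psi\big)
=\|\pi_m(\boldsymbol\eta)\pi_n(\bxi)\psi\|
=\|\pi_{m+n}(\boldsymbol\eta,\bxi)\psi\|
=p_{(\boldsymbol\eta,\bxi)}(\psi),
\]
so each generating weak seminorm pulls back under $\pi_n(\bxi)$ to \emph{another} generating weak seminorm; the estimate needed for continuity in fact holds as an equality, and weak continuity of $\pi_n(\bxi)$ follows immediately.

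For the strong topology the target seminorms are the $p_B$ with $B\subseteq\fg^m$ bounded, and the same identity yields
\[
p_B\big(\pi_n(\bxi)\psi\big)
=\sup_{\boldsymbol\eta\in B}\|\pi_{m+n}(\boldsymbol\eta,\bxi)\psi\|
=p_{B\times\{\bxi\}}(\psi),
\]
where $B\times\{\bxi\}=\{(\boldsymbol\eta,\bxi):\boldsymbol\eta\in B\}\subseteq\fg^m\times\fg^n=\fg^{m+n}$. The only thing left to verify is that $B\times\{\bxi\}$ is a bounded subset of $\fg^{m+n}$, which is elementary: a single point is bounded, and a product of bounded sets is bounded in the product topology. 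Hence $p_{B\times\{\bxi\}}$ is again a generating strong seminorm on the source, giving strong continuity.

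I do not expect a genuine obstacle here. The two points that need care — rather than difficulty — are getting the ordering right in the concatenation identity (so that the fixed tuple $\bxi$ is appended on the right, matching $\pi_n(\bxi)=\pi(\xi_n)\cdots\pi(\xi_1)$), and the observation that appending the fixed point $\bxi$ carries bounded subsets of $\fg^m$ to bounded subsets of $\fg^{m+n}$. Notably, neither unitarity of $\pi$, completeness of $V$, nor barrelledness of $\fg$ enters: the statement is a purely formal consequence of the way the weak and strong topologies are built from the operators $\pi_k$.
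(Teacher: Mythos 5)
Your proof is correct and follows essentially the same route as the paper: both arguments rest on the concatenation identity $p_{\boldsymbol\eta}(\pi_n(\bxi)\psi)=p_{(\boldsymbol\eta,\bxi)}(\psi)$ for the weak seminorms and on $p_{B}(\pi_n(\bxi)\psi)=p_{B\times\{\bxi\}}(\psi)$ together with the boundedness of $B\times\{\bxi\}$ for the strong ones. The only difference is that you spell out the ordering of the concatenation and the boundedness check, which the paper leaves implicit.
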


\begin{proof}
For the weak topology, this follows from
$p_{\bxi'}(\pi_n(\bxi)\psi) = p_{\bxi'\bxi}(\psi)$, 
where $\bxi'\bxi \in \fg^{m+n}$ denotes the concatenation 
of $\xi' \in \fg^m$ and $\xi \in \fg^n$.
For the strong topology, this follows from
$p_{B}(\pi_n(\bxi)\psi) = p_{B \times \{\bxi\}}(\psi)$, as 
$B \times \{\bxi\} \subseteq \fg^{n+m}$ is bounded 
for all $\bxi \in \fg^m$ and all bounded sets 
$B \subseteq \fg^m$.
\end{proof}

Although the norm 
 topology on $V$ is coarser than the weak and the strong topology,
continuity of \emph{all} maps $\pi^{\psi}_{n}\colon \fg^{n} \rightarrow V$ 
w.r.t.\ the norm 
topology implies continuity w.r.t.\ the weak and 
strong topology.

\begin{Proposition}\label{bijdehand}{\rm (Strong continuity $\pi_{n}^{\psi}$)}
If $\pi \colon \fg \rightarrow \mathrm{End}(V)$ is continuous 
in the sense of {\rm Definition~\ref{LAcontinuity}}, 
then for every $\psi \in V$ and $n \in \N$,
the map $\pi^{\psi}_{n}\colon \fg_{\C}^{n} \rightarrow V$
is strongly (hence weakly) continuous.
\end{Proposition}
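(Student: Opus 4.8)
The plan is to verify strong continuity of $\pi^\psi_n \colon \fg_\C^n \to V$ directly at an arbitrary point $\bxi_0 \in \fg_\C^n$, working with the seminorms that generate the strong topology. Since a basis of strong neighbourhoods of $\pi_n(\bxi_0)\psi$ is obtained from finite intersections of the sets $\{v : p_B(v - \pi_n(\bxi_0)\psi) < \eps\}$, it suffices to treat one bounded set $B \subseteq \fg^m$ at a time and show that $p_B\big(\pi_n(\bxi)\psi - \pi_n(\bxi_0)\psi\big) \to 0$ as $\bxi \to \bxi_0$. Using the concatenation identity already exploited in the proof of Proposition~\ref{isookcontinu}, namely $\pi_m(\bxi')\pi_n(\bxi)\psi = \pi_{m+n}(\bxi'\bxi)\psi = \pi^\psi_{m+n}(\bxi'\bxi)$, this quantity equals $\sup_{\bxi' \in B}\big\|\pi^\psi_{m+n}(\bxi'\bxi) - \pi^\psi_{m+n}(\bxi'\bxi_0)\big\|$. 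The whole problem is therefore to show that this supremum, taken over the bounded set $B$, tends to $0$.

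The engine of the argument is that, by hypothesis (Definition~\ref{LAcontinuity}), the map $\pi^\psi_{m+n}$ is a \emph{jointly continuous, multilinear} map $\fg^{m+n} \to \cH_V$; expanding each slot into real and imaginary parts and using that every continuous seminorm on $\fg$ extends to one on $\fg_\C$, the same holds on $\fg_\C^{m+n}$. Continuity of a multilinear map at the origin is equivalent to a product estimate, so there are continuous seminorms with $\|\pi^\psi_{m+n}(\bxi'\bxi)\| \le \big(\prod_i a_i(\xi'_i)\big)\big(\prod_j b_j(\xi_j)\big)$. First I would expand the difference $\pi^\psi_{m+n}(\bxi'\bxi) - \pi^\psi_{m+n}(\bxi'\bxi_0)$ by a telescoping sum in the last $n$ slots, turning it into $\sum_{k=1}^n \pi^\psi_{m+n}$ of tuples whose $k$-th slot is the small increment $\xi_k - (\xi_0)_k$ and whose remaining slots are entries of $\bxi$ or $\bxi_0$.

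Applying the product estimate to each telescoping term and taking the supremum over $\bxi' \in B$ then gives a bound of the shape $C_B \cdot C' \cdot b_k(\xi_k - (\xi_0)_k)$, where $C_B = \sup_{\bxi' \in B}\prod_i a_i(\xi'_i)$ is finite because $B$ is bounded (hence each seminorm is bounded on the relevant projections), and $C'$ bounds the remaining factors once $\bxi$ is confined to a bounded neighbourhood of $\bxi_0$. Since $b_k$ is a continuous seminorm, each such term tends to $0$ as $\bxi \to \bxi_0$, and summing over $k$ yields $p_B(\pi_n(\bxi)\psi - \pi_n(\bxi_0)\psi) \to 0$; this proves strong continuity, and weak continuity follows because the weak topology is coarser. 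The hard part — and the reason this is more than a restatement of Definition~\ref{LAcontinuity} — is precisely the \emph{uniformity} over the bounded set $B$ demanded by the strong topology. This is exactly what the product seminorm estimate for continuous multilinear maps supplies, with no appeal to barrelledness, the boundedness of $B$ entering only to keep the constant $C_B$ finite.
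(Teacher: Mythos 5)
Your proof is correct, and it rests on the same two pillars as the paper's: the hypothesis that $\pi^{\psi}_{m+n}\colon \fg^{m+n}\to\cH_{V}$ is norm-continuous, and the boundedness of $B$ to obtain the uniformity demanded by $p_{B}$. The execution differs in a way worth recording. The paper first proves weak continuity pointwise (exactly as you do, via continuity of $\bxi \mapsto \pi_m(\bxi_0)\pi_n(\bxi)\psi$ for each fixed $\bxi_0$), but then verifies strong continuity only at $\bxi = 0$: from continuity of $\pi^{\psi}_{m+n}$ at the origin it extracts $0$-neighbourhoods $U_0 \subseteq \fg^{m}$ and $U \subseteq \fg^{n}$ on which $\|\pi_m(\bxi_0)\pi_n(\bxi)\psi\|\le\eps$, absorbs $B \subseteq \lambda U_0$, and uses $m$- and $n$-homogeneity to conclude $p_{B}(\pi^{\psi}_{n}(\bxi)) \le \eps$ for $\bxi \in \lambda^{-m/n}U$. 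The passage from continuity at the origin to continuity at an arbitrary point of the multilinear map $\pi^{\psi}_{n}$ is left implicit there; it is precisely the product-estimate-plus-telescoping argument that you carry out in full. Concretely, your seminorm bound $\|\pi^{\psi}_{m+n}(\bxi'\bxi)\| \le \bigl(\prod_i a_i(\xi'_i)\bigr)\bigl(\prod_j b_j(\xi_j)\bigr)$ is the gauge-seminorm repackaging of the paper's $\eps$-neighbourhood statement, your constant $C_B = \sup_{\bxi'\in B}\prod_i a_i(\xi'_i) < \infty$ plays exactly the role of the paper's absorption $B \subseteq \lambda U_0$, and your telescoping sum supplies the globalisation step the paper omits. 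The paper's scaling trick buys brevity at the origin; your argument buys continuity at every point with no appeal to an unstated lemma on multilinear maps, and it also addresses the $\fg_{\C}$ versus $\fg$ discrepancy between the statement and Definition~\ref{LAcontinuity}, which the paper's proof passes over silently.
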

\begin{proof}
Let $\bxi_0 \in \fg^m$. Then  
norm-continuity of $\pi_{m+n}^{\psi} \colon \fg^{m+n} \rightarrow \cH_{V}$
implies that the map
$\fg^{n} \rightarrow \cH_{V}$ defined by 
$\bxi \mapsto \pi_m(\bxi_0) \pi_n(\bxi)\psi$
is norm-continuous.
Since this is true for all fixed $\bxi_0\in \fg^{m}$, 
$\pi^{\psi}_{n}$ is weakly continuous.

To show that it is even strongly continuous, 
consider the norm $p_{B}$ derived from the bounded set 
$B \subseteq \fg^{m}$.
For every $\varepsilon >0$, there exist open sets
$U_0 \subseteq \fg^m$ and $U \subseteq \fg^n$ such that
$\bxi_0\in U_0$ and $\bxi \in U$ imply $\|\pi_m(\bxi_0)\pi_n(\bxi)\psi\|\leq \varepsilon$.
Since $B \subseteq \lambda U_0$ for some $\lambda > 0$, we have 
$\|\pi_m(\bxi_0)\pi_n(\bxi)\psi\|\leq \lambda^{m}\varepsilon$ 
for $\bxi_0\in B$, $\bxi \in U$. 
Since $\pi_{n}^{\psi}$ is $n$-linear, $\bxi \in \lambda^{-m/n} U$
 implies
 $p_{B}(\pi^{\psi}_{n}(\bxi))\leq \varepsilon$.
\end{proof}

\begin{Lemma}\label{seqcont} {\rm(Sequential continuity of infinitesimal action)}
If $\pi \colon \fg \rightarrow \mathrm{End}(V)$ is a continuous unitary representation of a 
barrelled Lie algebra $\fg$, then the map 
$\fg \times V \rightarrow V, (\xi,\psi) \mapsto \pi(\xi)\psi$
is sequentially continuous for the weak and strong topology.
\end{Lemma}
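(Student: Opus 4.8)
The plan is to verify sequential continuity by hand. Fix sequences $\xi_k \to \xi$ in $\fg$ (with respect to its given locally convex topology) and $\psi_k \to \psi$ in $V$ with respect to the weak (resp.\ strong) topology, and show $\pi(\xi_k)\psi_k \to \pi(\xi)\psi$ in the same topology on $V$. The first move is to split
\[
\pi(\xi_k)\psi_k - \pi(\xi)\psi = \pi(\xi_k)(\psi_k - \psi) + \pi(\xi_k - \xi)\psi,
\]
and treat the two summands separately. The second summand is $\pi_1^{\psi}(\xi_k - \xi)$, and since $\xi_k - \xi \to 0$ in $\fg$, Proposition~\ref{bijdehand} (strong, hence weak, continuity of $\pi_1^{\psi}$) shows directly that it tends to $0$ in both topologies. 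Everything therefore reduces to the cross term $\pi(\xi_k)\chi_k$, where I write $\chi_k := \psi_k - \psi \to 0$.

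For the strong topology this is easy and does not even require barrelledness. Set $S := \{\xi_k : k \in \N\} \cup \{\xi\}$, which is bounded in $\fg$ since a convergent sequence together with its limit is compact. For any bounded $B \subseteq \fg^n$ the concatenation $BS \subseteq \fg^{n+1}$ is again bounded, and using the identity $p_{\bxi'}(\pi(\eta)\chi) = p_{\bxi'\eta}(\chi)$ from the proof of Proposition~\ref{isookcontinu} one gets
\[
p_B(\pi(\xi_k)\chi_k) = \sup_{\bxi' \in B} p_{\bxi'\xi_k}(\chi_k) \le p_{BS}(\chi_k) \to 0,
\]
because $\chi_k \to 0$ strongly. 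As $B$ and $n$ were arbitrary, $\pi(\xi_k)\chi_k \to 0$ strongly.

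The weak topology is the real content, and this is where barrelledness enters through Banach--Steinhaus. The difficulty is precisely that $p_{\bxi'\xi_k}(\chi_k)$ involves a seminorm that itself depends on $k$, so weak convergence of $\chi_k$ (which controls only each individually fixed seminorm) is not enough. To get around this, I would fix $\bxi' \in \fg^n$ and consider the linear maps $S_k \colon \fg \to \cH_{V}$ given by $S_k(\eta) := \pi_n(\bxi')\pi(\eta)\chi_k$. Each $S_k$ is norm-continuous, being a fixed-argument restriction of the norm-continuous map $\pi_{n+1}^{\chi_k}$ (continuity of $\pi$ in the sense of Definition~\ref{LAcontinuity}), and the family $\{S_k\}_k$ is pointwise bounded, since $\|S_k(\eta)\| = p_{\bxi'\eta}(\chi_k) \to 0$ for each fixed $\eta$ by weak convergence of $\chi_k$. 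Because $\fg$ is barrelled, the Banach--Steinhaus theorem upgrades pointwise boundedness to equicontinuity of $\{S_k\}_k$.

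Finally I would evaluate at $\xi_k$ by splitting $S_k(\xi_k) = S_k(\xi) + S_k(\xi_k - \xi)$. Here $\|S_k(\xi)\| = p_{\bxi'\xi}(\chi_k) \to 0$ against the now-fixed seminorm, while for the other term equicontinuity furnishes, for each $\varepsilon > 0$, a $0$-neighbourhood $U \subseteq \fg$ with $\|S_k(\eta)\| \le \varepsilon$ for all $\eta \in U$ and all $k$; since $\xi_k - \xi \to 0$ we have $\xi_k - \xi \in U$ eventually, whence $\|S_k(\xi_k - \xi)\| \le \varepsilon$ for large $k$. Thus $\|S_k(\xi_k)\| \to 0$, that is $p_{\bxi'}(\pi(\xi_k)\chi_k) \to 0$, and as $\bxi'$ and $n$ range freely this gives $\pi(\xi_k)\chi_k \to 0$ weakly. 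The main obstacle, as indicated, is the $k$-dependence of the seminorm in the weak case, and barrelledness (via Banach--Steinhaus) is exactly the tool that converts the available pointwise control into the uniform control needed to evaluate along the moving point $\xi_k$.
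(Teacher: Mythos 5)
Your proof is correct. For the weak topology it is essentially the paper's argument in mirror image: the paper applies Banach--Steinhaus to the family $\{\pi^{\psi_k}\}_{k\in\N}$, $\pi^{\psi_k}(\eta) := \pi(\eta)\psi_k$ (pointwise bounded because $\pi(\eta)\psi_k$ converges, by Proposition~\ref{isookcontinu}), obtains equicontinuity, and then gets smallness of $\pi(\xi - \xi_k)\psi_k$ along the diagonal; you instead apply Banach--Steinhaus to the family $S_k(\eta) := \pi_n(\bxi')\pi(\eta)(\psi_k - \psi)$ built from the differences, viewed as maps into the normed space $\cH_V$ one seminorm at a time, and evaluate at the moving point $\xi_k - \xi$. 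Both arguments isolate a cross term by adding and subtracting, and both invoke barrelledness exactly once and in the same way, so this half is the same proof in different clothing. Where you genuinely depart from the paper is the strong topology: the paper runs the Banach--Steinhaus argument a second time, with the family enlarged by an index $\bxi \in B$, whereas you avoid it entirely by observing that $S := \{\xi_k : k \in \N\}\cup\{\xi\}$ is bounded (a convergent sequence together with its limit is compact, hence bounded) and that
\[
p_B\bigl(\pi(\xi_k)(\psi_k - \psi)\bigr) \le p_{B\times S}(\psi_k - \psi),
\]
where $B \times S$ is again bounded, so $p_{B\times S}$ is itself one of the strong seminorms. This is more elementary and buys a slightly stronger statement: strong sequential continuity of the action map holds for any continuous unitary representation of an arbitrary locally convex Lie algebra, with barrelledness needed only for the weak half --- a refinement that the paper's uniform treatment of the two cases does not exhibit.
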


\begin{proof}
Let $(\xi_n,\psi_n)$ be a sequence in $\fg \times V$ converging weakly to 
$(\xi_{\infty}, \psi_{\infty})$.
The maps $\pi^{\psi_n}  \colon \fg \rightarrow V$ defined by $\pi^{\psi_n}(\xi) := \pi(\xi)\psi_n$
are weakly continuous by Proposition~\ref{bijdehand},
and $H  := \{\pi^{\psi_n}\,;\, n \in \N\}$ is bounded for the topology of pointwise 
convergence because for each fixed $\xi \in \fg$, 
$\pi^{\psi_n}(\xi)$ converges to $\pi(\xi) \psi_{\infty}$ by Proposition~\ref{isookcontinu}.
By the Banach--Steinhaus Theorem (\cite[Thm.~1]{Bo50}), $H$ is equicontinuous,
so that $\lim_{k \rightarrow \infty} \pi(\xi_{\infty} - \xi_{k})\psi_{n} = 0$ uniformly in $n$.
It follows that $\lim_{n \rightarrow \infty} \pi(\xi_{n})\psi_{n} = 
\lim_{n \rightarrow \infty} \pi(\xi_{\infty})\psi_{n}$,
which equals $\pi(\xi_{\infty})\psi_{\infty}$, again by Proposition~\ref{isookcontinu}.

If $(\xi_n,\psi_n) \rightarrow (\xi_{\infty}, \psi_{\infty})$ strongly,
then one considers for each $\bxi\in \fg^{m}$ the map $\pi_{\bxi}^{\psi_n} \colon \fg \rightarrow V$
defined by $\pi_{\bxi}^{\psi_n}(\xi) = \pi_m(\bxi)\pi(\xi)\psi_{n}$, strongly 
continuous by Proposition~\ref{bijdehand}.
For every bounded subset $B\subseteq \fg^{m}$, the set 
\[ H := \{\pi_{\bxi}^{\psi_n}\,;\, n \in \N, \bxi \in B\} \] 
is pointwise bounded because for each fixed $\xi \in \fg$, 
$\|\pi_{\bxi}^{\psi_n}(\xi)\| \leq p_{B\times \{\xi\}}(\psi_{n})$, which, for $n \rightarrow \infty$, approaches $p_{B\times\{\xi\}}(\psi_{\infty})$ by assumption. 
By the Banach--Steinhaus Theorem, $H$ is equicontinuous, and
\[ \lim_{k \rightarrow \infty} p_{B}(\pi(\xi_{\infty} - \xi_{k})\psi_{n}) = 0 \] 
uniformly in $n$.
Since $\lim_{n \rightarrow \infty} \pi(\xi_\infty)\psi_n = \pi(\xi_{\infty})\psi_{\infty}$ 
strongly by Proposition~\ref{isookcontinu},
it follows that $\pi(\xi_{n})\psi_n$ approaches $\pi(\xi_{\infty})\psi_{\infty}$
with respect to every seminorm~$p_{B}$.
\end{proof}

\begin{Remark}(Continuity of $\fg \times V \rightarrow V$)
If $\fg$ is a Banach--Lie algebra, then both $\fg$ and the strong topology on $V$
are first countable, so that sequential strong continuity of 
the action map $(\xi,\psi) \mapsto \pi(\xi)\psi$ implies 
strong continuity. 
However, even for Fr\'echet--Lie algebras $\fg$, the action 
$(\xi,\psi) \mapsto \pi(\xi)\psi$ can be \emph{sequentially} continuous
without being continuous. 
For example, the map $(\xi,\psi) \mapsto \dd\rho(\xi)\psi$ described in Example~\ref{U1example}
is sequentially continuous by Lemma.~\ref{seqcont},
but it is not continuous w.r.t.\ \emph{any} locally convex topology on $V$, cf.\
\cite[Ex.~4.8]{Ne10b}.
\end{Remark}

\begin{Lemma}\label{kromcont} {\rm(Continuous curves)}
Let $\pi \colon \fg \rightarrow \mathrm{End}(V)$ be a continuous unitary representation of a 
barrelled Lie algebra $\fg$, let $t \mapsto \xi_t$ be a continuous curve in $\fg$,
and let and $t \mapsto \psi_t$ be a strongly (weakly)
continuous curve in $V$. Then the curve $t \mapsto \pi(\xi_t)\psi_t$ in $V$
is strongly (weakly) continuous.
\end{Lemma}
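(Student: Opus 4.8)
The plan is to reduce the claim to the sequential continuity of the infinitesimal action already recorded in Lemma~\ref{seqcont}. The key observation I would exploit is that a curve is by definition a map from an interval $I \subseteq \R$, and $I$ is metrizable, hence first countable. For maps out of a first countable space into an arbitrary topological space, continuity and sequential continuity coincide, so it will suffice to prove that $t \mapsto \pi(\xi_t)\psi_t$ is sequentially continuous for the strong (weak) topology on $V$.

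To do this I would fix $t_\infty \in I$ and take an arbitrary sequence $t_k \to t_\infty$ in $I$. Since $t \mapsto \xi_t$ is continuous into $\fg$ and $t \mapsto \psi_t$ is strongly (weakly) continuous into $V$, and $I$ is first countable, both curves are sequentially continuous; hence $\xi_{t_k} \to \xi_{t_\infty}$ in $\fg$ and $\psi_{t_k} \to \psi_{t_\infty}$ strongly (weakly) in $V$. Convergence in the product $\fg \times V$ being coordinatewise, this gives $(\xi_{t_k}, \psi_{t_k}) \to (\xi_{t_\infty}, \psi_{t_\infty})$ for the product of the given topology on $\fg$ and the strong (weak) topology on $V$.

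At this point Lemma~\ref{seqcont} finishes the argument: the action map $(\xi,\psi) \mapsto \pi(\xi)\psi$ is sequentially continuous for the strong (weak) topology, so $\pi(\xi_{t_k})\psi_{t_k} \to \pi(\xi_{t_\infty})\psi_{t_\infty}$ strongly (weakly). As $t_k \to t_\infty$ was arbitrary, the composite curve is sequentially continuous at $t_\infty$, and since $t_\infty$ was arbitrary it is sequentially continuous, hence continuous, for the strong (weak) topology.

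The one subtlety I would flag---and the reason the statement is not purely formal---is that the action $(\xi,\psi) \mapsto \pi(\xi)\psi$ is in general only \emph{sequentially} continuous and not continuous, as the Remark following Lemma~\ref{seqcont} stresses. What makes the argument go through is exactly that curves are parametrised by a first countable set, so that sequential continuity of the action is precisely strong enough to conclude continuity of the composite; no appeal to joint continuity of the action (which generally fails) is required.
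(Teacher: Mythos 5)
Your proof is correct and is essentially the paper's own argument: the paper likewise reduces the claim to Lemma~\ref{seqcont} by noting that failure of continuity at a point $t_0$ would produce a sequence $t_n \to t_0$ along which $\pi(\xi_{t_n})\psi_{t_n} \not\to \pi(\xi_{t_0})\psi_{t_0}$, contradicting sequential continuity of the action. Your version merely phrases the same reduction directly (via first countability of the interval) rather than by contradiction, and your flag about why sequential continuity suffices is exactly the point.
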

\begin{proof}
If $t \mapsto \pi(\xi_t)\psi_t$ were not continuous in some point $t_0$, then it would be possible
to find a sequence $t_n$ converging to $t_0$ such that $\pi(\xi_n)\psi_n$ does not 
converge to $\pi(\xi_0)\psi_0$, contradicting Lemma~\ref{seqcont}.
\end{proof}

The following lemma holds for the weak and strong topology, but not for the norm topology --
which was our main motivation to introduce the former two.
\begin{Lemma}\label{productrule} {\rm ($C^{1}$-curves)}
Suppose that $(\pi,V)$ is a continuous unitary representation of a barrelled Lie algebra $\fg$.
If $t \mapsto \psi_{t}$ is a $C^1$-curve in $V$ for the strong (weak) topology, 
and $t \mapsto \xi_t$ a $C^1$-curve in $\fg$, then 
$t \mapsto \pi(\xi_{t})\psi_t$ is a $C^1$-curve in $V$ for the strong (weak) topology, 
with derivative $\frac{d}{dt} \pi(\xi_{t})\psi_t = \pi(\xi'_{t})\psi_t + \pi(\xi_{t})\psi'_t$.
\end{Lemma}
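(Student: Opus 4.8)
The plan is to treat $(\xi,\psi) \mapsto \pi(\xi)\psi$ as a bilinear map and prove the product rule by the standard difference-quotient manipulation, reducing every limit to the sequential continuity already established in Lemma~\ref{seqcont}. First I would fix $t$ and, for $h \neq 0$, split the difference quotient as
\[
\frac{1}{h}\bigl(\pi(\xi_{t+h})\psi_{t+h} - \pi(\xi_t)\psi_t\bigr)
= \pi\Bigl(\tfrac{\xi_{t+h}-\xi_t}{h}\Bigr)\psi_{t+h} + \pi(\xi_t)\Bigl(\tfrac{\psi_{t+h}-\psi_t}{h}\Bigr),
\]
using linearity of $\pi$ in its first slot and linearity of the operator $\pi(\xi_t)$ in its second.

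The observation that makes the argument go through is that the variable $h$ ranges over the first-countable space $\R$, so the topological limit $\lim_{h\to 0}$ in $V$ (strong or weak) may be computed along arbitrary sequences $h_n \to 0$ with $h_n \neq 0$. Fixing such a sequence, I would note that $\frac{\xi_{t+h_n}-\xi_t}{h_n} \to \xi'_t$ in $\fg$ and $\frac{\psi_{t+h_n}-\psi_t}{h_n} \to \psi'_t$ strongly (weakly) in $V$ since $\xi$ and $\psi$ are $C^1$, while $\psi_{t+h_n}\to \psi_t$ by continuity of $\psi$. For the first summand, since $\bigl(\frac{\xi_{t+h_n}-\xi_t}{h_n},\psi_{t+h_n}\bigr) \to (\xi'_t,\psi_t)$ in $\fg \times V$, sequential continuity of the action (Lemma~\ref{seqcont}) yields $\pi\bigl(\frac{\xi_{t+h_n}-\xi_t}{h_n}\bigr)\psi_{t+h_n} \to \pi(\xi'_t)\psi_t$. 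For the second summand, continuity of $\pi(\xi_t)\colon V \to V$ (Proposition~\ref{isookcontinu}) gives $\pi(\xi_t)\bigl(\frac{\psi_{t+h_n}-\psi_t}{h_n}\bigr) \to \pi(\xi_t)\psi'_t$. Since this holds for every sequence $h_n \to 0$, the difference quotient converges to $\pi(\xi'_t)\psi_t + \pi(\xi_t)\psi'_t$, establishing differentiability with the claimed derivative.

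It then remains to upgrade differentiability to $C^1$, i.e.\ to verify that $t \mapsto \pi(\xi'_t)\psi_t + \pi(\xi_t)\psi'_t$ is strongly (weakly) continuous. Here I would apply Lemma~\ref{kromcont} twice: the curve $t \mapsto \pi(\xi'_t)\psi_t$ is continuous because $\xi'$ and $\psi$ are continuous, and $t \mapsto \pi(\xi_t)\psi'_t$ is continuous because $\xi$ and $\psi'$ are; their sum is then continuous, which is exactly the continuity of the derivative.

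I expect the only genuinely delicate point to be the passage from Lemma~\ref{seqcont}, which provides merely \emph{sequential} continuity of the action (and indeed the action need not be continuous for any locally convex topology on $V$, by the Remark following that lemma), to an honest limit of the difference quotient. This is precisely why I would stress the first countability of the domain $\R$: it licenses computing $\lim_{h\to 0}$ sequentially, which is what allows sequential continuity to suffice. Everything beyond this is the routine bilinear product rule.
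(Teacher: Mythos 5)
Your proof is correct, and it takes a slightly different route from the paper's. You use the two-term splitting
\[
\frac{1}{h}\bigl(\pi(\xi_{t+h})\psi_{t+h} - \pi(\xi_t)\psi_t\bigr)
= \pi\Bigl(\tfrac{1}{h}(\xi_{t+h}-\xi_t)\Bigr)\psi_{t+h} + \pi(\xi_t)\Bigl(\tfrac{1}{h}(\psi_{t+h}-\psi_t)\Bigr),
\]
whose first term has \emph{both} arguments moving, so you need the joint sequential continuity of Lemma~\ref{seqcont}, together with your (correct, and genuinely necessary) observation that first countability of the domain $\R$ lets one compute $\lim_{h\to 0}$ along sequences even though the strong/weak topology on $V$ need not be first countable. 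The paper instead freezes $\psi_t$ in the first term, writing the difference quotient as $\pi\bigl(\tfrac{1}{h}(\xi_{t+h}-\xi_t)\bigr)\psi_t + \pi(\xi_t)\bigl(\tfrac{1}{h}(\psi_{t+h}-\psi_t)\bigr)$ plus the cross term $\tfrac{1}{h}\pi(\xi_{t+h}-\xi_t)(\psi_{t+h}-\psi_t)$; the two main terms then need only the \emph{separate} continuity of Propositions~\ref{bijdehand} and~\ref{isookcontinu}, and sequential continuity (packaged as Lemma~\ref{kromcont}, whose proof contains exactly your domain-first-countability argument) enters only to show the cross term vanishes. The two decompositions differ precisely by that cross term, so the proofs are equivalent in substance; yours is marginally leaner, while the paper's makes visible that barrelledness and Banach--Steinhaus are needed only for the error term, with the principal terms handled by separate continuity alone. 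Your treatment of the continuity of the derivative (Lemma~\ref{kromcont} applied to each summand) coincides with the paper's.
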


\begin{proof}
Since $\fg \rightarrow V,  \xi \mapsto \pi(\xi)\psi_t$ 
and $\pi(\xi_t) \: V \rightarrow V$ are continuous w.r.t.\ the strong and weak topology by 
Propositions \ref{bijdehand} and \ref{isookcontinu} respectively,
the first
two terms on the r.h.s.\ of
\begin{eqnarray*}
{\textstyle \frac{d}{dt}} \pi(\xi_{t})\psi_t &=& 
\lim_{h \rightarrow 0}  \pi\left({\textstyle \frac{1}{h}}(\xi_{t+h} - \xi_{t})\right)\psi_{t}
+\lim_{h \rightarrow 0} \pi(\xi_{t})\left({\textstyle \frac{1}{h}}(\psi_{t+h} - \psi_{t})\right)\\
& &
+
\lim_{h \rightarrow 0}  {\textstyle \frac{1}{h}} \pi(\xi_{t+h} - \xi_{t})(\psi_{t+h} - \psi_{t})
\end{eqnarray*}
are $\pi(\xi'_{t})\psi_t$ and $\pi(\xi_{t})\psi'_{t}$.
It remains to show that the third term is zero.
Since $h \mapsto \psi_{t+h} - \psi_t$ is a strongly (weakly) continuous curve in $V$,
and since the map defined by $0 \mapsto \xi'_{t}$ and
$h \mapsto \frac{1}{h}(\xi_{t+h} - \xi_t)$ for $h \neq 0$ is a continuous
curve in $\fg$, Lemma~\ref{kromcont} implies that 
$
\lim_{h \rightarrow 0}  {\textstyle \frac{1}{h}} \pi(\xi_{t+h} - \xi_{t})(\psi_{t+h} - \psi_{t})
= \pi(\xi'_{t})(0) = 0
$.
Since $\frac{d}{dt}\pi(\xi_t)\psi_t = \pi(\xi'_t)\psi_t + \pi(\xi_t)\psi'_t$, strong (or weak)
continuity of the derivative follows directly
from Lemma~\ref{kromcont}.
\end{proof}

The following product rule results from Lemma~\ref{productrule} by induction.
\begin{Proposition}{\rm (Product rule)}\label{moreproducts}
Suppose that $(\pi,V)$ is a continuous unitary representation of a barrelled Lie algebra $\fg$.
Let $t \mapsto \xi_t$ be a $C^n$-curve in $\fg$, and let 
$t \mapsto \psi_{t}$ be a $C^n$-curve in $V$ for the strong (weak) topology, with 
$n \in \N \cup \{\infty\}$.
Then $t \mapsto \pi(\xi_{t})\psi_t$ is a $C^n$-curve in $V$ for the strong (weak) topology, 
and for all $k \leq n$, we have
\[\frac{d^k}{dt^k} \pi(\xi_{t})\psi_t = \sum_{t=0}^{k}  \binom{k}{t} \pi(\xi^{(t)}_{t})\psi^{(k-t)}_t\,.\]
\end{Proposition}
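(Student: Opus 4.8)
The plan is to derive the statement from the first-order case, Lemma~\ref{productrule}, by induction on $n$, using the elementary fact from the calculus of Section~\ref{SectionPPER} that a curve is $C^n$ precisely when it is $C^1$ and its derivative is $C^{n-1}$ (for a curve, i.e.\ $E=\R$, the iterated directional derivatives are just the ordinary derivatives $\psi_t^{(j)}$). The base cases are immediate: for $n=0$ the claim is that $t\mapsto\pi(\xi_t)\psi_t$ is strongly (weakly) continuous, which is Lemma~\ref{kromcont}, and for $n=1$ it is Lemma~\ref{productrule} verbatim. Once all finite $n$ are treated, the case $n=\infty$ follows because a curve is $C^\infty$ exactly when it is $C^n$ for every finite~$n$.

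For the inductive step, suppose the proposition holds for $C^{n-1}$-curves. Given $C^n$-curves $t\mapsto\xi_t$ in $\fg$ and $t\mapsto\psi_t$ in $V$, Lemma~\ref{productrule} gives that $t\mapsto\pi(\xi_t)\psi_t$ is $C^1$ with
\[
\frac{d}{dt}\pi(\xi_t)\psi_t = \pi(\xi'_t)\psi_t + \pi(\xi_t)\psi'_t .
\]
Here $\xi'_t,\xi_t$ are $C^{n-1}$-curves in $\fg$ and $\psi_t,\psi'_t$ are $C^{n-1}$-curves in $V$, so the induction hypothesis applies to each summand and shows that both $\pi(\xi'_t)\psi_t$ and $\pi(\xi_t)\psi'_t$ are $C^{n-1}$-curves. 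Their sum, which is the derivative of $\pi(\xi_t)\psi_t$, is therefore $C^{n-1}$, and by the fact quoted above $t\mapsto\pi(\xi_t)\psi_t$ is $C^n$.

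To obtain the formula for a given $k\le n$, I would differentiate the displayed first-order identity $k-1$ times and expand each of the two resulting terms by the induction hypothesis (the Leibniz formula of order $k-1$):
\[
\frac{d^{k-1}}{dt^{k-1}}\pi(\xi'_t)\psi_t = \sum_{j=0}^{k-1}\binom{k-1}{j}\pi\big(\xi_t^{(j+1)}\big)\psi_t^{(k-1-j)},
\]
\[
\frac{d^{k-1}}{dt^{k-1}}\pi(\xi_t)\psi'_t = \sum_{j=0}^{k-1}\binom{k-1}{j}\pi\big(\xi_t^{(j)}\big)\psi_t^{(k-j)} .
\]
Adding these, shifting the index in the first sum so that both run over terms $\pi(\xi_t^{(j)})\psi_t^{(k-j)}$, and applying Pascal's identity $\binom{k-1}{j-1}+\binom{k-1}{j}=\binom{k}{j}$ produces the asserted expression $\sum_{j=0}^{k}\binom{k}{j}\pi(\xi_t^{(j)})\psi_t^{(k-j)}$.

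Since this is a bookkeeping induction, I do not expect a genuine obstacle: the analytic substance --- the product rule itself and the strong/weak continuity of the maps $\xi\mapsto\pi(\xi)\psi$ and $\pi(\xi)\colon V\to V$ --- is already contained in Lemma~\ref{productrule} and the results it rests on, Propositions~\ref{bijdehand} and~\ref{isookcontinu} and Lemma~\ref{kromcont}. The only points deserving care are that each differentiation of $\xi_t$ or $\psi_t$ indeed produces a $C^{n-1}$-curve to which the hypothesis applies, and that the choice of topology (strong or weak) is held fixed and preserved throughout, as each use of Lemma~\ref{productrule} respects whichever one is in force.
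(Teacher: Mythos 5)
Your proposal is correct and follows exactly the route the paper takes: the paper's entire proof is the one-line remark that the proposition "results from Lemma~\ref{productrule} by induction," and your write-up is precisely that induction, carried out carefully (base cases via Lemmas~\ref{kromcont} and~\ref{productrule}, the inductive step via the fact that $C^n$ means $C^1$ with $C^{n-1}$ derivative, and the Leibniz formula via Pascal's identity).
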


The following completeness result for the space of smooth vectors
will not be needed for the globalisation result towards which we are working, but
is of general interest.

\begin{Proposition}{\rm (Completeness of $\cH^{\infty}$)}\label{completeness}
Let $G$ be a regular Lie group modelled on a Fr\'echet--Lie algebra $\fg$.
Then for any smooth unitary $G$-representation $(\rho,\cH)$, the space 
$\cH^{\infty}$ of smooth vectors is complete for the weak (and hence for the strong) 
topology derived from the $\fg$-representation $(\dd\rho, \cH^{\infty})$.  
\end{Proposition}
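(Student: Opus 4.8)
The plan is to prove that every net $(\psi_\alpha)$ in $\cH^\infty$ which is Cauchy for the weak topology converges to a limit lying in $\cH^\infty$; the parenthetical strong statement then follows formally. Indeed, a strongly Cauchy net is weakly Cauchy (each weak seminorm equals the strong seminorm attached to a singleton), so once the weak limit $\psi$ is shown to lie in $\cH^\infty$ one recovers strong convergence by fixing $\alpha$ beyond the Cauchy threshold, letting $\beta$ run, and using that $\pi_n(\bxi)\psi_\beta \to \pi_n(\bxi)\psi$ for each fixed $\bxi$ to pass $p_B(\psi_\alpha - \psi_\beta) \le \eps$ to $p_B(\psi_\alpha - \psi) \le \eps$. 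So let $(\psi_\alpha)$ be weakly Cauchy. For each $\bxi \in \fg^n$ the net $\bigl(\pi_n(\bxi)\psi_\alpha\bigr)$ is Cauchy in the Hilbert space $\cH = \cH_V$ and hence norm-converges to some $\eta_{\bxi} \in \cH$; taking $n=0$ yields the norm limit $\psi := \lim_\alpha \psi_\alpha$. Everything rests on proving $\psi \in \cH^\infty$ together with $\pi_n(\bxi)\psi = \eta_{\bxi}$.

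The decisive step, and what I expect to be the main obstacle, is upgrading the coordinatewise convergence $\pi_n(\bxi)\psi_\alpha \to \eta_{\bxi}$ (one $\bxi$ at a time) to convergence that is \emph{uniform} for $\bxi$ ranging over compact subsets of $\fg^n$. For each $n$ the maps $\pi_n^{\psi_\alpha}\colon \fg^n \to \cH$ are continuous and $n$-linear (Remark~\ref{deriscont}, Proposition~\ref{bijdehand}), and the family $\{\pi_n^{\psi_\alpha}\}_\alpha$ is pointwise bounded because it is pointwise convergent. Since $\fg$ is Fr\'echet, hence a Baire space, the \emph{multilinear} Banach--Steinhaus theorem applies and makes this family equicontinuous; and an equicontinuous, pointwise convergent family converges uniformly on every compact subset of $\fg^n$. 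This is precisely where the Fr\'echet hypothesis is essential and cannot be weakened to barrelledness: the linear Banach--Steinhaus theorem underlying Lemma~\ref{seqcont} holds on any barrelled space, but its multilinear ($n \geq 2$) counterpart requires the Baire property, which barrelled non-Baire spaces need not possess.

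With this uniformity in hand I would show that the orbit map $O_\psi\colon g \mapsto \rho(g)\psi$ is smooth, which is the definition of $\psi \in \cH^\infty$. Since $G$ is modelled on a Fr\'echet space, it suffices to check that $O_\psi$ is smooth along every smooth curve in $G$; by left translation and unitarity of $\rho$ I may reduce to curves through $\one$. For $\psi_\alpha \in \cH^\infty$ the curves $t \mapsto \rho(\gamma_t)\psi_\alpha$ are smooth for the weak topology and satisfy $\frac{d}{dt}\rho(\gamma_t)\psi_\alpha = \dd\rho(\xi_t)\rho(\gamma_t)\psi_\alpha$ with $\xi_t := \delta^R_t\gamma$; iterating via the product rule (Proposition~\ref{moreproducts}) expresses every derivative as $\rho(\gamma_t)\,\pi_m\!\bigl(\Ad(\gamma_t^{-1})A_k(t)\bigr)\psi_\alpha$, where $A_k(t) \in \cU_\C(\fg)$ is built from $\xi_t$ and its derivatives. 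Unitarity removes the factor $\rho(\gamma_t)$ from the norm, and the $\Ad$-twisted arguments sweep out a compact subset of finitely many $\fg^m$ as $t$ runs over a compact interval, so the previous step gives uniform convergence of these derivative curves in $\alpha$. Passing the uniform limit through the integral form of the fundamental theorem of calculus then shows, inductively in $k$, that $t \mapsto \rho(\gamma_t)\psi$ is smooth with the expected derivatives; hence $O_\psi$ is smooth and $\psi \in \cH^\infty$. Finally, continuity of derivative evaluation under this convergence (equivalently, the uniform convergence of the $\pi_n^{\psi_\alpha}$ at $\one$) identifies $\pi_n(\bxi)\psi = \eta_{\bxi} = \lim_\alpha \pi_n(\bxi)\psi_\alpha$, so that $\psi_\alpha \to \psi$ weakly and the completeness claim is proved.
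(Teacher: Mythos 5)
Your proposal is sound in substance, but it follows a genuinely different route from the paper's own proof, so a comparison is in order. The paper never establishes any uniform convergence. Instead it uses the regularity of $G$ and Stone's theorem to present each $\dd\rho(\xi)$ as the restriction of a closed operator (namely $i$ times the selfadjoint generator $X_\xi$ of $t \mapsto \rho(\exp(t\xi))$), so that the norm limits $\psi_{\bxi}$ are automatically identified with $i^{n}X_{\xi_n}\cdots X_{\xi_1}\psi_{\one}$; it then proves only \emph{separate} continuity of $\bxi \mapsto \psi_{\bxi}$ via the \emph{linear} Banach--Steinhaus theorem (for which barrelledness suffices), upgrades to joint continuity by Rudin's theorem that separately continuous multilinear maps on Fr\'echet spaces are jointly continuous---this is where the paper spends the Fr\'echet hypothesis---and concludes that $\psi_{\one} \in \cH^{\infty}$ by citing the smoothness criterion of \cite[Lemma~3.4]{Ne10b}. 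You instead extract \emph{equicontinuity} of the whole family $\{\pi_n^{\psi_\alpha}\}$ from a multilinear uniform boundedness principle (your localisation of the Baire property there is accurate), deduce uniform Cauchyness on compacta, and then prove smoothness of the orbit map of the limit by hand, closing with the Boman/Kriegl--Michor theorem that for Fr\'echet model spaces smoothness along smooth curves implies smoothness in the $C^\infty$ sense used in this paper. Your route buys two things: it never uses regularity of $G$ (so, carried out in full, it proves a marginally stronger statement), and it avoids the external criterion of \cite[Lemma~3.4]{Ne10b}. Its cost is two imports the paper does not need: the multilinear ($n\ge 2$) Banach--Steinhaus theorem, and the convenient-calculus equivalence, which is a genuine theorem that must be cited rather than treated as routine.

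Two caveats. First, your appeal to weak smoothness of $t \mapsto \rho(\gamma_t)\psi_\alpha$ together with Proposition~\ref{moreproducts} is in effect a forward reference to Lemma~\ref{smoothorbit}; this is not circular (that lemma is independent of Proposition~\ref{completeness}), and it can be avoided by differentiating in the norm topology, using that $(\bxi,g)\mapsto \dd\rho_m(\bxi)\rho(g)\psi_\alpha$ is smooth as a partial derivative of the orbit map of $\psi_\alpha$. Second, your assertion that the family is ``pointwise bounded because it is pointwise convergent'' is not literally valid for nets: a convergent net need not be bounded---only its tails are, and the relevant tail depends on the evaluation point. The paper's proof makes the identical assertion, so this is a shared informality rather than a defect specific to your approach, but it is the one step in both arguments that needs extra care (for instance a closed-graph or duality argument, or a reduction to bounded nets) to be fully rigorous.
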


\begin{proof}
Suppose that $(\psi_\alpha)_{\alpha \in S}$ is a Cauchy net in 
$\cH^{\infty}$ for the weak topology derived from $(\dd\rho, \cH^{\infty})$
(which is certainly the case if it is Cauchy for the strong topology).
Then for each $\bxi \in \fg^n$, the net $\dd\rho_{n}(\bxi)\psi_{\alpha}$ is Cauchy in $\cH$,
hence converges to some $\psi_{\bxi} \in \cH$ in norm.
We need to show that $\psi_{\one}$ is a smooth vector.
Consider $(\xi_n, \ldots, \xi_1) \in \fg^{n}$, and note that the limits
$\psi_{\one} := \lim \psi_{\alpha}$ and
\begin{equation}\label{smurfendans}
\psi_{\xi_{k}\ldots \xi_{1}} = \lim \dd\rho(\xi_{k}) \ldots \dd\rho(\xi_{1})\psi_{\alpha}
\end{equation}
exist in $\cH$ for $1\leq k \leq n$.
Since $G$ is regular, the map $t \mapsto \rho(\exp(t\xi_k))$ is a continuous 
one-parameter group of unitary operators (w.r.t.\ the strong topology on $\U(\cH)$)
whose selfadjoint generator
$X_{\xi_k}$ restricts to $-i\dd\rho(\xi_k)$ on $\cH^{\infty}$.
In particular, the operators $\dd\rho(\xi_k)$ are closable, so existence 
of the limit~(\ref{smurfendans})
and closedness of $iX_{\xi_{k+1}}$
implies $\psi_{\xi_{k+1}\ldots \xi_{1}} = iX_{\xi_{k+1}} \psi_{\xi_{k}\ldots \xi_{1}}$. 
One then proves by induction that $\psi_{\xi_{n}\ldots \xi_{1}} = i^{n}X_{\xi_n}\ldots X_{\xi_{1}}\psi_{\one}$.

In order to prove that $\psi_{\one}\in \cH^{\infty}$, we show that the map 
\[ \Omega^{\psi_{\one}} \colon \fg^n \rightarrow \cH, \quad 
\Omega^{\psi_{\one}}(\xi_n,\ldots, \xi_1) := i^nX_{\xi_n}\ldots X_{\xi_1}\psi_{\one}\] is 
$n$-linear and continuous.
It is linear in each $\xi_{k}$ because 
\begin{eqnarray*}
\psi_{\xi_{n}\ldots (s\xi_k + t \xi'_k)\ldots  \xi_1} &=& \lim_{\alpha\in S} 
s \dd\rho(\xi_{n} \ldots \xi_{k} \ldots \xi_1)\psi_\alpha + 
\lim_{\alpha\in S} t \dd\rho(\xi_{n} \ldots \xi'_{k} \ldots \xi_1)\psi_\alpha
\end{eqnarray*}
is the sum of two nets converging to 
$s \psi_{\xi_{n}\ldots \xi_{k}\ldots  \xi_1}$ and $t\psi_{\xi_{n}\ldots \xi_{k}' \ldots  \xi_1}$.
We show that $\Omega^{\psi_{\one}}$ is separately continuous 
by means of a uniform boundedness argument.
Let $u_{\alpha} \colon \fg \rightarrow \cH$ be defined by
\[u_{\alpha}(\xi) := \dd\rho(A)\dd\rho(\xi)\dd\rho(B)\psi_{\alpha}\quad \mbox{ where } \quad 
A = \xi_n \ldots \xi_{k+1}, B = \xi_{k-1}\ldots \xi_1 \] 
are fixed elements 
of $\cU_{\C}(\fg)$.
Since $\psi_{\alpha} \in \cH^{\infty}$, continuity of $(\dd\rho,\cH^{\infty})$ (cf.\ Remark~\ref{deriscont})
implies that the $u_{\alpha}$ are continuous, and the set $H := \{u_{\alpha} \,;\, \alpha \in S\}$
is pointwise bounded because for each fixed $\xi \in \fg$, the net 
$u_{\alpha}(\xi)$ is norm-convergent.  
Since $\fg$ is barrelled,
the Banach--Steinhaus Theorem \cite[Thm.~1]{Bo50} implies that $H$
is equicontinuous;
for every $\varepsilon >0$, there exists a 0-neighbourhood $U \subseteq \fg$
such that $\xi\in U$ implies $\|\dd\rho(A)\dd\rho(\xi)\dd\rho(B)\psi_{\alpha}\| \leq \varepsilon$ uniformly for
all $\alpha \in S$.
It follows that for $\xi \in U$, also the limit for $\alpha \in S$ satisfies $\|\psi_{A\cdot \xi\cdot B}\|\leq \varepsilon$.


Since $\fg$ is a Fr\'echet space, separate continuity of the $n$-linear map $\Omega^{\psi_{\one}}$
implies joint continuity \cite[Thm.~2.17]{Ru91}, so we may conclude 
from \cite[Lemma 3.4]{Ne10b} that $\psi_{\one}$ is a smooth vector, 
hence $\cH^{\infty}$ is complete. 
\end{proof}

\subsubsection{Regular Lie algebra representations}
Let $G$ be a regular Lie group modelled on a barrelled Lie algebra $\fg$.
We will show that a unitary Lie algebra representation $(\pi, V)$ of 
$\fg$ globalises to a smooth representation of the universal cover $\widetilde{G}_0$
of the connected $\one$-component $G_0$ of $G$ if and only if it is 
weakly or strongly \emph{regular}
in the following sense. 

\begin{Definition}(Regular representations)
We call a unitary Lie algebra representation $(\pi,V)$ 
{\it strongly (weakly) regular}
if it is continuous (cf.~Definition~\ref{LAcontinuity}) 
and if for every smooth function $\xi \colon [0,1]\rightarrow \fg$, the differential equation
\begin{equation}\label{ODE}
{\textstyle \frac{d}{dt}}\psi_t = \pi(\xi_t) \psi_t
\end{equation}
with initial condition $\psi|_{t=0} = \psi_0$
has a smooth solution in $V$ whose value in $1$ depends smoothly on the path $\xi \in C^{\infty}([0,1],\fg)$, 
where $V$ is equipped with the strong (weak) topology
(cf.\ Definition~\ref{wstopology}).
\end{Definition}



\begin{Remark}(Uniqueness of solutions)\label{inproduct}
Solutions to (\ref{ODE}) are automatically unique if they exist;
if both $\psi_{t}$ and $\tilde{\psi}_{t}$ are solutions, then by
continuity (hence smoothness) of $\langle\,\cdot\,,\,\cdot\,\rangle$
and skew-symmetry of $\pi(\xi_{t})$, we have 
\[\frac{d}{dt}\langle \psi_t, \tilde{\psi}_t\rangle =  \langle \pi(\xi_{t}) \psi_{t},\tilde{\psi}_{t} \rangle 
+ \langle \psi_{t}, \pi(\xi_{t}) \tilde{\psi}_{t} \rangle = 0.\]
Since $\|\psi_{t}\|^2 = \|\tilde{\psi}_{t}\|^2 = \langle \psi_t, \tilde{\psi}_t\rangle$ holds
for $t=0$, it holds for all $t$, hence $\psi_t = \tilde{\psi}_t$.
\end{Remark}

We first prove that all derived representations are regular.
The first step is the following lemma,
which reaches a slightly stronger statement than 
Proposition~\ref{bijdehand} if $\pi$ is derived from a group representation.

\begin{Lemma}{\rm (Weak and strong continuity of orbit maps)}\label{bijdehandtwee}
If $(\rho, \cH)$ is a smooth unitary representation 
of a locally convex Lie group $G$, then,  for every $\psi \in \cH^{\infty}$,
the map $\rho_{n}^{\psi} \colon \fg^n \times G \rightarrow \cH^{\infty}$
given by $\rho^{\psi}_{n}(\bxi,g) := \dd\rho_n(\bxi)\rho(g)\psi$
is both weakly and strongly  continuous.
\end{Lemma}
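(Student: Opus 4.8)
The plan is to reduce everything to the conjugation identity $\dd\rho(\xi)\rho(g) = \rho(g)\,\dd\rho(\Ad(g^{-1})\xi)$ on $\cH^{\infty}$, which is valid for any smooth representation since conjugation by $\rho(g)$ intertwines $\dd\rho$ with $\dd\rho\circ\Ad(g)$ and preserves $\cH^{\infty}$. Applying it $n$-fold and using $\dd\rho_m(\bxi')\dd\rho_n(\bxi) = \dd\rho_{m+n}(\bxi'\bxi)$ for the concatenation $\bxi'\bxi$, one obtains, for every $\bxi'\in\fg^m$ (with $\Ad(g^{-1})$ acting in each slot),
\[
\dd\rho_m(\bxi')\,\rho_n^\psi(\bxi,g) \;=\; \rho(g)\,\dd\rho_{m+n}^\psi\big(\Ad(g^{-1})(\bxi'\bxi)\big).
\]
This exhibits the seminorms of $\rho_n^\psi$ as compositions of three well-understood pieces: the smooth, hence continuous, map $(\bxi',\bxi,g)\mapsto\Ad(g^{-1})(\bxi'\bxi)\in\fg^{m+n}$; the map $\dd\rho_{m+n}^\psi$, which is continuous by Remark~\ref{deriscont} and, being the $(m+n)$-th derivative at $\one$ of the smooth orbit map $g\mapsto\rho(g)\psi$, is a \emph{jointly} continuous $(m+n)$-linear map, so it is bounded on bounded sets and obeys a product-seminorm estimate $\|\dd\rho_{m+n}^\psi(\eta_{m+n},\dots,\eta_1)\|\le\prod_i q^{(i)}(\eta_i)$; and the unitary $\rho(g)$, whose orbit maps are continuous on smooth vectors. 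I first note that $\rho_n^\psi$ indeed takes values in $\cH^{\infty}$, since $\rho(g)\psi\in\cH^{\infty}$ and $\dd\rho$ preserves $\cH^{\infty}$.

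For weak continuity I fix $\bxi'\in\fg^m$ and set $X(\bxi,g):=\dd\rho_{m+n}^\psi(\Ad(g^{-1})(\bxi'\bxi))$, a norm-continuous $\cH^{\infty}$-valued map by the above. Then $p_{\bxi'}(\rho_n^\psi(\bxi,g)-\rho_n^\psi(\bxi_0,g_0)) = \|\rho(g)X(\bxi,g)-\rho(g_0)X(\bxi_0,g_0)\|$, which I bound by $\|X(\bxi,g)-X(\bxi_0,g_0)\| + \|(\rho(g)-\rho(g_0))X(\bxi_0,g_0)\|$. The first term tends to $0$ by norm-continuity of $X$, and the second by continuity of the orbit map of the smooth vector $X(\bxi_0,g_0)\in\cH^{\infty}$. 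As this holds for every $\bxi'$, weak continuity of $\rho_n^\psi$ follows.

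For strong continuity I must make this estimate uniform over a bounded set $B\subseteq\fg^m$ in place of the single $\bxi'$, and this uniformity is the main obstacle. Splitting as before and supping over $\bxi'\in B$ leaves two terms: $p_{\Ad(g^{-1})B}(w(\bxi,g)-w(\bxi_0,g_0))$, where $w(\bxi,g):=\dd\rho_n^\psi(\Ad(g^{-1})\bxi)$ is strongly continuous, being the composition of a continuous map into $\fg^n$ with the strongly continuous $\dd\rho_n^\psi$ of Proposition~\ref{bijdehand}; and $\sup_{\bxi'\in B}\|(\rho(g)-\rho(g_0))\,\dd\rho_m^{w_0}(\Ad(g_0^{-1})\bxi')\|$ with $w_0:=w(\bxi_0,g_0)$, which amounts to strong continuity of the orbit map $g\mapsto\rho(g)w_0$. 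Both are controlled once one knows that $\Ad(g^{-1})B$ stays inside a fixed bounded set and that $\Ad(g^{-1})\to\Ad(g_0^{-1})$ uniformly on the bounded projections of $B$ as $g\to g_0$; feeding these into the product-seminorm estimates for the jointly continuous multilinear maps $\dd\rho_k^\psi$, and telescoping over the slots exactly as in the scaling step of Proposition~\ref{bijdehand}, yields the required uniform smallness. The delicate point, and the reason the hypothesis that $\pi=\dd\rho$ is \emph{derived} from a group representation is essential, is precisely that these local boundedness and uniform-continuity properties of $\Ad$ near $g_0$, together with the joint continuity of the $\dd\rho_k^\psi$, play the role that the Banach--Steinhaus argument played for the weaker Proposition~\ref{bijdehand}, upgrading pointwise weak continuity to full strong continuity without assuming $\fg$ barrelled.
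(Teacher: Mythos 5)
Your weak-continuity argument is correct; it detours through the conjugation identity $\dd\rho(\xi)\rho(g)=\rho(g)\,\dd\rho(\Ad(g^{-1})\xi)$ where the paper simply observes that $\dd\rho_m(\bxi_0)\rho_n^{\psi}$ is a restriction of the norm-continuous map $\rho_{m+n}^{\psi}$, but both routes are valid, since for the \emph{fixed} vector $X(\bxi_0,g_0)$ pointwise strong continuity of $\rho$ suffices. The strong-continuity part, however, has a genuine gap. After conjugating and splitting, your decomposition produces, besides the two terms that your $\Ad$-estimates, product-seminorm bounds and telescoping do control, the term
\[
\sup_{\bxi'\in B}\big\|\big(\rho(g)-\rho(g_0)\big)\,\dd\rho_m\big(\Ad(g_0^{-1})\bxi'\big)w_0\big\|,
\]
and none of your listed ingredients controls it. The vectors $\dd\rho_m(\Ad(g_0^{-1})\bxi')w_0$, $\bxi'\in B$, form a norm-bounded but in general not precompact subset of $\cH^{\infty}$; strong continuity of $\rho$ gives $(\rho(g)-\rho(g_0))v\to 0$ pointwise, hence uniformly on compact sets, but uniformity over a merely norm-bounded set is exactly the kind of statement that norm continuity would provide and strong continuity does not (the paper stresses that the interesting representations are not norm continuous). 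Your identification of this term with ``strong continuity of the orbit map $g\mapsto\rho(g)w_0$'' is incorrect as written, because $\rho(g)-\rho(g_0)$ sits on the wrong side of $\dd\rho_m$; and under the correct reading it is circular, since strong continuity of orbit maps in the seminorms $p_B$ is precisely the $n=0$ case of the lemma you are proving, and attacking it with the same conjugation trick reproduces the identical problematic term.

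The missing ingredient is a Lipschitz-type estimate in the group variable that is \emph{uniform over the multilinear slots}, i.e.\ Proposition~\ref{Helgeslemma} applied to a smooth map having $g$ in its base variable: for instance $(g,\eta')\mapsto \dd\rho_m(\Ad(g)\eta')\rho(g)w_0=\rho_m^{w_0}(\Ad(g)\eta',g)$ is smooth and $m$-linear in $\eta'$, and the $C^1$-estimate gives $\|(\rho(g)-\rho(g_0))\dd\rho_m(\eta')w_0\|\le \|g-g_0\|_p\,\|\eta_1'\|_q\cdots\|\eta_m'\|_q$, uniformly for $\eta'$ in a bounded set. But this is precisely the paper's proof, applied there directly to $\rho_{n+m}^{\psi}$ on $\fg^m\times(\fg^n\times G)$ with no conjugation at all; moreover, the local boundedness and uniform continuity of $\Ad(g^{-1})$ on bounded sets, which you assert but do not prove, themselves require the same $C^1$-estimate applied to $\Ad$. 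So the $\Ad$-detour does not replace the Gl\"ockner estimate; it only postpones the point where it must be invoked, and as written your argument stops short of that point.
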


In order to prove the `strong' part of 
Lemma~\ref{bijdehandtwee}, we need the following proposition.

\begin{Proposition}{\rm ($C^1$-estimate)}\label{Helgeslemma}
Let $X$ and $E$ be locally convex vector spaces, 
$U \subseteq X$ open, $k \in \N$, and
$f \colon U \times E^{k} \rightarrow \cH$ a $C^1$-map such that 
$f(u, \,\cdot\,) \colon E^{k} \rightarrow \cH$ is $k$-linear for each $u\in U$.
Then for each $x \in U$, there exists a continuous seminorm
$\|\,\cdot\,\|_{p}$ on $X$  with $B^1_{p}(x) \subseteq U$, 
and a continuous seminorm 
$\|\,\cdot\,\|_{q}$ on $E$ such that
\begin{eqnarray}\label{eekhoorn}
\|f(y, \xi_1, \ldots, \xi_k) - f(x, \xi_1, \ldots, \xi_{k})\| \leq 
\|y-x\|_{p}\|\xi_1\|_{q}\ldots\|\xi_{k}\|_{q} 
\end{eqnarray}
for all $y \in B^1_{p}(x)$ and $(\xi_1, \ldots, \xi_{k}) \in E^{k}$.      
\end{Proposition}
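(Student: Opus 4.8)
The plan is to reduce the estimate to a bound on the partial derivative of $f$ in the $X$-direction along the segment $[x,y]$ via the fundamental theorem of calculus, and then to extract the multiplicative seminorm estimate from the multilinearity of that derivative together with its continuity at a single base point. Writing $\bxi := (\xi_1,\dots,\xi_k) \in E^k$, I would introduce
\[
g(u,h;\bxi) := \lim_{s\to 0}{\textstyle\frac1s}\bigl(f(u+sh,\bxi)-f(u,\bxi)\bigr),
\]
the directional derivative of $f$ in its first slot, which exists and is jointly continuous on $U \times X \times E^k$ because $f$ is a $C^1$-map (it is a restriction of $Df$). Two structural facts are crucial. First, $g$ is linear in $h$, as recorded after the definition of $C^1$-maps. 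Second, $g$ is $k$-linear in $\bxi$: it is a limit of the difference quotients ${\textstyle\frac1s}\bigl(f(u+sh,\bxi)-f(u,\bxi)\bigr)$, each $k$-linear in $\bxi$ by hypothesis, and $k$-linearity is preserved under pointwise limits. Thus $g(u,\,\cdot\,;\,\cdot\,)$ is $(k+1)$-linear for each fixed $u$.

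Next I would fix $x \in U$ and exploit continuity of $g$ at the point $(x,0;0,\dots,0)$, where $g$ vanishes by linearity. Since $U$ is open, I first choose a continuous seminorm $p'$ on $X$ and a radius $\delta>0$ with $\{u : \|u-x\|_{p'} < \delta\} \subseteq U$; after replacing $p'$ by the maximum with the seminorm(s) furnished by continuity of $g$ (still continuous) and shrinking $\delta$, continuity then yields a continuous seminorm $q'$ on $E$ such that $\|u-x\|_{p'}<\delta$, $\|h\|_{p'}<\delta$ and $\|\xi_j\|_{q'}<\delta$ for all $j$ jointly imply $\|g(u,h;\bxi)\| \le 1$. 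Folding the inclusion $\{\|u-x\|_{p'}<\delta\}\subseteq U$ into the same seminorm is deliberate: it guarantees that the segment from $x$ to any nearby $y$ stays inside $U$, which is exactly what the integral below requires.

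Finally I would run the homogeneity argument. For fixed $u$ in the ball and arguments with nonzero seminorms, the rescalings $h \mapsto \tfrac{\delta}{2\|h\|_{p'}}h$ and $\xi_j \mapsto \tfrac{\delta}{2\|\xi_j\|_{q'}}\xi_j$ land inside the polydisc, so $(k+1)$-linearity upgrades the unit bound to $\|g(u,h;\bxi)\| \le (2/\delta)^{k+1}\|h\|_{p'}\prod_j \|\xi_j\|_{q'}$; a vanishing seminorm forces $g=0$ and is handled by a separate scaling argument. Setting $\|\,\cdot\,\|_p := (2/\delta)\|\,\cdot\,\|_{p'}$ and $\|\,\cdot\,\|_q := (2/\delta)\|\,\cdot\,\|_{q'}$ absorbs the constant, and since $B^1_p(x) = \{\|u-x\|_{p'}<\delta/2\} \subseteq U$ is convex and contains $x$, the fundamental theorem of calculus gives, for $y \in B^1_p(x)$,
\[
\|f(y,\bxi)-f(x,\bxi)\| \le \int_0^1 \|g\bigl(x+t(y-x),\,y-x;\,\bxi\bigr)\|\,dt \le \|y-x\|_p\prod_{j=1}^k\|\xi_j\|_q.
\]
The only genuinely delicate points are the $k$-linearity of the partial derivative (justified by the limit-of-multilinear-maps observation) and the bookkeeping that binds the uniform polydisc bound to an inclusion inside $U$; the fundamental theorem of calculus for $C^1$-maps into the complete space $\cH$ is standard.
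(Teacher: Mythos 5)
Your proof is correct, and its skeleton --- bound the derivative of $f$ near the point $(x,0,\ldots,0)$ where it vanishes, convert the resulting polydisc bound into a seminorm estimate by homogeneity, then integrate along the segment from $x$ to $y$ --- is the same as the paper's, which is taken from Gl\"ockner's Lemma~1.6(b). The one genuine difference is \emph{where} the multilinear scaling happens. You scale the partial derivative $g(u,h;\bxi)$ \emph{before} integrating, which obliges you to prove that $g$ is $k$-linear in $\bxi$; your justification (a pointwise limit of $k$-linear maps is $k$-linear) is correct, but it is an extra lemma. The paper avoids this step entirely: it bounds the full differential $Df$ on a polydisc, uses only linearity in the direction variable $z$ (the standard property of $C^1$-maps) to get $\|Df(y,\xi_1,\ldots,\xi_k,z,0,\ldots,0)\| \leq \|z\|_p$ for $\bxi$ ranging in the \emph{unit} polydisc, applies the Mean Value Theorem there to obtain $\|f(y,\bxi)-f(x,\bxi)\| \leq \|y-x\|_p$ on that polydisc, and only at the very end invokes the hypothesis that $f(y,\,\cdot\,)-f(x,\,\cdot\,)$ is $k$-linear to scale the finished estimate from the polydisc to all of $E^k$. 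The paper's ordering is slightly more economical, since the multilinearity it uses is a hypothesis rather than something to be derived; your ordering buys a clean quantitative bound on the derivative itself, at the cost of that extra (easy) verification. Both arguments are complete, and your handling of the degenerate case of vanishing seminorms and of the inclusion of the segment in $U$ is sound.
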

\begin{proof} This proposition, as well as its proof, is taken from \cite[Lemma~1.6(b)]{Gl07}.
Since $Df \colon (U \times E^k) \times (X \times E^k) \rightarrow \cH$ is continuous
and $Df(x,0,0,0) = 0$, there exists a seminorm $p$ on $X$
such that the unit ball $B^{p}_{1}(x)$ around $x$ is contained in $U$,
and a seminorm $q$ on $E$
such that 
\begin{equation}\label{konijnevel}
\|Df(y, \xi_1, \ldots, \xi_k, z, \eta_1, \ldots, \eta_k)\| \leq 1
\end{equation}  
for all
$y \in B^{p}_{1}(x) \subseteq U$ and  
$z \in B^{p}_{1}(0) \subseteq X$, and for
$\xi_j, \eta_j \in B^{q}_{1}(0) \subseteq E$ with $1 \leq j \leq k$.
By linearity in $z$, this 
implies
\begin{equation}\label{muizenplaagje}
\|Df(y, \xi_1, \ldots, \xi_k, z, 0, \ldots, 0)\| \leq \|z\|_{p}
\end{equation}
for $y \in B^{p}_{1}(x)$, $\xi_{j} \in B^{q}_{1}(0)$ and $z \in X$.
We now use the Mean Value Theorem to write
\[
f(y,\xi_1, \ldots \xi_k) - f(x, \xi_1, \ldots \xi_k) = 
\int_{0}^{1}Df(x + t(y-x), \xi_1, \ldots, \xi_k, y-x, 0, \ldots, 0) dt
\]
for $y \in B^{p}_{1}(x)$ and $\xi_j \in B^{q}_{1}(0)$.
By (\ref{muizenplaagje}), the integrand is bounded by 
$\|y - x\|_{p}$ in norm, so we obtain
\[
\|f(y,\xi_1, \ldots \xi_k) - f(x, \xi_1, \ldots \xi_k)\| \leq \|y - x\|_{p}
\]
for $y \in B^{p}_{1}(x)$, $\xi_{j} \in B^{q}_{1}(0)$.
Equation~(\ref{eekhoorn}) now follows from $k$-linearity of the map
${f(y,\,\cdot\,) - f(x,\,\cdot\,)} \colon E^{k} \rightarrow \cH$\,.
\end{proof} 

We proceed with the proof of Lemma~\ref{bijdehandtwee}.

\begin{proof}
We first prove weak continuity:
since the map $\rho_{m+n}^{\psi}$ is norm-continuous,
so is the map $\dd\rho_m(\bxi_{0})\rho^{\psi}_{n}$ for every fixed $\bxi_0 
\in \fg^{m}$.
This implies that $\rho^{\psi}_{n}$ is continuous for every norm $p_{\bxi_0}$, hence
weakly continuous.

To prove the strong continuity, we need to exhibit, for 
every $(\bxi,g) \in \fg^{n}\times G$,  
every bounded 
set $B \subseteq \fg^{m}$ and every $\varepsilon >0$,
an open neighbourhood $W \subseteq \fg^{n}\times G$ of $(\bxi,g)$ such that 
for all $(\bxi', g') \in W$, we have 
$p_{B}(\rho^{\psi}_{n}(\bxi',g') - \rho^{\psi}_{n}(\bxi,g)) \leq \varepsilon$.

We apply Lemma~\ref{Helgeslemma}
with $k = m$, $E = \fg$, $X = \fg^{n} \times \fg$, and $U \subseteq \fg^{n} \times \fg$ 
a coordinate patch corresponding with an open neighbourhood $U'$ of $(\bxi,g)$ in 
$\fg^{n} \times G$, for which $(\bxi,g) \in U'$ 
is represented by $x \in U$.
Considering the restriction of the smooth map
$\rho^{\psi}_{n+m} \colon \fg^{m}\times \fg^{n} \times G \rightarrow \cH$
to $\fg^{m} \times U'$ and identifying $U'$ with $U$, we find continuous seminorms 
${\|\,\cdot\,\|_{q}}$ on $\fg$ and ${\|\,\cdot\,\|_{p}}$ on $\fg^{n}\times \fg$ 
such that for $\bxi_0 = (\xi_1,\ldots, \xi_{m}) \in \fg^{m}$ and $(\bxi',g') \in U'$, we have
\[
\|\dd\rho_{m}(\bxi_0)\dd\rho_{n}(\bxi')\rho(g')\psi - \dd\rho_{m}(\bxi_0)\dd\rho_{n}(\bxi)\rho(g)\psi\|
\leq
\| x'- x\|_{p}
\|\xi_{1}\|_{q}\ldots \|\xi_{m}\|_{q}
\,,\\
\]
where $x' \in U$ is the coordinate of $(\bxi',g')$.
Let $B^{q}_{1}(0) \subseteq \fg$ be the open unit ball w.r.t.\ the seminorm ${\|\,\cdot\,\|_{q}}$,
and let $U_0 := B_1^{q}(0)^m \subseteq \fg^{m}$. Since $B$ is bounded, we can choose $\lambda \geq 1$
such that $B \subseteq \lambda U_0$.
Then $\bxi_0 \in B$ implies that $\widetilde{\bxi}_0 := \lambda^{-1}\bxi_0 \in U_0$, so that,
since $\dd\rho_{m}$ is $m$-linear,
\begin{eqnarray*}
&&\|\dd\rho_{m}(\bxi_0)\dd\rho_{n}(\bxi')\rho(g')\psi - \dd\rho_{m}(\bxi_0)\dd\rho_{n}(\bxi)\rho(g)\psi\|\\
&=&
\lambda^{m} \|\dd\rho_{m}(\widetilde{\bxi}_0)\dd\rho_{n}(\bxi')\rho(g')\psi -
 \dd\rho_m(\widetilde{\bxi}_0)\dd\rho_{n}(\bxi)\rho(g)\psi\|
 \leq  
\lambda^{m}\| x'- x\|_{p}
\end{eqnarray*}
for $\bxi_0 \in B$, $(\bxi', g') \in U'$.
Choosing the open neighbourhood $W \subseteq U'$ corresponding to 
the open ball $B^{p}_{\lambda^{-m}\varepsilon}(x) \subseteq U$
around $x$ with radius $\lambda^{-m}\varepsilon$ w.r.t.\ $\|\,\cdot\,\|_{p}$, 
we have  
\[
\|\dd\rho_m(\bxi_0)\dd\rho_n(\bxi')\rho(g')\psi 
- \dd\rho_m(\bxi_0)\dd\rho_n(\bxi)\rho(g)\psi\|
\leq
\varepsilon\\
\]
uniformly for $\bxi_0 \in B$, $(\bxi', g') \in W$, so that, 
$p_{B}(\rho_{n}^{\psi}(\bxi',g') - \rho_{n}^{\psi}(\bxi',g')) \leq \varepsilon$ 
for $(\bxi', g') \in W$ as desired.
\end{proof}

Using Lemma~\ref{bijdehandtwee}, we now prove that the orbit map is smooth.

\begin{Lemma}{\rm(Smooth orbit maps)}\label{smoothorbit}
If $(\rho,\cH)$ is a smooth unitary representation of a locally convex Lie group $G$,
then for every $\psi \in \cH^{\infty}$, 
the orbit map ${\rho^{\psi}\colon G \rightarrow \cH^{\infty}}$, defined by 
$\rho^{\psi}(g) := \rho(g)\psi$,
is smooth for the weak and strong topology on~$\cH^{\infty}$.
\end{Lemma}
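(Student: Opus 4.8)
The plan is to prove the stronger statement that every member of the family of right-trivialised iterated derivatives of the orbit map, i.e.\ the maps $\rho_n^\psi \colon \fg^n \times G \to \cH^\infty$ of Lemma~\ref{bijdehandtwee} given by $\rho_n^\psi(\bxi, g) = \dd\rho_n(\bxi)\rho(g)\psi$, is itself smooth into $\cH^\infty$ for the weak and strong topology; since $\rho^\psi = \rho_0^\psi$, this yields the claim. The structural input is that, for fixed $\bxi$, the map $g \mapsto \rho_n^\psi(\bxi, g)$ is norm-smooth: the $\rho_n^\psi \colon \fg^n \times G \to \cH$ are precisely the iterated right-invariant derivatives of the norm-smooth orbit map $g \mapsto \rho(g)\psi$. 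Consequently, differentiating $\rho_n^\psi$ along a curve $\gamma$ in $G$ with $\delta^R_t\gamma(0) = \xi_0$ produces, in the norm topology, $\dd\rho_n(\bxi)\dd\rho(\xi_0)\rho(g)\psi = \rho_{n+1}^\psi((\bxi,\xi_0), g)$, while differentiating in the $\fg^n$-variables is multilinear and returns a value of $\rho_n^\psi$ with one argument replaced. In both cases the derivative lands in $\cH^\infty$ and is continuous for the weak and strong topology by Lemma~\ref{bijdehandtwee}, with the multilinear part covered by Propositions~\ref{bijdehand} and~\ref{isookcontinu}.

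First I would treat the weak topology. Since the weak topology is the initial topology with respect to the linear maps $\dd\rho_m(\bxi') \colon \cH^\infty \to \cH$, weak convergence of the group-direction difference quotient $\frac{1}{t}(\rho_n^\psi(\bxi, \gamma(t)) - \rho_n^\psi(\bxi, g))$ to $\rho_{n+1}^\psi((\bxi,\xi_0), g)$ is equivalent to norm convergence after applying each $\dd\rho_m(\bxi')$. But $\dd\rho_m(\bxi')$ turns this into the difference quotient of $\rho_{m+n}^\psi(\bxi'\bxi, \cdot)$, which converges in norm because $\rho_{m+n}^\psi$ is norm-smooth. Hence every difference quotient converges weakly to the asserted derivative, and that derivative is weakly continuous by Lemma~\ref{bijdehandtwee}. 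Since the total derivative of $\rho_n^\psi$ is thus continuous into $\cH^\infty$ and is again assembled from members of the family $(\rho_k^\psi)_k$, an induction on the order of differentiability shows that every $\rho_n^\psi$, and in particular $\rho^\psi$, is smooth for the weak topology.

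For the strong topology the same scheme applies, except that strong convergence of the group-direction difference quotient requires the norm convergence above to hold \emph{uniformly} over $\bxi$ in a bounded set $B \subseteq \fg^n$. This uniformity is exactly what the $C^1$-estimate of Proposition~\ref{Helgeslemma} supplies: applying it to the norm-smooth map $\rho_{m+n}^\psi \colon \fg^m \times (\fg^n \times G) \to \cH$, with the $\fg^m$-slots as the multilinear variables and $\fg^n \times G$ as the base, produces a continuous seminorm bounding the increment uniformly in $\bxi \in B$, precisely as in the strong part of the proof of Lemma~\ref{bijdehandtwee}. This upgrades the pointwise convergence of difference quotients to convergence in each seminorm $p_B$, i.e.\ strong convergence, and strong continuity of the derivatives is again furnished by Lemma~\ref{bijdehandtwee}. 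The induction then runs verbatim.

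I expect the main obstacle to be the strong case: to differentiate into the strong topology one must control the difference quotients of $\rho_n^\psi$ \emph{uniformly} over bounded subsets of $\fg^n$, rather than merely pointwise as the weak topology permits, and one must check that the first-order remainder vanishes uniformly. Making this estimate rigorous is where Proposition~\ref{Helgeslemma} is indispensable; by contrast the weak case is essentially a formal consequence of the norm-smoothness of the individual components $\rho_n^\psi(\bxi, \cdot)$ together with the continuity statement of Lemma~\ref{bijdehandtwee}.
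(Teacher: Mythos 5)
Your overall plan --- prove smoothness of the whole family $\rho_n^\psi \colon \fg^n \times G \to \cH^{\infty}$, identify the $G$-directional derivative via the right logarithmic derivative as a combination of $\rho_n^\psi$- and $\rho_{n+1}^\psi$-terms, and close an induction using Lemma~\ref{bijdehandtwee} for continuity of the derivatives --- is exactly the paper's, and your weak-topology argument coincides with the paper's proof: composing with $\dd\rho_m(\bxi')$ turns the difference quotients into difference quotients of the norm-smooth maps $\rho_{m+n}^\psi$, which converge in norm, and the resulting derivatives are weakly continuous by Lemma~\ref{bijdehandtwee}.

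The strong case, as you wrote it, has a genuine gap. Applying Proposition~\ref{Helgeslemma} to $\rho_{m+n}^\psi$ with the $\fg^m$-slots as the multilinear variables (note: the bounded set must sit in $\fg^m$, the seminorm indices, not in $\fg^n$ as you wrote) yields, after the usual rescaling $B \subseteq \lambda B^q_1(0)^m$, an estimate of the form
\[
\sup_{\bxi_0 \in B}\big\|\dd\rho_m(\bxi_0)\big(\rho_n^\psi(x') - \rho_n^\psi(x)\big)\big\| \;\le\; \lambda^m \|x'-x\|_{p}\,,
\]
i.e.\ an equi-Lipschitz bound on the \emph{increments} of the function. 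That is precisely what the proof of Lemma~\ref{bijdehandtwee} needs, because there only \emph{continuity} is asserted; it is not enough for \emph{differentiability}. Pointwise convergence of difference quotients together with an equi-Lipschitz bound does not imply uniform convergence of difference quotients: for $f_n(h) = n^{-1}\sin(nh)$ the family is uniformly $1$-Lipschitz and $f_n'(0) = 1$ for every $n$, yet $\sup_n |f_n(h)/h - 1|$ does not tend to $0$ as $h \to 0$. So your step ``this upgrades the pointwise convergence of difference quotients to convergence in each seminorm $p_B$'' fails as stated: what must be made uniformly small over $\bxi_0 \in B$ is the first-order Taylor \emph{remainder}, and this requires control one order higher than the function-level increment estimate.

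This is exactly why the paper, at this point, does not reuse Proposition~\ref{Helgeslemma} but instead applies Taylor's Theorem to $h \mapsto \dd\rho_m(\bxi_0)\rho_n^\psi(\bxi_{t+h},\gamma_{t+h})$ and bounds the second-derivative term $C_{\bxi_0,s}$ on $B \times [-\varepsilon,\varepsilon]$, using its continuity and positive homogeneity in $\bxi_0$. Your scheme can be repaired while staying close to your tools: apply the $C^1$-estimate of Proposition~\ref{Helgeslemma} not to $\rho_{m+n}^\psi$ itself but to its derivative in the $(\fg^n \times G)$-variables (which is again $C^1$ and again $m$-linear in the $\fg^m$-slots), and then invoke the Mean Value Theorem to conclude that the remainder is $o(1)$ uniformly for $\bxi_0 \in B$. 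Either way, some second-order input --- a bound on second derivatives or equicontinuity of first derivatives over $B$ --- is indispensable; the increment bound alone cannot close the argument.
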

\begin{proof}
The orbit map is smooth for the norm 
topology on $\cH^{\infty}$ by definition, so
for each $\psi \in \cH^{\infty}$, the 
maps $\fg^m \times \fg^n \times G \rightarrow \cH^{\infty}$
defined by $(\bxi_0,\bxi,g) \mapsto \dd\rho_m(\bxi_0)\dd\rho_n(\bxi)\rho(g)\psi$
are smooth, 
as the restrictions to $\fg^m \times \fg^m \times G$ of the 
$(m+n)^{\mathrm{th}}$ derivative of the orbit map.
It follows from this that the maps 
\begin{equation}\label{horizontaal}
\rho_{n}^{\psi} \colon \fg^n \times G \rightarrow \cH^{\infty}, 
\quad \rho_{n}^{\psi}(\bxi,g) := \dd\rho_n(\bxi)\rho(g)\psi
\end{equation}
are $C^1$ for the weak topology; for fixed $\bxi_0 \in \fg^m$ and for every smooth curve
$t \mapsto (\bxi_t,\gamma_t)$ in $\fg^n \times G$, 
we have 
\begin{eqnarray*}
& &\lim_{h \rightarrow 0} \dd\rho_m(\bxi_0)
	{\textstyle{\frac{1}{h}}}
	\big(\rho_{n}^{\psi}(\bxi_{t+h},\gamma_{t+h}) -  \rho_{n}^{\psi}(\bxi_{t},\gamma_{t})\big)\\
	& &\qquad
	= \dd\rho_m(\bxi_0) \left(\dd\rho_n(\bxi'_{t})\rho(\gamma_t)\psi 
	+ \dd\rho_n(\bxi_t)\dd\rho(\delta^{R}_{t}\gamma)\rho(\gamma_t)\psi\right)
\end{eqnarray*}
w.r.t.\ the norm 
topology, where,
if $\bxi_t = (\xi_1(t), \ldots, \xi_{n}(t))$, the expression $\bxi'_t$ should be understood as 
\[\bxi'_{t} = \big(\xi'_{1}(t), \xi_2(t), \ldots, \xi_{n}(t)\big) + \ldots + 
\big(\xi_{1}(t), \ldots, \xi_{n-1}(t),\xi'_{n}(t)\big)\,.\]
Therefore, the directional derivatives w.r.t.\ the norms $p_{\bxi_0}$ exist and are linear combinations of 
maps of the type $\rho_{n}^{\psi}$ and $\rho^{\psi}_{n+1}$. 
Since these are continuous in the weak topology 
by Lemma~\ref{bijdehandtwee}, the desired $C^1$-property follows.

For $n=0$, this shows that $\rho^{\psi} \colon G \rightarrow \cH^{\infty}$ is $C^1$ for the weak topology,
and that $D\rho^{\psi} \colon TG \rightarrow \cH^{\infty}$ is the map $\rho^{\psi}_1$ under 
the identification $TG \simeq G \times \fg$.
If $\rho^{\psi}$ is $C^n$ for the weak topology and its $n^{\mathrm{th}}$ derivative corresponds to a sum of
maps of type $\rho^{\psi}_{r}$ under 
the identification $T^{(n)}G \simeq G \times \prod_{k=0}^{n-1}\fg^{2^{k}}$, then the same is true for 
$n+1$ because maps of type $\rho_{r}^{n}$ are $C^1$ with derivatives of the same type.
By induction, it follows that the orbit map is smooth for the weak topology.

For the strong topology, fix a smooth curve $t \mapsto (\bxi_t, \gamma_t)$ in $\fg^{n} \times G$,
and use Taylor's Theorem to see that for a bounded set $B \subseteq \fg^m$, the difference
\begin{align*}
\Delta_{h}  :=  \sup_{\bxi_0 \in B} \Big\|&\dd\rho_m(\bxi_0)
{\textstyle\frac{1}{h}}\big(\dd\rho_n(\bxi_{t+h})\rho(\gamma_{t+h}) - \dd\rho_n(\bxi_t)\rho(\gamma_t)\big)\psi\\
& 
- \dd\rho_m(\bxi_0)\big(\dd\rho_n(\bxi'_t) + \dd\rho_n(\bxi_t)\dd\rho(\delta^{R}_{t}\gamma_t)\big)\rho(\gamma_t) 
\psi\Big\|
\end{align*}
for $h\neq 0$
is bounded by the remainder term involving the second derivative,
\[\Delta_h \leq |h| \sup\{ C_{\bxi_0,s} \,;\, \bxi_0\in B, s \in [-h,h]\}\,,\] 
with $C_{\bxi_0,s}$ the norm of the second derivative at $t+s$,
\begin{eqnarray*} 
C_{\bxi_0,s} &:=& \Big\|  
\dd\rho_m(\bxi_0)\Big(
\dd\rho_n(\bxi''_{t+s})+
2\dd\rho_n(\bxi'_{t+s})\dd\rho(\delta^{R}_{t+s}\gamma)\\ & & +
\dd\rho_n(\bxi)\dd\rho(\delta^{R}\gamma'_{t+s}) + 
\dd\rho_n(\bxi) \dd\rho(\delta^R\gamma_{t+s})^2 \Big)
\rho(\gamma_{t+s})\psi
\Big\|\,.
\end{eqnarray*}
Since the map $(\bxi_0, s) \mapsto C_{\bxi_{0}, s}$ is continuous in $\bxi_0$ and $s$,
and positively homogeneous in $\bxi_0$, 
there exists an $\varepsilon >0$ such that $C_{\bxi_{0}, s}$ is bounded
on $B \times [-\varepsilon,\varepsilon]$, say by $C>0$.
Since $\Delta_{h} \leq |h|\cdot C$ uniformly for $\bxi \in B$ (subject to $|h| < \varepsilon$),
we see that the directional derivative
\[\frac{d}{dt}\dd\rho_n(\bxi_t)\rho(\gamma_t)\psi = 
\big(\dd\rho_n(\bxi'_t) + \dd\rho_n(\bxi_t)\dd\rho(\delta^R\gamma_{t})\big)\rho(\gamma_t)\psi\]
exists for the strong topology, and that the result is continuous by Lemma~\ref{bijdehandtwee}. 
It follows that the maps $\rho_{n}^{\psi}$ are $C^1$ for the strong topology.
The proof that the orbit map is smooth for the strong topology is then the same as 
for the weak topology.
\end{proof}

\begin{Proposition}{\rm (Derived representations are regular)}\label{derivedisregular}
Let $(\rho,\cH)$ be a
smooth unitary representation of a regular locally convex Lie group $G$. 
Then the derived representation $(\dd\rho, \cH^{\infty})$ is strongly (hence weakly) regular.
\end{Proposition}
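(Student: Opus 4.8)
The plan is to construct the solution explicitly by transporting the initial value $\psi_0$ along the group: regularity of $G$ supplies the relevant curve in $G$, and Lemma~\ref{smoothorbit} upgrades smoothness of the orbit map from the norm topology to the strong topology on $\cH^\infty$. Continuity of $(\dd\rho, \cH^\infty)$ in the sense of Definition~\ref{LAcontinuity} is already recorded in Remark~\ref{deriscont}, so only the solvability and smooth-dependence clauses remain. Given a smooth path $\xi \colon [0,1] \to \fg$, regularity of $G$ produces a smooth curve $\gamma \colon [0,1] \to G$ with $\delta^R_t\gamma = \xi_t$ and $\gamma(0) = \one$. I set $\psi_t := \rho(\gamma_t)\psi_0$. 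Since $\rho(g)$ preserves $\cH^\infty$ (the orbit map of $\rho(g)\psi_0$ is $\rho^{\psi_0}\circ R_g$), this curve takes values in $\cH^\infty$, and in fact $\psi_t = \rho^{\psi_0}(\gamma_t)$ is the composite of the smooth curve $t \mapsto \gamma_t$ in $G$ with the orbit map $\rho^{\psi_0}\colon G \to \cH^\infty$.

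The crucial input is that by Lemma~\ref{smoothorbit} the orbit map $\rho^{\psi_0}$ is smooth for the strong topology on $\cH^\infty$; hence $t \mapsto \psi_t$ is a smooth curve in $\cH^\infty$ for the strong topology. Differentiating by the chain rule, using that $D\rho^{\psi_0}$ is identified with $\rho^{\psi_0}_1$ under $TG \simeq G \times \fg$ (again Lemma~\ref{smoothorbit}), and that $\dot\gamma_t = \xi_t \cdot \gamma_t$ corresponds to $(\gamma_t,\xi_t)$ under the right trivialisation, I obtain $\frac{d}{dt}\psi_t = \dd\rho(\xi_t)\rho(\gamma_t)\psi_0 = \dd\rho(\xi_t)\psi_t$ in the strong topology. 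Thus $\psi_t$ is a strongly smooth solution of~(\ref{ODE}) with $\psi|_{t=0} = \psi_0$.

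It remains to check that $\psi_1$ depends smoothly on $\xi$. By definition, regularity of $G$ is exactly the statement that the evolution map $\xi \mapsto \gamma_1$ is smooth as a map $C^\infty([0,1],\fg) \to G$. Composing with the strongly smooth orbit map $\rho^{\psi_0}$ expresses $\xi \mapsto \psi_1 = \rho^{\psi_0}(\gamma_1)$ as a composite of smooth maps, hence smooth into $\cH^\infty$ for the strong topology. Since the weak topology is coarser than the strong one, strong regularity implies weak regularity, which completes the argument. The only genuine obstacle here is ensuring that every differentiation and every smoothness statement above takes place in the strong topology rather than merely in norm; this is precisely the content of Lemma~\ref{smoothorbit}, so the substantive work has already been done and the present proof is an assembly of the preceding results.
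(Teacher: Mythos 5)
Your proposal is correct and follows essentially the same route as the paper's own proof: continuity from Remark~\ref{deriscont}, regularity of $G$ to solve $\delta^R_t\gamma = \xi_t$, transport of $\psi_0$ along $\gamma_t$ via the orbit map, and Lemma~\ref{smoothorbit} to guarantee that the resulting curve and its dependence on $\xi$ are smooth for the strong (hence weak) topology. The only difference is that you spell out the chain-rule identification $D\rho^{\psi_0} \simeq \rho^{\psi_0}_1$ under the right trivialisation of $TG$ explicitly, which the paper leaves implicit.
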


\begin{proof}
The derived representation is continuous by Remark~\ref{deriscont}, so
it remains to show that equation (\ref{ODE}) has a smooth solution $t \mapsto \psi_t$
that depends smoothly on the path $\xi \in C^{\infty}([0,1],\fg)$.
Since $G$ is regular, the ODE 
$\delta^{R}_{t}\gamma_t = \xi_t$ with $\gamma_0 = \one$ has a smooth solution 
$t \mapsto \gamma_t$ in $G$, which
depends smoothly on the path $\xi$. 
Since the orbit map $\rho^{\psi}\colon G \rightarrow \cH^{\infty}$ defined by 
$\rho^{\psi}(g) = \rho(g)\psi$ is smooth w.r.t.\ the weak as well as the strong
topology (Lemma~\ref{smoothorbit}), the path $\psi_t = \rho(\gamma_t)\psi$
is smooth in $\cH^{\infty}$ for the weak and strong topology, 
with derivative $\dd\rho(\delta^{R}_{t}\gamma)\psi_t$
as desired. Since $\gamma_1$ depends smoothly on the path $t \mapsto \xi_t$
and $\rho(\gamma_1)\psi$ depends smoothly on $\gamma_1$, the derived representation is regular. 
\end{proof}

\begin{Proposition}\label{regularisderived} {\rm (Regular implies integrable)}
If $G$ is a 1-connected Lie group modelled on a barrelled Lie algebra $\fg$,
 then every unitary $\fg$-representation
$(\pi,V)$ 
that is either weakly or strongly regular
integrates to a smooth unitary $G$-representation on the Hilbert completion 
$\cH_{V}$ of $V$,
and $V \subseteq \cH_{V}$ is $G$-invariant.
\end{Proposition}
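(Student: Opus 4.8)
The plan is to \emph{construct} the representation $\rho$ by path-ordered integration of $(\pi,V)$, using regularity to solve the relevant linear ODEs and $1$-connectedness to make the construction independent of choices. Since $G$ is connected, every $g\in G$ is the endpoint of a smooth path $\gamma\colon[0,1]\to G$ with $\gamma_0=\one$, $\gamma_1=g$. Setting $\xi_t:=\delta^{R}_{t}\gamma$, regularity of $(\pi,V)$ provides a smooth solution $t\mapsto\psi_t$ of $\frac{d}{dt}\psi_t=\pi(\xi_t)\psi_t$ with $\psi_0=\psi$, unique by Remark~\ref{inproduct}; I would \emph{define} $\rho(g)\psi:=\psi_1$. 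Linearity of $\rho(g)$ is immediate from linearity of the ODE, so the content is to show that $\psi_1$ depends only on $g$, not on $\gamma$.

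The crux — and the step I expect to be the main obstacle — is this path-independence. Given two smooth paths from $\one$ to $g$, $1$-connectedness (after smoothing) yields a smooth homotopy $\gamma\colon[0,1]^2\to G$, $(s,t)\mapsto\gamma^s_t$, rel endpoints, so $\gamma^s_0=\one$ and $\gamma^s_1=g$ for all $s$. Writing $\xi^s_t:=\delta^{R}_{t}\gamma$ and $\eta^s_t:=\delta^{R}_{s}\gamma$, let $\psi^s_t$ solve the ODE in the $t$-direction. The key observation is that $u^s_t:=\partial_s\psi^s_t$ and $\pi(\eta^s_t)\psi^s_t$ satisfy the \emph{same} inhomogeneous linear ODE in $t$: differentiating the defining equation in $s$ gives $\partial_t u=\pi(\xi^s_t)u+\pi(\partial_s\xi^s_t)\psi^s_t$, while the product rule (Proposition~\ref{moreproducts}) together with the Maurer--Cartan equation (\ref{MChammer}) in the form $\partial_s\xi^s_t-\partial_t\eta^s_t=[\eta^s_t,\xi^s_t]$ shows that $\pi(\eta^s_t)\psi^s_t$ solves it as well. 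Both vanish at $t=0$ (since $\eta^s_0=0$ and $\psi^s_0$ is constant in $s$), so by uniqueness — the argument of Remark~\ref{inproduct} applied to their difference — they agree for all $t$. Evaluating at $t=1$, where $\eta^s_1=0$ because $\gamma^s_1\equiv g$, gives $\partial_s\psi^s_1=0$, so $\psi_1$ is independent of $s$, hence of the path. The delicate analytic point buried here is that $(s,t)\mapsto\psi^s_t$ is \emph{jointly} smooth for the weak (strong) topology; I expect to obtain this from the smooth dependence of the solution on $\xi\in C^{\infty}([0,1],\fg)$ guaranteed by regularity, combined with the reparametrisation identifying $\psi_\tau$ with the time-$1$ value for the rescaled path $t\mapsto\tau\xi_{\tau t}$.

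With well-definedness established, the algebraic and metric properties are comparatively routine. The constant path gives $\rho(\one)=\id$. For the homomorphism property I would concatenate paths: if $\alpha$ runs from $\one$ to $h$ and $\beta$ from $\one$ to $g$, then $\alpha$ followed by the right translate $t\mapsto\beta_t h$ is a path from $\one$ to $gh$; since $\delta^{R}(\beta_t h)=\delta^{R}_{t}\beta$, solving along this concatenation (smoothed at the junction, which does not affect the time-$1$ value) first yields $\rho(h)\psi$ and then $\rho(g)\rho(h)\psi$, which by path-independence equals $\rho(gh)\psi$. In particular $\rho(g)$ has two-sided inverse $\rho(g^{-1})$. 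Unitarity then follows from the inner-product computation of Remark~\ref{inproduct}: $\langle\psi_t,\phi_t\rangle$ is constant, so each $\rho(g)$ is a linear isometry of $V$, bijective by the previous sentence, hence extends uniquely to a unitary of $\cH_{V}$; by construction $V$ is $G$-invariant.

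Finally, to see that $(\rho,\cH_{V})$ is a \emph{smooth} representation with $\dd\rho=\pi$, I would show every $\psi\in V$ is a smooth vector. In a chart $\kappa$ around $\one$ with $\kappa(\one)=0$, the straight-line paths $\gamma^g_t:=\kappa^{-1}(t\kappa(g))$ depend smoothly on $g$, so $g\mapsto\delta^{R}\gamma^g$ is smooth into $C^{\infty}([0,1],\fg)$; composing with the smooth solution map from regularity shows that $g\mapsto\rho(g)\psi$ is smooth near $\one$ (even for the weak and strong topologies, a fortiori for the norm). The relation $\rho(g)\psi=\rho(g_0)\rho(g_0^{-1}g)\psi$, together with boundedness of the operator $\rho(g_0)$, then propagates smoothness to every $g_0$. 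Thus $V\subseteq\cH_{V}^{\infty}$, which is therefore dense, so $\rho$ is smooth; and evaluating $\frac{d}{dt}\big|_{t=0}\rho(\gamma_t)\psi=\pi(\xi_0)\psi$ identifies $\dd\rho$ with $\pi$ on $V$, completing the integration.
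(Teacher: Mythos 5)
Your proposal is correct and takes essentially the same route as the paper's own proof: define $\rho(g)\psi$ by solving the ODE along a path, establish path-independence via a smooth endpoint-preserving homotopy combined with the Maurer--Cartan equation, the product rule, and uniqueness of solutions (your ``two solutions of the same inhomogeneous ODE'' is the paper's ``the difference solves the homogeneous equation with zero initial value''), then get the homomorphism property by concatenating right-translated paths and smoothness from regularity of $\pi$ and $G$. The only differences are presentational: the paper is more careful about producing the \emph{smooth} homotopy (sitting instants, connectedness of $C^{\infty}_{*}(\bS^1,G)$, and a smoothness result of Gl\"ockner), a point you flag but do not carry out, while you are more explicit than the paper about why every $\psi \in V$ is a smooth vector and why $\dd\rho = \pi$ on $V$.
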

\begin{proof}
For $\psi \in V$, we wish to define
$\rho(g)\psi$ as the solution of $\frac{d}{dt} \psi_{t} = \pi(\delta^{R}_{t}g_t)\psi_t$ with 
$\psi_0 = \psi$, where $t \mapsto g_t$ is a smooth path in $G$ 
with $g_0 = \one$, $g_1 = g$, and `sitting instants'.
We need to show that this definition is independent of the choice of path.

First of all,
since $G$ is simply connected, any pair $(g^0_{t},g^1_{t})$ of such paths
is connected by an
endpoint preserving 
homotopy $\gamma \colon [0,1]^2 \rightarrow G$. 
We show that $\gamma$ can be chosen to be smooth.
Indeed, since the paths have sitting instants, 
$g^1(g^0)^{-1}$ is an element of the Lie group  $C_{*}^{\infty}(\bS^1,G)$ 
of smooth based paths in~$G$. 
Since $G$ is simply connected, $C_{*}(\bS^1,G)$ is connected, 
hence $C_{*}^{\infty}(\bS^1,G)$ is connected by
\cite[Thm.~A.3.7]{Ne02}.
We can thus find a 
smooth path $\Gamma \colon [0,1] \rightarrow C_{*}^{\infty}(\bS^1,G)$
from $\one$ to $g^1(g^0)^{-1}$. This
yields an endpoint preserving homotopy $\gamma(s,t) := \Gamma(s)_{t}g^0_{t}$, which is smooth by
\cite[Prop.~12.2(a)]{Gl04}.
 
We now use the Maurer--Cartan equation to show that the solution to 
\begin{equation}\label{evolution}
\partial_{t}\psi^{s}_{t} = \pi(\delta^{R}_{t}\gamma^{s}_{t})\psi^{s}_{t}
\end{equation}
with initial value $\psi^{s}_{0} = \psi$
satisfies $\partial_{s}\psi^{s}_{1} = 0$, so that the endpoint $\psi^{s}_{1}$ 
does not depend on $s$.
Since the smooth path $[0,t] \rightarrow G$ defined by $\tau \mapsto \gamma^{s}_{\tau}$ depends
smoothly on $t$ and $s$, so does the $\fg$-valued smooth path $[0,t] \rightarrow \fg$ given by
$\tau \mapsto \delta^{R}_{\tau}\gamma^{s}_{\tau}$. 
By regularity of $\pi$, 
the solution $\psi_{t}^{s}\in V$ of (\ref{evolution}) also varies smoothly in $s$ and $t$.
Using the Maurer--Cartan equation~(\ref{MChammer}), the evolution equation (\ref{evolution}) and the 
Product Rule
\ref{moreproducts} (which uses that $\fg$ is barrelled), 
we obtain
\begin{eqnarray*}
\partial_s \partial_{t} \psi_{t}^{s} &=& 
\pi(\partial_{s}\delta^{R}_{t}\gamma^{s}_{t})\psi_t^s + 
\pi(\delta^{R}_{t}\gamma^{s}_{t})\partial_{s}\psi^{s}_{t}\\
&=&
\pi(\partial_{t}\delta^{R}_{s}\gamma^{s}_{t})\psi^{s}_t 
+ \pi([\delta^{R}_{s}\gamma^{s}_{t}, \delta^{R}_{t}\gamma^{s}_{t}])\psi^{s}_{t}
+ 
\pi(\delta^{R}_{t}\gamma^{s}_{t})\partial_{s}\psi^{s}_{t}\\
&=&
\big(\pi(\partial_{t}\delta^{R}_{s}\gamma^{s}_{t})\psi^{s}_t 
+\pi(\delta^{R}_{s}\gamma^{s}_{t})\pi(\delta^{R}_{t}\gamma^{s}_{t})\psi^{s}_{t}
\big)\\
& &
+ 
\big(
\pi(\delta^{R}_{t}\gamma^{s}_{t})\partial_{s}\psi^{s}_{t}
- \pi(\delta^{R}_{t}\gamma^{s}_{t})\pi(\delta^{R}_{s}\gamma^{s}_{t})\psi^{s}_{t}\big)\\
&=&
\partial_{t}\big(\pi(\delta^{R}_s \gamma^{s}_{t})\psi^{s}_{t}\big)
+ 
\pi(\delta^{R}_{t}\gamma^{s}_{t})
\big(\partial_{s}\psi^{s}_{t}
- \pi(\delta^{R}_{s}\gamma^{s}_{t})\psi^{s}_{t}\big)\,
\end{eqnarray*}
(derivatives are w.r.t.\ the strong or weak topology),
so, since $\partial_{t}\partial_{s}\psi_{t}^{s} = \partial_{s}\partial_{t}\psi_{t}^{s}$, we find
\[
\partial_{t}\big(  \partial_{s}\psi^{s}_{t}
- \pi(\delta^{R}_{s}\gamma^{s}_{t})\psi^{s}_{t}\big)
=
\pi(\delta^{R}_{t}\gamma^{s}_{t})
\big(\partial_{s}\psi^{s}_{t}
- \pi(\delta^{R}_{s}\gamma^{s}_{t})\psi^{s}_{t}\big)\,.
\]
Since both $\psi^{s}_{0} = \psi$ and $\gamma^{s}_{0} = \one$ are constant,
$\partial_{s}\psi^{s}_{t}
- \pi(\delta^{R}_{s}\gamma^{s}_{t})\psi_{t}^s$
satisfies the evolution equation~(\ref{evolution}) with initial condition zero, 
hence is identically zero by uniqueness (cf.\ Remark~\ref{inproduct}).
We thus have 
\begin{equation}
\partial_{s}\psi^{s}_{t}
= \pi(\delta^{R}_{s}\gamma^{s}_{t})\psi^{s}_{t}\,,
\end{equation}
so that in particular $\partial_{s}\psi^{s}_{1} = 0$ and  
$\rho(g)\psi$ is well defined.

The maps $\rho(g)$ are linear and unitary (cf.\ Remark~\ref{inproduct}), and 
$g \mapsto \rho(g)\psi$ is smooth by regularity of $G$ and $\pi$.
To show that $\rho(gh) =\rho(g)\rho(h)$, we observe that the concatenation $c_{t}$
of the paths $h_{2t}$ and $g_{2t-1}h_1$ is a
smooth path from $\one$ to $gh$ if we choose $g_t$ and $h_t$ to have `sitting instants'.
The solution of the evolution equation $\frac{d}{dt}\psi_t = \pi(\delta^{R}_{t}c_t)\psi_t$
is then given by $\psi_t = \rho(h_{2t})$ for $t \in [0,\frac{1}{2}]$ and $\psi_{t} = \rho(g_{2t-1})\rho(h_1)\psi$
for $t\in [\frac{1}{2},1]$, so evaluation in $t=1$ yields $\rho(gh)\psi = \rho(g)\rho(h)\psi$.
\end{proof}

Combining Propositions \ref{LArulez}, \ref{derivedisregular} and \ref{regularisderived}, 
we obtain the following characterisation of smooth unitary representations.

\begin{Theorem}{\rm (Correspondence global/infinitesimal representations)}
\label{heenenweer}
Let $G$ be a $1$-connected regular Lie group modelled on a barrelled Lie algebra~$\fg$.
Then every smooth unitary representation $(\rho,\cH)$ of $G$ 
gives rise to a derived Lie algebra representation $(\dd\rho,\cH^{\infty})$
which is both weakly and strongly regular.
Conversely, every unitary $\fg$-representation $(\pi,V)$ that is either weakly or strongly regular
integrates to a smooth unitary $G$-representation.
Two $G$-representations are
unitarily equivalent if and only if their derived $\fg$-representations are unitarily equivalent.
\end{Theorem}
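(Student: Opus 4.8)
The plan is to assemble the three preceding propositions, verifying that their hypotheses are met under the standing assumptions that $G$ is $1$-connected, regular, and modelled on a barrelled Lie algebra $\fg$. All the analytic substance has already been carried out; what remains is bookkeeping and, in the equivalence statement, one extension-by-density argument.

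First I would dispose of the forward direction. Given a smooth unitary representation $(\rho, \cH)$ of $G$, regularity of $G$ is precisely the hypothesis of Proposition~\ref{derivedisregular}, which yields at once that the derived representation $(\dd\rho, \cH^{\infty})$ is strongly, and hence weakly, regular. This gives the first assertion with no further work. For the converse, I would invoke Proposition~\ref{regularisderived}: since $G$ is $1$-connected and $\fg$ is barrelled, any unitary $\fg$-representation $(\pi, V)$ that is weakly or strongly regular integrates to a smooth unitary $G$-representation on the Hilbert completion $\cH_{V}$, with $V \subseteq \cH_{V}$ being $G$-invariant. I would note in passing that regularity of $G$ is not actually needed for this half; only $1$-connectedness and barrelledness enter.

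The remaining point is the equivalence statement, which rests on Proposition~\ref{LArulez} together with the density of $\cH^{\infty}$. Here I would argue both directions explicitly, the hypothesis of Proposition~\ref{LArulez} (connectedness of $G$) being satisfied since $G$ is $1$-connected. If $U \colon \cH \to \tilde\cH$ is a unitary $G$-intertwiner, then $U$ carries smooth orbit maps to smooth orbit maps, hence maps $\cH^{\infty}$ bijectively onto $\tilde\cH^{\infty}$, and by Proposition~\ref{LArulez} its restriction is an isometric $\fg$-intertwiner; thus $U|_{\cH^{\infty}}$ is a unitary equivalence of the derived representations. Conversely, a unitary equivalence $U_0 \colon \cH^{\infty} \to \tilde\cH^{\infty}$ of the derived $\fg$-representations is in particular a bijective linear isometry between dense subspaces, so it extends uniquely to a unitary $U \colon \cH \to \tilde\cH$ whose restriction to $\cH^{\infty}$ equals $U_0$. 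Applying the converse direction of Proposition~\ref{LArulez} to this $U$ then shows that $U$ intertwines the $G$-representations, so $(\rho,\cH)$ and $(\tilde\rho,\tilde\cH)$ are unitarily equivalent.

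I do not expect a genuine obstacle, precisely because the hard analysis is concentrated in Propositions~\ref{derivedisregular} and~\ref{regularisderived}. The only subtlety worth flagging is that Proposition~\ref{LArulez} is phrased in terms of an operator $U$ defined on all of $\cH$, whereas in the backward half of the equivalence we are handed only the operator $U_0$ on smooth vectors; the small step that needs care is therefore the passage from $U_0$ to its completed extension $U$, after which Proposition~\ref{LArulez} applies verbatim.
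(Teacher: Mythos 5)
Your proposal is correct and follows essentially the same route as the paper, which proves Theorem~\ref{heenenweer} precisely by combining Propositions~\ref{LArulez}, \ref{derivedisregular} and \ref{regularisderived}. The only detail you add beyond the paper's one-line assembly is the explicit passage, via density of $\cH^{\infty}$, from a unitary equivalence of the derived $\fg$-representations to its unitary extension on $\cH$ before applying Proposition~\ref{LArulez}, which is exactly the step the paper leaves implicit in ``$(\rho,\cH)$ is determined by $(\dd\rho,\cH^{\infty})$ up to unitary equivalence.''
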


\begin{Corollary} {\rm(Equivalence weak and strong regularity)}
Let $G$ be a regular Lie group modelled on a barrelled Lie algebra $\fg$.
Then a unitary $\fg$-representation $(\pi,V)$ is weakly regular if and only if it is 
strongly regular. 
\end{Corollary}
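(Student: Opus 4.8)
The plan is to treat the two implications separately. One is immediate from the comparison of topologies, while the converse requires the globalisation results of this section, applied to the universal cover.

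For the easy direction, suppose $(\pi,V)$ is strongly regular. Since the strong topology on $V$ is finer than the weak topology, the identity map from $V$ with its strong topology to $V$ with its weak topology is continuous and linear, hence smooth. Composing the strongly smooth solution curve $t \mapsto \psi_t$ and the strongly smooth endpoint map $\xi \mapsto \psi_1$ with this identity map shows that both are weakly smooth. As continuity in the sense of Definition~\ref{LAcontinuity} is the common hypothesis of both notions, $(\pi,V)$ is weakly regular.

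For the converse, suppose $(\pi,V)$ is weakly regular. I would first replace $G$ by the universal cover $\widetilde{G}_0$ of its identity component $G_0$: this is a $1$-connected Lie group with the same Lie algebra $\fg$, and it is again regular, since any evolution can be solved in $G_0$ and then lifted uniquely and smoothly along the covering, so that $\gamma_1$ still depends smoothly on $\xi$. By Proposition~\ref{regularisderived}, the weakly regular representation $(\pi,V)$ integrates to a smooth unitary representation $(\rho,\cH_V)$ of $\widetilde{G}_0$ for which $V$ is invariant; in particular $V \subseteq \cH_V^\infty$ and $\dd\rho|_V = \pi$. The point I would exploit next is that, because the defining seminorms agree on vectors of $V$, the weak and strong topologies on $V$ induced by $\pi$ coincide with the subspace topologies inherited from the corresponding weak and strong topologies on $\cH_V^\infty$ induced by $\dd\rho$. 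Now, given a smooth path $\xi \colon [0,1] \to \fg$ and $\psi \in V$, I solve $\delta^R_t \gamma = \xi_t$ with $\gamma_0 = \one$ in $\widetilde{G}_0$ (possible by regularity, with $\gamma_1$ depending smoothly on $\xi$) and set $\psi_t := \rho(\gamma_t)\psi$. Since $\dd\rho|_V = \pi$, this is the unique solution of $\frac{d}{dt}\psi_t = \pi(\xi_t)\psi_t$ with $\psi_0 = \psi$, and it lies in $V$ by invariance.

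It remains to upgrade strong smoothness in $\cH_V^\infty$ to strong smoothness in $V$. By Lemma~\ref{smoothorbit}, the orbit map $\rho^\psi \colon \widetilde{G}_0 \to \cH_V^\infty$ is smooth for the strong topology, so $t \mapsto \psi_t = \rho^\psi(\gamma_t)$ is strongly smooth into $\cH_V^\infty$, and likewise $\xi \mapsto \psi_1 = \rho^\psi(\gamma_1)$ is strongly smooth into $\cH_V^\infty$ by composition with the smooth evolution map. To descend both statements to $V$ equipped with its strong topology, I would invoke the explicit shape of the derivatives of the orbit map from the proof of Lemma~\ref{smoothorbit}: every iterated derivative is a finite linear combination of terms $\dd\rho_n(\bxi)\rho(g)\psi$, and since $\rho(g)\psi \in V$ and each $\dd\rho_n(\bxi)$ restricts to $\pi_n(\bxi)$, which preserves $V$, these derivatives all remain in $V$. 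Because $V$ carries exactly the subspace topology and all iterated derivatives land inside $V$, the curve $t \mapsto \psi_t$ and the endpoint map $\xi \mapsto \psi_1$ are smooth into $V$ with its strong topology, so $(\pi,V)$ is strongly regular. The main obstacle is precisely this last descent: strong smoothness into the ambient completion $\cH_V^\infty$ does not automatically give strong smoothness into the subspace $V$, and one must verify both that the strong topology on $V$ is genuinely the subspace topology (immediate from the coincidence of seminorms) and that no iterated derivative leaves $V$ (which one reads off from the concrete derivatives of orbit maps in Lemma~\ref{smoothorbit}).
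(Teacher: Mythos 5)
Your proof is correct and takes essentially the same route as the paper: both directions reduce to integrating the regular $\fg$-representation to the group via Proposition~\ref{regularisderived} and then invoking Proposition~\ref{derivedisregular} to get strong (hence weak) regularity back. The paper's own proof is a one-liner that leaves implicit precisely the two points you spell out, namely the passage to the $1$-connected cover $\widetilde{G}_0$ (needed since the corollary does not assume $G$ simply connected) and the descent of regularity from $(\dd\rho,\cH_{V}^{\infty})$ to the invariant subspace $(\pi,V)$ via the coincidence of the topologies and the fact that all derivatives stay in $V$, so your extra care fills in genuine gaps in the paper's terse argument rather than changing the method.
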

\begin{proof}
If $(\pi,V)$ is either weakly or strongly regular, then it integrates to a smooth $G$-representation,
so it is both weakly and strongly regular by Proposition~\ref{derivedisregular}.
\end{proof}

\section{Projective unitary representations and central extensions}
\label{sec:4}

Every smooth unitary representation of a locally convex Lie group $G$ 
gives rise to a smooth projective unitary representation, and we will call 
a projective representation \emph{linear} if it is of this form.
Although not every smooth projective unitary representations of $G$ is linear,
we will now show that it can always be viewed as a smooth linear 
representation for a 
\emph{central extension} of $G$ by the circle group $\T$. 
We prove that 
smooth \emph{projective} unitary representations of a locally convex
Lie group $G$ give rise to smooth \emph{linear} unitary representations of a 
central Lie group 
extension $G^{\sharp}$ of $G$, and hence to a representation of the central Lie algebra 
extension $\fg^{\sharp}$ of $\fg$. Just like in the finite dimensional case, 
no information is lost in this transition if $G$ is connected.


\begin{Definition}(Central Lie group extensions)\label{GroepUitbreiding}
A \emph{central extension} of $G$ by $\T$ is 
an exact sequence
\[1 \rightarrow \T \rightarrow \widehat{G} \rightarrow G \rightarrow 1\]
of locally convex Lie groups such that the image of $\T$ is central in $\widehat{G}$
and $\widehat{G} \rightarrow G$ is a locally trivial principal $\T$-bundle.
An \emph{isomorphism} $\Phi \colon \widehat{G} \rightarrow \widehat{G}'$ of
central $\T$-extensions is an isomorphism of locally convex Lie groups   
that induces the identity maps on $G $ and $\T$.
\end{Definition}

Given a continuous projective unitary representation $(\ol\rho,\cH)$ of $G$,
the morphism 
\[ G^\sharp := \{ (g,U) \in G \times \U(\cH) \: \oline\rho(g) = \oline U \} \to G, \quad
(g,U) \mapsto g \] 
defines a central $\T$-extension of \emph{topological} groups. Its continuous unitary 
representation 
\[\hat\rho \: G^\sharp \to \U(\cH), \quad (g,U) \mapsto U\] 
reduces to $z \mapsto z\one$ on $\T$, hence
induces $\overline{\rho}$ on $G$.
To show that $G^{\sharp}$ is a locally convex Lie group if $(\ol\rho,\cH)$ is smooth,
we need the notion of (local) \emph{lifts}.

\begin{Definition}(Lifts and cocycles) \label{def:1.3} 
Let $(\ol\rho,\cH)$ be a smooth projective unitary representation of $G$.
A \emph{lift} of $\ol\rho$ over a symmetric $\one$-neighbourhood $U_{\one} \subseteq G$ 
is a function $\rho \: U_{\one} \to \U(\cH)$ with $\rho(\1) = \1$ and
$\oline{\rho(g)} = \oline\rho(g)$ for every $g \in U_\1$.
If $V_{\one}\subeq U_\1$ is a symmetric $\one$-neighbourhood such that $g,h \in V_{\one}$ implies $gh \in U_{\one}$,
then the local \emph{2-cocycle} 
$f \: V_{\one} \times V_{\one} \to \T$, defined by
\begin{equation}
  \label{eq:cocycl}
 \rho(g) \rho(h) = f(g,h) \rho(gh) \quad \mbox{ for all } \quad g,h \in V_\1, 
\end{equation}
measures the failure of $\rho$ to be a local group homomorphism.
\end{Definition}

If two lifts
$\rho$ and $\rho'$ differ by a smooth function (a local 1-cocycle) 
$\beta \colon U_{\one} \rightarrow \T$ (and this will be the case for the 
lifts considered below),
the corresponding local 2-cocycles
$f$ and $f'$ differ by the local coboundary $\delta \beta \colon V_{\one}\times V_{\one} \rightarrow \T$,
defined by 
$\delta\beta(g,h):= \beta(g)\beta(h)\beta(gh)^{-1}$. Note that $\beta(\one) = 1$, 
and $f(g,\1) = f(\1,g) = 1$ for all $g \in V_{\one}$.


\begin{Theorem}{\rm($G^{\sharp}$ as a locally convex Lie group)} \label{thm:1.4} 
Let $(\oline\rho,\cH)$ be a projective 
unitary representation of a connected locally convex Lie group $G$.
Suppose that  $[\psi] \in \bP(\cH)$ is smooth. Then the following assertions hold: 
\begin{itemize}
\item[\rm(i)] The subset $U_\psi := \{ g \in G\: \oline\rho(g)[\psi]  \not\perp  [\psi]\}$ 
is an open symmetric neighbourhood of $\one$ in $G$. For $g \in U_\psi$, let 
$\rho_\psi(g) \in \U(\cH)$  
be the unique lift of $\oline\rho(g)$ with $\la \psi,\rho_\psi(g) \psi\ra > 0$. 
Then the map $U_\psi \to \cH$ defined by $g \mapsto \rho_\psi(g)\psi$ is smooth and 
$\rho_\psi(g^{-1}) = \rho_\psi(g)^{-1}$ for $g \in U_\psi$. 
\item[\rm(ii)] The central $\T$-extension $G^{\sharp}$
of $G$ carries a natural Lie group structure for which the map
$\tau_{\psi} \colon U_\psi \times \T \to G^\sharp$ defined by $(g,z) \mapsto (g, \rho_\psi(g)z)$ is a 
$\T$-equivariant diffeomorphism onto 
an open subset of~$G^\sharp$. This turns 
$G^\sharp$ into a locally convex principal $\T$-bundle over $G$. 
\item[\rm(iii)] The vector $\psi$ has a smooth orbit map under the unitary 
representation $(\widehat{\rho},\cH)$ of $G^{\sharp}$.
\item[\rm(iv)] If $\phi \in \cH$ is such that $\la \psi,\phi \ra \not=0$ and 
$[\phi] \in \bP(\cH)^\infty$, then $\phi$ is a smooth vector for $\hat\rho$, and 
the Lie group structures on $G^\sharp$ obtained from $[\phi]$ and $[\psi]$ coincide. 
\item[\rm(v)] If $\bP(\cH)^\infty$ is dense in $\bP(\cH)$,
then the Lie group structure on $G^{\sharp}$ does not depend on the 
choice of $[\psi] \in \bP(\cH)^{\infty}$. Moreover, we have $\bP(\cH)^{\infty} = \bP(\cH^{\infty})$,
that is,
a vector $\phi \in \cH\backslash\{0\}$ is smooth for $\widehat{\rho}$ if and only if its 
ray $[\phi]$ is smooth for $\ol\rho$.
\end{itemize}
\end{Theorem}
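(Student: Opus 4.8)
The plan is to extract (i)--(iii) directly from the smoothness of the single ray $[\psi]$, to deduce (iv) from a transition-function computation, and finally to bootstrap (iv) together with density into (v). For (i), I would first observe that $g\mapsto\ol\rho(g)[\psi]$ is continuous, so $g\mapsto p([\psi];\ol\rho(g)[\psi])$ is continuous; as $\ol\rho(g)[\psi]\not\perp[\psi]$ is exactly positivity of this transition probability, $U_\psi$ is the preimage of $(0,1]$, hence open, and it contains $\one$, while symmetry follows since $\ol\rho(g)$ preserves orthogonality and $\ol\rho(g^{-1})=\ol\rho(g)^{-1}$. For the canonical lift, let $P_g$ be the orthogonal projection onto the line $\ol\rho(g)[\psi]$. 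Since $g\mapsto\ol\rho(g)[\psi]$ is smooth into the Hilbert manifold $\bP(\cH)$ and $[\chi]\mapsto P_{[\chi]}$ is smooth, the map $g\mapsto P_g\psi$ is a smooth $\cH$-valued map, nonzero precisely on $U_\psi$. Setting $\rho_\psi(g)\psi:=\|\psi\|\,P_g\psi/\|P_g\psi\|$ then gives a smooth map $U_\psi\to\cH$ whose value lies in the ray $\ol\rho(g)[\psi]$ and satisfies $\langle\psi,\rho_\psi(g)\psi\rangle=\|\psi\|\,\|P_g\psi\|>0$, which pins down the phase uniquely. Finally $\rho_\psi(g)^{-1}$ is a lift of $\ol\rho(g^{-1})$ with $\langle\psi,\rho_\psi(g)^{-1}\psi\rangle=\overline{\langle\psi,\rho_\psi(g)\psi\rangle}>0$, so $\rho_\psi(g)^{-1}=\rho_\psi(g^{-1})$ by uniqueness.

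For (ii)--(iii), on the set where $g,h,gh\in U_\psi$ the local cocycle of Definition~\ref{def:1.3} is $f(g,h)=\langle\psi,\rho_\psi(g)\rho_\psi(h)\psi\rangle/\langle\psi,\rho_\psi(gh)\psi\rangle$. Rewriting the numerator as $\langle\rho_\psi(g^{-1})\psi,\rho_\psi(h)\psi\rangle$ exhibits it as a pairing of two maps that are smooth by (i), while the denominator is smooth and nonvanishing near $(\one,\one)$; hence $f$ is a smooth local $2$-cocycle. I would then invoke the standard construction of a Lie group structure on a central $\T$-extension from a smooth local cocycle (cf.~\cite{Ne06}): in the coordinates supplied by $\tau_\psi$ the multiplication reads $(g,z)(h,w)=(gh,f(g,h)zw)$ and the inversion $(g,z)\mapsto(g^{-1},z^{-1})$, both smooth, and left translations transport this chart across the connected group $G^\sharp$; the $\T$-equivariant diffeomorphisms $\tau_\psi$ trivialise $G^\sharp\to G$ over $U_\psi$ and its translates, yielding the principal $\T$-bundle of (ii). For (iii), the orbit map of $\psi$ reads $(g,z)\mapsto z\,\rho_\psi(g)\psi$ in the chart $\tau_\psi$, smooth by (i); since $\hat\rho(x_0x)\psi=\hat\rho(x_0)\hat\rho(x)\psi$ with $\hat\rho(x_0)$ a bounded operator, smoothness near $\one$ propagates to all of $G^\sharp$ by left translation.

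Part (iv) is the heart of the argument. Applying (i)--(iii) to $\phi$ produces a lift $\rho_\phi$ on $U_\phi$ and a Lie group structure on $G^\sharp$ for which $\phi$ has a smooth orbit map; the task is to identify this structure with the one from $\psi$. On $U_\psi\cap U_\phi$ the two lifts differ by a phase, $\rho_\phi(g)=\theta(g)\rho_\psi(g)$, and the charts are related by the transition $(g,z)\mapsto(g,\theta(g)^{-1}z)$, so everything reduces to smoothness of $\theta$. The key identity is
\[\theta(g)=\frac{\langle\psi,\rho_\phi(g)\phi\rangle}{\langle\psi,\rho_\psi(g)\phi\rangle}\,.\]
Here the numerator is smooth because $g\mapsto\rho_\phi(g)\phi$ is smooth by (i), and the denominator is smooth because $\langle\psi,\rho_\psi(g)\phi\rangle=\langle\rho_\psi(g^{-1})\psi,\phi\rangle$ with $g\mapsto\rho_\psi(g^{-1})\psi$ smooth. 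Crucially, at $g=\one$ the denominator equals $\langle\psi,\phi\rangle\neq0$, so it is nonzero on a neighbourhood of $\one$ and $\theta$ is smooth there. Thus the two structures agree near the identity, and since $G^\sharp$ is connected (a $\T$-bundle over the connected $G$) they agree globally; in particular $\phi$ is a smooth vector for $\hat\rho$.

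Finally, (v): independence of the structure follows from (iv), since for smooth rays with $\langle\psi_1,\psi_2\rangle\neq0$ the structures coincide directly, and the orthogonal case is bridged by a third smooth ray $[\psi_3]$ with $\langle\psi_3,\psi_i\rangle\neq0$, which exists because the set of such rays is open and nonempty (it contains $[\psi_1+\psi_2]$) and $\bP(\cH)^\infty$ is dense. For $\bP(\cH)^\infty=\bP(\cH^\infty)$: if $\phi$ is $\hat\rho$-smooth, composing its smooth orbit map with a local section of $G^\sharp\to G$ from (ii) and the projection $\cH\setminus\{0\}\to\bP(\cH)$ shows $[\phi]\in\bP(\cH)^\infty$; conversely, if $[\phi]$ is smooth then applying (iii) with $[\phi]$ itself as defining ray makes $\phi$ smooth for the $[\phi]$-structure, which by the independence just proved is the canonical one. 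The main obstacle throughout is precisely the smoothness of $\theta$ in (iv): the whole comparison of Lie group structures, and hence the identification $\bP(\cH)^\infty=\bP(\cH^\infty)$, rests on the nonvanishing of $\langle\psi,\phi\rangle$ propagating the smooth phase relation between the two canonical lifts.
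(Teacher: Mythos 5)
Your proof is correct and follows essentially the same path as the paper's: the canonical positive-phase lift with smooth orbit map, smoothness of the local pairing/cocycle feeding into a standard local-data construction of the Lie group structure on $G^\sharp$ (the paper cites \cite[Thm.~A.4]{Ne10a} for this step, you invoke the equivalent construction from \cite{Ne06}), smoothness of the transition phase between $\rho_\psi$ and $\rho_\phi$ as a ratio of inner products whose denominator is kept away from zero by $\la \psi,\phi\ra \neq 0$, and the bridging-ray density argument for (v). The only departures are cosmetic: rank-one projections instead of the section $\sigma_\psi$ in (i), and in (iv) you prove smoothness of the transition first and deduce $\phi \in \cH^\infty$ from the identification of the two structures, whereas the paper first proves $g \mapsto \rho_\psi(g)\phi$ smooth and then derives smoothness of the transition from it.
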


\begin{proof} (i): We may assume that $\|\psi\| = 1$. 
On the open subset 
\[ V_\psi 
:= \{ [\phi] \in \bP(\cH) \: |\la \psi, \phi\ra| \not= 0\}  
= \{ [\psi + u]\: u \in \psi^\bot \} \] 
which is diffeomorphic to $\psi^\bot$, we have a unique smooth map 
$\sigma_\psi \: V_{\psi} \to \cH$ with 
\begin{equation}  \label{eq:sigmasect}
\|\sigma_\psi([\phi])\| = 1 \quad \mbox{ and } \quad 
\la \psi, \sigma_\psi([\phi]) \ra > 0 \quad \mbox{ for }\quad [\phi] \in V_{\psi}.
\end{equation}
It satisfies 
$\sigma_\psi([\psi + u]) = \frac{\psi+u}{\|\psi+u\|}$ for $u \in \psi^\bot$. 
Since $[\psi]$ has a continuous orbit map, $U_\psi$ 
is an open $\1$-neighbourhood in $G$. For $g \in U_\psi$, there exists a unique 
$\rho_\psi(g) \in \U(\cH)$ with $\oline{\rho_\psi(g)} = \oline\rho(g)$ and 
$\la \psi,\rho_\psi(g) \psi\ra > 0$. Then $\rho_{\psi}(g) \psi = \sigma_\psi(\oline\rho(g)[\psi])$ 
shows that the map $U_\psi \to \cH$ defined by $g \mapsto \rho_\psi(g)\psi$ is smooth. 
For $g \in U_\psi$, the relation 
$\la \psi, \rho_\psi(g)^{-1} \psi \ra =  \la \rho_\psi(g)\psi, \psi \ra > 0$ 
implies that $g^{-1} \in U_\psi$ with $\rho_\psi(g^{-1}) = \rho_\psi(g)^{-1}$. 

(ii) and (iii): It follows from (i) that, in particular, the map 
$G \times G \to \C$, defined by
\[ (g,h) \mapsto \la \psi, \rho_\psi(g) \rho_\psi(h)\psi\ra 
= \la \rho_\psi(g)^{-1} \psi, \rho_\psi(h)\psi \ra 
= \la \rho_\psi(g^{-1})\psi, \rho_\psi(h)\psi \ra \] 
is smooth in a neighbourhood of $(\1,\1)$, so that (ii) and (iii) follow 
from \cite[Thm.~A.4]{Ne10a}. 

(iv) Let $U \subeq U_\psi \subeq G$ be an open symmetric $0$-neighbourhood 
such that $\oline\rho(g)[\phi] \in V_\psi$ for all $g\in U$. 
We have to show that the map 
\[ U \to \cH, \quad g \mapsto \hat\rho(g, \rho_\psi(g))\phi = \rho_\psi(g)\phi \] 
is smooth. Define $\alpha(g) \in \T$ by 
\[ \rho_\psi(g)\phi = \alpha(g) \sigma_\psi(\oline\rho(g)[\phi]).\] 
In view of the smoothness of $\sigma_\psi$, it suffices to show that 
$\alpha$ is smooth. As $\la \psi, \sigma_\psi(\oline\rho(g)[\phi]) \ra > 0$ and 
$|\alpha(g)| = 1$, we have 
\[ \alpha(g) = \frac{\la \psi, \rho_\psi(g)\phi \ra}{|\la \psi, \rho_\psi(g)\phi \ra|}.\] 
Further, the relation $\rho_\psi(g)^{-1}= \rho_\psi(g^{-1})$ shows that 
the function 
\[  U \to \C,  \quad g \mapsto \la \psi, \rho_\psi(g)\phi \ra = \la \rho_\psi(g^{-1})\psi, \phi \ra \] 
is smooth. This proves that $\phi$ is a smooth vector for $\hat\rho$. 

The Lie group structure on $G^\sharp$ constructed from $[\psi]$ has the property that the map 
\[ \tau_{\psi} \colon U_\psi \times \T \to G^\sharp, \quad (g,z) \mapsto (g, \rho_\psi(g)z)\]
is a local diffeomorphism. Let $\rho_\psi$ and $\rho_\phi$ denote lifts of $\oline\rho$ in an open 
symmetric identity neighbourhood $U \subeq U_\phi$ such that 
$\la \psi, \rho_\psi(g)\psi\ra > 0$ and $\la \phi, \rho_\phi(g)\phi\ra > 0$. 
Write $\rho_\psi(g) = \beta_{\psi,\phi}(g)\rho_\phi(g)$ with $\beta_{\psi,\phi}(g)\in \T$. We have to show that 
$\beta_{\phi,\psi}$ is smooth. We have seen above that the function 
\[ g \mapsto \la  \phi, \rho_\psi(g)\phi \ra = \beta_{\psi,\phi}(g) \la \phi, \rho_\phi(g)\phi\ra \] 
is smooth. Since $\la \phi, \rho_\phi(g)\phi\ra = |\la \phi, \rho_\psi(g)\phi\ra|$, it follows that 
\[ \beta_{\psi,\phi}(g) = \frac{ \la \phi, \rho_\psi(g)\phi\ra}{|\la \phi, \rho_\psi(g)\phi\ra|} \] 
is also smooth. This completes the proof of (iv). 

(v) If $[\phi] \in \bP(\cH)^\infty$, then the density of $\bP(\cH)^\infty$ implies the existence 
of an element 
$[\chi]\in \bP(\cH)^\infty$ with $\la \psi,\chi \ra \not= 0 \not= \la \phi,\chi\ra$. 
Then (iv) implies that the 
Lie group structure on $G^\sharp$ does not depend on the choice of $[\psi]$ and that 
$\phi$ is a smooth vector for $\hat\rho$. 
\end{proof}

\begin{Remark} (Equivariance of lifts) Suppose that, in the context of the preceding theorem, we fix a smooth 
ray $[\psi] \in \bP(\cH)$, $g \in G$ and a lift $\rho(g)$ of $\oline\rho(g) \in \PU(\cH)$. 
Then 
\begin{equation}
  \label{eq:ctrafo}
 U_{\rho(g)\psi} = g U_\psi g^{-1} 
\quad \mbox{ and } \quad 
\rho_{\rho(g)\psi}(ghg^{-1}) = \rho(g)\rho_\psi(h)\rho(g)^{-1} \,,
\end{equation}
because 
$\la \rho(g)\psi, (\rho(g)\rho_\psi(h) \rho(g)^{-1}) \rho(g)\psi\ra > 0$. 

\end{Remark}

The following corollary allows us to consider projective smooth unitary representations 
as \emph{linear} 
representations of a central extension, even if the group is not connected.

\begin{Corollary}{\rm(Projective $G$-rep's $\leftrightarrow$ linear $G^{\sharp}$-rep's)} 
\label{lineariseer}
Let $(\overline{\rho},\cH)$ be a smooth projective unitary representation of a 
locally convex Lie group $G$. Then there exists a central Lie group extension 
\[\T \rightarrow G^{\sharp} \rightarrow G\] 
and a smooth unitary representation 
$\hat\rho$ of $G^{\sharp}$ 
such that $\widehat{\rho}(z) = z\mathbf{1}$ on $\T$ 
and $\oline \rho$ is the
corresponding projective unitary representation. 
Moreover, $(\ol\rho',\cH')$ is unitarily equivalent to $(\overline{\rho},\cH)$
if and only if there exists a unitary transformation $U \colon \cH \rightarrow \cH'$
and an isomorphism $\Phi \colon G^{\sharp} \rightarrow G^{\sharp}{}'$ of central Lie group extensions
such that $U \circ \widehat{\rho}(g) = \widehat{\rho}'(\Phi(g)) \circ U$ for 
$g \in G^\sharp$.
\end{Corollary}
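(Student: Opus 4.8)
The plan is to deduce the statement from the connected case treated in Theorem~\ref{thm:1.4}. For the first assertion I would restrict $\ol\rho$ to the identity component $G_0$, which is open in $G$. Since smoothness of the orbit map $g \mapsto \ol\rho(g)[\psi]$ is a local condition and $\ol\rho(g_0 h)[\psi] = \ol\rho(g_0)\ol\rho(h)[\psi]$, a ray is smooth for $G$ if and only if it is smooth for $G_0$; in particular $\ol\rho|_{G_0}$ is again a smooth projective representation and $\bP(\cH)^\infty$ is the same set whether computed for $G$ or for $G_0$. Theorem~\ref{thm:1.4} then equips the preimage $(G_0)^\sharp$ of $G_0$ in $G^\sharp$ with a Lie group structure, modelled near $\one$ by the charts $\tau_\psi$ attached to smooth rays $[\psi]$.

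To upgrade this to a structure on all of $G^\sharp$, I would show that every inner automorphism restricts to a smooth automorphism of the open normal subgroup $(G_0)^\sharp$, and then invoke the standard gluing principle that a group with an open normal Lie subgroup on which all conjugations act smoothly carries a unique compatible Lie group structure, the charts on the cosets being the left translates of $\tau_\psi$. Smoothness of conjugation is exactly where the equivariance identity \eqref{eq:ctrafo} enters: for $\hat g = (g,W) \in G^\sharp$ one computes $\hat g\, \tau_\psi(h,z)\, \hat g^{-1} = (ghg^{-1}, (W\rho_\psi(h)W^{-1})z) = \tau_{W\psi}(ghg^{-1},z)$, and since $[W\psi] = \ol\rho(g)[\psi]$ is again a smooth ray, $\tau_{W\psi}$ is a chart for the \emph{same} Lie group structure by Theorem~\ref{thm:1.4}(v); thus $c_{\hat g}$ reads $(h,z) \mapsto (ghg^{-1},z)$ in charts and is smooth. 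Local triviality of $G^\sharp \to G$ and centrality of $\T$ are inherited from $\tau_\psi$ by translation, giving a central Lie group extension in the sense of Definition~\ref{GroepUitbreiding}. Finally $\hat\rho(z) = z\one$ and $\ol{\hat\rho} = \ol\rho$ hold by construction, and a vector is smooth for $\hat\rho$ iff it is smooth for $\hat\rho|_{(G_0)^\sharp}$ by locality, i.e.\ iff its ray lies in $\bP(\cH)^\infty = \bP(\cH^\infty)$ by Theorem~\ref{thm:1.4}(v); density of $\bP(\cH)^\infty$ then forces $\cH^\infty$ to be dense, so $\hat\rho$ is smooth.

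For the equivalence, the direction from $(\Phi,U)$ to a unitary equivalence of projective representations is immediate: lifting $g$ to some $\hat g \in G^\sharp$ and projectivising $U\hat\rho(\hat g) = \hat\rho'(\Phi(\hat g))U$ yields $\ol U\,\ol\rho(g) = \ol\rho'(g)\,\ol U$. Conversely, given a unitary $U \colon \cH \to \cH'$ with $\ol U\,\ol\rho(g) = \ol\rho'(g)\,\ol U$, I would set $\Phi(g,W) := (g, UWU^{-1})$. Since $\ol{UWU^{-1}} = \ol U\,\ol\rho(g)\,\ol U^{-1} = \ol\rho'(g)$, this is a well-defined bijective homomorphism $G^\sharp \to G^{\sharp}{}'$ inducing the identity on $G$ and on $\T$, and it satisfies $\hat\rho'(\Phi(g,W))U = UWU^{-1}U = U\hat\rho(g,W)$ as required.

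The one substantial point, and the heart of the statement, is the smoothness of $\Phi$ (that of $\Phi^{-1}$ being symmetric). Here I would again exploit the freedom in the choice of smooth ray: put $\psi' := U\psi$, which is a smooth ray for $\ol\rho'$ because its orbit map is $\ol U$ composed with the smooth orbit map of $[\psi]$. Since $\ol U$ preserves orthogonality one has $U_{\psi'} = U_\psi$, and since $\langle \psi', (U\rho_\psi(g)U^{-1})\psi'\rangle = \langle \psi, \rho_\psi(g)\psi\rangle > 0$ on $U_\psi$, the operator $U\rho_\psi(g)U^{-1}$ is precisely the normalised lift $\rho_{\psi'}(g)$. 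Consequently $\Phi(\tau_\psi(g,z)) = (g,(U\rho_\psi(g)U^{-1})z) = \tau_{\psi'}(g,z)$, so in the charts $\tau_\psi$ and $\tau_{\psi'}$ the map $\Phi$ is the identity and hence smooth near $\one$. Being a homomorphism between Lie groups whose translations are smooth, $\Phi$ is then smooth everywhere: in the translated charts $L_{\hat g}\tau_\psi$ and $L_{\Phi(\hat g)}\tau_{\psi'}$ the homomorphism property collapses its local expression back to $\tau_{\psi'}^{-1}\circ\Phi\circ\tau_\psi$. Thus $\Phi$ is an isomorphism of central Lie group extensions, completing the proof.
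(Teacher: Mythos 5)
Your proof is correct and follows essentially the same route as the paper: restrict to $G_0$, equip $(G_0)^\sharp$ with the Lie group structure of Theorem~\ref{thm:1.4}, extend it to $G^\sharp$ by showing conjugations are smooth near $\one$, and realise the isomorphism as $\Phi(g,W)=(g,UWU^{-1})$, identified with the local trivialisations $\tau_\psi$, $\tau'_{U\psi}$. The only (harmless) cosmetic difference is that you phrase the conjugation step via the equivariance relation \eqref{eq:ctrafo} and chart compatibility from Theorem~\ref{thm:1.4}(iv)--(v), where the paper writes out the transition function $\beta(g)=\rho_\psi(hgh^{-1})^{-1}W\rho_\psi(g)W^{-1}$ explicitly --- these are the same argument.
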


\begin{proof} Let $G_0 \subeq G$ be the identity component of $G$. 
We endow $G^\sharp_0$ with the Lie group structure from Theorem~\ref{thm:1.4}. 
Note that $[\psi] \in \bP(\cH)$ is smooth for $\ol\rho$ if and only if it is smooth for 
$\ol\rho|_{G_0}$, so that 
Theorem~\ref{thm:1.4}(v) implies 
that $\bP(\cH)^{\infty} = \bP(\cH^\infty)$, where $\cH^{\infty}$ 
is the space of smooth vectors
for the representation $\hat\rho$ of~$G^\sharp_0$. 

It remains to show that the Lie group structure on the normal subgroup $G^\sharp_0$ of $G^\sharp$ 
extends to a Lie group structure on $G^\sharp$. We have to show that, 
for $(h,W) \in G^\sharp$, the corresponding conjugation automorphism 
$c_{(h,W)}$ of $G^{\sharp}_0$ is smooth in an identity neighbourhood. The manifold structure on 
$G^{\sharp}_0$ is obtained from product maps 
\[ U \times \T \to G^{\sharp}_0, \quad (g,z) \mapsto (g, \rho_\psi(g)z),\]
where $[\psi] \in \bP(\cH)^\infty$ and 
$\rho_\psi(g) \in \U(\cH)$ is specified by $\la \psi, \rho_\psi(g)\psi\ra > 0$. 
In view of \[ (h,W)(g, \rho_\psi(g)z)(h,W)^{-1} = (hgh^{-1}, W \rho_\psi(g) W^{-1} z),\] 
the smoothness of $c_{(h,W)}$ in an identity neighbourhood is equivalent to the 
smoothness of the map 
\[ g \mapsto \beta(g) := \rho_\psi(hgh^{-1})^{-1} W \rho_\psi(g) W^{-1} \in \T \] 
in a $\1$-neighbourhood of $G$. 
For $\phi := W\psi$ we have $[\phi] = \oline\rho(h)[\psi] \in \bP(\cH)^\infty$. 
Further, $\rho_\phi(g) := W \rho_\psi(g) W^{-1}$ satisfies $\oline{\rho_\phi(g)} = \oline\rho(hgh^{-1})$ 
and $\la \rho_\phi(g)\phi,\phi\ra = \la \rho_\psi(g)\psi,\psi\ra > 0$. Therefore the smoothness of $\beta$ 
in an identity neighbourhood follows as in the proof of Theorem~\ref{thm:1.4}(v).
It is clear that the space of smooth vectors for $\widehat{\rho}$ and $\widehat{\rho}|_{G_0^{\sharp}}$
is the same.

To prove the final statement, note that
if $U \colon \cH \rightarrow \cH'$ is a unitary intertwiner between $(\ol\rho,\cH)$
and $(\ol\rho',\cH')$, then 
\[\Phi \colon G^{\sharp} \rightarrow G^{\sharp}{}', \quad (g,V) \mapsto (g,UVU^{-1})\] 
is a continuous
isomorphism of central extensions satisfying $\widehat{\rho}(\Phi(g)) \circ U 
= {U \circ \widehat{\rho}(g)}$.
Since the lifts $\rho'_{U\phi}$ and $\rho_{\psi}$ from Theorem~\ref{thm:1.4}(i) 
satisfy $\rho'_{U\psi}(g) = U\rho_{\psi}(g)U^{-1}$, 
we obtain $\tau'_{U\psi} = \Phi \circ \tau_{\psi}$ for
the local trivialisations $\tau_{\psi}$
and $\tau'_{U\psi}$ of 
Theorem~\ref{thm:1.4}(ii). It follows that $\Phi$ is smooth.
\end{proof}

\begin{Remark}(Identical cocycles) \label{zelfde}
If $U \colon \cH \rightarrow \cH'$ is an isometric intertwiner from $(\ol\rho,\cH)$ to $(\ol\rho',\cH')$,
then the local 2-cocycles $f_{\psi}$ and $f_{U\psi}$ derived from the local lifts $\rho_{\psi}$ and $\rho'_{U\psi}$ 
(cf.\ Definition~\ref{def:1.3})
are not only cohomologous, but 
actually \emph{identical}.
\end{Remark}

In order to classify smooth projective unitary representations of $G$, one first
classifies the central $\T$-extensions $\widehat{G} \rightarrow G$ up to isomorphism, and then
determines the smooth unitary representations of each $\widehat{G}$ (with the property that $\rho(z) = z\one$ for $z\in \T$)
up to unitary equivalence. One then obtains all smooth projective unitary representations,
but there is a slight redundancy in this description.
Indeed,
unitary $\widehat{G}$-representations that differ by a character $\chi \colon G \rightarrow \T$ 
clearly give rise to the same projective $G$-representation, but need not be unitarily equivalent 
as $\widehat{G}$-representations. For connected Lie groups $G$ with a perfect Lie algebra $\fg$, 
this redundancy vanishes because all characters are trivial.

\begin{Proposition}\label{autoriseer}{\rm (Linear representations modulo characters)}
Let $(\widehat{\rho}, \cH)$ and $(\widehat{\rho}',\cH')$ 
be smooth unitary representations of the same
central $\T$-extension $\widehat{G}$, 
with $\widehat{\rho}(z) = \widehat{\rho}'(z)= z \one$ for $z\in \T$.
Then they give rise to unitarily equivalent projective unitary $G$-representations
if and only if there exists a unitary transformation $U \colon \cH \rightarrow \cH'$
and a smooth character $\chi \colon G \rightarrow \T$ such that
\[\widehat{\rho}'(\widehat{g}) = \chi(g)\cdot U\widehat{\rho}(\widehat{g})U^{-1} 
\quad \mbox{ for } \quad \hat G \ni \hat g \mapsto g \in G.\]
If, moreover, the Lie algebra $\fg$ of $G$ is a topologically perfect, then $\chi$ factors through
a character $\chi_{0} \colon \pi_0(G) \rightarrow \T$.
\end{Proposition}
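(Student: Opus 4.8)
The plan is to prove the two implications of the equivalence separately and then treat the topologically perfect refinement. The direction $(\Leftarrow)$ is immediate: if $\widehat\rho'(\hat g) = \chi(g)\,U\widehat\rho(\hat g)U^{-1}$ with $\chi(g)\in\T$ scalar, then passing to $\PU(\cH')$ kills the central factor $\chi(g)$, so $\ol\rho'(g) = \ol U\,\ol\rho(g)\,\ol U^{-1}$ for all $g$, exhibiting $\ol U$ as a unitary intertwiner of the projective representations. For $(\Rightarrow)$, suppose $\ol U\,\ol\rho(g)\,\ol U^{-1} = \ol\rho'(g)$ for a unitary $U\colon\cH\to\cH'$. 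Then $\widehat\rho'(\hat g)$ and $U\widehat\rho(\hat g)U^{-1}$ have the same image in $\PU(\cH')=\U(\cH')/\T\one$, so they differ by a unique scalar $\chi(\hat g)\in\T$, and I define $\chi$ by $\widehat\rho'(\hat g) = \chi(\hat g)\,U\widehat\rho(\hat g)U^{-1}$. Writing $\hat g' = z\hat g$ with $z\in\T$ and using $\widehat\rho(z\hat g) = z\widehat\rho(\hat g)$ and $\widehat\rho'(z\hat g) = z\widehat\rho'(\hat g)$ shows $\chi(z\hat g)=\chi(\hat g)$, so $\chi$ descends to a map $\chi\colon G\to\T$; comparing $\widehat\rho'(\hat g\hat h) = \widehat\rho'(\hat g)\widehat\rho'(\hat h)$ with the defining relation then gives $\chi(gh)=\chi(g)\chi(h)$, so that $\chi$ is a character.

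The only substantial point is smoothness of $\chi$, and this is where I expect the main obstacle: $U$ intertwines the two representations only projectively, so it does \emph{not} linearly intertwine $\widehat\rho$ and $\widehat\rho'$, and smoothness of an orbit map $\hat g\mapsto\widehat\rho'(\hat g)U\eta$ cannot be read off directly. I would first establish that $U$ carries smooth vectors to smooth vectors, i.e.\ $U(\cH^\infty)=(\cH')^\infty$. Since $\widehat\rho$ and $\widehat\rho'$ are smooth, the sets of smooth rays $\bP(\cH)^\infty$ and $\bP(\cH')^\infty$ are dense, so Theorem~\ref{thm:1.4}(v) applies and identifies smooth vectors with nonzero vectors generating smooth rays in both spaces. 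As $\ol U$ intertwines $\ol\rho$ and $\ol\rho'$ it maps smooth rays to smooth rays (the orbit map of $[U\eta]$ under $\ol\rho'$ is $\ol U$ composed with that of $[\eta]$ under $\ol\rho$, and $\ol U$ is a diffeomorphism of projective spaces); chaining these equivalences yields $\eta\in\cH^\infty\setminus\{0\}\iff U\eta\in(\cH')^\infty\setminus\{0\}$. It then suffices to prove $\chi$ is smooth near $\one$, since a character that is smooth near the identity is smooth everywhere. Fixing a unit smooth vector $\eta\in\cH^\infty$, both matrix coefficients $a(\hat g):=\la U\eta,\widehat\rho'(\hat g)U\eta\ra$ and $b(\hat g):=\la\eta,\widehat\rho(\hat g)\eta\ra$ are smooth on $\widehat G$, and the defining relation gives $a(\hat g)=\chi(g)\,b(\hat g)$. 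Near $\one\in\widehat G$ we have $b(\hat g)\to\la\eta,\eta\ra=1$, so $b$ is nonvanishing there and $\chi(g)=a(\hat g)/b(\hat g)$ is a smooth function of $\hat g$ that is constant on $\T$-fibres; composing with a smooth local section of the principal $\T$-bundle $\widehat G\to G$ shows $\chi$ is smooth near $\one$.

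For the final assertion, suppose $\fg=\ol{[\fg,\fg]}$ is topologically perfect. The derivative $\Lie(\chi)\colon\fg\to\Lie(\T)=\R$ is a continuous homomorphism of Lie algebras; since $\R$ is abelian it annihilates $[\fg,\fg]$, and by continuity it annihilates the closure $\ol{[\fg,\fg]}=\fg$, so $\Lie(\chi)=0$. Consequently, for any smooth path $g\colon[0,1]\to G$ with $g_0=\one$, differentiating $\chi(g_{t+h})=\chi(g_{t+h}g_t^{-1})\chi(g_t)$ at $h=0$ gives $\frac{d}{dt}\chi(g_t)=\Lie(\chi)(\delta^{R}_{t}g)\,\chi(g_t)=0$, so $t\mapsto\chi(g_t)$ is constant and equal to $\chi(\one)=1$. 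As $G_0$ is connected, hence smoothly path-connected, this shows $\chi|_{G_0}=1$, so $\chi$ factors through $G/G_0=\pi_0(G)$ to give the desired character $\chi_0\colon\pi_0(G)\to\T$.
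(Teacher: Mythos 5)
Your proof is correct, but it takes a genuinely different route from the paper's. The paper settles the equivalence in one stroke by invoking Corollary~\ref{lineariseer}: unitary equivalence of the induced projective representations is equivalent to the existence of $U$ together with an isomorphism $\Phi$ of central Lie group extensions intertwining $\widehat\rho$ and $\widehat\rho'$; the only remaining work is the structural observation that an automorphism $\Phi$ of a central $\T$-extension inducing the identity on $G$ and $\T$ must have the form $\Phi(\widehat g)=\widehat g\cdot\widehat\chi(\widehat g)$ for a smooth $\T$-valued group homomorphism $\widehat\chi$ that kills $\T$ and hence factors through a smooth character $\chi$ of $G$. Smoothness of $\chi$ is thus inherited from the smoothness of $\Phi$, which was established in Corollary~\ref{lineariseer} by comparing local trivialisations. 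You instead construct $\chi$ by hand as the scalar discrepancy $\widehat\rho'(\widehat g)\bigl(U\widehat\rho(\widehat g)U^{-1}\bigr)^{-1}$, check the character property algebraically, and prove smoothness directly: $\ol U$ carries smooth rays to smooth rays, Theorem~\ref{thm:1.4}(v) upgrades this to $U(\cH^\infty)=(\cH')^{\infty}$, and then $\chi=a/b$ is a quotient of smooth matrix coefficients near $\one$, smooth on $G$ after composing with a local section of $\widehat G\to G$. Your route is more self-contained -- it bypasses Corollary~\ref{lineariseer} and the classification of bundle automorphisms -- at the cost of re-deriving a piece of that corollary's content; the paper's argument is shorter and isolates a reusable structural fact about automorphisms of central extensions. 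The final step (perfect $\fg$ forces $\dd\chi$ to vanish on $\ol{[\fg,\fg]}=\fg$, hence $\chi$ is locally constant and factors through $\pi_0(G)$) is the same in both proofs, with your version supplying the details of why a smooth character with vanishing derivative is trivial on $G_0$.

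One point to flag, though it does not invalidate your argument: Theorem~\ref{thm:1.4}(v) is stated for the canonical extension $G^\sharp$ and its tautological representation $\widehat\rho$, whereas you apply it to the given representations of the abstract extension $\widehat G$. Strictly, this requires identifying $\widehat G$ with $G^\sharp$ (resp.\ $G^{\sharp}{}'$) as Lie group extensions via $\widehat g\mapsto(q(\widehat g),\widehat\rho(\widehat g))$, and checking that this identification is smooth (which follows by expressing it in the charts $\tau_\psi$ through matrix coefficients of a smooth vector). This is a gloss rather than a gap: the paper makes the very same silent identification when it applies Corollary~\ref{lineariseer} to representations of the given $\widehat G$.
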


\begin{proof}
If $\Phi \colon \widehat{G} \rightarrow \widehat{G}$ is an automorphism of a
central $\T$-extension $\widehat{G}\rightarrow G$, then it is
in particular an automorphism of principal $\T$-bundles, hence of the form
$\Phi(\widehat{g}) = \widehat{g}\cdot \chi(\widehat{g})$ for a smooth $\T$-equivariant
map $\widehat{\chi} \colon \widehat{G} \rightarrow \T$.
Moreover, $\widehat{\chi}$ must be a group homomorphism because $\Phi$ is.
Since $\Phi(z) = z$ for $z\in \T$, we have $\widehat{\chi}(z) = \one$, so that
$\widehat{\chi}$ factors through a smooth character 
$\chi \colon G \rightarrow \T$.  
The first part of the proposition now follows from Corollary~\ref{lineariseer}.
Since $\dd\chi \colon \fg \rightarrow \R$ is a continuous Lie algebra homomorphism, 
it vanishes on $[\fg,\fg]$. Thus, if $\fg$ is topologically perfect,
$\chi$ is locally constant, hence factors through a character $\chi \colon \pi_{0}(G)\rightarrow \T$.
\end{proof}

\section{Smoothness of projective representations} \label{AppendixA}

In this section, we provide further background on the
notion of \emph{smoothness} for a projective unitary representation
$(\ol\rho,\cH)$ of a locally convex Lie group $G$.
In Subsection~\ref{A1} we obtain 
effective criteria for the smoothness of a ray $[\psi] \in \bP(\cH)$, and 
in Subsection~\ref{A2} 
we 
determine the structure of the set $\bP(\cH)^{\infty}$
of smooth rays. 

\subsection{Smoothness criteria}\label{A1}

The following useful smoothness criterion for unitary representations is proven in \cite[Thm.~7.2]{Ne10b}.
\begin{Theorem}{\rm(Linear smoothness criterion)}\label{kreet}
A vector $\psi \in \cH$ is smooth for a unitary representation $(\rho,\cH)$ if and only if the 
map $G \rightarrow \C, g \mapsto \la \psi, \rho(g) \psi \ra$ 
is smooth in a neighbourhood of $\one\in G$.
\end{Theorem}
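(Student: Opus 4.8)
The plan is to prove the two implications separately; the forward one is formal and the converse carries all the content. For ``$\psi$ smooth $\Rightarrow$ matrix coefficient smooth'': if $\psi \in \cH^{\infty}$ then the orbit map $\rho^{\psi}\colon G \to \cH$, $g \mapsto \rho(g)\psi$, is smooth by definition, and $g \mapsto \la \psi, \rho(g)\psi\ra$ is its composition with the bounded (hence smooth) conjugate-linear functional $\phi \mapsto \la \psi, \phi\ra$, so it is smooth on all of $G$, in particular near $\one$.

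For the converse I write $f(g) := \la \psi, \rho(g)\psi\ra$ and assume $f$ is smooth on a $\one$-neighbourhood; I must show $\rho^{\psi}$ is smooth. Since $\rho^{\psi}(g) = \rho(g_1)\,\rho^{\psi}(g_1^{-1}g)$, with $\rho(g_1)$ a (smooth) unitary and $g \mapsto g_1^{-1}g$ a diffeomorphism taking a neighbourhood of $g_1$ to one of $\one$, it suffices to establish smoothness of $\rho^{\psi}$ near $\one$. Moreover, differentiating along a curve $\gamma$ with $\gamma_0 = g_1$ reduces, after applying the unitary $\rho(g_1)$, to the curve $\tilde{\gamma}_t := g_1^{-1}\gamma_t$ through $\one$, along which the relevant group element $\tilde{\gamma}_t^{-1}\tilde{\gamma}_s = \gamma_t^{-1}\gamma_s$ stays near $\one$. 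Hence everything will be controlled by $f$ on a single $\one$-neighbourhood, matching the hypothesis.

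The heart of the argument is a weak-to-strong (``doubling of derivatives'') mechanism. Fix a smooth curve $\gamma$ with $\gamma_0 = \one$ and set $x_t := \tfrac1t(\rho(\gamma_t)\psi - \psi)$ for $t\neq 0$. Expanding the inner product and using unitarity (so $\la \rho(\gamma_t)\psi, \rho(\gamma_s)\psi\ra = f(\gamma_t^{-1}\gamma_s)$ and $f(g^{-1}) = \overline{f(g)}$) gives
\[
\la x_t, x_s\ra = \frac{1}{ts}\big(F(t,s) - F(t,0) - F(0,s) + F(0,0)\big), \qquad F(t,s) := f(\gamma_t^{-1}\gamma_s).
\]
Here $F$ is smooth near $(0,0)$, being the composite of $f$ with the smooth map $(t,s)\mapsto \gamma_t^{-1}\gamma_s$. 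The right-hand side is a second-order difference quotient: writing it as $\frac{1}{ts}\int_0^t\!\!\int_0^s \partial_t\partial_s F(u,v)\,dv\,du$ and using continuity of $\partial_t\partial_s F$, it converges to $L := \partial_t\partial_s F(0,0)$ as $(t,s)\to(0,0)$, a finite real number. The same computation gives $\|x_t\|^2 \to L$ and $\operatorname{Re}\la x_t,x_s\ra \to L$, so $\|x_t - x_s\|^2 = \|x_t\|^2 + \|x_s\|^2 - 2\operatorname{Re}\la x_t, x_s\ra \to 0$; thus $(x_t)$ is Cauchy and converges in $\cH$. This yields the directional derivative of $\rho^{\psi}$ at $\one$, and by the reduction above at every point near $\one$.

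To promote this to $\rho^{\psi} \in C^{\infty}$ I would iterate the computation: an order-$j$ iterated difference quotient of the orbit map has, by the same expansion, squared norm equal to an order-$2j$ iterated difference quotient of $f$, which converges because $f$ is $C^{\infty}$, and joint continuity of the resulting maps $D^j\rho^{\psi}$ follows from uniform convergence of these quotients on compact sets of base points and bounded sets of directions, using continuity of the higher partials of $f$. The hard part will be precisely this higher-order bookkeeping: pairing the $j$ left increments with the $j$ right increments so as to recognise the squared norm as a genuine $2j$-fold difference quotient of $f$, and extracting continuity uniformly in the base point. Conceptually this is the same phenomenon as the first-order identity, so I would either carry out the induction directly or, equivalently, organise it through the reproducing-kernel space of the positive-definite function $f$, where the statement ``$g\mapsto K_g$ is smooth iff $(g,h)\mapsto f(g^{-1}h)$ is smooth'' is exactly the difference-quotient identity above (cf.\ \cite{Ne10b}).
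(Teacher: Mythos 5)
Your proposal is correct, but note that the paper itself offers no proof of this statement: it is quoted verbatim from \cite[Thm.~7.2]{Ne10b}, and your argument reconstructs exactly the mechanism of that cited proof — the identity expressing $\la x_t,x_s\ra$ as a doubled difference quotient of the kernel $(g,h)\mapsto\la\rho(g)\psi,\rho(h)\psi\ra=f(g^{-1}h)$, whose all-orders version is precisely the Hilbert-space kernel criterion \cite[Thm.~7.1]{Ne10b} that this paper also invokes in its proof of the projective criterion (Theorem~\ref{projkreet}). Your first-order step (including the reduction to a $\one$-neighbourhood, which is what makes the local hypothesis on $f$ suffice) is complete and correct, and the higher-order bookkeeping you defer is exactly the content of that kernel theorem, so the proposal follows essentially the same route as the source on which the paper relies.
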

In particular, a unitary representation is smooth if it possesses a cyclic vector 
$\Omega \in \cH$ such that $G\rightarrow \C, g \mapsto \la\Omega,\rho(g)\Omega\ra$
is smooth in a neighbourhood of $\one \in G$.
For projective unitary representations, Theorem~\ref{kreet} generalises as follows.

\begin{Theorem}{\rm(Projective smoothness criterion)} \label{projkreet}
A ray $[\psi] \in \bP(\cH)$ is smooth for a projective unitary representation $(\ol\rho,\cH)$
if and only if the transition probability 
\[G \rightarrow \R, \quad g \mapsto p([\psi]\,; \ol\rho(g)[\psi])\]
is smooth in a neighbourhood of 
$\one \in G$ and if, moreover, the local cocycle $f_{\psi}$ (which is then defined)
is smooth in a neighbourhood of $(\one,\one) \in G\times G$. 
\end{Theorem}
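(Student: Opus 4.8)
The plan is to reduce both directions to the smoothness, near $\one$, of the single $\cH$-valued map $\gamma\colon g \mapsto \rho_\psi(g)\psi$, where $\rho_\psi$ is the normalised local lift of Theorem~\ref{thm:1.4}. Normalising $\|\psi\| = 1$, I would first note that smoothness of a ray is a local condition and that $[\psi]$ is smooth for $\ol\rho$ if and only if it is smooth for $\ol\rho|_{G_0}$ (as in the proof of Corollary~\ref{lineariseer}), so I may take $G$ connected and work in a symmetric $\one$-neighbourhood. Once $\gamma$ is shown to be smooth near $\one$, composing with the smooth submersion $\cH\setminus\{0\} \to \bP(\cH)$ gives smoothness of $g \mapsto \ol\rho(g)[\psi]$ near $\one$, and writing $\ol\rho(g)[\psi] = \ol\rho(g_0)\ol\rho(g_0^{-1}g)[\psi]$ and using that each $\ol\rho(g_0)$ acts on $\bP(\cH)$ as a (smooth) Fubini--Study isometry propagates smoothness to all of $G$.

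For the forward implication I would assume $[\psi]$ smooth and apply Theorem~\ref{thm:1.4}(i): then $U_\psi$ is an open $\one$-neighbourhood, $\rho_\psi(g)^{-1} = \rho_\psi(g^{-1})$, and $\gamma$ is smooth on $U_\psi$. Since $\rho_\psi(g)$ is unitary,
\[ p([\psi]\,;\ol\rho(g)[\psi]) = |\la \psi, \rho_\psi(g)\psi\ra|^2 \]
is smooth. For the cocycle, rewriting the numerator of
\[ f_\psi(g,h) = \frac{\la \psi, \rho_\psi(g)\rho_\psi(h)\psi\ra}{\la \psi, \rho_\psi(gh)\psi\ra} \]
as $\la \rho_\psi(g^{-1})\psi, \rho_\psi(h)\psi\ra$ exhibits it as a composition of smooth maps, while the denominator is smooth and strictly positive near $(\one,\one)$; hence $f_\psi$ is smooth there.

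For the converse I would start from the observation that continuity of $g \mapsto p([\psi]\,;\ol\rho(g)[\psi])$, which equals $1$ at $\one$, places a $\one$-neighbourhood inside $U_\psi$, so that $\rho_\psi$ and (on a smaller neighbourhood) $f_\psi$ are defined and $\rho_\psi(g)^{-1} = \rho_\psi(g^{-1})$ holds by the normalisation alone. Setting $a(k) := \la\psi,\rho_\psi(k)\psi\ra = \sqrt{p([\psi]\,;\ol\rho(k)[\psi])}$, which is smooth and positive near $\one$, the cocycle relation $\rho_\psi(g^{-1})\rho_\psi(h) = f_\psi(g^{-1},h)\rho_\psi(g^{-1}h)$ yields
\[ K(g,h) := \la \rho_\psi(g)\psi, \rho_\psi(h)\psi\ra = f_\psi(g^{-1},h)\,a(g^{-1}h), \]
which is smooth near $(\one,\one)$ by the two hypotheses together with smoothness of $(g,h) \mapsto g^{-1}h$. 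Note that continuity of $K$ already gives continuity of $\gamma$, via $\|\gamma(g) - \gamma(h)\|^2 = K(g,g) - 2\,\mathrm{Re}\,K(g,h) + K(h,h)$.

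The crux, and the step I expect to be the main obstacle, is to deduce smoothness of $\gamma$ from smoothness of the kernel $K(g,h) = \la\gamma(g),\gamma(h)\ra$; this is the twisted, genuinely two-variable analogue of the passage in Theorem~\ref{kreet} from a matrix coefficient to an orbit map. For the directional difference quotients $d_t := \tfrac1t(\gamma(g\exp(t\xi)) - \gamma(g))$ I would check, using smoothness of $K$, that $\|d_t\|^2$ is a second difference quotient of $K$ and hence stays bounded and converges, and that each $\la\gamma(h), d_t\ra$ converges; since the $\gamma(h)$ span the cyclic subspace in which all $d_t$ lie, $d_t$ converges weakly to some $v$, and because $\lim_t\|d_t\|^2$ and $\|v\|^2$ both compute the mixed second derivative of $K$ at $(g,g)$, one has $\|d_t\| \to \|v\|$; weak convergence together with convergence of the norms then forces $d_t \to v$ in norm, giving the directional derivative $D\gamma(g)(\xi) = v$. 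The hard, genuinely technical part is to promote these pointwise directional derivatives to a full $C^\infty$ statement, establishing joint continuity and uniform control of all the iterated derivatives, which I would carry out by iterating the above estimates, refining the techniques of \cite{Ne10b, Ne14}. The reduction of the first paragraph then completes the argument.
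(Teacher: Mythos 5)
Your proposal is correct and follows the paper's own proof essentially step for step: the same reduction of both directions to smoothness of $g \mapsto \rho_\psi(g)\psi$ near $\one$ (via Theorem~\ref{thm:1.4}(i) and equivariance), and the same key identity $\la\rho_\psi(g)\psi,\rho_\psi(h)\psi\ra = f_\psi(g^{-1},h)\,\sqrt{p([\psi]\,;\ol\rho(g^{-1}h)[\psi])}$ linking the kernel to the two hypotheses. The only difference is that the ``hard, genuinely technical part'' you defer --- that smoothness of the kernel $(g,h)\mapsto\la\gamma(g),\gamma(h)\ra$ implies smoothness of $\gamma$ itself --- is precisely \cite[Thm.~7.1]{Ne10b}, which the paper invokes as a citation rather than re-proving; your $C^1$-sketch is a correct start on that result, but you can simply quote it.
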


\begin{proof}
Since continuity at $\one$ of either the orbit map or the transition probability 
imply the existence of a $\one$-neighbourhood $U_{\one} \subseteq G$
with $\ol\rho(g)[\psi] \not \perp [\psi]$ for $g \in U_\1$, 
the lift $\rho_{\psi}(g)$
of $\ol\rho(g)$ is defined on $U_{\one}$ by the requirement 
$\la\psi,\rho_{\psi}(g)\psi\ra > 0$. Under either assumption, then, the local cocycle 
$f_{\psi} \colon U_{\one} \times U_{\one} \rightarrow \T$ of Definition~\ref{def:1.3}
is defined.
By equivariance,
a ray $[\psi] \in \bP(\cH)$ is smooth if and only if the orbit map 
$G \rightarrow \bP(\cH), g \mapsto \ol\rho(g)[\psi]$ is smooth in a neighbourhood 
$U_{\one}$ of $\one \in G$, which is the case if and only if 
$U_{\one} \rightarrow \cH,  g \mapsto \rho_{\psi}(g)\psi$ is smooth 
(see the proof of Theorem~\ref{thm:1.4}(i)).
By \cite[Thm.~7.1]{Ne10b}, this is the case if and only if 
$U_{\one} \times U_{\one} \rightarrow \C, (g,h) \mapsto \la\rho_{\psi}(g)\psi,
\rho_{\psi}(h)\psi\ra$
is smooth. If $\|\psi\| = 1$, then 
this equals $f_{\psi}(g^{-1},h)\la\psi, \rho_{\psi}(g^{-1}h)\psi\ra$, which in turn 
can be written $f_{\psi}(g^{-1},h) \sqrt{p([\psi], \ol\rho(g^{-1}h)[\psi])}$. The theorem follows.
\end{proof}
This criterion yields the following effective method to prove smoothness of projective 
unitary representations.
\begin{Corollary}{\rm(Smoothness of projective representations)} 
A projective unitary representation $(\ol\rho,\cH)$ is smooth if it possesses a cyclic vector $\Omega \in \cH$ such that 
$g \mapsto p([\Omega], \ol\rho(g)[\Omega])$ and $f_{\Omega}$ are smooth
in a neighbourhood of $\one \in G$ and $(\one,\one) \in G\times G$ respectively.
\end{Corollary}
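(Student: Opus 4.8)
The plan is to derive this corollary directly from the Projective smoothness criterion (Theorem~\ref{projkreet}), using the cyclic vector $\Omega$ to reduce the density requirement for $\bP(\cH)^\infty$ to a single ray. First I would observe that by hypothesis the transition probability $g \mapsto p([\Omega], \ol\rho(g)[\Omega])$ is smooth near $\one$, and the local cocycle $f_\Omega$ is smooth near $(\one,\one)$; this is precisely what Theorem~\ref{projkreet} requires to conclude that the single ray $[\Omega] \in \bP(\cH)$ is smooth for $\ol\rho$. So the cyclic vector itself generates a smooth ray.

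The remaining task is to upgrade smoothness of the one ray $[\Omega]$ to density of $\bP(\cH)^\infty$, which is the definition of a smooth projective representation. Here I would invoke the fact that $\Omega$ is cyclic: the linear span of the orbit $\{\rho_\Omega(g)\Omega : g \in U_\Omega\}$, or more generally of vectors obtained by acting with the representation, is dense in $\cH$. The key structural input is the equivariance of smoothness encoded in Theorem~\ref{thm:1.4} and the remark on equivariance of lifts, equation~\eqref{eq:ctrafo}: if $[\psi]$ is a smooth ray and $g \in G$, then $\ol\rho(g)[\psi]$ is again smooth, since $U_{\rho(g)\psi} = g U_\psi g^{-1}$ and the lift transforms by conjugation. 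Thus the entire $G$-orbit of $[\Omega]$ consists of smooth rays.

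The main obstacle I anticipate is passing from the smooth orbit of $[\Omega]$ to a \emph{dense} set of smooth rays in $\bP(\cH)$, because the orbit of a single ray need not be dense even when $\Omega$ is a cyclic \emph{vector}. The cleanest route is to work at the level of the central extension: once $[\Omega]$ is smooth, Theorem~\ref{thm:1.4}(ii)--(iii) (applied to $G_0$, and extended via Corollary~\ref{lineariseer}) endows $G^\sharp$ with a Lie group structure for which $\Omega$ is a smooth vector for the linear representation $\hat\rho$. Then cyclicity of $\Omega$ for $\rho$ — hence for $\hat\rho$, since the two differ only by the central $\T$-action — together with the Linear smoothness criterion (Theorem~\ref{kreet}) and the standard fact that the smooth vectors $\cH^\infty$ of a smooth linear representation form a dense subspace containing the span of $\hat\rho(G^\sharp)$-translates of any smooth cyclic vector, shows $\cH^\infty$ is dense in $\cH$. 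By Theorem~\ref{thm:1.4}(v) we have $\bP(\cH)^\infty = \bP(\cH^\infty)$, so density of $\cH^\infty$ yields density of $\bP(\cH)^\infty$, which is exactly smoothness of $(\ol\rho,\cH)$.

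An alternative, more self-contained argument avoids explicitly constructing $G^\sharp$: having shown $[\Omega]$ is smooth via Theorem~\ref{projkreet}, I would note that the smoothness of $g \mapsto \la\Omega,\rho_\Omega(g)\Omega\ra$ established in that proof lets one apply the \emph{linear} criterion Theorem~\ref{kreet} locally to see that $\Omega$ is a smooth vector for the local lift $\rho_\Omega$, and then use the matrix coefficients $g \mapsto \la\Omega, \rho_\Omega(g)\rho_\Omega(h)\Omega\ra$ to exhibit each $\rho_\Omega(h)\Omega$ as a smooth vector as well. The span of these is dense by cyclicity, giving density of $\bP(\cH^\infty)$. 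I expect the first route via $G^\sharp$ to be the more robust one, since it cleanly separates the smoothness of the single ray (handled by Theorem~\ref{projkreet}) from the density argument (handled by linear representation theory of $G^\sharp$), and the identification $\bP(\cH)^\infty = \bP(\cH^\infty)$ does the final bookkeeping.
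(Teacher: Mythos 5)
Your overall route is the natural one, and (since the paper states this corollary without proof, as an immediate consequence of Theorem~\ref{projkreet}) it is surely the intended one: Theorem~\ref{projkreet} makes $[\Omega]$ a smooth ray, Theorem~\ref{thm:1.4} then yields the Lie group structure on the central extension together with the fact that $\Omega$ is a smooth vector for $\hat\rho$, invariance of smooth vectors under right translation plus cyclicity makes the space of smooth vectors dense, and density of smooth rays follows. You also correctly identify the one real obstacle, namely that the $G$-orbit of $[\Omega]$ alone need not be dense, so one must pass to linear spans at the level of $\hat\rho$.

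There is, however, a genuine circularity in two of your citations. Both Corollary~\ref{lineariseer} and Theorem~\ref{thm:1.4}(v) carry the hypothesis that $\bP(\cH)^\infty$ is dense in $\bP(\cH)$ --- which is, by definition, exactly the smoothness of $(\ol\rho,\cH)$ you are trying to establish --- so neither may be invoked at this stage. The repair is fortunately cheap, because you only need the elementary half of (v), the inclusion $\bP(\cH^\infty)\subseteq\bP(\cH)^\infty$: if $\psi$ is a smooth vector for $\hat\rho$, then near any point the projective orbit map $g\mapsto\ol\rho(g)[\psi]$ is the composition of a smooth local section of the principal $\T$-bundle $G^\sharp\to G$ (Theorem~\ref{thm:1.4}(ii)), the smooth linear orbit map, and the smooth projection $\cH\setminus\{0\}\to\bP(\cH)$; no density hypothesis enters. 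Similarly, instead of extending the Lie structure to all of $G^\sharp$ via Corollary~\ref{lineariseer} (whose proof again presupposes density), work with $G_0$ only: by the equivariance argument in the proof of Theorem~\ref{projkreet}, a ray is smooth for $\ol\rho$ if and only if its orbit map is smooth near $\one$, i.e.\ if and only if it is smooth for $\ol\rho|_{G_0}$, so it suffices to produce a dense set of $\ol\rho|_{G_0}$-smooth rays. If $G$ is connected this completes the proof. If $G$ is disconnected, a further point remains for you \emph{and} for the paper's implicit argument: cyclicity is assumed for $G$, not for $G_0$, and while the set of $\ol\rho|_{G_0}$-smooth rays is $\ol\rho(G)$-invariant, Theorem~\ref{prop:pc} only organises it into a union $\bigcup_j\bP(\cD_j)$ over \emph{mutually orthogonal} subspaces; such a union can fail to be dense even when the algebraic span of the $\cD_j$ is dense, so one still has to argue that the relevant translates lie in a single piece. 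Finally, your ``more self-contained'' alternative is weaker than the main route: $\rho_\Omega$ is only a local lift, not a representation, so Theorem~\ref{kreet} does not apply to it, and cyclicity gives density of the span of translates over all of $G$, not over the neighbourhood $U_\Omega$ on which $\rho_\Omega$ is defined.
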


\subsection{Structure of the set of smooth rays}\label{A2}

We now study the structure of the set 
$\bP(\cH)^\infty$ of smooth rays. 
Our methods are similar to those 
used in \cite{Ne14} to study the set of continuous rays. 

For the following lemma, recall that the topology of the Hilbert manifold $\bP(\cH)$
is induced by the Fubini--Study metric, which is defined by the equation
$d([\psi],[\phi]) = \arccos |\la \psi,\phi\ra| \in [0,\pi/2]$
for unit vectors $\psi,\phi \in  \cH$.
Geodesics in this metric are of the form 
\[ \gamma(t) = [(\cos t)\psi + (\sin t) \chi] \quad \mbox{ for } \quad \|\psi\| = \|\chi\| = 1, \,\la \psi,\chi\ra = 0.\] 
From that one derives easily that points $[\psi],[\phi]$ with $d([\psi],[\phi]) < \pi/2$ are connected 
by a unique minimal geodesic parametrised by arc-length. 

\begin{Lemma}{\rm(Smoothness of generalised midpoint maps)} \label{lem:5.7} For $[\psi], [\phi] \in\bP(\cH)$ with $\la \psi, \phi \ra \not=0$, let 
$\gamma_{[\psi],[\chi]}(t), 0 \leq t \leq \pi/2$ denote the unique minimal geodesic starting in 
$[\psi]$ and passing through $[\phi]$ for $t_0 = d([\psi],[\phi])$. Then, for every $t \in [0,\pi/2]$, the map 
\[ \{([\psi], [\phi]) \in \bP(\cH)^2 \: \la \psi,\phi \ra \not=0\} \to \bP(\cH), 
 \quad 
([\psi],[\phi]) \mapsto \gamma_{[\psi],[\phi]}(t)\] 
is smooth.
\end{Lemma}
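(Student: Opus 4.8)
The plan is to reduce the statement to an explicit formula for the geodesic and then to check smoothness in local charts, exploiting the smooth sections $\sigma_{\psi}$ constructed in the proof of Theorem~\ref{thm:1.4}(i). Throughout I may assume $[\psi] \neq [\phi]$, since for $[\psi]=[\phi]$ (that is $d([\psi],[\phi])=0$) no unit-speed geodesic is determined; on the remaining set $\{([\psi],[\phi]) : \la\psi,\phi\ra \neq 0,\ [\psi]\neq[\phi]\}$ one has $0 < d([\psi],[\phi]) < \pi/2$. The key point is that, once $[\psi]$ and $[\phi]$ are represented by unit vectors $\psi',\phi'$ with $\la\psi',\phi'\ra > 0$, the recalled description of geodesics yields the closed spherical-interpolation formula
\[ \gamma_{[\psi],[\phi]}(t) = \big[\,\sin(t_0 - t)\,\psi' + \sin(t)\,\phi'\,\big], \qquad t_0 := \arccos\la\psi',\phi'\ra, \]
whose endpoints are visibly $[\psi']=[\psi]$ at $t=0$ and $[\phi']=[\phi]$ at $t=t_0$. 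It therefore suffices to produce, locally around any base pair, smooth unit lifts $\psi',\phi'$ with $\la\psi',\phi'\ra>0$, and to verify that the vector on the right is nowhere zero.

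Since smoothness is a local property, I would fix a base pair $([\psi_0],[\phi_0])$ in the domain, choose unit representatives $\psi_0,\phi_0$ with $\la\psi_0,\phi_0\ra>0$, and work on a neighbourhood of it. First I set $\psi' := \sigma_{\psi_0}([\psi])$ and $\phi'' := \sigma_{\psi_0}([\phi])$; both are smooth unit lifts by the smoothness of $\sigma_{\psi_0}$ established in the proof of Theorem~\ref{thm:1.4}(i), and are defined wherever $\la\psi_0,\psi\ra\neq0$ and $\la\psi_0,\phi\ra\neq0$, which holds near the base pair. The inner product $z:=\la\psi',\phi''\ra$ is then a smooth $\C$-valued function equal to $\la\psi_0,\phi_0\ra>0$ at the base pair, hence nonzero nearby. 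Correcting the phase by setting $\phi' := (\bar z/|z|)\,\phi''$ produces a smooth unit lift of $[\phi]$ with $\la\psi',\phi'\ra = |z|$, which lies in $(0,1)$: it is positive since $z\neq0$, and strictly below $1$ by Cauchy--Schwarz because $[\psi]\neq[\phi]$. As $z\mapsto \bar z/|z|$ is smooth on $\C^{\times}$, the vector $\phi'$ depends smoothly on $([\psi],[\phi])$.

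With $c := \la\psi',\phi'\ra \in (0,1)$ and $t_0 := \arccos(c)$, which is smooth because $\arccos$ is smooth on $(-1,1)$, the vector $v(t) := \sin(t_0-t)\,\psi' + \sin(t)\,\phi'$ depends smoothly on $([\psi],[\phi])$ for each fixed $t$. It remains to verify that $v(t)\neq 0$: since $\psi'$ and $\phi'$ are linearly independent (they span distinct rays), $v(t)=0$ would force $\sin(t_0-t)=\sin(t)=0$ simultaneously, which is impossible on $[0,\pi/2]$ once $t_0\in(0,\pi/2)$. As the canonical projection $\cH\setminus\{0\}\to\bP(\cH)$ is a smooth submersion, the composite $([\psi],[\phi])\mapsto[v(t)] = \gamma_{[\psi],[\phi]}(t)$ is smooth on the chosen neighbourhood, and therefore on the entire domain. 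The main obstacle, and the only genuinely delicate step, is the simultaneous phase normalisation that makes $\la\psi',\phi'\ra$ real and positive in \emph{both} variables at once: without it the elementary geodesic formula is unavailable, and it is precisely this normalisation, together with the non-vanishing of $v(t)$, that must be arranged smoothly.
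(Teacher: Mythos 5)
Your proposal is correct and follows essentially the same route as the paper's proof: both construct smooth local unit lifts of the two rays, apply the identical phase correction $z\mapsto \bar z/|z|$ to make the inner product real and positive, and then exhibit the geodesic by an explicit smooth formula (your spherical interpolation $[\sin(t_0-t)\psi'+\sin(t)\phi']$ is, by the addition formula, a positive multiple of the paper's $[(\cos t)\psi'+(\sin t)\chi_{\psi',\phi'}]$ with $\chi_{\psi',\phi'}$ the Gram--Schmidt unit vector). The only minor differences are that you use a single section $\sigma_{\psi_0}$ for both arguments instead of two sections, and that you make explicit the (shared, harmless) restriction to $[\psi]\neq[\phi]$, where alone the minimal geodesic is uniquely determined.
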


\begin{proof} For two unit vectors $\psi$ and $\phi$ with $\la \psi, \phi \ra > 0$, we write 
\[ \chi_{\psi,\phi} 
:= \frac{\phi - \la \phi,\psi\ra \psi}{\|\phi - \la \phi,\psi \ra \psi\|} 
= \frac{\phi - \la \phi,\psi\ra \psi}{\sqrt{1 - |\la \phi,\psi\ra|^2}}.\] 
Then $\la \psi, \chi_{\psi,\phi} \ra = 0$ and we have 
\[ \gamma_{[\psi],[\phi]}(t) = [(\cos t) \psi + (\sin t) \chi_{\psi,\phi}].\] 
Note that $\chi_{\psi,\phi}$ is the unique element 
in $\C \psi + \C \phi$
orthogonal to $\psi$ with $\|\chi_{\psi,\phi}\| = 1$ and 
$\la \chi_{\psi,\phi},\phi \ra > 0$. 
If we choose another representative $\lambda \psi \in [\psi]$ with $|\lambda| = 1$, then 
we also have to replace $\phi$ by $\lambda \phi$ to ensure positivity of the scalar product, and this 
implies that $[(\cos t) \psi + (\sin t) \chi_{\psi,\phi}]$ does not depend on the choice of $\psi$. 

If we choose a smooth local section $\sigma \: U \to \bS(\cH) = \{ \psi \in \cH \: \|\psi\| = 1\}$ of 
$\bS(\cH) \rightarrow \bP(\cH)$ 
on an open neighbourhood $U$ of $[\psi]$ and $\sigma' \: V \to \bS(\cH)$ 
on an open neighbourhood $V$ of $[\phi]$ such that any two lines $[a] \in U$ and $[b] \in V$ are  
not orthogonal, then 
\[ \sigma''([a],[b]) := \Big(\sigma([a]), 
 \frac{|\la \sigma([a]), \sigma'([b])\ra|}{\la \sigma([a]), \sigma'([b])\ra} 
\sigma'([b])\Big)
=: (\sigma([a]), \chi_{\sigma''([a],[b])})\] 
is a smooth section $U \times V \to \bS(\cH)^2$ whose range consists of pairs $(a,b)$ with 
$\la a, b \ra > 0$. We therefore have 
\[ \gamma_{[a],[b]}(t) = [(\cos t) \sigma([a]) + (\sin t) \chi_{\sigma''([a],[b])}],\]
so that the assertion follows from the smoothness of the map $(\psi,\phi) \mapsto \chi_{\psi,\phi}$. 
\end{proof}

This yields the following description of the set of smooth rays: 

\begin{Theorem}{\rm(Structure of the set of smooth rays)} \label{prop:pc} 
There exists a family $(\cD_j)_{j \in J}$ of not necessarily closed 
mutually orthogonal linear subspaces of $\cH$ such that 
\[  \bP(\cH)^\infty = \bigcup_{j \in J} \bP(\cD_j).\] 
In particular, if $\bP(\cH)^\infty$ is dense in $\bP(\cH)$, then it is of the form 
$\bP(\cD)$ for a dense subspace $\cD \subeq \cH$. 
\end{Theorem}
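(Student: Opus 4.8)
The plan is to characterize when two smooth rays lie in a common "smooth plane," and to show that this defines an equivalence relation whose equivalence classes assemble into the subspaces $\cD_j$. First I would observe, using the smoothness criterion for rays (Theorem~\ref{projkreet}) together with Lemma~\ref{lem:5.7}, that if $[\psi]$ and $[\phi]$ are two smooth rays with $\la \psi, \phi\ra \neq 0$, then \emph{every} point on the minimal geodesic $\gamma_{[\psi],[\phi]}$ is again a smooth ray. Indeed, the orbit map of each $\gamma_{[\psi],[\phi]}(t)$ under $\ol\rho$ can be written, via the equivariance of the midpoint construction, as $g \mapsto \gamma_{\ol\rho(g)[\psi],\ol\rho(g)[\phi]}(t)$, which is smooth in $g$ by Lemma~\ref{lem:5.7} applied to the (smooth) orbit maps of $[\psi]$ and $[\phi]$.

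Next I would upgrade this from "the geodesic between them is smooth" to "the entire projective plane $\bP(\C\psi + \C\phi)$ consists of smooth rays." Since the geodesic endpoints sweep out the projective line through $[\psi]$ and $[\phi]$ as we vary the parametrisation and choice of representatives, and since smoothness of a ray is a condition checkable pointwise via Theorem~\ref{projkreet}, I expect that every ray in $\bP(\C\psi+\C\phi)$ is smooth. The key point is that any two non-orthogonal smooth rays generate a two-dimensional smooth subspace; the non-orthogonality hypothesis $\la\psi,\phi\ra\neq 0$ is exactly what Lemma~\ref{lem:5.7} requires, and a short argument handles orthogonal smooth rays by interpolating through a third smooth ray non-orthogonal to both.

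With this, I would define a relation on smooth rays by declaring $[\psi] \sim [\phi]$ if they are connected by a finite chain of pairwise non-orthogonal smooth rays. This is manifestly an equivalence relation; let $\cD_j$ be the linear span of the representatives in the $j$-th equivalence class. The previous step shows each $\cD_j$ consists entirely of smooth rays, i.e.\ $\bP(\cD_j) \subseteq \bP(\cH)^\infty$, and conversely every smooth ray lies in some $\bP(\cD_j)$. The mutual orthogonality of distinct $\cD_j$ follows because any non-orthogonality between vectors drawn from two classes would (after the span argument) merge those classes. For the final assertion, if $\bP(\cH)^\infty$ is dense then the union of the mutually orthogonal $\bP(\cD_j)$ is dense; but a nontrivial orthogonal decomposition cannot be dense in $\bP(\cH)$ unless there is a single summand, so $\bP(\cH)^\infty = \bP(\cD)$ for $\cD = \cD_{j_0}$ dense.

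The main obstacle I anticipate is the second step: rigorously promoting geodesic-connectedness of two smooth rays to smoothness of the \emph{whole} two-dimensional projective subspace they span, including the orthogonal locus, and verifying carefully that smoothness is preserved under the chaining in step three so that the spans $\cD_j$ are genuinely well-defined linear subspaces (rather than merely sets of rays). Establishing mutual orthogonality of the $\cD_j$ from the chain definition — equivalently, that non-orthogonality of \emph{any} pair of their vectors forces the classes to coincide — is the delicate bookkeeping where the interplay between the metric structure (Fubini--Study) and linearity must be handled with care.
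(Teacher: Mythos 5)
Your steps (1), (3), and the final density argument are sound and essentially coincide with the paper's proof: geodesic smoothness follows from Lemma~\ref{lem:5.7} together with the equivariance $\ol\rho(g)\gamma_{[\psi],[\phi]}(t) = \gamma_{\ol\rho(g)[\psi],\,\ol\rho(g)[\phi]}(t)$; your chains of pairwise non-orthogonal smooth rays define the same decomposition as the paper's maximal ``orthogonally indecomposable'' subsets; and a nontrivial orthogonal decomposition indeed cannot be dense in $\bP(\cH)$. The gap is exactly where you anticipated it, in step (2), and it is genuine: it is \emph{false} that the geodesic endpoints ``sweep out the projective line through $[\psi]$ and $[\phi]$ as we vary the parametrisation and choice of representatives.'' The construction in Lemma~\ref{lem:5.7} is independent of the choice of representatives (that is precisely what makes it well defined), and varying $t \in [0,\pi/2]$ only yields the \emph{real} arc $[(\cos t)\psi + (\sin t)\chi_{\psi,\phi}]$, i.e., half of a great circle in the $2$-sphere $\bP(\C\psi + \C\phi)$. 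Worse, the minimal geodesic between any two non-orthogonal points of that great circle lies again on the same great circle, so no amount of iterating the geodesic construction inside $\bP(\C\psi+\C\phi)$ ever produces a ray such as $[(\cos t)\psi + i(\sin t)\chi_{\psi,\phi}]$: the real geodesic structure is blind to the complex phases. With only step (2) as you state it, your chaining argument proves smoothness of a one-real-parameter family of rays in each plane, so you cannot conclude $\bP(\cD_j)\subseteq \bP(\cH)^\infty$, and the linearity of the decomposition collapses.

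The missing idea --- which is the actual content of the paper's proof of the case $n=1$ --- is to lift the problem to the tangent bundle. Since $[\psi]$ and $[\phi]$ are smooth and non-orthogonal, the tangent vector $v = \Exp_{[\psi]}^{-1}([\phi]) \in T_{[\psi]}(\bP(\cH))$ has a smooth $G$-orbit map (again by Lemma~\ref{lem:5.7} and the $\PU(\cH)$-equivariance of $\Exp$). Now $G$ acts on $T(\bP(\cH))$ by bundle automorphisms whose fibrewise action is \emph{complex} linear; hence the set $T_{[\psi]}(\bP(\cH))^\infty$ of smooth tangent vectors at a smooth ray is a complex linear subspace, and in particular $\C v \subseteq T_{[\psi]}(\bP(\cH))^\infty$. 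Pushing back down via the equivariant exponential map, $\Exp(\C v)$ --- which is the \emph{whole} projective line $\bP(\C\psi+\C\phi)$, not merely the real arc --- consists of smooth rays. Some device of this kind, allowing you to multiply tangent directions by $i$ without destroying smoothness, is indispensable; without it, step (2), and with it the theorem, does not go through.
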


\begin{proof} (cf.\ \cite[Lemma~5.8]{Ne14}) First we show that, if 
$[\psi_0], \ldots, [\psi_n] \in \bP(\cH)^\infty$ are such that 
$\la \psi_j, \psi_{j+1}\ra \not=0$ for $j =0,\ldots, n-1$, then 
\[[\psi_0,\ldots, \psi_n] := \bP(\Spann\{\psi_0,\ldots, \psi_n\})\]
is contained in $\bP(\cH)^\infty$. 
We start with the case $n = 1$. Let 
\[ \Exp \: T(\bP(\cH)) \to \bP(\cH) \] 
denote the exponential map of the Riemannian symmetric space $\bP(\cH)$. 
For $[\chi] = \Exp(v)$ and  $v \in T_{[\psi]}(\bP(\cH))$ with $\|v\| < \pi/2$, 
the whole geodesic arc 
$[\chi_t] = \Exp(tv)$, $0 \leq t \leq 1$, consists of elements not orthogonal 
to $[\psi]$. In view of Lemma~\ref{lem:5.7}, $[\psi], [\chi] \in \bP(\cH)^\infty$ implies that 
$[\chi_t] = \gamma_{[\psi],[\chi]}(t)\in \bP(\cH)^\infty$ for $0 \leq t \leq 1$ because 
$\oline\rho(g)\gamma_{[\psi],[\chi]} = \gamma_{\oline\rho(g)[\psi], \oline\rho(g)[\chi]}$.

Since $\Exp$ is $\PU(\cH)$-equivariant, we 
conclude that, for the action of $G$ on the tangent bundle 
$T(\bP(\cH))$, the set 
$T(\bP(\cH))^\infty$ of $G$-smooth elements has the 
property that, if $v \in T(\bP(\cH))^\infty$ with $\|v\| < \pi/2$, 
then $[0,1]v \subeq T(\bP(\cH))^\infty$. 
Since $G$ acts on $T(\bP(\cH))$ by bundle automorphisms, 
it also follows that, for each $[\psi] \in \bP(\cH)^\infty$, 
the set $T_{[\psi]}(\bP(\cH))^\infty$ is a complex linear subspace. 
This implies that, for two non-orthogonal elements 
$[\psi], [\chi] \in \bP(\cH)^\infty$, the whole projective 
plane $[\psi,\chi] \subeq \bP(\cH)$ consists of $G$-smooth rays. This completes the proof 
for the case $n =1$. 

We now argue by induction. 
Assume that $n > 1$. Then the induction hypothesis implies that 
$[\psi_0,\ldots, \psi_{n-1}] \subeq \bP(\cH)^\infty$. 
If $[\psi] \in [\psi_0,\ldots, \psi_{n}]$ is different from $[\psi_n]$, then 
$[\psi,\psi_n]$ is a projective line intersecting $[\psi_0,\ldots, \psi_{n-1}]$ in some point 
$[\chi]$. Then $[\psi] \in [\chi,\psi_n] \subeq \bP(\cH)^\infty$ follows from the case $n =1$. 
This shows that $[\psi_0,\ldots, \psi_n] \subeq \bP(\cH)^\infty$. 

The preceding arguments imply that every non-empty 
subset $C \subeq \bP(\cH)^\infty$ which cannot be decomposed 
into two non-empty orthogonal subsets is of the form $\bP(\cD)$ for a linear subspace 
$\cD \subeq \cH$. For two such subsets $C_1$ and $C_2$, the corresponding subspaces 
$\cD_1$ and $\cD_2$ are clearly orthogonal. This completes the proof. 
\end{proof}

\section{Lie algebra extensions and cohomology}
\label{sec:6} 

We have seen in Corollary~\ref{lineariseer} that smooth projective unitary representations of a 
locally convex Lie group $G$ are linear representations of a central extension $G^{\sharp}$.
Since those are determined on $G^{\sharp}_{0}$ by their derived Lie algebra 
representation (Proposition~\ref{LArulez}), it is worth while to take a closer look at
central extensions of locally convex Lie algebras.

\begin{Definition} {\rm(Central Lie algebra extensions)} \label{LAUitbreiding}
A \emph{central extension} of a locally convex Lie algebra $\fg$ by $\R$ is 
an exact sequence
\begin{equation}\label{LARijtje}
0 \rightarrow \R \rightarrow \widehat{\fg} \rightarrow \fg \rightarrow 0
\end{equation}
of locally convex Lie algebras such that the image of $\R$ is central in $\widehat{\fg}$.
An \emph{isomorphism} $\phi \colon \widehat{\fg} \rightarrow \widehat{\fg}'$ of
central extensions is an isomorphism of locally convex Lie algebras 
that induces the identity maps on $\fg$ and $\R$.
\end{Definition}
Needless to say, group extensions 
in the sense of Definition~\ref{GroepUitbreiding} give rise to
Lie algebra extensions in the sense of Definition~\ref{LAUitbreiding}. 
Lie algebra extensions are classified by Lie algebra cohomology.

\begin{Definition} \label{def:cohom} {\rm(Continuous LA cohomology)}
The \emph{continuous Lie algebra
cohomology} $H^n(\fg,\R)$ of a locally convex Lie algebra $\fg$ 
is the cohomology of the complex
$C^{\bullet}(\fg,\R)$, where $C^n(\fg,\R)$ consists of the 
continuous alternating linear
maps $\g^n \rightarrow \R$ 
with differential $\delta^n \colon C^{n}(\fg,\R) \rightarrow C^{n+1}(\fg,\R)$ defined by
\[\delta^{n} \omega(\xi_0,\ldots,\xi_{n}):= \sum_{0\leq i<j\leq n}
(-1)^{i+j} \omega([\xi_i,\xi_j],\xi_1,\ldots,\widehat{\xi}_i, \ldots, 
\widehat{\xi}_j, \ldots, \xi_n)\,.\]
\end{Definition}
Given a 2-cocycle $\omega \: \g^2 \rightarrow \R$, 
we define the Lie algebra $\widehat{\fg}_{\omega}$ by
\[
\widehat{\fg}_{\omega} :=  \R \oplus_{\omega} \g
\]
with the Lie bracket
$[(z,\xi), (z',\xi')] := \big(\omega(\xi,\xi'), [\xi,\xi']\big)$. 
Equipped with the obvious maps
$\R \rightarrow \widehat{\fg}_{\omega} \rightarrow \fg$, it
constitutes a central extension 
of $\fg$ by~$\R$. 

\begin{Proposition}{\rm(Cohomological description of central extensions)}
Central extensions of $\fg$ by $\R$ are classified up to isomorphism by $H^2(\fg,\R)$.
\end{Proposition}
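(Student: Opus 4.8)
The plan is to exhibit an explicit bijection between isomorphism classes of central extensions of $\fg$ by $\R$ and the classes in $H^2(\fg,\R)$, by sending an extension to the cocycle that measures the failure of a continuous linear section to be a Lie algebra homomorphism, and then checking that this is inverse to the construction $\omega \mapsto \widehat{\fg}_\omega$ already given in the text.

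First I would fix a central extension (\ref{LARijtje}) and choose a \emph{continuous} linear section $\sigma \colon \fg \to \widehat{\fg}$ of the quotient map $q$. This is the one step where local convexity genuinely enters, and I will treat it as the crux: such a section exists because the kernel $\R$ is one-dimensional. Indeed, the image of $\R$ is a one-dimensional, hence closed and topologically embedded, subspace, so by Hahn--Banach its coordinate functional extends to a continuous linear projection $\widehat{\fg} \to \R$; combined with the fact that $q$ is a topological quotient, this splits (\ref{LARijtje}) in the category of locally convex spaces and yields $\sigma$. With $\sigma$ fixed, I set $\omega(\xi,\eta) := [\sigma(\xi),\sigma(\eta)] - \sigma([\xi,\eta]) \in \R$. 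This is continuous, bilinear and alternating since $\sigma$, the bracket and $q$ are continuous, and the Jacobi identity in $\widehat{\fg}$ translates precisely into $\delta^2\omega = 0$, so $\omega \in C^2(\fg,\R)$ is a cocycle. Then $(z,\xi)\mapsto z + \sigma(\xi)$ is an isomorphism $\widehat{\fg}_\omega \to \widehat{\fg}$ of central extensions, which already shows surjectivity of $\omega \mapsto \widehat{\fg}_\omega$ onto isomorphism classes.

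Next I would verify that the class $[\omega] \in H^2(\fg,\R)$ is independent of the section and is a complete isomorphism invariant. Replacing $\sigma$ by a second continuous section $\sigma'$ changes it by a continuous linear map $\beta := \sigma'-\sigma \colon \fg \to \R$, and since $\beta$ takes central values a short computation gives $\omega' - \omega = \delta^1\beta$, so $[\omega]$ is well defined. For the converse, any isomorphism $\phi \colon \widehat{\fg}_\omega \to \widehat{\fg}_{\omega'}$ of central extensions must have the form $\phi(z,\xi) = (z + \beta(\xi),\xi)$ for a continuous linear $\beta \colon \fg \to \R$, because it induces the identity on $\R$ and on the quotient $\fg$; imposing that $\phi$ respect the brackets forces $\omega - \omega' = \delta^1\beta$, whence $[\omega]=[\omega']$. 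Conversely, given $\omega' = \omega - \delta^1\beta$, the same formula defines such an isomorphism, so cohomologous cocycles produce isomorphic extensions. Together these give the claimed bijection.

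The main obstacle, as flagged above, is the existence of the continuous section $\sigma$: short exact sequences of locally convex spaces need not split, and without a continuous section the cohomological description would fail. What rescues the argument is that the kernel is the finite-dimensional space $\R$, so Hahn--Banach provides a continuous projection and hence a topological splitting. Everything after that is the standard algebraic cocycle bookkeeping, the only recurring care being to confirm continuity of $\omega$, of the transition maps $\beta$, and of the comparison isomorphisms, each of which is immediate from continuity of $\sigma$, of the Lie bracket on $\widehat{\fg}$, and of $q$.
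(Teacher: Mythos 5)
Your proposal is correct and follows essentially the same route as the paper: a Hahn--Banach splitting of the exact sequence produces the $2$-cocycle, and the standard coboundary bookkeeping shows the class is well defined and a complete isomorphism invariant. The only cosmetic differences are that the paper extends the identity on $\R$ to a continuous linear retraction $\widehat{\fg}\to\R$ rather than constructing a continuous section $\fg\to\widehat{\fg}$ (equivalent data, as your own argument shows), and that you spell out the surjectivity and the ``cohomologous $\Rightarrow$ isomorphic'' direction, which the paper leaves implicit via the construction of $\widehat{\fg}_{\omega}$.
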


\begin{proof} Let $\R \into \hat\g \sssmapright{q}  \g$ be a central extension of $\g$ by $\R$ as above. The Hahn--Banach Theorem guarantees existence\footnote{For 
extensions derived from projective representations, such
maps can be constructed explicitly.}  of a continuous linear
(but, in general, not homomorphic) map 
$\sigma \colon \widehat{\fg} \rightarrow \R$ of (\ref{LARijtje}) that is the identity on $\R$.
The alternating continuous linear map 
$\sigma \circ [\,\cdot\,,\,\cdot\,] \colon \widehat{\fg}^2  \rightarrow \R$
vanishes on $\R \times \widehat{\fg}$, hence drops 
to a 2-cocycle $\omega_{\sigma} \colon \fg^2 \rightarrow \R$.
The difference between two maps $\sigma$ and $\sigma'$ as above 
is continuous and vanishes on $\R$,
hence defines a 1-cochain $\beta \colon \fg \rightarrow \R$.
Thus $\omega_{\sigma'} - \omega_\sigma = \delta^1\beta$ is exact,
and the cohomology class $[\omega_{\sigma}] \in H^2(\fg,\R)$ does not depend on $\sigma$.
If $\phi \colon \widehat{\fg} \rightarrow \widehat{\fg}'$ is an isomorphism of central 
extensions, then $\sigma' := \phi \circ \sigma$ 
is the identity on $\R$, and thus $\omega_{\sigma'} = \omega_{\sigma}$.
\end{proof}

If $(\ol\rho,\cH)$ is a smooth projective unitary representation of $G$,
then the central Lie group extension $\T \rightarrow G^{\sharp} \rightarrow \fg$
of Corollary~\ref{lineariseer} gives rise to a central Lie algebra extension 
$\R \rightarrow \fg^{\sharp} \rightarrow \fg$.

\begin{Proposition}\label{bolhoofd} {\rm(A map from $\bP(\cH^{\infty})$ to the space of cocycles)}
If $\psi\in \cH^{\infty}$ is a unit vector,
then
the class $[\omega_{\psi}] \in H^2(\fg,\R)$ corresponding to the central extension 
$\fg^{\sharp}$ is represented by the 2-cocycle 
\[\omega_{\psi}(\xi,\xi') = -i\la\psi, [\rho_{\psi *}(\xi), \rho_{\psi *}(\xi')] \psi\ra, \]
where 
$\rho_{\psi *} \colon T_{\one}G \cong \g \rightarrow T_{\one}G^{\sharp}\cong \hat\g$ 
is the differential of the local lift 
$\rho_{\psi}$ of {\rm Theorem~\ref{thm:1.4}}. 
In terms of the group 2-cocycle, it is given by
\begin{equation*}
\omega_{\psi}(\xi,\xi') = 
-i \Big(\frac{\partial^2}{\partial t \partial s}\Big|_{s,t=0} f_{\psi}(\gamma_{\xi}(t), \gamma_{\xi'}(s))
- \frac{\partial^2}{\partial t \partial s}\Big|_{s,t=0} f_{\psi}(\gamma_{\xi'}(t), \gamma_{\xi}(s))\Big),
\end{equation*} 
where $\gamma_{\xi}$ denotes a curve $\R \rightarrow G$ 
with $\gamma_{\xi}(0) = \one$ and $\gamma'_{\xi}(0) = \xi$.
\end{Proposition}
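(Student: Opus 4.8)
The plan is to use the explicit local lift $\rho_\psi$ from Theorem~\ref{thm:1.4} to manufacture a continuous linear section of the Lie algebra extension, and then to feed that section into the cohomological classification of central extensions established above. The smooth map $s_\psi \colon U_\psi \to G^\sharp$, $g \mapsto (g,\rho_\psi(g))$, is a local section of the bundle $G^\sharp \to G$ whose differential at $\one$ is exactly $\rho_{\psi*}\colon \fg \to \hat\g$. Together with the central generator $c$ spanning $\R = \ker(\hat\g \to \fg)$ this gives a vector space splitting $\hat\g = \rho_{\psi*}(\fg) \oplus \R c$, and I let $\sigma\colon \hat\g \to \R$ be the projection onto $\R c$. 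By the Proposition on the cohomological description of central extensions, the class of $\fg^\sharp$ is then represented by $\omega(\xi,\xi') = \sigma([\rho_{\psi*}(\xi),\rho_{\psi*}(\xi')])$, the bracket being taken in $\hat\g$.

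The key step is to identify $\sigma$ representation-theoretically. Since $\psi$ is a smooth vector for $\hat\rho$ (Theorem~\ref{thm:1.4}(iii)), I may differentiate orbit maps, and for $\zeta \in \fg$ I claim
\[ \la \psi, \dd\hat\rho(\rho_{\psi*}(\zeta))\psi \ra = 0. \]
Indeed $\dd\hat\rho(\rho_{\psi*}(\zeta))\psi = \frac{d}{dt}\big|_{0}\rho_\psi(\gamma_\zeta(t))\psi$, so the left-hand side is $\frac{d}{dt}\big|_{0}\la\psi,\rho_\psi(\gamma_\zeta(t))\psi\ra$; but $g \mapsto \la\psi,\rho_\psi(g)\psi\ra$ is real and positive on $U_\psi$ by the defining property of $\rho_\psi$, equals $\|\psi\|^2 = 1$ at $\one$, and is bounded by $1$ via Cauchy--Schwarz, so $\one$ is a critical point. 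Hence $\la\psi,\dd\hat\rho(\,\cdot\,)\psi\ra$ vanishes on $\rho_{\psi*}(\fg)$, while the central $\T$ acting by scalars fixes $\dd\hat\rho(c) = i\one$, so $\la\psi,\dd\hat\rho(c)\psi\ra = i$. Writing $X = \rho_{\psi*}(\zeta) + \sigma(X)c$ then yields $\sigma(X) = -i\la\psi,\dd\hat\rho(X)\psi\ra$, and applying this to $X = [\rho_{\psi*}(\xi),\rho_{\psi*}(\xi')]$, together with the fact that $\dd\hat\rho$ is a homomorphism, gives the first formula.

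For the second formula I would differentiate the cocycle relation. Setting $g = \gamma_\xi(t)$, $h = \gamma_{\xi'}(s)$ in $\rho_\psi(g)\rho_\psi(h) = f_\psi(g,h)\rho_\psi(gh)$, pairing with $\psi$, and taking $\partial_t\partial_s$ at $s=t=0$, the left-hand side becomes $\la\psi,\dd\hat\rho(\rho_{\psi*}(\xi))\dd\hat\rho(\rho_{\psi*}(\xi'))\psi\ra$ (using that $\dd\hat\rho(\rho_{\psi*}(\xi'))\psi \in \cH^\infty$ so the orbit map may be differentiated once more, and that joint smoothness permits exchanging the order of differentiation). On the right, $f_\psi(g,\one)=f_\psi(\one,h)=1$ kills the first-order terms of $f_\psi$, so Leibniz leaves only $\partial_t\partial_s|_0 f_\psi(\gamma_\xi(t),\gamma_{\xi'}(s)) + \partial_t\partial_s|_0 \la\psi,\rho_\psi(\gamma_\xi(t)\gamma_{\xi'}(s))\psi\ra$. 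Antisymmetrising in $(\xi,\xi')$ and invoking the first formula reduces everything to showing that the second summand is symmetric.

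That symmetry is the main obstacle, and it is again the critical-point property that settles it. In a chart the product reads $\gamma_\xi(t)\gamma_{\xi'}(s) = t\xi + s\xi' + ts\,B(\xi,\xi') + \cdots$ with $B(\xi,\xi') - B(\xi',\xi) = [\xi,\xi']$, so for $\Phi(g) := \la\psi,\rho_\psi(g)\psi\ra$ the mixed derivative is $\partial_t\partial_s|_0\Phi(\gamma_\xi(t)\gamma_{\xi'}(s)) = D^2\Phi(\one)(\xi,\xi') + D\Phi(\one)(B(\xi,\xi'))$. Its antisymmetric part is $D\Phi(\one)([\xi,\xi'])$, which vanishes because $\one$ is a critical point of $\Phi$, while the symmetric Hessian term drops out automatically. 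Thus only the $f_\psi$-terms survive antisymmetrisation, producing the stated group-cocycle expression. The only remaining work is the routine justification of the repeated differentiations and the bookkeeping of the normalisation $\dd\hat\rho(c)=i\one$ responsible for the prefactor $-i$.
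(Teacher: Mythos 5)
Your proof is correct and takes essentially the same route as the paper: you use $\rho_{\psi*}$ as a continuous linear splitting of $\R \to \fg^{\sharp} \to \fg$, identify the central component via the critical-point property of $g \mapsto \la\psi,\rho_{\psi}(g)\psi\ra$ at $\one$ (which gives $\la\psi,\dd\hat\rho(\rho_{\psi*}(\zeta))\psi\ra = 0$), and obtain the group-cocycle formula by differentiating the relation $\rho_{\psi}(g)\rho_{\psi}(h) = f_{\psi}(g,h)\rho_{\psi}(gh)$. The only difference is one of detail: the paper dispatches the second formula with the single sentence ``one differentiates Equation~\eqref{eq:cocycl}'', whereas you carry out the Leibniz expansion, the antisymmetrisation, and the verification that the symmetric Hessian term drops out — a welcome elaboration of exactly the computation the paper intends.
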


\begin{proof} 
Note that $\rho_{\psi *}$ is a continuous linear splitting of the exact sequence
$\R \rightarrow \fg^{\sharp} \rightarrow \fg$. In general, it is not homomorphic. 
The corresponding Lie algebra 2-cocycle 
\begin{equation}\label{reptococycle}
\omega_{\psi}(\xi,\xi') := i(\rho_{\psi}{}_{*}([\xi,\xi']) - [\rho_{\psi}{}_{*}(\xi),\rho_{\psi}{}_{*}(\xi')])
\end{equation}
measures the failure of $\rho_{\psi}{}_{*}$ to be a Lie algebra homomorphism.
Since the real valued function $g \mapsto \la\psi, \rho_{\psi}(g) \psi\ra$ is maximal at $g=\one$,
we have $\la\psi, \rho_{\psi*}(\xi) \psi\ra = 0$ for all $\xi \in \fg$, so the formula for $\omega_{\psi}$
in terms of $\rho_{\psi *}$ follows.
To obtain the formula for $\omega_{\psi}$ in terms of $f_{\psi}$, one differentiates 
Equation~(\ref{eq:cocycl}).
\end{proof}


By Corollary~\ref{lineariseer}, every smooth projective unitary representation of a locally convex Lie group $G$
gives rise to a class $[G^{\sharp}] \in \Ext(G,\T)$,
and to a smooth unitary representation of $G^{\sharp}$.
The Lie group extension $\T \rightarrow G^{\sharp} \rightarrow G$
in turn gives rise to a Lie algebra extension $\R \rightarrow \widehat{\fg} \rightarrow \fg$,
hence to a class $[\omega] \in H^2(\fg,\R)$. 

The converse direction, however, is quite nontrivial;
in general, not every class in $H^2(\fg,\R)$ will integrate to a class in $\Ext(G,\T)$.
The following is an integrability criterion for Lie algebra 2-cocycles in terms of discreteness conditions 
on the \emph{period homomorphism} $\mathrm{per}_{\omega} \colon \pi_2(G) \rightarrow \R$,
which is the extension of
the map $C^{\infty}_{*}(\mathbb{S}^2,G) \rightarrow \R$ 
defined by $\sigma \mapsto \int_{\sigma}\Omega$, where 
$\Omega$ the left invariant 2-form on $G$ with $\Omega_{\one} = \omega$
(see \cite[Def.~5.8]{Ne02}).

\begin{Theorem}{\rm (Integration of Lie algebra cocycles)}
Let $G$ be a connected simply connected Lie group
modelled on a locally convex Lie algebra $\fg$, and let $[\omega] \in H^2(\fg,\R)$. 
We fix the isomorphism $\R \simeq \mathrm{Lie}(\T)$ obtained by 
the exponential function $\exp_\T(t) = e^{2\pi it}$. 
Then the Lie algebra extension $\R \rightarrow \widehat{\fg} \rightarrow \fg$ 
defined by $\omega$
integrates to an extension $\T \rightarrow \widehat{G}\rightarrow G$ of Lie groups 
if and only if $\mathrm{per}_{\omega}(\pi_2(G))\subseteq \Z$.
\end{Theorem}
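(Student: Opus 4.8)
The plan is to read the extension geometrically. Integrating $\widehat{\fg} = \R\oplus_\omega\fg$ to a central $\T$-extension $\T\to\widehat{G}\to G$ is equivalent to producing a principal $\T$-bundle $q\colon\widehat{G}\to G$ together with a left-invariant principal connection whose curvature descends to the left-invariant $2$-form $\Omega$ with $\Omega_\one = \omega$; here $\Omega$ is closed precisely because $\delta\omega = 0$, by the Chevalley--Eilenberg formula for $d\Omega$ on left-invariant vector fields. For the necessity of the condition, suppose such a $\widehat{G}$ exists. Transporting the linear splitting $\fg\to\widehat{\fg}$ by left translation yields a connection $1$-form $\theta$ on $\widehat{G}$ with $q^*\Omega = d\theta$. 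For $\sigma\in C^\infty_*(\mathbb{S}^2,G)$ the pullback bundle $\sigma^*\widehat{G}\to\mathbb{S}^2$ carries a connection of curvature $\sigma^*\Omega$, and with the normalisation $\exp_\T(t) = e^{2\pi i t}$ (so that the period lattice of $\T$ is $\Z$) Chern--Weil identifies $\mathrm{per}_\omega([\sigma]) = \int_{\mathbb{S}^2}\sigma^*\Omega$ with the integral first Chern number $c_1(\sigma^*\widehat{G})$. Hence $\mathrm{per}_\omega(\pi_2(G))\subseteq\Z$.

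For sufficiency I would construct $\widehat{G}$ by parallel transport of $\Omega$ along paths, as in \cite{Ne02}. Let $\mathcal{P}$ be the set of smooth paths $\alpha\colon[0,1]\to G$ with $\alpha(0)=\one$ and sitting instants, and put $\widehat{G} := (\mathcal{P}\times\T)/\!\sim$, declaring $(\alpha,z)\sim(\beta,w)$ whenever $\alpha(1)=\beta(1)$ and $w = z\,\exp_\T\!\big(\int_{[0,1]^2}H^*\Omega\big)$ for some smooth endpoint-fixing homotopy $H$ from $\alpha$ to $\beta$. Since $d\Omega = 0$, Stokes' theorem shows $\int H^*\Omega$ is unchanged under homotopies of $H$ rel boundary, and two such homotopies differ by a smooth $\mathbb{S}^2$ in $G$; thus $\int H^*\Omega$ is determined modulo $\mathrm{per}_\omega(\pi_2(G))$. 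As $G$ is $1$-connected a homotopy $H$ always exists, and the hypothesis $\mathrm{per}_\omega(\pi_2(G))\subseteq\Z = \ker\exp_\T$ is exactly what makes the $\T$-coordinate well defined. The map $[\alpha,z]\mapsto\alpha(1)$ then presents $\widehat{G}\to G$ as a $\T$-bundle.

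The group law is concatenation with left translation: $[\alpha,z]\cdot[\beta,w] := [\alpha * (\alpha(1)\beta),\,zw]$, with $\T$ embedded via the constant path. Left-invariance of $\Omega$ makes $\int\Omega$ additive along concatenations and insensitive to left translation of homotopies, which gives well-definedness, associativity and centrality of $\T$; the nontriviality of the extension is then carried by the local group cocycle $f(g,h) = \exp_\T(\int\Omega)$ arising when one compares concatenated paths with chosen reference paths, and $\delta\omega = 0$ is what makes this $f$ a cocycle. To obtain the smooth structure I would exhibit a smooth local section over a chart $U\ni\one$ by sending $g$ to $[\alpha_g,1]$ for a smoothly varying family of ``straight'' paths $\alpha_g$; smoothness of $g\mapsto\int H^*\Omega$ follows from smoothness of $\Omega$ and of the evaluation $C^\infty([0,1],G)\to G$ (cf.\ \cite{Gl04}), and left translates of this section trivialise $\widehat{G}$ over $G$ (cf.\ \cite{Ne02, Ne06}). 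Finally I would confirm $\mathrm{Lie}(\widehat{G})\cong\widehat{\fg}$ by differentiating $f$ twice at $\one$: the quadratic term of $\exp_\T^{-1}$ of the $\T$-component equals the integral of $\Omega$ over an infinitesimal $2$-simplex, which returns $\omega$.

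The genuine obstacle is the sufficiency direction, and inside it the smoothness package rather than the topology. The well-definedness of the $\T$-coordinate, where integrality enters, is conceptually central but technically light; the delicate points are local triviality and smoothness of multiplication and inversion of the quotient $\widehat{G}$ in the locally convex category --- the path space $\mathcal{P}$ and its homotopies must be treated with the calculus of maps into $C^\infty([0,1],G)$ --- together with the verification that the induced Lie-algebra cocycle is $\omega$ on the nose rather than a merely cohomologous representative. It is precisely because $G$ is already given as a locally convex Lie group that the path-space machinery of \cite{Ne02} is available to carry this out.
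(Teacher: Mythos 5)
Your proposal is correct and takes essentially the same approach as the paper, whose entire proof of this theorem is the single citation ``This is \cite[Thm.~7.9]{Ne02}''; your sketch is a faithful reconstruction of that cited argument. Both of your halves match the machinery of \cite{Ne02} that the paper invokes elsewhere as well: necessity via the identification of $\mathrm{per}_{\omega}$ with the Chern class of the pullback bundle (equivalently, minus the connecting homomorphism $\pi_2(G)\to\pi_1(\T)\cong\Z$, cf.\ \cite[Prop.~5.11]{Ne02}), and sufficiency via the path-space construction in which the integrality hypothesis is exactly what makes the $\T$-coordinate well defined (cf.\ \cite[\S~6]{Ne02}).
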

\begin{proof}
This is \cite[Thm.~7.9]{Ne02}.
\end{proof}


As a byproduct of the construction of the smooth structure on $G^{\sharp}$ (cf. 
Theorem~\ref{thm:1.4}),
we obtain the following necessary condition for a class $[\omega]$ to come from 
a projective unitary representation.

\begin{Proposition}{\rm (Necessary condition for unitarity of cocycles)} 
\label{polarisability}
A necessary condition for a class $[\omega] \in H^2(\fg,\R)$ to correspond to a projective unitary representation
is that for some representative $\omega_{\psi} \in [\omega]$, there exists a continuous positive semi-definite 
sesquilinear form $H_{\psi} \colon \fg_{\C} \times \fg_{\C} \rightarrow \C$ such that 
$\omega_{\psi} = -2\mathrm{Im}(H_{\psi})$ on 
$\fg$. It satisfies $\mathrm{Ker}(H_{\psi}) \subseteq \mathrm{Ker}(\omega_{\psi})_{\C}$.
\end{Proposition}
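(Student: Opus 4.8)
The plan is to extract the polarising form directly from the construction of the central extension. Since $[\omega]\in H^2(\fg,\R)$ arises from a smooth projective unitary representation $(\ol\rho,\cH)$, Corollary~\ref{lineariseer} supplies the central extension $\T\to G^\sharp\to G$ with its linear representation $(\hat\rho,\cH)$, and I fix a smooth unit vector $\psi\in\cH^\infty$ together with the local lift $\rho_\psi$ of Theorem~\ref{thm:1.4}(i). Its differential yields skew-symmetric operators $\rho_{\psi *}(\xi)$ on $\cH^\infty$, and Proposition~\ref{bolhoofd} already hands us the representative $\omega_\psi(\xi,\xi')=-i\la\psi,[\rho_{\psi *}(\xi),\rho_{\psi *}(\xi')]\psi\ra$ of the prescribed class $[\omega]$. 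The natural candidate for the form is obtained by extending $\rho_{\psi *}$ complex-linearly to $\fg_\C$ and setting
\[ H_\psi(\xi,\eta):=\la\rho_{\psi *}(\xi)\psi,\rho_{\psi *}(\eta)\psi\ra,\qquad \xi,\eta\in\fg_\C. \]
So the first step is simply to record this definition; all the work lies in verifying its three required properties.

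Positive semi-definiteness and Hermitian symmetry are immediate, since $H_\psi(\xi,\xi)=\|\rho_{\psi *}(\xi)\psi\|^2\ge 0$. Continuity of $H_\psi$ I would deduce from the fact that $\psi$ is a smooth vector for $\hat\rho$ (Theorem~\ref{thm:1.4}(iii)): by Remark~\ref{deriscont} the map $\xi\mapsto\rho_{\psi *}(\xi)\psi$ is continuous and linear, so $H_\psi$ is continuous as the composite of this map in each argument with the continuous inner product. The central identity $\omega_\psi=-2\mathrm{Im}(H_\psi)$ on $\fg$ is then a one-line computation: for real $\xi,\xi'$, Hermitian symmetry and skew-symmetry $\rho_{\psi *}(\xi)^*=-\rho_{\psi *}(\xi)$ give
\[ -2\mathrm{Im}\,H_\psi(\xi,\xi')=i\big(\la\rho_{\psi *}(\xi)\psi,\rho_{\psi *}(\xi')\psi\ra-\la\rho_{\psi *}(\xi')\psi,\rho_{\psi *}(\xi)\psi\ra\big)=-i\la\psi,[\rho_{\psi *}(\xi),\rho_{\psi *}(\xi')]\psi\ra=\omega_\psi(\xi,\xi'). \]

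For the kernel inclusion I would invoke positive semi-definiteness together with the Cauchy--Schwarz inequality for $H_\psi$: if $\xi$ lies in $\mathrm{Ker}(H_\psi)$, that is $\rho_{\psi *}(\xi)\psi=0$, then $|H_\psi(\xi,\eta)|^2\le H_\psi(\xi,\xi)H_\psi(\eta,\eta)=0$ for every $\eta$, so $H_\psi(\xi,\cdot)$ vanishes identically and hence $\omega_\psi(\xi,\cdot)=-2\mathrm{Im}\,H_\psi(\xi,\cdot)$ vanishes, placing $\xi$ in $\mathrm{Ker}(\omega_\psi)_\C$. I expect this kernel bookkeeping to be the main obstacle, precisely because $H_\psi$ is a Hermitian form living on the complexification $\fg_\C$ whereas $\omega_\psi$ is a real alternating form on $\fg$ (extended complex-bilinearly); one must keep careful track of conjugation when comparing the two radicals, and it is the Cauchy--Schwarz step, rather than the bare identity $\omega_\psi=-2\mathrm{Im}(H_\psi)$, that forces the containment of kernels. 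The remaining points -- that $\omega_\psi$ genuinely represents the given class $[\omega]$, and that nothing depends on the admissible choices -- are already furnished by Proposition~\ref{bolhoofd} and Theorem~\ref{thm:1.4}(iv)--(v).
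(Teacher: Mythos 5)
Your construction coincides with the paper's own proof: the same form $H_{\psi}(\xi,\eta)=\la\rho_{\psi*}(\xi)\psi,\rho_{\psi*}(\eta)\psi\ra$, the same appeal to Proposition~\ref{bolhoofd}, and your verifications of positive semi-definiteness, continuity, and the identity $\omega_{\psi}=-2\mathrm{Im}(H_{\psi})$ on $\fg$ are correct (indeed more detailed than the paper, which merely asserts these points).

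The kernel inclusion, however, contains a genuine gap. The identity $\omega_{\psi}=-2\mathrm{Im}(H_{\psi})$ has only been established on $\fg\times\fg$, and it does not survive complex-bilinear extension: $\omega_{\C}(i\xi,\eta)=i\,\omega_{\C}(\xi,\eta)$, while $\mathrm{Im}\,H_{\psi}(i\xi,\eta)=-\mathrm{Re}\,H_{\psi}(\xi,\eta)$. So your step ``$H_{\psi}(\xi,\cdot)\equiv 0$, hence $\omega_{\psi}(\xi,\cdot)=-2\mathrm{Im}\,H_{\psi}(\xi,\cdot)$ vanishes'' is valid only for \emph{real} $\xi$, whereas $\mathrm{Ker}(H_{\psi})$ is a complex subspace of $\fg_{\C}$ that in general is \emph{not} conjugation-stable. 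What one actually gets for $\xi\in\mathrm{Ker}(H_{\psi})$, using $\rho_{\psi*}(\xi)^{*}=-\rho_{\psi*}(\bar\xi)$, is
\[
\omega_{\C}(\xi,\eta)\;=\;-i\la\psi,\rho_{\psi*}(\xi)\rho_{\psi*}(\eta)\psi\ra\;=\;i\,H_{\psi}(\bar\xi,\eta),
\]
which vanishes only when $\bar\xi$ also lies in $\mathrm{Ker}(H_{\psi})$. Thus Cauchy--Schwarz yields exactly $\mathrm{Ker}(H_{\psi})\cap\fg\subseteq\mathrm{Ker}(\omega_{\psi})$, i.e.\ the inclusion for the conjugation-stable part $(\mathrm{Ker}(H_{\psi})\cap\fg)_{\C}$, and nothing more. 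The stronger literal statement is in fact false: for the Fock representation of the Heisenberg group of $(\R^{2},\omega)$ with $\psi$ the vacuum, $\mathrm{Ker}(H_{\psi})$ is the annihilation line $\C(q+ip)$ while $\omega$ is non-degenerate, so $\mathrm{Ker}(\omega_{\psi})_{\C}=\{0\}$. In fairness, the paper is no more careful here --- its proof of this point is the bare assertion $\mathrm{Ker}(H_{\psi})=\mathfrak{stab}_{\fg_{\C}}([\psi])\subseteq\mathrm{Ker}(\omega)$, which has the same conjugation problem. So your instinct that one must ``keep careful track of conjugation'' was correct, but you located the difficulty in the wrong place: it is not the Cauchy--Schwarz step but the use of $\omega_{\psi}=-2\mathrm{Im}(H_{\psi})$ at complex arguments that breaks down, and only the real-points version of the inclusion can be proved.
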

In particular, $\xi,\eta \in \fg$ satisfy the \emph{Heisenberg uncertainty relations}
\begin{equation}
  \label{eq:uncert}
{\textstyle \frac{1}{2}}|\omega_{\psi}(\xi,\eta)| \leq \|\xi\|_{H_{\psi}} \|\eta\|_{H_{\psi}}\,.
\end{equation}

\begin{proof} 
If $(\ol{\rho},\cH)$ is a smooth projective unitary representation of $G$ 
and $\psi \in \cH^{\infty}$ is a smooth vector, then recall from 
Proposition~\ref{bolhoofd} the formula 
\begin{equation} 
\omega_{\psi}(\xi,\eta) 
= -i \la\psi, [\rho_{\psi*}(\xi), \rho_{\psi*}(\eta)] \psi \ra\,.\label{cocvanpsi}
\end{equation} 
If we define the positive semi-definite sesquilinear form $H_{\psi} \colon \fg_{\C} \times \fg_{\C} \rightarrow \C$ by 
\begin{equation}
H_{\psi}(\xi,\eta) := \la \rho_{\psi *}(\xi) \psi, \rho_{\psi *}(\eta) \psi \ra\,,\label{inpvanpsi}
\end{equation}
then $\omega_{\psi} = -2 \mathrm{Im} H_{\psi}$ on $\fg$.
Note that the kernel of $H$ is precisely the complexification of the stabiliser of $[\psi]$ in $\fg_{\C}$,
that is,
$\mathrm{Ker}(H_{\psi}) = \mathfrak{stab}_{\fg_{\C}}([\psi]) \subseteq \mathrm{Ker}(\omega)$.
\end{proof}

The representative $\omega_{\psi}$ and its corresponding semi-definite sesquilinear form $H_{\psi}$
depend on the choice of a ray $[\psi] \in \bP(\cH^{\infty})$. If we choose a different point 
$\ol{\rho}(g)[\psi]$
in the same $G$-orbit $G[\psi] \subseteq \bP(\cH^{\infty})$, then the corresponding 
$\omega_{\widehat{\rho}(g)\psi}$ and $H_{\widehat{\rho}(g)\psi}$ are related to  
$\omega_{\psi}$ and $H_{\psi}$ by the adjoint representation: 

\begin{Proposition}{\rm (Covariance $\omega$ and $H$)}\label{CocycleCovariance}
If $[\psi'] = \ol{\rho}(g)[\psi]$ is in the $G$-orbit of $[\psi] \in \bP(\cH^{\infty})$, then 
\begin{eqnarray}
\omega_{\widehat{\rho}(g)\psi}(\xi,\eta) &=& \omega_{\psi}(\mathrm{Ad}_{g^{-1}}(\xi), \mathrm{Ad}_{g^{-1}}(\eta))\,,
\label{2vorm}\\
H_{\widehat{\rho}(g)\psi}(\xi,\eta) &=& H_{\psi}(\mathrm{Ad}_{g^{-1}}(\xi), \mathrm{Ad}_{g^{-1}}(\eta))\,.
\label{Kaehler}
\end{eqnarray}
In particular, $H_{\psi}$ and $\omega_{\psi}$ are invariant under the stabiliser $G_{[\psi]} \subseteq G$ 
of~$[\psi]$.
\end{Proposition}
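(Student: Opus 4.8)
The plan is to deduce both covariance identities from a single conjugation formula for the lifted infinitesimal generators at $[\psi]$ and at $[\psi']:=\ol\rho(g)[\psi]$, obtained by differentiating the group-level equivariance relation~(\ref{eq:ctrafo}) on the dense domain of smooth vectors. I first fix a lift $W\in\U(\cH)$ of $\ol\rho(g)\in\PU(\cH)$ and set $\psi':=W\psi$, so that $[\psi']=\ol\rho(g)[\psi]$ and the symbol $\hat\rho(g)\psi$ is read as $\psi'$. Since the normalisation $\la\psi,\rho_\psi(h)\psi\ra>0$ defining $\rho_\psi$ is unchanged under $\psi\mapsto\lambda\psi$ with $|\lambda|=1$, the lift $\rho_\psi$, and hence $\omega_\psi$ and $H_\psi$, depend only on the ray $[\psi]$; thus $\omega_{\psi'},H_{\psi'}$ do not depend on the choice of $W$, and the statement is well posed. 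By Theorem~\ref{thm:1.4}(iii) the vector $\psi$ is smooth for $\hat\rho$, and since smooth vectors are invariant under $\hat\rho$, so are $\psi'=W\psi$ and $W^{\pm1}\phi$ for every $\phi\in\cH^\infty$.

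The key step is to differentiate the equivariance relation $\rho_{\psi'}(ghg^{-1})=W\rho_\psi(h)W^{-1}$ from~(\ref{eq:ctrafo}). Given $\xi\in\fg$, I would choose a curve $t\mapsto h_t$ through $\one$ with tangent $\Ad_{g^{-1}}\xi$, so that $t\mapsto gh_tg^{-1}$ is a curve through $\one$ with tangent $\Ad_g(\Ad_{g^{-1}}\xi)=\xi$. Applying both sides to a smooth vector $\phi$ and differentiating at $t=0$ gives, on $\cH^\infty$,
\[ \rho_{\psi'*}(\xi)=W\,\rho_{\psi*}(\Ad_{g^{-1}}\xi)\,W^{-1}. \]
I expect this differentiation to be the main technical point: one must check that the derivative of the operator-valued curve exists when applied to each smooth vector (which follows since $g\mapsto\rho_{\psi'}(g)\phi$ is smooth near $\one$, cf.\ the proof of Theorem~\ref{thm:1.4}), that $W^{-1}\phi\in\cH^\infty$ so the right-hand side is defined, and that $\rho_{\psi*}(\,\cdot\,)$ preserves $\cH^\infty$ so that the commutator used below is defined on the common invariant domain.

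It then remains to substitute. For $H$, unitarity of $W$ and $\psi'=W\psi$ give
\[ H_{\psi'}(\xi,\eta)=\la\rho_{\psi'*}(\xi)\psi',\rho_{\psi'*}(\eta)\psi'\ra=\la W\rho_{\psi*}(\Ad_{g^{-1}}\xi)\psi,\,W\rho_{\psi*}(\Ad_{g^{-1}}\eta)\psi\ra=H_\psi(\Ad_{g^{-1}}\xi,\Ad_{g^{-1}}\eta), \]
which is~(\ref{Kaehler}). For $\omega$, the conjugation formula yields $[\rho_{\psi'*}(\xi),\rho_{\psi'*}(\eta)]=W[\rho_{\psi*}(\Ad_{g^{-1}}\xi),\rho_{\psi*}(\Ad_{g^{-1}}\eta)]W^{-1}$ on $\cH^\infty$, whence
\[ \omega_{\psi'}(\xi,\eta)=-i\la\psi',[\rho_{\psi'*}(\xi),\rho_{\psi'*}(\eta)]\psi'\ra=-i\la\psi,[\rho_{\psi*}(\Ad_{g^{-1}}\xi),\rho_{\psi*}(\Ad_{g^{-1}}\eta)]\psi\ra=\omega_\psi(\Ad_{g^{-1}}\xi,\Ad_{g^{-1}}\eta), \]
which is~(\ref{2vorm}). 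Alternatively, (\ref{2vorm}) follows from (\ref{Kaehler}) together with the identity $\omega_\psi=-2\operatorname{Im}H_\psi$ established in the proof of Proposition~\ref{polarisability}. Finally, the last assertion is immediate: if $g\in G_{[\psi]}$ then $[\psi']=[\psi]$, so (\ref{2vorm}) and (\ref{Kaehler}) read $\omega_\psi(\xi,\eta)=\omega_\psi(\Ad_{g^{-1}}\xi,\Ad_{g^{-1}}\eta)$ and $H_\psi(\xi,\eta)=H_\psi(\Ad_{g^{-1}}\xi,\Ad_{g^{-1}}\eta)$, i.e.\ $G_{[\psi]}$-invariance of both forms.
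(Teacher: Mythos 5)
Your proof is correct and takes essentially the same route as the paper: both differentiate the equivariance relation~(\ref{eq:ctrafo}) to obtain $\rho_{\hat\rho(g)\psi *}(\xi) = \hat\rho(g)\rho_{\psi *}(\Ad_{g^{-1}}\xi)\hat\rho(g)^{-1}$ and then substitute into the defining formulas (\ref{cocvanpsi}) and (\ref{inpvanpsi}) for $\omega_{\psi}$ and $H_{\psi}$. Your additional remarks on well-posedness (dependence only on the ray) and on the invariance of $\cH^{\infty}$ under the lift are sound elaborations of points the paper leaves implicit.
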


\begin{proof} From \eqref{eq:ctrafo}, we know that 
$\rho_{\hat\rho(g)\psi}(ghg^{-1}) = \hat\rho(g)\rho_\psi(h)\hat\rho(g)^{-1}$, 
which leads to 
\[ \rho_{\hat\rho(g)\psi *}(\xi) = \hat\rho(g)\rho_{\psi *}(\Ad_g^{-1}\xi)\hat\rho(g)^{-1}\] 
and further to 
\[ \rho_{\hat\rho(g)\psi *}(\xi) \hat\rho(g)\psi 
= \hat\rho(g)\rho_{\psi *}(\Ad_g^{-1}\xi)\psi.\] 
The assertion now follows from (\ref{cocvanpsi}) and (\ref{inpvanpsi}).
\end{proof}

\begin{Remark}
The geometric intuition behind Propositions \ref{polarisability} and \ref{CocycleCovariance}
is the following. If the orbit $\mathcal{O} = G[\psi] \subseteq \bP(\cH^{\infty})$ 
happens to be a smooth homogeneous manifold
modelled on $\fg/\fg_{[\psi]}$ (for example if $G$ is finite dimensional), 
then $P = G^{\sharp} \psi \subseteq \cH^{\infty}$ is a smooth principal $\T$-bundle
over $\mathcal{O}$, modelled on $\fg^{\sharp}/\fg_{\psi}$.
The maps $[\psi] \mapsto \omega_{\psi}$ and $[\psi] \mapsto \mathrm{Re}(H_{\psi})$ then
define an equivariant smooth $2$-form and a hermitian metric on $\mathcal{O}$, 
whereas the splitting $s_{\psi} \colon T_{[\psi]}\mathcal{O} \rightarrow T_{\psi}P$
derived from $\rho_{\psi*} \colon \fg/\fg_{[\psi]} \rightarrow \fg^{\sharp}/\fg_{\psi}$ 
yields an equivariant connection 1-form on 
$P$ with curvature $-\omega$.  
This makes $P \rightarrow \mathcal{O}$ into a prequantum $\T$-bundle with equivariant Riemannian metric 
$\mathrm{Re}(H)$.
\end{Remark}

\section{The main theorem}\label{sec:7}

We have seen that for locally convex Lie groups, there is a correspondence 
between smooth projective unitary representations of $G$
and smooth linear unitary representations of a central extension $G^{\sharp}\rightarrow G$.
Moreover, for connected simply connected Lie groups $G$ modelled on a barrelled Lie  
algebra $\fg$, there is a correspondence between smooth unitary representations of $G^{\sharp}$
and regular unitary representations of its Lie algebra $\fg^{\sharp}$.
We now give a formulation of this correspondence which carefully takes the intertwiners into account.  

\begin{Definition} {\rm(Representation categories)}
Let $G$ be a Lie group modelled on a locally convex Lie algebra~$\fg$. 
Define the categories {\it I}, {\it II}, {\it III}, and {\it III}$_{r}$
as follows:
\begin{enumerate}
\item[{\it I}.] \label{item1}
The category of triples $(\ol\rho,\cH,[\psi])$, where $(\ol\rho,\cH)$ is a nonzero smooth projective unitary 
representation of $G$ and 
$[\psi] \in \bP(\cH)^{\infty}$ is a distinguished smooth ray.
A morphism $(\ol\rho,\cH,[\psi]) \rightarrow (\ol\rho',\cH',[\psi'])$ is a linear isometry
$U \colon \cH \rightarrow \cH'$ with $[U\psi] = [\psi']$ and 
$\ol{U} \circ \ol\rho = \ol \rho' \circ \ol{U}$.
\item[{\it II}.] \label{item2}
The category of quadruples $(\widehat{G}\rightarrow G, \rho,\cH, [\psi])$, where 
$\widehat{G}\rightarrow G$ is a smooth central $\T$-extension of $G$, $(\rho,\cH)$ is a smooth 
unitary representation of $\widehat{G}$ such that $\rho(z) = z \one$ on $\T$, 
$[\psi] \in \bP(\cH^{\infty})$, and the morphisms are pairs $(\Phi,U)$, where 
$\Phi \colon \widehat{G} \rightarrow \widehat{G}{'}$ is an isomorphism of central extensions 
and $U \colon \cH \rightarrow \cH'$ is a linear isometry satisfying $U\rho(g) = \rho'(\Phi(g))U$
and ${[U\psi]=[\psi']}$. 
\item[{\it III}.] \label{item3} 
The category of quadruples $(\widehat{\fg}\rightarrow \fg, \pi, V, [\psi])$, where 
$\widehat{\fg}\rightarrow \fg$ is a central $\R$-extension of locally convex Lie algebras,
$(\pi,V)$ is a continuous 
unitary representation of $\widehat{\fg}$ such that $\pi(1) = \one$ 
(cf.~Definition~\ref{LAcontinuity}), $[\psi] \in \bP(V)$, 
and morphisms are pairs $(\phi,U)$ where 
$\phi \colon \widehat{\fg} \rightarrow \widehat{\fg}{}{'}$ is an 
isomorphism of central extensions 
and $U \colon V \rightarrow V'$ is a linear isometry satisfying $U\pi(\xi) = \pi'(\phi(\xi))U$
and $[U\psi] = [\psi']$.
\item[{\it III}$_{\mathrm{r}}$.] The full subcategory of {\it III} where 
the image $\mathrm{per}_{\omega}(\pi_2(G)) \rightarrow \R$ of the 
period homomorphism for the class $[\omega] \in H^2(\fg,\R)$ of 
$\widehat{\fg}\cong \R \oplus_\omega \g$ 
is contained in $\Z$, and $(\pi,V)$ is (strongly) regular.
\end{enumerate}
The case $\cH = \{0\}$ is excluded in all categories.
\end{Definition}

\begin{Remark} (Morphisms are intertwiners for perfect, connected $G$)
If $G$ is connected and $\fg$ is topologically perfect, then the isomorphisms
$\Phi \colon \widehat{G} \rightarrow \widehat{G}{}'$ and 
$\phi \colon \widehat{\fg} \rightarrow \widehat{\fg}{}'$ are unique 
by Proposition~\ref{autoriseer}. In this case, isomorphic central extensions 
can be canonically identified, and the morphisms are simply isometric intertwiners $U$ 
with $[U\psi] = [\psi']$.
\end{Remark}

\begin{Theorem}{\rm (Main theorem)}\label{maintheorem}
Let $G$ be a Lie group modelled on a locally convex Lie algebra~$\fg$. 
\begin{itemize}
\item[A)] There exists an equivalence of categories between {\it I} and {\it II}.
\item[B)] If $G$ is connected, then the derived representation functor 
$\dd \colon \mathrm{{\it II}} \rightarrow \mathrm{{\it III}}$
 is fully faithful. If $G$ is connected and regular, then $\dd$ lands in 
$\mathrm{{\it III}}_{r}$.
\item[C)] If $G$ is a connected, simply connected, regular Lie group modelled on a barrelled 
Lie algebra $\fg$, then 
there exists an integration functor, denoted $\mathbf{I} \colon \mathrm{{\it III}}_{r} \rightarrow \mathrm{{\it II}}$,
which is left adjoint to the derived representation functor $\dd \colon\mathrm{{\it II}} \rightarrow \mathrm{{\it III}}_{r} $.
\end{itemize}
\end{Theorem}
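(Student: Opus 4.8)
The plan is to handle the three parts in sequence, leaning on the equivalence of Part~A throughout and on the integration machinery of Section~\ref{sec:3}. For Part~A I would define $F\colon \mathit{I}\to\mathit{II}$ by sending $(\ol\rho,\cH,[\psi])$ to $(G^\sharp,\hat\rho,\cH,[\psi])$, where $G^\sharp=\{(g,U):\ol\rho(g)=\ol U\}$ carries the Lie group structure of Theorem~\ref{thm:1.4} and $\hat\rho(g,U)=U$; Theorem~\ref{thm:1.4} guarantees $[\psi]\in\bP(\cH^\infty)$ and smoothness of $\hat\rho$, while Corollary~\ref{lineariseer} turns a morphism $U$ in $\mathit{I}$ into $\Phi(g,V)=(g,UVU^{-1})$, hence a morphism $(\Phi,U)$ in $\mathit{II}$. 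The quasi-inverse $P\colon\mathit{II}\to\mathit{I}$ forgets the extension, sending $(\widehat G,\rho,\cH,[\psi])$ to $(\ol\rho,\cH,[\psi])$ with $\ol\rho(g):=\ol{\rho(\hat g)}$, well defined since $\rho(z)=z\one$. One checks $PF=\mathrm{id}$ directly, and that $\hat g\mapsto(q(\hat g),\rho(\hat g))$ is a natural isomorphism $\widehat G\xrightarrow{\sim}G^\sharp$ of central extensions intertwining $\rho$ with $\hat\rho$; that this is an isomorphism of \emph{Lie} groups is precisely the uniqueness of the smooth structure in Theorem~\ref{thm:1.4}(v). The bijection on morphisms is supplied by Corollary~\ref{lineariseer}, so $F$ and $P$ are mutually inverse equivalences.

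For Part~B the functor $\dd$ sends $(\widehat G,\rho,\cH,[\psi])$ to $(\widehat\fg,\dd\rho,\cH^\infty,[\psi])$ with $\widehat\fg=\Lie(\widehat G)$ and $(\Phi,U)\mapsto(\Lie\Phi,U|_{\cH^\infty})$. Faithfulness is immediate: $U$ is determined by $U|_{\cH^\infty}$ by density, and for connected $G$ the extension $\widehat G$ is connected, so by Proposition~\ref{autoriseer} two extension automorphisms with equal differential differ by a character of vanishing derivative, i.e.\ the trivial one; thus $\Phi$ is determined by $\Lie\Phi$. For fullness, given a $\mathit{III}$-morphism $(\phi,W)$ between $\dd$-images, I would extend the isometry $W$ to a unitary $U\colon\cH\to\cH'$ and then argue that $U$ is a projective intertwiner: because $\phi$ is the identity on $\fg$ and the centre acts by scalars, $U$ intertwines the two projective $\fg$-actions, and the integration argument of Proposition~\ref{LArulez}, run on the projective representations via Part~A, upgrades this to $\ol U\ol\rho=\ol\rho'\ol U$. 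Transporting through Part~A gives $(\Phi,U)$ with $\Phi(g,V)=(g,UVU^{-1})$, and $\Lie\Phi=\phi$ since $(\phi-\Lie\Phi)(\tilde\xi)$ is central, so $\dd\rho'$ sends it to a scalar multiple of $\one$ that must annihilate the nonzero image $W(\cH^\infty)$. Finally $\dd$ lands in $\mathit{III}_r$ once $G$ is regular: the derived representation is strongly regular by Proposition~\ref{derivedisregular}, and the mere existence of $\widehat G$ forces $\mathrm{per}_\omega(\pi_2(G))\subseteq\Z$ by the integrability criterion for Lie algebra $2$-cocycles.

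For Part~C, with $G$ connected, simply connected, regular and barrelled, I would define $\mathbf I$ on objects as follows: the integrality $\mathrm{per}_\omega(\pi_2(G))\subseteq\Z$ together with simple connectivity lets the integrability criterion produce a central $\T$-extension $\widehat G_Y\to G$ integrating $\widehat\fg_Y=\R\oplus_\omega\fg$; the regular representation $(\pi,V)$ integrates by Proposition~\ref{regularisderived} to a smooth representation of the $1$-connected cover $\widetilde H$ of $\widehat G_Y$ on $\cH_V$, and the normalization $\pi(1)=\one$ combined with the period condition makes this representation descend to $\widehat G_Y$ with $\rho(z)=z\one$, yielding $\mathbf I(Y)=(\widehat G_Y,\rho,\cH_V,[\psi])\in\mathit{II}$. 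The unit $\eta_Y\colon Y\to\dd\mathbf I(Y)$ is $(\mathrm{id},\iota)$, with $\iota\colon V\hookrightarrow\cH_V^\infty$ the inclusion of the invariant subspace $V$ into the full space of smooth vectors; since this is generally not surjective, $\dd$ is not essentially surjective and $\mathbf I$ is a genuine reflector. To verify $\operatorname{Hom}_{\mathit{II}}(\mathbf I Y,X)\cong\operatorname{Hom}_{\mathit{III}_r}(Y,\dd X)$ I would take $(\phi,W)\colon Y\to\dd X$, extend $W$ to a unitary $U\colon\cH_V\to\cH_X$, and rerun the fullness argument of Part~B to obtain a unique $(\Phi,U)\colon\mathbf I Y\to X$ in $\mathit{II}$ with $\Lie\Phi=\phi$ and $\dd(\Phi,U)\circ\eta_Y=(\phi,W)$; uniqueness follows from faithfulness of $\dd$ and density of $V$, and naturality in $X$ is routine.

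The main obstacle I expect lies entirely in Part~C, in two places. First, the object‑level construction of $\mathbf I$ is delicate: integrating the Lie algebra representation lands a priori on the $1$-connected cover $\widetilde H$, and one must show that the chosen central normalization together with $\mathrm{per}_\omega(\pi_2(G))\subseteq\Z$ makes the representation descend to the $\T$-extension $\widehat G_Y$ itself—rather than to some intermediate cover—with the centre acting as $z\mapsto z\one$. Second, and recurring already in the fullness step of Part~B, one needs a projective analogue of Proposition~\ref{LArulez}: that an infinitesimal intertwiner integrates to a projective intertwiner of the connected group. I would prove this by lifting to $G^\sharp$ through Part~A and running the constancy argument of Proposition~\ref{LArulez} on $\la\tilde\rho(\gamma_t)U\psi,U\rho(\gamma_t)\psi\ra$, carefully tracking the residual scalar ambiguity introduced by the centre.
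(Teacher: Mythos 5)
Your overall architecture matches the paper's: part A rests on Theorem~\ref{thm:1.4} and Corollary~\ref{lineariseer}, part B on Proposition~\ref{LArulez}, Proposition~\ref{autoriseer} and Proposition~\ref{derivedisregular}, and part C on integrating the Lie algebra representation by the ODE of Proposition~\ref{regularisderived} and matching intertwiners by restriction and closure. However, there is a genuine, recurring gap: the morphisms of $\mathit{I}$, $\mathit{II}$, $\mathit{III}$ are linear \emph{isometries}, not unitaries, and your proof treats them as invertible at every load-bearing step. The assignment $\Phi(g,V)=(g,UVU^{-1})$ is undefined when $U$ is a non-surjective isometry, and ``extend the isometry $W$ to a unitary $U\colon \cH\to\cH'$'' is false in general---the closure of an isometric intertwiner is an isometry onto a possibly proper closed subspace. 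This is not cosmetic: the whole point of the categorical setup, and of the adjunction in part C, is to accommodate non-invertible intertwiners (subrepresentation inclusions are morphisms in $\mathit{I}$, and the unit $V\hookrightarrow \cH_{V}^{\infty}$ that you yourself exhibit is generically non-surjective). A functor $\mathit{I}\to\mathit{II}$ defined only on unitary morphisms gives neither the equivalence in A nor the hom-set bijection in C.

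The device the paper uses at exactly this point is missing from your proposal. By Remark~\ref{zelfde}, an isometric intertwiner $U$ with $[U\psi]=[\psi']$ makes the local cocycles $f_{\psi}$ and $f_{U\psi}$ \emph{identical}, not merely cohomologous, so the two central extensions are literally the same Lie group and the $\mathit{II}$-morphism is simply $(\mathrm{Id},U)$---no conjugation is needed. The same rigidity drives part C: a $\mathit{III}$-morphism $(\phi,W)$ carries the splitting singled out by $\la \psi,\pi(\sigma(\xi))\psi\ra =0$ to the corresponding splitting on the target, so the distinguished cocycles coincide and the (non-canonical) construction of \cite[\S~6]{Ne02} produces the \emph{same} group $\widehat{G}$ on both sides; intertwiners are then handled by $U\mapsto U|_{V}$ and by closure, using that solutions of the ODE are mapped to solutions. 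If you replace your conjugation/unitarity steps by this argument (or, alternatively, define $\Phi(g,V)$ for isometric $U$ as the unique lift $V'$ of $\ol\rho'(g)$ with $UV=V'U$), the rest of your outline---faithfulness via Proposition~\ref{autoriseer}, fullness via the projective variant of the constancy argument of Proposition~\ref{LArulez}, and the descent of the integrated representation from the universal cover of $\widehat{G}$ using the central normalization together with the period condition---does go through; your treatment of the descent is in fact more explicit than the paper's. One smaller point: in part B you invoke the integrability criterion to conclude $\mathrm{per}_{\omega}(\pi_2(G))\subseteq \Z$, but that criterion is stated for $1$-connected $G$; for merely connected $G$ the paper instead uses that $\mathrm{per}_{\omega}$ is (minus) the connecting homomorphism $\pi_2(G)\to\pi_1(\T)\cong\Z$ of the bundle $\widehat{G}\to G$, or else you should first pull the extension back to the universal cover.
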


\begin{proof} 
The functors we need are already constructed at the level of objects, so we need only check 
the above statements and functoriality
at the level of morphisms.
\noindent A) The equivalence of $\mathrm{{\it I}}$ and $\mathrm{{\it II}}$ is essentially 
Corollary \ref{lineariseer}. Considering a $\widehat{G}$-representation as a 
projective $G$-representation yields a functor
$\mathrm{{\it II}}\rightarrow \mathrm{{\it I}}$. We show that the construction of 
$(G^{\sharp}, \rho, \cH, \psi)$ out of $(\ol\rho, \cH, \psi)$ is functorial.
By Remark~\ref{zelfde} and the fact that the construction of the cocycle only depends on the 
class $[\psi]$ of $\psi$, 
the extensions $G^{\sharp}$ and $G^{\sharp}{}'$ derived from 
$(\ol\rho, \cH, \psi)$ and $(\ol\rho', \cH', \psi')$
are \emph{the same} if there exists  
a linear isometric intertwiner $U \colon \cH \rightarrow \cH'$ with 
$[U\psi] =[\psi']$.
This means that we can define the functor to map
$U$ to $(\Phi,U)$ with $\Phi = \mathrm{Id}$,
\[
(G^{\sharp}, \rho, \cH, \psi) \smapright{(\mathrm{Id},U)}(G^{\sharp}, \rho', \cH', \psi') \,.
\]
The composition 
$\mathrm{{\it I}} \rightarrow \mathrm{{\it II}} \rightarrow \mathrm{{\it I}}$ is the identity, and the natural 
transformation between the identity and 
$\mathrm{{\it II}} \rightarrow \mathrm{{\it I}} \rightarrow \mathrm{{\it II}}$ is 
given on $(\widehat{G},\rho, \cH, \psi)$ by the pair $(\Phi, U)$ where 
$U \colon \cH \rightarrow \cH$ is the identity and $\Phi \colon \widehat{G} \rightarrow G^{\sharp}$
is fixed by requiring that $\Phi(\widehat{g})$ cover $g\in G$ and 
$\rho(\widehat{g}) = \rho^{\sharp}(\Phi(\widehat{g}))$.
B) The derived representation functor $\dd$ takes $(\widehat{G}, \rho, \cH, \psi)$ to 
$(\widehat{\fg}, \dd\rho, \cH^{\infty}, \psi)$
and $(\Phi, U)$ to $(\dd\Phi, U|_{\cH^{\infty}})$. 
It is fully faithful for connected $G$ by  Proposition~\ref{LArulez}. 
The image of the period map is discrete for connected $G$
because by \cite[Prop.~5.11]{Ne02}, $\mathrm{per}_{\omega}$ is minus the connecting homomorphism 
$\delta \colon \pi_{2}(G) \rightarrow \Z$
of the principal $\T$-bundle $\widehat{G}\rightarrow \Z$.
If $G$ is regular, then so is $(\dd\rho, \cH^{\infty})$ by Proposition~\ref{derivedisregular}. 
Together, discreteness and regularity imply that the Lie functor lands in 
$\mathrm{{\it III}}_{r}$.
C) We describe the integration functor $\mathrm{{\it III}}_{r} \rightarrow \mathrm{\it II}$.
The ray $[\psi]$ picks out a distinguished splitting $\sigma \colon \fg  \rightarrowtail \widehat{\fg}$
of $\widehat{\fg} \rightarrow \fg$
by the requirement $\langle\psi, \pi(\sigma(\xi)) \psi\rangle = 0$ for $\xi \in \fg$,
and hence a distinguished element $\omega$ in the class $[\omega]$ corresponding to 
$\widehat{\fg} \rightarrow \fg$ 
(this is the infinitesimal version of equation~\eqref{reptococycle}).
If $(\phi,U)$ is a morphism from $(\widehat{\fg}, \pi, V, [\psi])$ to 
$(\widehat{\fg}{'}, \pi', V', [\psi'])$, then the splitting 
$\sigma' \colon \fg \rightarrowtail \widehat{\fg}{}'$ is precisely $\sigma ' = \phi \circ \sigma$,
so that $\omega'$ is \emph{identical} to $\omega$.
Although the construction of group extensions from Lie algebra extensions 
described in \cite[\S~6]{Ne02} is highly non-canonical (it depends on a choice of a system of paths in $G$),
it takes as input only a Lie algebra 2-cocycle $\omega$, so it produces the \emph{same}
central extension $\widehat{G}$ for $\widehat{\fg}$ and $\widehat{\fg}{}'$.
We construct the smooth group representation of $\widehat{G}$ on $\cH_{V}$
by solving the ODE
$\frac{d}{dt}\psi_t = \pi(\delta^{R}g_t)\psi_t$ with initial condition $\psi_0$ 
as outlined in Proposition~\ref{regularisderived}.
(The identification between $\widehat{\fg}$ and the Lie algebra
$\fg \oplus_{\omega}\R$ of $\widehat{G}$ needed to evaluate $\pi(\delta^{R}g_t)$ is fixed by the 
splitting~$\sigma$.)
If $U \colon V \rightarrow V'$ is an isometric intertwiner, then $\psi'_t = U\psi_t$ is a solution to
$\frac{d}{dt}\psi'_t = \pi'(\delta^{R}g_t)\psi'_t$ with initial condition $\psi'_0 = U\psi_0$, so 
its closure $U \colon \cH_{V} \rightarrow \cH_{V'}$ is an intertwiner at the group level.
To check that $\mathbf{I}$ is left adjoint to $\mathbf{\dd}$, we provide a natural isomorphism
\[\mathrm{hom}_{{\it II}}(\mathbf{I} \, - , -) 
\simeq 
\mathrm{hom}_{{\it III}_{r}}(-, \dd\, -).
\]
By the above argument, we may identify the groups and Lie algebras on both sides.
For a unitary $G$-representation $(\pi, \cH)$, 
let $U \colon \cH_{V} \rightarrow \cH$ be an isometric intertwiner in {\it II}. 
Since $V \subseteq \cH_{V}$ is contained in $\cH_{V}^{\infty}$, we have
$UV \subseteq \cH^{\infty}$, and
$U|_{V} \colon V \rightarrow \cH^{\infty}$ is an isometric intertwiner in {\it III}$_{r}$.
Conversely, every isometric intertwiner $U|_{V} \colon V \rightarrow \cH^{\infty}$
extends to the closure $\cH_{V} \rightarrow \cH$, hence defines an isometric intertwiner 
in {\it II}. This shows that $(\mathbf{I} , \dd)$ is an adjoint pair.
\end{proof}

\begin{Remark}{\rm(Adjunction, but not equivalence)}
The adjoint pair $(\mathbf{I}, \dd)$ is in general not an equivalence of categories,
because non-equivalent Lie algebra representations can correspond to equivalent group 
representations. For example, the Lie algebra representations on the
spaces of smooth and analytic vectors for a single unitary group representation are in general 
not unitarily equivalent.
\end{Remark}

\section{Covariant representations}
\label{sec:8}

In this section, we consider a locally convex Lie group $G$, endowed with a smooth 
action of a locally convex Lie group $R$, given by a homomorphism 
${\alpha \: R \to \Aut(G)}$. 
The semidirect product $G \rtimes_{\alpha}R$ is then again a locally convex Lie group
with Lie algebra $\fg \rtimes_{D\alpha} \fr$, the ``crossed product'' 
by $D\alpha \: \fr \to \mathrm{der}(\fg)$.

This setting, especially with $R = \R$, is frequently encountered 
in the representation theory of infinite dimensional Lie groups, 
in particular for loop groups
(cf.\ \cite{PS86} and Section \ref{sec:loopgroups}) and gauge groups
(cf.\ \cite{JN15}).


\begin{Definition}(Covariant unitary representations) A triple $(\rho, U, \cH)$ of a 
Hilbert space $\cH$ with 
unitary representations 
$\rho$ of $G$ and
$U$ of $R$ is called a \emph{covariant unitary representation} if
\[ U_t \rho(g) U_t^{-1} = \rho(\alpha_t g)\quad \mbox{ for } \quad g \in G, t \in R\] 
or, equivalently, if the map $\rho_{U} \colon G\rtimes_{\alpha} R \rightarrow \U(\cH)$
defined by
\[\rho_{U} (g,t) := \rho(g) U_t\] 
is a unitary representation of ${G \rtimes_\alpha R}$. 
We call $(G,U)$ \emph{continuous} or \emph{smooth} if the corresponding representation
$\rho_U$
of ${G \rtimes_{\alpha}R}$
is continuous or smooth, respectively. 
%
\end{Definition}
\begin{Remark}{\rm(Smoothness of covariant representations)}
A covariant representation $(\rho,U,\cH)$ is continuous if and only if both $\rho$ and $U$
are continuous. If $(\rho,U,\cH)$ is smooth, then both $\rho$ and $U$ are smooth.
The converse may not hold because the intersection 
$\cH^\infty(\rho_U) = \cH^\infty(\rho) \cap \cH^\infty(U)$ 
(cf.\ Theorem~\ref{kreet}) 
may not  be dense in $\cH$. 
\end{Remark}

\begin{Definition} (Projective covariant representations) 
A triple $(\oline\rho, \oline U, \cH)$ of a 
Hilbert space $\cH$ with projective unitary representations $\ol\rho$ of $G$
and $\ol U$ of $R$ is called a 
{\it covariant projective unitary representation} if 
\[ \oline{U_t} \oline\rho(g) \oline{U_t}{}^{-1} 
= \oline\rho(\alpha_t g)\quad \mbox{ for } \quad g \in G, t \in R\] 
or, equivalently, if the map $\ol{\rho_{U}} \colon G \rtimes_\alpha R \rightarrow \PU(\cH)$ 
defined by 
\[\ol{\rho_{U}} (g,t) := \oline\rho(g) \oline{U_t}\] is a 
projective unitary representation of the semi\-di\-rect product 
$G \rtimes_\alpha R$. 
We call $(\oline \rho, \ol U, \cH)$ \emph{continuous} or \emph{smooth}
if the corresponding projective unitary representation of 
$G\rtimes_{\alpha} R$ is continuous or smooth, respectively.
\end{Definition}


\begin{Remark}\label{Rk:2.9} {\rm(Thm.~\ref{maintheorem} in covariant context)}
If $(\oline\rho, \ol U, \cH)$ is a smooth covariant projective unitary representation, then applying 
Corollary~\ref{lineariseer} to the projective representation $\ol{\rho_{U}}$ of $G\rtimes_{\alpha}R$,
we obtain a central 
extension 
\[
\T \rightarrow (G\rtimes_{\alpha}R)^{\sharp} \rightarrow G\rtimes_{\alpha}R 
\]
with a smooth unitary representation $\widehat{\rho}$ of $(G\rtimes_{\alpha}R)^{\sharp}$ on $\cH$ that induces
both $\oline\rho$ and $\oline U$.
From Proposition~\ref{LArulez}, we see that the restriction of $\ol{\rho_{U}}$ to $(G \rtimes_{\alpha}R)_{0}$
is determined up to unitary equivalence by the 
derived representation $\dd\widehat{\rho}$
of the central extension $\R \rightarrow (\fg \rtimes_{D\alpha}\fr)^{\sharp} \rightarrow \fg \rtimes_{D\alpha}\fr$.
Theorem~\ref{maintheorem} provides
necessary and sufficient integrability criteria.
\end{Remark}

In view of the importance of the special case $R = \R$,
we now describe the Lie algebra extensions 
for this particular situation in more detail.
If $\alpha$ is a 
smooth $\R$-action on $G$ 
with infinitesimal generator $D \in \mathrm{der}(\fg)$, 
then $G\rtimes_{\alpha} \R$ is a locally convex Lie group
with Lie algebra $\fg \rtimes_{D}\R$. We now have (cf.\ Proposition~\ref{bolhoofd}):

\begin{Proposition}{\rm(The case $R = \R$)}\label{erisereenjarig}
A ray $[\psi] \in \bP(\cH)$ yields a cocycle 
${\omega_{\psi} \: (\g\rtimes_D \R)^2 \to \R}$
and an isomorphism between $(\fg \rtimes_{D}\R)^{\sharp}$
and the Lie al\-ge\-bra
\begin{equation}
\widehat{\fg} := \widehat{(\fg \rtimes_{D} \R)}_{\omega_{\psi}} =  \R \oplus_{\omega_{\psi}} (\g\rtimes_{D}\R)\,,
\end{equation}
with Lie bracket
\begin{equation} 
[(z,x,t), (z',x',t')] = \big(\omega_{\psi}(x,t;x',t'), [x,x'] + t D(x') - t' D(x), 0\big)\,.
\end{equation}
\end{Proposition}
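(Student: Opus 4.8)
The plan is to derive this as a direct specialisation of Proposition~\ref{bolhoofd} together with the cohomological classification of central extensions, applied to the semidirect product $G \rtimes_\alpha \R$ exactly as indicated in Remark~\ref{Rk:2.9}. First I would apply Corollary~\ref{lineariseer} to the smooth projective representation $\ol{\rho_{U}}$ of $G \rtimes_\alpha \R$ to obtain the central Lie group extension $(G \rtimes_\alpha \R)^{\sharp}$, and hence the associated central Lie algebra extension $(\fg \rtimes_D \R)^{\sharp}$. Choosing a unit vector $\psi \in \cH^{\infty}$ representing the given smooth ray $[\psi]$, Proposition~\ref{bolhoofd} then produces a continuous $2$-cocycle $\omega_{\psi}$ on $\fg \rtimes_D \R$, given by $\omega_{\psi}(\xi,\xi') = -i\langle \psi, [\rho_{\psi *}(\xi), \rho_{\psi *}(\xi')]\psi\rangle$, whose class $[\omega_{\psi}] \in H^2(\fg \rtimes_D \R, \R)$ represents the extension $(\fg \rtimes_D \R)^{\sharp}$.

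Next I would record the bracket of the semidirect product. Since $D \in \mathrm{der}(\fg)$ is the infinitesimal generator of $\alpha$ and the $\R$-factor is abelian, the Lie bracket on $\fg \rtimes_D \R$ is
\[
[(x,t),(x',t')] = \big([x,x'] + t D(x') - t' D(x),\, 0\big)
\]
for $(x,t),(x',t') \in \fg \rtimes_D \R$. Feeding $\omega_{\psi}$ into the general construction $\widehat{\fg}_{\omega} = \R \oplus_{\omega}\fg$ (cf.\ the discussion preceding the cohomological classification of central extensions) then yields the central extension $\R \oplus_{\omega_{\psi}}(\fg \rtimes_D \R)$ with bracket
\[
[(z,x,t),(z',x',t')] = \big(\omega_{\psi}(x,t;x',t'),\, [x,x'] + t D(x') - t' D(x),\, 0\big),
\]
which is precisely the asserted formula. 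Since central extensions of a locally convex Lie algebra by $\R$ are classified up to isomorphism by their cohomology class in $H^2$, the coincidence of classes $[(\fg \rtimes_D \R)^{\sharp}] = [\omega_{\psi}]$ then supplies the desired isomorphism $(\fg \rtimes_D \R)^{\sharp} \cong \widehat{\fg}$.

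I do not anticipate a genuine obstacle here, since the statement is essentially a bookkeeping specialisation of results already in hand. The only points demanding care are keeping the semidirect-product structure straight when substituting into the general bracket --- in particular verifying that the $\R$-component of the bracket on $\fg \rtimes_D \R$ vanishes and that the derivation terms $tD(x') - t'D(x)$ carry the correct signs --- and observing that $\omega_{\psi}$, being supplied by Proposition~\ref{bolhoofd}, is automatically a continuous cocycle on the \emph{full} algebra $\fg \rtimes_D \R$ rather than merely on $\fg$.
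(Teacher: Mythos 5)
Your proof is correct and takes essentially the same approach as the paper: the paper states this proposition without a separate proof, merely citing Proposition~\ref{bolhoofd}, and your write-up is exactly the bookkeeping that citation entails (the cocycle $\omega_{\psi}$ from the local lift on $G \rtimes_\alpha \R$, the semidirect-product bracket, and the identification of $(\fg \rtimes_D \R)^{\sharp}$ with $\R \oplus_{\omega_{\psi}} (\fg \rtimes_D \R)$ via the continuous splitting $\rho_{\psi *}$, equivalently via the cohomological classification of central extensions).
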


\begin{Remark} (Equivariant sections and cocycles) \label{rem:inv-cocyc} 
Suppose that
$(\oline\rho, \oline U,\cH)$ is a smooth covariant projective unitary representation 
of $G \rtimes_\alpha R$ and that $\oline U$ is induced by the unitary representation 
$U \: R \to \U(\cH)$. Then $R$ acts on the central extension $G^\sharp$ by 
\[ \hat\alpha_t(g,U) = (\alpha_t(g), U_t U U_{t}^*).\] 

Assume that $\psi$ is an eigenvector for $U$ and that $\chi \: R \to \T$ is 
the corresponding character, defined by 
$U_t \psi = \chi(t)\psi$ for $t \in R$. Then 
$[\psi]$ is fixed under the action of $R$ on $\bP(\cH)$, and  
this implies that $[\psi]$ is invariant under every $\oline U_t$. 
We further obtain for 
$\eta \in \psi^\bot$ and the canonical section 
$\sigma_\psi \: V_\psi \to \cH$ from \eqref{eq:sigmasect} 
the following equivariance relation:  
\begin{align*}
\sigma_\psi(\oline{U_t}[\psi + \eta]) 
&= \sigma_\psi([\chi(t)\psi + U_t \eta])
= \sigma_\psi([\psi + \chi(t)^{-1}U_t \eta])
= \frac{\psi + \chi(t)^{-1}U_t \eta}{\|\psi + \chi(t)^{-1}U_t \eta\|}\\
&= \chi(t)^{-1} U_t \frac{\psi + \eta}{\|\psi + \eta\|}
= \chi(t)^{-1} U_t \sigma_\psi([\psi + \eta]).
\end{align*}
This leads with the terminology from Theorem~\ref{thm:1.4} to 
\begin{align*}
 U_t \rho_\psi(g)\psi 
&= U_t \sigma_\psi(\overline{\rho}(g)[\psi]) 
= \chi(t) \sigma_\psi(\oline{U_t}\overline{\rho}(g)[\psi])
= \chi(t) \sigma_\psi(\overline{\rho} (\alpha_t(g))[\psi])\\
&= \chi(t) \rho_\psi(\alpha_t(g))\psi 
= \rho_\psi(\alpha_t(g)) U_t \psi,
\end{align*}
so that 
\[ U_t \rho_\psi(g)= \rho_\psi(\alpha_t(g)) U_t \quad \mbox{ for } \quad g \in U_\psi, t \in R.\] 
For the cocycle $f \: G \times G \to \T$ defined by \eqref{eq:cocycl} 
in Definition~\ref{def:1.3}, this leads to 
\begin{align*}
 f(\alpha_t(g),& \alpha_t(h)) \1
= \rho_\psi(\alpha_t(gh))^{-1} \rho_\psi(\alpha_t(g)) \rho_\psi(\alpha_t(h))\\
&= U_t \rho_\psi(gh)^{-1} \rho_\psi(g) \rho_\psi(h) U_t^{-1}
=  \rho_\psi(gh)^{-1} \rho_\psi(g) \rho_\psi(h) = f(g,h) \1.
\end{align*}
We conclude that the restriction of $f$ to the pairs 
$(g,h) \in U_\psi^2$ with $gh \in U_\psi$ is $R$-invariant. 
It follows in particular that the action of $\R$ on $\T \times U_\psi \subeq G^\sharp \cong \T \times_f G$ 
is given by 
\[ \hat\alpha_t(z,g) := (z, \alpha_t(g)),  \] 
and hence that it is smooth on this subset. 
We conclude that each $\hat\alpha_t$ is a smooth automorphism of $G^\sharp$ 
and that the action $\hat\alpha$ is smooth on the identity component~$G^\sharp_0$ 
of~$G^\sharp$. 
\end{Remark}

\section{Admissible derivations}\label{AppendixB}


In Subsection~\ref{admissible}, we study 2-cocycles on Lie algebras of the 
form $\fg \rtimes_{D} \R$, where $D$ is an admissible derivation on 
a locally convex Lie algebra $\fg$. These arise naturally in the context of the preceding 
section. 
In Subsection~\ref{cocposen}, we give some extra information on cocycles
that come from representations which are either periodic or of positive energy.

\subsection{Admissible derivations}\label{subsec:admissible}

In this section, we will assume that our derivation is of the following type.
\begin{Definition}{\rm (Admissible derivations)}\label{admissible} 
A continuous derivation $D$ of a locally convex Lie algebra $\fg$ is called
admissible if 
$D\fg \subseteq \fg$ is a closed subspace
and if the 
sequence 
\[
0 \rightarrow \mathrm{Ker}(D) \hookrightarrow \fg \stackrel{D}{\longrightarrow}
D\fg \hookrightarrow \fg \stackrel{q}{\longrightarrow}\mathrm{Coker}(D) \rightarrow 0 
\]
admits continuous linear splitting maps at $\mathrm{Coker}(D)$ and $D\fg$. 
In other words, we require that the 
surjections
$D \colon \fg \rightarrow D \fg$ and 
$q \colon \fg \rightarrow \mathrm{Coker}(D)$
admit continuous sections $I \colon D\fg \rightarrow \fg$ 
and $\sigma \colon \mathrm{Coker}(D) \rightarrow \fg$,
so that $\fg$ admits the direct sum decompositions
$\fg \simeq \mathrm{Ker}(D) \oplus ID\fg$ and
$\fg \simeq D\fg \oplus \sigma q \fg$ of locally convex  
vector spaces.
\end{Definition}


\begin{Proposition}{\rm (Periodic implies admissible)} \label{prop:circleaction}
Let $\fg$ be a complete locally convex 
complex Lie algebra with a smooth $1$-periodic action
$\alpha \colon \R \rightarrow \mathrm{Aut}(\fg)$, i.e., $\alpha$ factors through 
an action of $\T \cong \R/\Z$.  
Let 
$D := \frac{d}{dt}|_{t=0}\,\alpha_t$
be the corresponding derivation.
Then $D$ is admissible.
\end{Proposition}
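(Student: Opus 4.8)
The plan is to exploit the fact that $\alpha$ descends to an action of the compact group $\T = \R/\Z$, which lets me build the required splittings out of two explicit averaging operators. Since $\alpha$ is $1$-periodic we have $\alpha_1 = \alpha_0 = \id$, and since the action is smooth the curve $t \mapsto \alpha_t\xi$ is smooth for each $\xi$, with $\frac{d}{dt}\alpha_t = D\alpha_t = \alpha_t D$. As $\fg$ is complete, Riemann integrals over $[0,1]$ of continuous $\fg$-valued curves exist, so I may define $\fg$-valued integral operators by integrating $\alpha_t\xi$ against fixed scalar weight functions.

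First I would introduce the averaging projection $P_0\xi := \int_0^1 \alpha_t\xi\,dt$. Equivariance of the integral under the shift $t \mapsto t+s$ together with periodicity shows $\alpha_s P_0 = P_0$, so $P_0\xi$ is $\alpha$-fixed; conversely $P_0$ is the identity on the fixed-point space. Hence $P_0$ is a continuous idempotent with image $\mathrm{Ker}(D)$ (the fixed points, since $D\xi = 0$ forces $t \mapsto \alpha_t\xi$ to be constant), and I obtain a topological direct sum $\fg = \mathrm{Ker}(D) \oplus \mathrm{Ker}(P_0)$.

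Next I would introduce the homotopy operator $Q\xi := \int_0^1 t\,\alpha_t\xi\,dt$. Writing $D\alpha_t\xi = \frac{d}{dt}\alpha_t\xi$ and integrating by parts, the boundary term collapses by periodicity and one finds $DQ = QD = \id - P_0$. This is the crucial identity: it shows at once that $\mathrm{Ker}(P_0) = D\fg$ (for $\zeta \in \mathrm{Ker}(P_0)$ one has $\zeta = D(Q\zeta)$, while $P_0 D = 0$ gives the reverse inclusion), so $D\fg$ is closed and complemented, with $\fg = \mathrm{Ker}(D)\oplus D\fg$. I would then read off admissibility directly: $I := Q|_{D\fg}$ is a continuous section of $D \colon \fg \to D\fg$ because $D I = \id - P_0 = \id$ on $D\fg$, and the isomorphism $\mathrm{Coker}(D) = \fg/D\fg \cong \mathrm{Ker}(D)$ induced by $P_0$ supplies the continuous section $\sigma$ of $q$. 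Both decompositions required in Definition~\ref{admissible} reduce to the single splitting $\fg = \mathrm{Ker}(D)\oplus D\fg$.

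The main obstacle is not the algebra — the identity $DQ = QD = \id - P_0$ is a one-line integration by parts — but the functional-analytic bookkeeping in a space that is only assumed complete (not Banach, Fr\'echet, or even barrelled). I would therefore take care to justify that $P_0$ and $Q$ are genuinely continuous linear maps $\fg \to \fg$. This rests on the equicontinuity of the family $\{\alpha_t : t \in [0,1]\}$: joint continuity of the smooth action map $\R \times \fg \to \fg$ gives, near each $t_0$, an estimate $p(\alpha_t\xi) \le C\,q(\xi)$ for a continuous seminorm $q$, and a compactness argument on $[0,1]$ makes this uniform in $t$, whence $p(P_0\xi), p(Q\xi) \le C'\, q(\xi)$. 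The standard fundamental theorem of calculus and integration-by-parts formula for $C^1$ curves into a complete locally convex space then legitimise the computation.
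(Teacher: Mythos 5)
Your proof is correct, but it takes a genuinely different route from the paper. The paper's argument is spectral: it decomposes $\fg = \widehat{\bigoplus}_{k\in\Z}\fg_k$ into the eigenspaces $\fg_k = \{\xi : \alpha_t(\xi) = e^{2\pi i k t}\xi\}$ of the $\T$-action (this is where the hypothesis that $\fg$ is \emph{complex} enters), identifies $\mathrm{Ker}(D) = \fg_0$, and observes that $D$ is invertible on $\mathrm{Im}(D) = \widehat{\bigoplus}_{k\neq 0}\fg_k$ since it acts as $2\pi i k$ on $\fg_k$. You instead build the two integral operators $P_0\xi = \int_0^1 \alpha_t\xi\,dt$ and $Q\xi = \int_0^1 t\,\alpha_t\xi\,dt$ and derive the single identity $DQ = QD = \id - P_0$, from which closedness of $D\fg = \mathrm{Ker}(P_0)$, the splitting $\fg = \mathrm{Ker}(D)\oplus D\fg$, and both continuous sections $I = Q|_{D\fg}$ and $\sigma$ (via the isomorphism $\mathrm{Coker}(D)\cong\mathrm{Ker}(D)$ induced by $P_0$) all follow at once. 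Your approach buys several things: it avoids any convergence argument for the Fourier decomposition (which the paper leaves implicit), it never uses the complex structure and so proves the statement for real Lie algebras as well, and it produces explicit formulas for the splitting maps; the equicontinuity argument you sketch for continuity of $P_0$ and $Q$ is exactly the bookkeeping needed, and completeness of $\fg$ is used precisely where you say (existence of the Riemann integrals). What the paper's approach buys is the finer eigenspace structure of $D$ (its full spectral decomposition), which is the natural picture in the loop-group/Kac--Moody applications that follow. One small wording correction: in the integration by parts the boundary term does not vanish; it equals $1\cdot\alpha_1\xi - 0\cdot\alpha_0\xi = \xi$ by periodicity, and it is this $\xi$ that produces the identity $QD = \id - P_0$ you state (correctly).
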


\begin{proof}
If $\fg_{k} := \{\xi \in \fg\,;\, \alpha_{t}(\xi) = e^{2\pi ikt}\xi\}$, then 
$\fg = \widehat{\bigoplus}_{k\in \Z} \fg_{k}$ as the closure of a direct sum 
of locally convex spaces.
This yields a direct sum decomposition $\fg = \mathrm{Ker}(D)\oplus \mathrm{Im}(D)$ 
with $\mathrm{Ker}(D) = \fg_{0}$ and the restriction of $D$ to 
$\mathrm{Im}(D) = \widehat{\bigoplus}_{k\in \Z/\{0\}} \fg_{k}$ is invertible.
\end{proof}

\begin{Definition} \label{def:9.3} {\rm (Gauge algebra)}
Let $K$ be a finite dimensional Lie group with Lie algebra $\fk$ and let 
$P \rightarrow M$ be a principal $K$-bundle.
Then $\Ad(P) := {P\times_{\Ad}\fk}$ is a bundle of Lie algebras with 
typical fibre $\fk$. We define the 
(compactly supported) \emph{gauge algebra of $P$}
to be the locally convex Lie algebra
$\fg := \Gamma_{c}(\Ad(P))$
of compactly supported smooth sections of $\Ad(P)$, equipped with the
pointwise Lie bracket and the canonical locally convex topology.
\end{Definition}

\begin{Remark}
If $P\rightarrow M$ is the trivial bundle, then 
$\Gamma_{c}(\Ad(P)) \simeq C_{c}^{\infty}(M,\fk)$.
\end{Remark}

We will be interested in the situation where
$P\rightarrow M$ carries a $K$-equivariant $\R$-action
$\gamma \colon \R \rightarrow \mathrm{Aut}(P)$ 
with generator $\bv \in \Gamma(TP)^{K}$.
We then obtain a smooth \mbox{$1$-parameter} family of automorphisms 
$\alpha \colon \R \rightarrow \mathrm{Aut}(\fg)$, hence a continuous derivation
$D := \frac{d}{dt}|_{t=0}\,\alpha_{t}$.
Under the isomorphism $\Gamma(\Ad(P)) \simeq C^{\infty}(P,\fk)^{K}$,
it is given by $D\xi = L_{\bv}\xi$.

If the $\R$-action on $P$ factors through a $\mathbb{T}$-action, then we are in the 
setting of Proposition \ref{prop:circleaction}, so that $D$ is admissible.
The motivating example of an admissible derivation, however, is the following 
(cf.\ \cite{JN15}).

\begin{Proposition}\label{propfree} {\rm(Admissible derivations for gauge groups)}
Suppose that the $K$-equivariant $\R$-action on $P\rightarrow M$ is 
proper and free on $M$. Let $\Sigma := M/\R$ denote the corresponding quotient 
manifold.  
Then $P \simeq P_{\Sigma}\times \R$ with $P_{\Sigma} \rightarrow \Sigma$
a principal $K$-bundle over $\Sigma$. 
Moreover, the derivations on $\Gamma_{c}(\Ad(P))$ and $\Gamma(\Ad(P))$
defined by $D\xi(\eta,t) = \frac{d}{dt}\xi(\eta,t)$ are admissible.
\end{Proposition}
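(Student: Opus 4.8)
The plan is to trivialize the flow direction, reduce the derivation $D$ to plain differentiation $\partial_t$ along $\R$, and then solve the transport equation $D\xi=\eta$ by explicit integration, handling the full and compactly supported cases separately. First I would dispose of the bundle statement. Since $\R$ acts properly and freely on $M$, the quotient map $M\to\Sigma=M/\R$ is a smooth principal $\R$-bundle; as $\R$ is contractible this bundle is trivial, and concretely one produces an equivariant ``time function'' $\tau\colon M\to\R$ with $\tau(t\cdot m)=\tau(m)+t$ (average a nonnegative $h\in C_{c}^{\infty}(M)$ of unit integral along each orbit, which exists by properness, and set $\tau(m)=\int_\R t\,h(t\cdot m)\,dt$), whose level set is a global slice $\Sigma\hookrightarrow M$. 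This gives an $\R$-equivariant diffeomorphism $M\cong\Sigma\times\R$ on which $\R$ translates the second factor. Setting $P_\Sigma:=P|_{\Sigma\times\{0\}}$, a principal $K$-bundle over $\Sigma$, the flow $\Phi\colon P_\Sigma\times\R\to P$, $\Phi(p,t):=\gamma_t(p)$, is a smooth $K$-equivariant bundle map covering $\id_M$ (with inverse sending $q$ over $(m,t)$ to $(\gamma_{-t}q,t)$), hence a principal bundle isomorphism, so $P\cong P_\Sigma\times\R$. Under $\Phi$ the generator $\bv$ corresponds to $\partial_t$, whence $\Ad(P)\cong\mathrm{pr}_\Sigma^{*}\Ad(P_\Sigma)$ and $D$ is identified with $\partial_t$ acting on sections $\xi(m,t)\in\Ad(P_\Sigma)_m$, in agreement with the stated formula.

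For $\fg=\Gamma(\Ad(P))$ the argument is then immediate: $D=\partial_t$ is surjective, since $I(\eta)(m,t):=\int_0^t\eta(m,s)\,ds$ is a smooth section with $DI=\id$, so $D\fg=\fg$ is closed and $\mathrm{Coker}(D)=0$; the operator $I$ is a continuous linear section of $D$ for the $C^\infty$-seminorms, and $\mathrm{Ker}(D)$ consists of the $t$-independent sections, complemented by the continuous projection $\xi\mapsto\xi(\cdot,0)$. Thus $D$ is admissible.

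For $\fg=\Gamma_{c}(\Ad(P))$ no nonzero $t$-independent section is compactly supported, so $\mathrm{Ker}(D)=0$, and the only obstruction to solving $D\xi=\eta$ within $\Gamma_{c}$ is the total integral. I would introduce the continuous linear map $\Pi\colon\Gamma_{c}(\Ad(P))\to\Gamma_{c}(\Ad(P_\Sigma))$, $\Pi(\eta)(m):=\int_\R\eta(m,t)\,dt$, and show $\mathrm{Im}(D)=\mathrm{Ker}(\Pi)$: if $\eta=\partial_t\chi$ then $\Pi\eta=0$ by compact support, while if $\Pi\eta=0$ then $\chi(m,t):=\int_{-\infty}^t\eta(m,s)\,ds$ lies in $\Gamma_{c}$ and satisfies $D\chi=\eta$. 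Hence $D\fg=\mathrm{Ker}(\Pi)$ is closed. The map $I(\eta):=\int_{-\infty}^t\eta(\cdot,s)\,ds$ does not enlarge supports (it vanishes below the support of $\eta$, and on $\mathrm{Ker}(\Pi)$ also above it), so it restricts to a continuous inverse $I\colon D\fg\to\fg$ of $D$, giving $\fg=ID\fg$ since $\mathrm{Ker}(D)=0$. Finally, fixing $\phi\in C_{c}^{\infty}(\R)$ with $\int_\R\phi=1$, the map $s(\zeta)(m,t):=\phi(t)\zeta(m)$ is a continuous section of $\Pi$, yielding the continuous direct-sum decomposition $\fg=\mathrm{Ker}(\Pi)\oplus\mathrm{Im}(s)=D\fg\oplus\mathrm{Im}(s)$ and hence the splitting at $\mathrm{Coker}(D)\cong\Gamma_{c}(\Ad(P_\Sigma))$. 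This proves admissibility.

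The bundle trivialization and the surjectivity in the full case are routine; the delicate point is the compactly supported case, where the image of $D$ must be identified with the closed subspace $\mathrm{Ker}(\Pi)$ and where one must verify that the integration operator $I$ and the splitting $s$ are continuous for the \emph{inductive limit} (LF) topology on $\Gamma_{c}$. This works precisely because $I$ and $s$ keep a fixed bound on the $\R$-direction of the support and so restrict to continuous maps between the Fr\'echet pieces of the inductive system, which is the one place where the argument genuinely uses the structure of $\Gamma_{c}(\Ad(P))$ rather than formal manipulation.
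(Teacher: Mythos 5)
Your proposal is correct and follows essentially the same route as the paper: trivialize $P\simeq P_{\Sigma}\times\R$ so that $D$ becomes $\partial_t$, use the integration operator $I\xi(\cdot,t)=\int_{-\infty}^{t}\xi(\cdot,s)\,ds$ together with the orbit-integral map (your $\Pi$ is exactly the paper's $I_{\infty}$) to identify $D\fg=\ker\Pi$ as a closed subspace with $I|_{D\fg}$ a continuous section, and split the cokernel $\cong\Gamma_{c}(\Ad(P_{\Sigma}))$ by a bump function, with the full-section case handled by $\int_{0}^{t}$ just as in the paper. The only blemish is the parenthetical construction of the time function: a nonnegative $h\in C_{c}^{\infty}(M)$ with unit integral along \emph{every} orbit cannot exist when $\Sigma$ is noncompact (its orbit integral is a compactly supported function on $\Sigma$), so one should instead take $h$ with support proper over $\Sigma$ or use a partition of unity --- a minor repair to a standard fact (triviality of principal $\R$-bundles) that the paper itself asserts without proof.
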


\begin{proof}
As the $\R$-action is proper and free, $M$ and $P$ are 
(automatically trivial) 
principal $\R$-bundles over
$\Sigma := M/\R$ and $P_{\Sigma} := P/\R$ respectively, 
hence $P \simeq P_{\Sigma}\times \R$ with $P_{\Sigma} \rightarrow \Sigma$
a principal $K$-bundle. The formula for $D$ is clear. 

First, we consider the case $\fg := \Gamma_{c}(\Ad(P))$.
Let $I \colon \fg \rightarrow \Gamma(\Ad(P))$
be the integration $I\xi(\eta,t) := \int_{-\infty}^{t}\xi(\eta,\tau) d\tau$
and set $I_{\infty}(\xi)(\eta) := \lim_{t\rightarrow \infty}I\xi(\eta,t)$.
Then $I_{\infty} \colon \fg \rightarrow \Gamma_{c}(\Ad(P_{\Sigma}))$
is continuous with $\mathrm{Ker}(I_{\infty}) = D\fg$, hence 
$D\fg$ is closed.
Furthermore, $D$ is injective on $\fg$ and $D \circ I \circ D = D$, 
so $I|_{D\fg}$ is a continuous section of $D\colon \fg \rightarrow D\fg$.
Finally, the map $I_{\infty}$ yields an isomorphism 
$\mathrm{Coker}(D) \rightarrow \Gamma_{c}(\Ad(P_{\Sigma}))$,
so that any bump function $\phi \in C^{\infty}_{c}(\R)$ of integral 1
yields a continuous section $\sigma \colon \mathrm{Coker}(D) \rightarrow \fg$ by
$\sigma([\xi])(\eta,t) := \phi(t)I_{\infty}(\xi)(\eta)$.

Secondly, we consider the case $\fg := \Gamma(\Ad(P))$. 
Then $D \colon \fg \rightarrow \fg$ is surjective and the section 
$I\colon D\fg \rightarrow \fg$ is given by 
$I\xi(\eta,t):= \int_{0}^{t}\xi(\eta,\tau)d\tau$.
\end{proof}

The requirement that the $\R$-action be either periodic 
or proper and free cannot be dispensed with, as is shown by the following:
\begin{Example} {\rm (Non-admissible derivation)}
Let $\fg := C^{\infty}(T^2,\fk)$ with $T^2 := \R^2/\Z^2$ the $2$-torus with 
{$1$-periodic} coordinates $\theta_1, \theta_2$, and let $\fg_{\C}$
be its complexification. 
Let $D\colon \fg_{\C} \rightarrow \fg_{\C}$ be the derivation 
$D := \partial_{\theta_1} + \tau \partial_{\theta_2}$
with $\tau \in \R \setminus \Q$. Then
$D\fg$ is not closed; choose (infinite) sequences $(n_{k,1})_{k\in \N}$
and $(n_{k,2})_{k\in \N}$ of integers with $|n_{k,1} + \tau n_{k,2}| < 2^{-2k}$.
For $X\in \fk_{\C}$, we set 
\[X_{n_{k,1},n_{k,2}} := \exp\left(i (n_{k,1}\theta_1 + n_{k,2}\theta_{2})\right)\,.\]
Then the Fourier series
$\sum_{k=0}^{\infty}D\left(2^k
X_{n_{k,1},n_{k,2}}\right)$ 
defines a smooth $\fk_{\C}$-valued function on the torus which is in $\ol{D\fg}$,
because all the partial sums are in $D\fg$, 
but since 
$\sum_{k=0}^{\infty}2^k
X_{n_{k,1},n_{k,2}}$ does not converge, it is not in $D\fg$ itself. 
\end{Example}

\begin{Definition} {\rm(Translation invariant cohomology)} Recall from Definition~\ref{def:cohom} 
the continuous Chevalley--Eilenberg complex $C^{\bullet}(\fg,\R)$. 
We denote by $H_{D}^{\bullet}(\fg,\R)$ the cohomology of the subcomplex
$C^{\bullet}(\fg,\R)^{D}$ of cochains 
$\omega$ annihilated by $D$ under the action 
\[ 
D\omega(\xi_1,\ldots,\xi_n) := \sum_{i=1}^{n}\omega(\xi_1,\ldots,D\xi_i,\ldots, \xi_n).\]
\end{Definition}

\begin{Proposition}{\rm ($H^2(\fg\rtimes_{D}\R,\R)$ vs.\ $H^2_{D}(\fg,\R)$)} \label{DInvariant}
If $D$ is an admissible derivation of a locally convex Lie algebra $\fg$, then 
we have the following exact sequence:
\begin{align*}
0 \rightarrow 
\left((D\fg \cap \ol{[\fg,\fg]})/D\ol{[\fg,\fg]})\right)'
\sssmapright{\alpha} H_{D}^2(\fg,\R) 
&\sssmapright{\beta} H^2(\fg\rtimes_{D} \R, \R) \\
&\sssmapright{\gamma} H^1(\mathrm{Ker}(D),\R). 
\end{align*}
Here $\alpha(\lambda)$ is the class of the 
$D$-invariant $2$-cocycle $\delta(\lambda)$,
$\beta([\omega])$ is the class of the cocycle 
$\tilde\omega((x,t), (y,s)) := \omega(x,y)$, and 
$\gamma([\tilde\omega]) = (i_D\tilde\omega)\res_{\ker D}$. 
In particular, $H^2(\fg\rtimes_{D}\R,\R) \simeq H^2_{D}(\fg,\R)$ if both
$\fg$ and $\mathrm{Ker}(D)$ are topologically perfect.
\end{Proposition}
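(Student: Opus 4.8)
The plan is to compute everything at the level of the continuous Chevalley--Eilenberg complex, exploiting the fact that $C^\bullet(\fg\rtimes_D\R,\R)$ is the mapping cone of the operator $D$ acting on $C^\bullet(\fg,\R)$. Fix the generator $e=(0,1)$ of the $\R$-summand. Using the splitting $\fg\rtimes_D\R=\fg\oplus\R e$, every continuous $2$-cochain $\Omega$ is encoded by the pair $(\omega,\lambda)$ with $\omega:=\Omega|_{\fg\times\fg}\in C^2(\fg,\R)$ and $\lambda:=i_{e}\Omega\in C^1(\fg,\R)$, and a $1$-cochain is a pair $(\mu,\kappa)\in C^1(\fg,\R)\oplus\R$. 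The first step is to push the differential through this dictionary: a short computation with the bracket $[(x,s),(y,r)]=([x,y]+sDy-rDx,0)$ shows that $\Omega$ is a cocycle iff
\[ \delta\omega=0 \quad\text{and}\quad D\omega=\delta\lambda, \]
and that the coboundary of $(\mu,\kappa)$ is the pair $(\delta\mu,\,\mu\circ D)$. These two identities are the engine of the whole argument, so I would establish them cleanly before anything else.

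With the dictionary in hand the three maps are read off directly, and I would first verify that each is well defined and that consecutive composites vanish. The map $\beta$ sends a $D$-invariant cocycle $\omega$ to $(\omega,0)$, which is a cocycle since $D\omega=0=\delta 0$, and it kills invariant coboundaries because $\delta\mu$ with $\mu\circ D=0$ gives $(\delta\mu,0)=(\delta\mu,\mu\circ D)$. The map $\gamma$ sends $[(\omega,\lambda)]$ to $\lambda|_{\mathrm{Ker}(D)}$, which is a $1$-cocycle on the subalgebra $\mathrm{Ker}(D)$ because for $x,y\in\mathrm{Ker}(D)$ one has $\delta\lambda(x,y)=D\omega(x,y)=\omega(Dx,y)+\omega(x,Dy)=0$, and which vanishes on coboundaries since $(\mu\circ D)|_{\mathrm{Ker}(D)}=0$. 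Finally $\alpha$ is a transgression: for $\ell\in Q':=\big((D\fg\cap\ol{[\fg,\fg]})/D\ol{[\fg,\fg]}\big)'$ I would lift $\ell$ to a functional on $D\fg\cap\ol{[\fg,\fg]}$ vanishing on $D\ol{[\fg,\fg]}$, extend it by Hahn--Banach to $\hat\ell\in\fg'$, and set $\alpha(\ell):=[\delta\hat\ell]\in H^2_D(\fg,\R)$. The defining conditions on $Q$ are exactly what force $\hat\ell$ to vanish on $D\ol{[\fg,\fg]}$, hence $\delta\hat\ell$ to be $D$-invariant; independence of the extension and of the representative modulo characters is the part that needs genuine care.

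The heart of the proof is exactness, and this is where admissibility of $D$ is indispensable. For the node $H^2(\fg\rtimes_D\R,\R)$ I would take a cocycle $(\omega,\lambda)$ with $\lambda|_{\mathrm{Ker}(D)}=0$; then $\lambda$ factors through $D$, and the continuous section $I\colon D\fg\to\fg$ together with the topological splitting $\fg\cong\mathrm{Ker}(D)\oplus I(D\fg)$ produces a \emph{continuous} $\mu\in\fg'$ with $\mu\circ D=\lambda$. Subtracting the coboundary $(\delta\mu,\mu\circ D)$ normalises the class to the form $(\omega',0)$ with $\omega'\in Z^2_D(\fg,\R)$, placing it in the image of $\beta$. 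For the lowest node I would establish exactness at $H^2_D$ by identifying $\ker\beta$ with $\mathrm{Im}\,\alpha$: a $D$-invariant cocycle that is an \emph{ordinary} coboundary $\delta\mu$ fails to be an \emph{invariant} coboundary precisely to the extent that the character $\mu\circ D$ cannot be corrected by a genuine character of $\fg$, and this obstruction is pinned down by a functional in $Q'$. Matching numerator and denominator here relies on the continuous splittings at $D\fg$ and $\mathrm{Coker}(D)$ and on Hahn--Banach extensions across the closed subspace $\ol{[\fg,\fg]}$. I expect this step --- keeping every correcting functional \emph{continuous} while passing to and from the closed subspaces $D\fg$ and $\ol{[\fg,\fg]}$ --- to be the main obstacle; the closedness of $D\fg$ built into admissibility is exactly what prevents the naive argument from collapsing.

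The final ``in particular'' then drops out by inspection. If $\fg$ is topologically perfect then $\ol{[\fg,\fg]}=\fg$, whence $D\fg\cap\ol{[\fg,\fg]}=D\fg=D\ol{[\fg,\fg]}$ and $Q=0$, so $Q'=0$; if in addition $\mathrm{Ker}(D)$ is topologically perfect then $H^1(\mathrm{Ker}(D),\R)=\big(\mathrm{Ker}(D)/\ol{[\mathrm{Ker}(D),\mathrm{Ker}(D)]}\big)'=0$. With both outer terms gone, the exact sequence collapses to the asserted isomorphism $\beta\colon H^2_D(\fg,\R)\xrightarrow{\sim}H^2(\fg\rtimes_D\R,\R)$.
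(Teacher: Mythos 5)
Your mapping--cone dictionary, your verification that $\beta$ and $\gamma$ are well defined, and your argument for exactness at $H^2(\fg\rtimes_D\R,\R)$ (solving $\lambda=\mu\circ D$ with $\mu$ continuous, using $I$ and the closedness of $D\fg$, then subtracting a coboundary) are correct, and they are essentially the paper's own proof in cleaner packaging: the paper's normalising cochain $\chi(\xi)=\omega(D,I(\xi-\sigma q\xi))$ accomplishes precisely your reduction of a class to the form $(\omega',0)$ with $\omega'$ a $D$-invariant cocycle.

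The gap is at the node $H^2_D(\fg,\R)$, and it is your own dictionary that exposes it. By the coboundary formula $(\delta\mu,\mu\circ D)$ (signs immaterial), the pair $(\omega,0)$ is exact if and only if $\omega=\delta\mu$ for some $\mu$ with $\mu\circ D=0$, i.e.\ if and only if $[\omega]=0$ already in $H^2_D(\fg,\R)$; hence $\ker\beta=\{0\}$, not $Q':=\big((D\fg\cap\ol{[\fg,\fg]})/D\ol{[\fg,\fg]}\big)'$. The group your plan computes --- invariant cocycles that are \emph{ordinary} coboundaries, modulo coboundaries of \emph{invariant} $1$-cochains --- is the kernel of the forgetful map $H^2_D(\fg,\R)\to H^2(\fg,\R)$, and it does not lie in $\ker\beta$: subtracting $(\delta\mu,\mu\circ D)$ from $(\delta\mu,0)$ leaves the class of $(0,\mu\circ D)$, which is nontrivial whenever $\mu$ is nonzero on $D\fg\cap\ol{[\fg,\fg]}$. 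Consequently $\beta\circ\alpha$ is injective rather than zero, so the claimed sequence is not even a complex at that node. A concrete check: for the Heisenberg algebra spanned by $x,y,z$ with $[x,y]=z$ and $Dx=z$, $Dy=Dz=0$, one has $Q'\cong\R$, while $\beta(\alpha(\ell))=[-\ell(z)\,x^*\wedge y^*]\neq 0$, because the coboundaries of $\fg\rtimes_D\R$ are exactly the multiples of $x^*\wedge y^*-x^*\wedge e^*$ (with $e$ the generator of the $\R$-summand). To be fair, you have faithfully reproduced the step at which the paper's own proof makes the corresponding assertion (namely that $\beta([\omega])=0$ iff $\omega\in B^2(\fg,\R)^D$), so the defect is inherited from the statement rather than introduced by you; but a proof following your plan cannot be completed as written. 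What your dictionary does prove is exactness of $0\to H^2_D(\fg,\R)\to H^2(\fg\rtimes_D\R,\R)\to H^1(\mathrm{Ker}(D),\R)$, together with the separate identification $\ker\big(H^2_D(\fg,\R)\to H^2(\fg,\R)\big)\cong Q'$, and this already yields the concluding isomorphism: topological perfectness of $\mathrm{Ker}(D)$ alone makes $\beta$ bijective, perfectness of $\fg$ serving only to force $Q'=0$.
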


\begin{proof}
Let $\omega$ be a 
continuous 2-cocycle on $\fg \rtimes_{D} \R$. 
Define $\chi \colon \fg \rtimes_{D} \R \rightarrow \R$
by $\chi(D) := 0$ and
\[\chi(\xi) := \omega(D, I(\xi-\sigma \circ q \xi)) \quad \mbox{ for } \quad 
\xi \in \fg,\]
 where $q$, $\sigma$, $D$ and $I$ are as in the remarks following 
Definition~\ref{admissible}.
Then $\omega$ is cohomologous to $\omega' := \omega + \delta \chi$
which satisfies $\omega'(D , I D \fg ) = \{0\}$ because 
\[ (\delta\chi)(D,ID\xi) = -\chi([D, ID\xi]) = 
- \chi(DID\xi) = - \chi(D\xi) = - \omega(D,ID\xi).\] 
Therefore, the linear functional $i_{D} \omega' \colon \fg \rtimes_{D} \R \rightarrow \R$ 
factors through 
a map 
$\gamma_{\omega'} \colon \mathrm{Ker}(D) \rightarrow \R$. It vanishes on 
$[\mathrm{Ker}(D),\mathrm{Ker}(D)]$
by the cocycle property of $\omega'$, so we obtain a class
$[\gamma_{\omega'}] \in H^1(\mathrm{Ker}(D),\R)$.
Since $i_{D} \delta \chi|_{\mathrm{Ker}(D)} = 0$ for all 
$1$-chains $\chi$, the map $\omega' \mapsto \gamma_{\omega'}$
drops to a map
$H^{2}(\fg\rtimes_{D} \R,\R) \rightarrow H^1(\mathrm{Ker}(D),\R)$.

If $[\omega']\in \ker\gamma$, then $i_{D}\omega' = 0$ because 
$\g = \ker D \oplus ID\g$, so that 
$\omega'$ is determined by its restriction to $\fg$. There,  
the cocycle property 
\[\omega'([D,X],Y) + \omega'(X,[D,Y])) = \omega'(D,[X,Y])\]
implies $D$-invariance of $i_D\omega'$ on $\g$. 
Conversely, any $D$-invariant 2-cocycle $\omega$ on $\fg$
extends trivially to a cocycle $\tilde\omega$ on $\fg \rtimes_{D} \R$. 
This proves exactness in $H^2(\g \rtimes_D \R, \R)$. 

The relation $\beta([\omega]) = 0$ is equivalent to 
$\omega \in B^2(\fg,\R)^{D}$. Therefore, the kernel of $\beta$ 
is $B^2(\fg,\R)^{D} / B^{2}_{D}(\fg,\R)$, 
where the upper index $D$ stands for $D$-invariant coboundaries, 
and the lower index for coboundaries of $D$-invariant 1-chains.

The coboundary $\delta \phi \in B^2(\fg,\R)$ is determined by the restriction of $\phi \in C^1(\fg,\R) = \fg'$
to $\overline{[\fg,\fg]}$, and by the Hahn--Banach Theorem, 
the correspondence between $B^2(\fg,\R)$ and $\overline{[\fg,\fg]}{}'$ is bijective.
Now $\delta\phi \in B^2_D(\fg,\R)$ if and only if
$\phi \in \overline{[\fg,\fg]}{}'$ satisfies $\phi(D\fg \cap \overline{[\fg,\fg]}) = \{0\}$, and
$\delta\phi \in B^2(\fg,\R)^{D}$ if and only if 
$\phi(D \ol{[\fg,\fg]}) = \{0\}$. The quotient $B^2(\fg,\R)^{D} / B^{2}_{D}(\fg,\R)$ is therefore
equal to
\[ (\overline{[\fg,\fg]}/D[\fg,\fg])'/ (\overline{[\fg,\fg]}/(D\fg \cap \ol{[\fg,\fg]})' \simeq  
\left((D\fg \cap \ol{[\fg,\fg]}) / D\ol{[\fg,\fg]}\right)'.\qedhere\] 
\end{proof}

\begin{Remark}(Covariant cocycles for gauge algebras)\label{rk:perfect}
Suppose that $\fk$ is a perfect Lie algebra and that 
the equivariant $\R$-action is proper and
free on $M$ (Proposition~\ref{propfree}).  
Then, for $\fg = \Gamma_{c}(\Ad(P))$, we have
\[H^2(\fg\rtimes_{D}\R,\R) \simeq H^2_{D}(\fg,\R)\,.\]
Indeed, $\fg$ is topologically perfect because $\fk$ is perfect (\cite[Prop.~2.4]{JW13}).
Since 
$P_{\Sigma} \rightarrow \Sigma$ is a smooth principal fibre bundle 
(cf.\ Proposition~\ref{propfree}), the same
argument implies that
$\mathrm{Ker}(D) \simeq \Gamma_{c}(\Ad(P_{\Sigma}))$
is perfect, so that the statement follows from Proposition~\ref{DInvariant}.
%
\end{Remark}

\subsection{Cocycles for positive energy}\label{cocposen}

\begin{Proposition} {\rm (Spectral gap implies $D$-invariant cocycles)}
Let $D$ be an admissible derivation of $\fg$ and 
let $(\rho,\cH)$ be a smooth unitary 
representation of the Lie group $\widehat{G}$ with Lie algebra 
$\hat\g \cong \R \oplus_\omega (\g \rtimes_D \R)$. 
If $\mathrm{Spec}(-i\dd\rho(D)) \neq \R$, then
%
$\omega$ is co\-ho\-mo\-lo\-gous to an (automatically $D$-invariant) cocycle 
$\omega'$ with $\omega'(D ,\fg ) = \{0\}$.
\end{Proposition}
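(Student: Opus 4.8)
The plan is to convert the spectral-gap hypothesis into the vanishing of $\omega$ on the pair $(D,\mathrm{Ker}(D))$, and then to apply the coboundary normalisation from the proof of Proposition~\ref{DInvariant} to remove the whole mixed part $i_D\omega$. Write $C$ for the central generator of $\hat\g \cong \R\oplus_\omega(\g\rtimes_D\R)$, identify $\g\rtimes_D\R$ with its image under the distinguished splitting $\xi\mapsto(0,\xi)$, and regard each $x\in\g$ and the generator $D$ as elements of $\hat\g$. Since $\rho(z)=z\one$ on $\T$ and $\exp_\T(t)=e^{2\pi it}$, the central generator acts by $\dd\rho(C)=2\pi i\,\one$, and the defining bracket of $\hat\g$ gives, for $x\in\mathrm{Ker}(D)$,
\[
[D,x]_{\hat\g}=\omega(D,x)\,C,\qquad\text{hence}\qquad [\dd\rho(D),\dd\rho(x)]=2\pi i\,\omega(D,x)\,\one
\]
on $\cH^\infty$, because $[D,x]=Dx=0$ in $\g\rtimes_D\R$.

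First I would integrate this infinitesimal canonical commutation relation to the group level. Let $A:=\overline{-i\dd\rho(D)}$ be the self-adjoint generator of $U(t):=\rho(\exp(tD))$ (self-adjoint by the Remark on selfadjoint generators, which is what makes $\mathrm{Spec}(-i\dd\rho(D))$ meaningful), and set $V(s):=\rho(\exp(sx))$. The curve $\zeta(t):=\Ad_{\exp(tD)}x$ solves $\zeta'(t)=\ad_D\zeta(t)$ with $\zeta(0)=x$; since $\ad_D x=\omega(D,x)C$ and $\ad_D C=0$, the \emph{exact} solution is $\zeta(t)=x+t\,\omega(D,x)C$. As $C$ is central, $\exp\!\big(s\,\Ad_{\exp(tD)}x\big)=\exp(st\,\omega(D,x)C)\exp(sx)$, so applying $\rho$ and using $\rho(\exp(\tau C))=e^{2\pi i\tau}\one$ yields the Weyl relation
\[
U(t)V(s)U(t)^{-1}=e^{2\pi i\,st\,\omega(D,x)}\,V(s).
\]
Rearranging and inserting $U(t)=e^{itA}$ gives $V(s)AV(s)^{-1}=A-2\pi s\,\omega(D,x)\,\one$, whence $\mathrm{Spec}(A)$ is invariant under translation by $-2\pi s\,\omega(D,x)$ for every $s\in\R$.

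The conclusion about $\omega$ then follows by contraposition: if $\omega(D,x)\neq0$ for some $x\in\mathrm{Ker}(D)$, the nonempty closed set $\mathrm{Spec}(A)$ would be invariant under all real translations, forcing $\mathrm{Spec}(-i\dd\rho(D))=\R$ and contradicting the hypothesis. Hence $\omega(D,x)=0$ for all $x\in\mathrm{Ker}(D)$, i.e.\ $(i_D\omega)\res_{\mathrm{Ker}(D)}=0$. Finally I would reuse the construction from the proof of Proposition~\ref{DInvariant} verbatim: with $q,\sigma,I$ the maps provided by admissibility of $D$, the $1$-cochain $\chi(D):=0$, $\chi(\xi):=\omega(D,I(\xi-\sigma q\xi))$ produces $\omega':=\omega+\delta\chi$ with $\omega'(D,ID\g)=\{0\}$; together with $(i_D\omega')\res_{\mathrm{Ker}(D)}=(i_D\omega)\res_{\mathrm{Ker}(D)}=0$ and the splitting $\g=\mathrm{Ker}(D)\oplus ID\g$ this gives $\omega'(D,\g)=\{0\}$. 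The cocycle identity for $\omega'$ then forces $\omega'\res_\g$ to be $D$-invariant, exactly as in the exactness argument of Proposition~\ref{DInvariant}.

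The main obstacle is the middle step: passing from the infinitesimal commutator identity on $\cH^\infty$ to the Weyl commutation relation between the \emph{unitary} one-parameter groups $U(t)$ and $V(s)$, since only at the integrated level does the translation-covariance — and hence the spectral information — become visible. This is where the finite-dimensional Heisenberg subalgebra $\Spann\{C,D,x\}$ and the Stone--von Neumann circle of ideas enter; the required functional-analytic input (essential self-adjointness of the generators) is supplied by regularity of $\hat G$, and the concluding cohomological step is a direct application of Proposition~\ref{DInvariant}.
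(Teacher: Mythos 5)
Your proposal is correct, and its skeleton coincides with the paper's proof: both use the splitting $\fg = \mathrm{Ker}(D)\oplus ID\fg$, both remove the $ID\fg$-part of $i_D\omega$ with exactly the coboundary $\chi(\xi)=\omega(D,I(\xi-\sigma q\xi))$ from Proposition~\ref{DInvariant}, and both kill the $\mathrm{Ker}(D)$-part by exhibiting a Heisenberg subalgebra $\Spann\{C,D,x\}$ on which a nonzero $\omega(D,x)$ would contradict the spectral gap. (That you run the spectral step before the normalisation and the paper after is immaterial, since $\delta\chi(D,x)=-\chi([D,x])=0$ for $x\in\mathrm{Ker}(D)$, so the two cocycles agree there.) The one genuine difference lies in how the contradiction is produced: the paper invokes the Stone--von Neumann theorem as a black box, while you integrate the commutator identity $[\dd\rho(D),\dd\rho(x)]=2\pi i\,\omega(D,x)\one$ to the Weyl relation $U(t)V(s)U(t)^{-1}=e^{2\pi i st\,\omega(D,x)}V(s)$ via $\Ad_{\exp(tD)}x=x+t\,\omega(D,x)C$, and then read off that $V(s)$ translates the spectrum of $\overline{-i\dd\rho(D)}$ by $-2\pi s\,\omega(D,x)$, forcing $\mathrm{Spec}=\R$. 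Your route is more elementary and, in a sense, more precise: Stone--von Neumann as usually stated concerns the integrated (Weyl) form of the CCR anyway, so the passage from the infinitesimal relation to the unitary groups --- which the paper leaves implicit --- has to be made somewhere, and you make it explicitly; moreover you only use the trivial direction (translation-invariance of a nonempty closed spectrum), not the uniqueness statement. What the paper's citation buys is brevity; what your argument buys is self-containedness and a clear accounting of where the standing hypotheses (existence of $\exp(tD)$, $\exp(sx)$, essential selfadjointness via regularity of $\widehat G$, and $\rho(\exp(\tau C))=e^{2\pi i\tau}\one$) actually enter.
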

\begin{proof}
Recall that $\fg = ID\fg \oplus \mathrm{Ker}(D)$.
We have seen in the proof of Proposition~\ref{DInvariant} that $\omega$
is co\-ho\-mo\-lo\-gous to $\omega'$ with $\omega'(D, ID\fg) = \{0\}$.
Now let $X\in \mathrm{Ker}(D)$. 
Then $\dd\rho$ defines a unitary representation of the Heisenberg algebra
$(\R X \rtimes_{D} \R) \oplus_{\omega} \R c$. 
If $\omega'(D,X) \neq 0$, then the Stone-von Neumann Theorem
implies $\mathrm{Spec}(-i\dd\rho(D)) = \R$.
%
\end{proof}

We call a smooth unitary representation $\rho \colon \widehat{G} \rightarrow\U(\cH)$
\emph{periodic} if the corresponding $\R$-action factors through $\R \rightarrow \mathbb{T} 
\cong \R/\Z$,
and of \emph{positive energy} if the spectrum of the Hamilton operator
$H := -i\oline{\dd\rho(D)}$ 
is bounded below. Since $\mathrm{Spec}(H) \neq \R$ in either case, we have
the following corollary.

\begin{Corollary}{\rm (Periodic/positive energy implies $D$-invariant cocycles)}\label{Corrietje}
Suppose that $D$ is an admissible derivation of $\fg$ and that 
$\rho \colon \widehat{G} \rightarrow\U(\cH)$
is either a positive energy representation or a periodic representation.
Then the class $[\omega]$ is in the image
of the map $\beta \: H^2_{D}(\fg,\R)\rightarrow H^2(\fg\rtimes_{D}\R,\R)$ 
of {\rm Proposition~\ref{DInvariant}}.
\end{Corollary}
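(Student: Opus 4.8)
The plan is to read this corollary off from the preceding proposition (``Spectral gap implies $D$-invariant cocycles'') combined with the exact sequence of Proposition~\ref{DInvariant}, so the argument naturally splits into a spectral step and a purely cohomological step.

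First I would verify that $\mathrm{Spec}(H) \neq \R$ in both cases, where $H := -i\oline{\dd\rho(D)}$ is the Hamilton operator. For a positive energy representation this is immediate from the definition, since $\mathrm{Spec}(H)$ is then bounded below. For a periodic representation, the one-parameter unitary group $t \mapsto \rho(\exp(tD))$ factors through $\R/\Z$ and is therefore periodic; by Stone's Theorem $H$ is its (essentially self-adjoint) generator, so a nontrivial relation $e^{icH} = \one$ holds for the relevant period $c$, forcing $\mathrm{Spec}(H)$ into a discrete subgroup of $\R$ and in particular $\mathrm{Spec}(H) \neq \R$. Since $-i\dd\rho(D)$ is essentially self-adjoint with closure $H$, we have $\mathrm{Spec}(-i\dd\rho(D)) = \mathrm{Spec}(H) \neq \R$ throughout.

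With this established, the preceding proposition applies verbatim: the cocycle $\omega$ on $\fg \rtimes_D \R$ is cohomologous to a (necessarily $D$-invariant) cocycle $\omega'$ with $\omega'(D,\fg) = \{0\}$, so that $[\omega] = [\omega']$ in $H^2(\fg\rtimes_D\R,\R)$. I would then invoke exactness of the sequence of Proposition~\ref{DInvariant} at the middle term $H^2(\fg\rtimes_D\R,\R)$, which identifies $\mathrm{Im}(\beta)$ with $\ker(\gamma)$. Because $\gamma([\omega']) = (i_D\omega')|_{\mathrm{Ker}(D)}$, and $\omega'(D,\fg)=\{0\}$ forces $i_D\omega'$ to vanish on all of $\fg$, hence \emph{a fortiori} on $\mathrm{Ker}(D)\subseteq\fg$, we obtain $\gamma([\omega']) = 0$. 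Therefore $[\omega] = [\omega']$ lies in the image of $\beta \colon H^2_D(\fg,\R) \to H^2(\fg\rtimes_D\R,\R)$, which is exactly the assertion.

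I expect the only genuine subtlety to lie in the periodic case of the spectral step: one must pin down the precise normalisation under $\exp_\T(t) = e^{2\pi i t}$ and the relation between the period-$1$ factorisation of the $\R$-action and the generator $H$ inside the central extension $\widehat{G}$, taking care that periodicity of the induced action suffices to render $\mathrm{Spec}(H)$ discrete. The cohomological step, by contrast, is a formal consequence of the exactness already proved in Proposition~\ref{DInvariant} and should require no further computation.
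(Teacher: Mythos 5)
Your proposal is correct and matches the paper's own (very terse) argument: the paper simply observes that $\mathrm{Spec}(H) \neq \R$ in both the periodic and the positive energy case and then invokes the spectral gap proposition together with the exact sequence of Proposition~\ref{DInvariant}. The details you supply --- that periodicity forces $e^{icH} = \one$ and hence discrete spectrum, and that $\omega'(D,\fg) = \{0\}$ gives $\gamma([\omega']) = 0$, so $[\omega] \in \mathrm{Im}(\beta)$ by exactness at the middle term --- are precisely the steps the paper leaves implicit.
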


For periodic representations and positive energy representations of $\widehat{G}$ with an admissible
derivation, we may thus
assume w.l.o.g. that the cocycle $\omega$ is $D$-invariant and that $i_{D}\omega=0$.

\begin{Example} {\rm(Translation invariant cocycles for abelian gauge algebras)}
If, in the context of Definition~\ref{def:9.3}, 
 $K$ is abelian, then $\fg$ is the LF-space $\Gamma_{c}(\Ad(P)) = C^{\infty}_{c}(M,\fk)$
with the trivial Lie bracket, so cocycles on $\fg$ are skew symmetric 
$\fk$-valued distributions on $M \times M$.
By Corollary \ref{Corrietje}, every
cocycle on $\fg\rtimes_{D}\R$ that is derived from a periodic or 
positive energy representation, is cohomologous to one that
satisfies $i_{D}\omega = 0$, hence given (modulo coboundaries) by a 
$D$-invariant skew-symmetric $\fk$-valued distribution on $M\times M$.
%
\end{Example}

\section{Applications}\label{sec:10}

Finally, let us mention a few examples of locally convex Lie groups
that serve to illustrate the theory developed so far.
In Subsection~\ref{abeliangroups}, 
we consider
abelian Lie groups, whose central extensions are related to Heisenberg groups.
In Subsection~\ref{Virasorogroups}, we consider
the group $\Diff(\bS^1)_+$ of orientation preserving diffeomorphisms of the circle,
whose central extensions are related to the Virasoro algebra.
Finally, in Subsection~\ref{sec:loopgroups}, we investigate twisted loop groups,
whose central extensions are related to affine Kac--Moody 
algebras.

\subsection{Abelian Lie groups}\label{abeliangroups}

If $V$ is a locally convex vector space, then $(V,+)$ 
is an abelian locally convex Lie group.
The central extensions of the abelian Lie algebra $V$ correspond to
continuous skew-symmetric bilinear forms $\omega \colon V \times V \rightarrow \R$.
Each of these integrates to a group extension: 

\begin{Definition}(Heisenberg groups)
If $\omega$ is a continuous skew-symmetric bilinear form on a locally convex vector space $V$, 
then the group $\T \times_{\omega} V$ 
with product
\[
(z,v)(z',v') = (zz' \exp({\textstyle \frac{1}{2}} i \omega(v,v')), v + v')\,
\]
is a locally convex Lie group whose Lie algebra is 
$\R \oplus_{\omega} V$
with bracket
\[
(z,v)(z',v') = (\omega(v,v'), 0)\,.
\]
If $\omega$ is non-degenerate, then $\T \times_{\omega}V$ is called a \emph{Heisenberg group},
and $\R \oplus_{\omega}V$ a \emph{Heisenberg--Lie algebra}. 
\end{Definition}
Every central $\T$-extension of $(V,+)$ is isomorphic to $\T \times_{\omega}V$
for some continuous skew-symmetric bilinear form $\omega$ on $V$, 
and the central $\T$-extensions
$\T \times_{\omega}V$ and $\T \times_{\omega'}V$ are isomorphic 
if and only if $\omega = \omega'$ (\cite[Thm.~7.12]{Ne02}).
The smooth characters $\chi \colon V \rightarrow \T$ of the group $(V,+)$ are 
all of the form $\chi(v) = \exp(i\phi(v))$ for a continuous linear functional 
$\phi \colon V \rightarrow \R$.

Corollary~\ref{lineariseer} now
implies that, for every smooth projective unitary representation $(\ol\rho,\cH)$ of $(V,+)$,
there exists a uniquely determined continuous skew-symmetric bilinear form $\omega$ on $V$
such that $\ol\rho$ is induced by a
smooth unitary representation $(\widehat{\rho},\cH)$ of $\T\times_{\omega}V$.
By Proposition~\ref{autoriseer}, 
two representations $(\widehat{\rho}, \cH)$ and $(\widehat{\rho}', \cH')$ of $\T \times_{\omega} V$ 
give rise to equivalent projective representations of $(V,+)$
if and only if there exists a unitary transformation $U \colon \cH \rightarrow \cH'$ 
and a continuous functional $\phi \in V'$ such that $\widehat{\rho}'(z,v) = \exp(i\phi(v))U\widehat{\rho}(z,v)U^{-1}$.

This allows us to reduce the classification of smooth projective factor representations of $(V,+)$
to the the classification of smooth unitary factor representations of the Heisenberg group.
Indeed, let $(\widehat{\rho},\cH)$ be a smooth factor representation of $\T \times_{\omega} V$.
The closed subspace $K := {\rm Ker}(\omega) = \{v\in V\,;\, i_{v}\omega = 0\}$ 
is central in $\T\times_{\omega}V$, so that the requirement that $\widehat{\rho}$ be a 
factor representation, $\widehat{\rho}(V)' \cap \widehat{\rho}(V)'' = \C \one$, implies that 
$\widehat\rho|_{K}$ is given by a smooth character $\chi \colon K \rightarrow \T$.
Writing $\chi(k) = \exp(i\phi(k))$ and extending the continuous functional 
$\phi \colon K \rightarrow \R$ to $V$ (this is possible by the 
Hahn--Banach Theorem), we obtain a 
smooth character $\chi \colon V \rightarrow \T$ such that $\chi^{-1} \cdot \widehat{\rho}$
is trivial on $K$, hence factors through a representation of the locally convex Lie group
$\T \times_{\omega} (V/K)$. 
Since $K$ is closed, 
$V/K$ is a locally convex Hausdorff space, and if $V$ is barrelled, then so is $V/K$. 
However, unless $V$ is Fr\'echet, completeness of $V$
does \emph{not} imply completeness of $V/K$  (\cite[\S 31.6]{Ko69}), 
so at this point, we are making 
essential use of our rather wide definition of a locally convex Lie group, which does not assume completeness of the model space.


\begin{Proposition}{\rm (Projective $(V,+)$-rep's come from linear Heisenberg rep's)}
A smooth projective factor representation of $(V,+)$ is given by a continuous skew-symmetric 
bilinear form $\omega$ on $V$, together with a smooth unitary factor representation 
$(\widehat{\rho}, \cH)$ of the 
Heisenberg group $\T \times_{\omega} V/(\mathrm{Ker}(\omega))$ with $\widehat{\rho}(z) = z \one$.
Two such representations $(\widehat{\rho}, \cH)$ and $(\widehat{\rho}', \cH')$ 
give rise to equivalent projective representations of $(V,+)$
if and only if $\omega = \omega'$ and $\widehat{\rho}(z,v) = e^{i\phi(v)}U\widehat{\rho}(z,v)U^{-1}$
for a unitary transformation $U \colon \cH \rightarrow \cH'$ and a continuous linear 
functional $\phi \in (V/(\mathrm{Ker}\omega))'$.
\end{Proposition}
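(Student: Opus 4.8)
The plan is to assemble the statement from the two packaging results already available, namely Corollary~\ref{lineariseer} (every smooth projective representation linearises over a central $\T$-extension) and Proposition~\ref{autoriseer} (two linear representations of a fixed extension induce equivalent projective representations precisely up to a smooth character), specialised to $G=(V,+)$ together with the classification of its central $\T$-extensions by continuous skew-symmetric forms $\omega$ via \cite[Thm.~7.12]{Ne02}. Everything below is essentially a bookkeeping of the discussion preceding the proposition.

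I would first treat the normal-form (existence) half. Starting from a smooth projective factor representation $(\ol\rho,\cH)$ of $(V,+)$, Corollary~\ref{lineariseer} produces a smooth unitary representation $(\hat\rho,\cH)$ with $\hat\rho(z)=z\one$ of a central extension which, by \cite[Thm.~7.12]{Ne02}, is $\T\times_\omega V$ for a unique continuous skew-symmetric form $\omega$. The key structural step is that $K:=\mathrm{Ker}(\omega)$ is closed and that $\{1\}\times K$ is central in $\T\times_\omega V$: two elements commute iff $\exp(i\omega(v,v'))=1$, which holds whenever $\omega(k,\,\cdot\,)=0$. Consequently each $\hat\rho(1,k)$, $k\in K$, lies in $\hat\rho(V)'\cap\hat\rho(V)''=\C\one$, so $\hat\rho|_K$ is a smooth character $\chi=e^{i\phi}$ with $\phi\in K'$, which I extend to $\phi\in V'$ by Hahn--Banach. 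The scalar twist $e^{-i\phi}\cdot\hat\rho$ is again a smooth unitary representation with the same value $z\one$ on $\T$, generates the same von Neumann algebra (hence is still a factor representation), and is trivial on $\{1\}\times K$; it therefore descends to a smooth unitary factor representation of the Heisenberg group $\T\times_\omega(V/K)$ attached to the nondegenerate induced form. For this descent I would use that $V/K$ is a Hausdorff (barrelled, though possibly incomplete) locally convex Lie group. The converse direction is routine: pull such a factor representation back along $\T\times_\omega V\to\T\times_\omega(V/K)$ and induce the associated smooth projective factor representation of $V$.

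Next I would prove the equivalence criterion. Since $\hat\rho|_K$ is scalar, $\ol\rho|_K$ is projectively trivial, so $\ol\rho$ factors through the quotient map $V\to V/K$; as this map is surjective, unitary equivalence of two such projective $V$-representations is the same as unitary equivalence of the induced projective $(V/K)$-representations. Unitary equivalence of $(\ol\rho,\cH)$ and $(\ol\rho',\cH')$ forces the corresponding extensions to be isomorphic, hence $\omega=\omega'$ by \cite[Thm.~7.12]{Ne02} (and $\omega=\omega'$ is assumed in the converse direction). Both representations now live on the single Heisenberg group $H=\T\times_\omega(V/K)$ with $\hat\rho(z)=z\one$, so applying Proposition~\ref{autoriseer} to the extension $H\to V/K$ yields a unitary $U$ and a smooth character of $V/K$ --- necessarily $e^{i\phi}$ with $\phi\in(V/K)'$ --- such that $\hat\rho'(z,v)=e^{i\phi(v)}U\hat\rho(z,v)U^{-1}$.

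The step requiring genuine care, rather than citation, is the factor argument: forcing $\hat\rho|_K$ to be scalar and then verifying that the factor property survives the scalar twist by $e^{-i\phi}$ and the descent to $\T\times_\omega(V/K)$, which is precisely what legitimises replacing $\T\times_\omega V$ by the honest Heisenberg group. The remaining points are point-set and topological: that $K$ is closed, that $V/K$ is a locally convex Lie group to which Proposition~\ref{autoriseer} applies (using the caveat on completeness already flagged above), and that the smooth characters of the vector group $V/K$ are exactly the $e^{i\phi}$ with $\phi\in(V/K)'$, so that $\phi$ genuinely lands in $(V/K)'$ and not merely in $V'$.
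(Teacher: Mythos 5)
Your proposal is correct and follows essentially the same route as the paper, whose own ``proof'' is precisely the discussion preceding the proposition: linearisation over $\T\times_\omega V$ via Corollary~\ref{lineariseer} and \cite[Thm.~7.12]{Ne02}, the factor-property/centrality argument forcing $\hat\rho|_{\mathrm{Ker}(\omega)}$ to be scalar, the Hahn--Banach extension and character twist, descent to $\T\times_\omega(V/\mathrm{Ker}(\omega))$ (with the same caveat about possible incompleteness of $V/\mathrm{Ker}(\omega)$), and Proposition~\ref{autoriseer} for the equivalence criterion. The only cosmetic difference is that you apply Proposition~\ref{autoriseer} to the extension over $V/\mathrm{Ker}(\omega)$ after descending, whereas the paper invokes it over $V$ first; your version actually makes the appearance of $\phi\in(V/\mathrm{Ker}(\omega))'$ in the statement slightly more explicit.
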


Switching from $V$ to $V/K$ and from $\rho$ to $\widehat{\rho}$ if necessary,
we may assume that
$(\rho,\cH)$ is a smooth representation of a Heisenberg group $\T\times_{\omega}V$.
By Proposition \ref{polarisability}, a smooth vector $\psi \in \cH^{\infty}$ defines a
non-degenerate continuous sesquilinear form
$H(\xi,\eta) = \langle \dd\rho(\xi) \psi, \dd\rho(\eta) \psi \rangle$ on $V_{\C}$ 
with $\omega = -2\mathrm{Im}(H)$.
In order to obtain the above equality, one has to require that
$\langle \psi , \dd\rho(\xi) \psi \rangle$ vanishes for $\xi \in \fg$, which can be achieved 
by twisting with $\phi(\xi) := i\langle \psi, \dd\rho(\xi) \psi \rangle$.

\begin{Proposition} {\rm(Characterisation of the relevant Heisenberg groups)}
A Heisenberg group $\T \times_{\omega} V$ possesses a smooth unitary representation $(\rho, \cH)$
with $\rho(z) = z \one$ if and only if there exists a Hermitian inner product $H$ on $V_{\C}$
such that $\omega = -2\mathrm{Im}(H)$.
\end{Proposition}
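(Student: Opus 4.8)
The plan is to prove the two implications separately, deriving necessity from Proposition~\ref{polarisability} and sufficiency from an explicit Fock space construction.

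For necessity, suppose $(\rho,\cH)$ is a smooth unitary representation of $\T\times_\omega V$ with $\rho(z)=z\one$. Since $\cH^\infty$ is dense, I would choose a nonzero smooth vector $\psi$ with $\|\psi\|=1$ and view $\rho$ as the linearisation of a smooth projective representation of $(V,+)$. Proposition~\ref{polarisability} then produces a continuous positive semi-definite Hermitian form $H_\psi$ on $V_\C$ with $\omega_\psi=-2\mathrm{Im}(H_\psi)$ and $\mathrm{Ker}(H_\psi)\subseteq\mathrm{Ker}(\omega_\psi)_\C$. Because the Lie bracket on $V$ is trivial, every $2$-coboundary vanishes, so the representative $\omega_\psi$ equals the given cocycle $\omega$. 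As $\T\times_\omega V$ is a Heisenberg group, $\omega$ is non-degenerate, whence $\mathrm{Ker}(\omega)_\C=\{0\}$ and therefore $\mathrm{Ker}(H_\psi)=\{0\}$. A non-degenerate positive semi-definite Hermitian form is positive definite, so $H:=H_\psi$ is the desired inner product with $\omega=-2\mathrm{Im}(H)$.

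For sufficiency, given a Hermitian inner product $H$ on $V_\C$ with $\omega=-2\mathrm{Im}(H)$, I would let $\cK$ be the Hilbert completion of $(V_\C,H)$ and form the symmetric (bosonic) Fock space $\cF(\cK)$, with vacuum $\Omega$ and Weyl operators $W(f)=\exp(a^*(f)-a(f))$ for $f\in\cK$. Restricting to the real subspace $V\subseteq V_\C\subseteq\cK$ and using $\mathrm{Im}\la v,v'\ra_{\cK}=\mathrm{Im}\,H(v,v')=-\tfrac12\omega(v,v')$, the canonical commutation relations give, with the appropriate normalisation, $W(v)W(v')=\exp\!\big(\tfrac12 i\,\omega(v,v')\big)W(v+v')$. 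Hence $\rho(z,v):=z\,W(v)$ defines a unitary representation of $\T\times_\omega V$ on $\cF(\cK)$ with $\rho(z)=z\one$, matching the group law of the Heisenberg group.

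It remains to verify smoothness, which I expect to be the main obstacle. I would take the vacuum $\Omega$ as the candidate cyclic smooth vector. Cyclicity holds because the coherent states $W(v)\Omega$, $v\in V$, are total in $\cF(\cK)$: their complex span contains the exponential vectors attached to $V$, and $V$ is total in $\cK$ since its complex span $V_\C$ is dense. Smoothness then follows from Theorem~\ref{kreet} (in its cyclic-vector form) once I check that the matrix coefficient
\[
(z,v)\mapsto\la\Omega,\rho(z,v)\Omega\ra=z\,e^{-\frac12 H(v,v)}
\]
is smooth near $\one$, which reduces to smoothness of the continuous quadratic form $v\mapsto H(v,v)$ in the locally convex calculus. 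The delicate point is that $V$ need not be complete and $\cK$ is only a completion, so I must confirm that all operators genuinely act on the Hilbert space $\cF(\cK)$ and that passing to the completion does not disrupt the totality argument for cyclicity; with $\Omega$ established as a smooth cyclic vector, the cyclic-vector criterion yields density of $\cF(\cK)^\infty$ and hence smoothness of $\rho$.
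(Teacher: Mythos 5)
Your proof is correct and follows essentially the same route as the paper: necessity via Proposition~\ref{polarisability} combined with non-degeneracy of $\omega$ (plus the observation that coboundaries vanish for the abelian Lie algebra $V$, so the representative equals $\omega$ itself), and sufficiency via a quasi-free representation whose vacuum is a cyclic smooth vector detected by Theorem~\ref{kreet}. Your explicit Fock-space construction with Weyl operators is precisely the GNS representation of the positive definite function $f(z,v)=z\,e^{-\frac{1}{2}H(v,v)}$ that the paper obtains by citing \cite{Pe90}, so the two arguments differ only in that yours carries out the construction by hand.
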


\begin{proof}
That existence of such an $H$ is necessary follows from the above. 
To see that it is sufficient, we note that the function 
\[ f \: \T \times_\omega V \to \C, \quad f(z,v) = z e^{- \shalf H(v,v)},\] 
is positive definite and leads to  the \emph{quasi-free} cyclic representation 
$(\rho, \cH,\Omega)$ (cf.~\cite[Thm.~3.4]{Pe90}) 
 with vacuum $\Omega \in \cH^{\infty}$ satisfying 
$\langle \Omega, \dd\rho_{H}(\xi) \Omega\rangle = 0$ and 
$\langle \dd\rho_{H}(\xi)\Omega, \dd\rho_{H}(\eta) \Omega\rangle = H(\xi,\eta)$
(see e.g.\ \cite[Thm.~3.8]{Pe90}, which draws heavily from \cite{MV68}).
\end{proof}

This completely reduces the theory of smooth projective unitary representations of $(V,+)$
to the theory of smooth unitary representations of the Heisenberg groups 
$\T \times_{\omega} V$ with $\omega = -2\mathrm{Im}H$ and $\rho(z) = z\one$, 
or equivalently, if $V$ is barrelled, to regular unitary representations $(\pi, W)$
of the Heisenberg--Lie algebra $\R \oplus_{\omega} V$ with $\pi(1,0) = 2\pi i\one$.
If $V$ is finite dimensional, then by the von Neumann Uniqueness Theorem \cite{vN31},
the Heisenberg group admits a single irreducible 
unitary representation with $\rho(z) = z\one$.
If $V$ is infinite dimensional, there exist many interesting 
irreducible and factor representations
besides the familiar Fock representations
\cite{MV68, DV71, D71, Ho82, Pe90}. Among these are the type III 
Araki--Woods factor representations,
which model equilibrium states at positive temperature 
in a free non-relativistic Bose gas \cite{AW63}.

\subsection{The Virasoro group}\label{Virasorogroups}

The group $\mathrm{Diff}(\bS^1)_+$ of orientation preserving diffeomorphisms 
of the circle is a connected locally convex Lie group with Lie algebra $\mathrm{Vec}(\bS^1)$
\cite{Ha82}. 
Its universal cover $G := \widetilde{\mathrm{Diff}}(S^1)_{+}$ can be described as
\[
G = \{\phi \in C^{\infty}(\R,\R): (\forall t \in\R)\  \phi(t + 2\pi) 
= \phi(t) + 2\pi \,\mathrm{and}\, \phi'(t) > 0\}\,,
\]
and $\pi^1(\mathrm{Diff}(\bS^1)) \simeq \Z$ is realised 
inside $G$ as the subgroup of functions
of the form $\phi_{n}(t) = t + 2\pi n$. 
The  universal central extension of $G$ is 
$\widehat{G} = \R \times_{B} \widetilde{\mathrm{Diff}}(S^1)_{+}$,
where $B$ is the globally defined Bott cocycle, 
\[
B(\phi, \psi) := \frac{1}{2}\int_{0}^{2\pi} \log((\phi \circ \psi)') d\log(\psi'),
\] 
and the induced product is $(\phi,a)(\psi,b) = (\phi\circ\psi, a + b + B(\phi,\psi))$ 
 \cite{Bo77}.
Note that $\pi_1(\mathrm{Diff}(S^1)_+) \simeq \Z$ is central in $\hat G$
because $B( \,\cdot\,, \phi_n) = B(\phi_n,\cdot) = 0$.

Its Lie algebra is the \emph{Virasoro algebra}
$\mathrm{Vir} := \R \oplus_{\omega} \mathrm{Vec}(S^1)$, where $\omega$ is the 
\emph{Gel'fand--Fuks cocycle}
\[\omega_{1}(\xi\partial_{t},\eta\partial_{t}) = -\frac{i}{2} \int_{0}^{2\pi} 
(\xi'\eta'' - \eta'\xi'')dt \] 
(see \cite[Ch.~II, \S~2]{KW09}). 
Since $G$ is contractible, every Lie algebra cocycle $\omega$ integrates to 
the group level (\cite[Thm.~7.12]{Ne02}). 
If $\omega$ is cohomologous to $c\omega_{1}$ with $c\neq 0$, then 
the corresponding group extension $\T \rightarrow G^{\sharp}\rightarrow G$ is isomorphic to 
the quotient of $\widehat{G}$ by the central subgroup $c\Z \subseteq \R$.

From Theorem \ref{maintheorem}.A), we then obtain that smooth projective unitary 
representations $(\overline{\rho}, \cH)$ of $G$ correspond to smooth 
unitary representations 
$(\widehat{\rho}, \cH)$ of $\widehat{G}$
with $\widehat{\rho}((t,0))= e^{2\pi i c t}\one$ for some $c\in \R$.
It comes from a projective $\mathrm{Diff}(S^1)_+$-representation if and only if 
$\widehat{\rho}(\phi_1) = \gamma \one\in \T\1$. 
Since $\R \times\pi_1(\mathrm{Diff}(S^1))$ is central in $\widehat{G}$, such pairs 
$(c,\gamma)$ automatically exist if $\widehat{\rho}$ is an irreducible or, more generally, 
factor representation of $\widehat{G}$.
By Theorem~\ref{maintheorem}.C, one can alternatively characterise the smooth 
projective unitary representations of $G$ as the regular representations 
$(\pi,V)$ of $\mathrm{Vir}$ with $\pi(1,0) = 2\pi i c \one$.
See \cite{GKO86, KR87, L88} for the Lie algebra and
\cite{GW85, TL99, Se81, NS14} for the group representations.


\subsection{Loop groups}\label{sec:loopgroups}

Let $K$ be a compact simple Lie group with Lie algebra $\fk$,
and let $\sigma$ be an automorphism of $K$ of finite order $N$. 
We denote the corresponding Lie algebra automorphism by the same letter,
and assume that it comes from a diagram automorphism of $\fk$ (\cite{Ka90}).
Then the  \emph{(twisted) loop group}
\[
\cL_{\sigma}(K) := \{f \in C^{\infty}(\R,K) : (\forall t \in \R)\ f(t+1) = \sigma^{-1}f(t)\}
\]
is a Fr\'echet--Lie group
\cite[App.~A]{NW09}.
Its Lie algebra is the {\it(twisted) loop algebra} 
\[
\cL_{\sigma}(\fk) := \{\xi \in C^{\infty}(\R,\fk): (\forall t \in \R)
\ \xi(t+1) = \sigma^{-1}\xi(t)\}\,.\]

It comes with a canonical 1-parameter group 
$T \colon \R \rightarrow \mathrm{Aut}(\cL_{\sigma}(K))$ of translations defined by 
$T_{\tau}(f) (t) := f(t + \tau)$.
The corresponding derivation $D = \partial_{t}$ of $\cL_{\sigma}(\fk)$
is admissible by Proposition~\ref{prop:circleaction} because $T$ is periodic.


We are interested in smooth covariant projective unitary representations 
$(\ol\rho, \ol{U}, \cH)$ of the pair $(\cL_{\sigma}(K), \R)$.
The first step is to determine the relevant group extensions.
Note that ${H^2(\cL_{\sigma}(\fk) \rtimes_{\partial_{t}} \R, \R)}$
is isomorphic to ${H^2(\cL_{\sigma}(\fk), \R)_{D}}$.
This follows from Proposition~\ref{DInvariant}
because both $\cL_{\sigma}(\fk)$  and $\mathrm{Ker}(D)$ are topologically perfect;
the former by \cite[Prop.~2.4]{JW13}, and the latter because
$\mathrm{Ker}(D)$ is the fixed point Lie algebra $\fk^{\sigma}$, which 
is compact semisimple, hence perfect, by \cite[Ch.~X, Thm.~5.15]{He78}.
Since ${H^2(\cL_{\sigma}(\fk), \R)}$ is 1-dimensional, and 
its generator $[\omega_1]$
possesses a translation invariant representative 
(cf.\ \cite[Ch.\ 7, 8]{Ka90})
\[\omega_{1}(\xi,\eta) = \frac{1}{8\pi}\int_{0}^{N} \kappa(\xi,{\textstyle \frac{d}{dt}}\eta) dt,\]
where $\kappa$ is a suitably normalized invariant symmetric bilinear 
form on $\fk$, it is isomorphic to ${H^2(\cL_{\sigma}(\fk), \R)_{D}}$.
The Lie algebra extension 
${\widetilde{\cL}_{\sigma}(\fk) \rightarrow \cL_{\sigma}(\fk)}$ 
corresponding 
to the class $c[\omega_{1}]$ for $c\neq 0$
is called the extension at \emph{level} $c$.
(For different values of $c\neq 0$, the Lie algebras $\widetilde{\cL}_{\sigma}(\fk)$ are
isomorphic, but the central extensions are not.)
It integrates to a group extension $\widehat{G}_{c} \rightarrow \cL_{\sigma}(K)_0$
if and only if\label{cinz} $c \in \Z$. 
These extensions have their origins in the Wess--Zumino--Novikov--Witten 
model \cite{Wi84, Pi89}.
Since the time translation 
is periodic, it has an eigenvector, so Remark~\ref{rem:inv-cocyc} applies and 
the 1-parameter group of automorphisms extends to $\widehat{G}_{c}$.
In combination with
Theorem~\ref{maintheorem}.A, we now find that equivariant smooth projective unitary representations 
$(\ol{\rho}, \ol{U}, \cH)$ of $(\mathcal{L}_{\sigma}(K)_{0}, \R)$ correspond to 
equivariant smooth unitary representations
$(\widehat{\rho}, U, \cH)$
of $(\widehat{G}_{c}, \R)$ with $\widehat{\rho}(z) = z\one$ for $z\in \T$.

Unfortunately, unlike in the Virasoro case, the extension 
$\widehat{G}_{c}\rightarrow \mathcal{L}_{\sigma}(K)_{0}$
is nontrivial as a principal $\T$-bundle, which makes its
construction less explicit and rather more complicated \cite{KW09}.
We therefore turn to 
part C of Theorem~\ref{maintheorem}, 
which translates the projective unitary representations
of the group $\widehat{G}_{c} \rtimes_{\partial_{t}} \R$
to regular unitary
representations of the more tractable Lie algebra 
\[\widehat{\cL}_{\sigma}(\fk) := \widetilde{\cL}_{\sigma}(\fk) \rtimes_{\partial_{t}}\R\,, \]
the \emph{completed affine Kac--Moody algebra}.

In order to apply Theorem~\ref{maintheorem}.C, we need 1-connected Lie groups.
Although we can always
go to the universal cover of $\cL_{\sigma}(K)_{0}$ and argue as in 
Subsection~\ref{Virasorogroups}, the following proposition shows that 
this is not necessary in the important special case when $K$ is 1-connected.
\begin{Proposition} {\rm(Topology of twisted loop groups)}
If $K$ is simply connected (1-connected), then so are the groups $\cL_{\sigma}(K)$
and 
$\cL_{\sigma}(K) \rtimes_{\partial_{t}} \R$. 
\end{Proposition}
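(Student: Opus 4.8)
The plan is to treat the semidirect-product factor trivially and reduce everything to the homotopy type of the loop group itself, which I will analyse through the evaluation fibration. Since $\R$ is contractible and $\cL_\sigma(K)\rtimes_{\partial_t}\R$ is, as a manifold, just the product $\cL_\sigma(K)\times\R$, the projection onto the first factor is a homotopy equivalence; hence $\cL_\sigma(K)\rtimes_{\partial_t}\R$ is $1$-connected as soon as $\cL_\sigma(K)$ is, and it suffices to prove the latter.

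First I would introduce the evaluation homomorphism $\ev_0\colon \cL_\sigma(K)\to K$, $f\mapsto f(0)$, a smooth surjective homomorphism of Lie groups admitting smooth local sections near $\one\in K$ (built from a bump function supported in $(0,1)$, so as to respect the twisting relation $f(t+1)=\sigma^{-1}f(t)$). It is therefore a locally trivial principal bundle with fibre the based twisted loop group $\Omega_\sigma K:=\ker(\ev_0)=\{f\in\cL_\sigma(K)\,:\,f(0)=\one\}$, and we obtain the long exact homotopy sequence
\[
\cdots\to \pi_1(\Omega_\sigma K)\to \pi_1(\cL_\sigma(K))\to \pi_1(K)\to \pi_0(\Omega_\sigma K)\to \pi_0(\cL_\sigma(K))\to \pi_0(K),
\]
in which $\pi_0(K)=\pi_1(K)=0$ by hypothesis. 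Thus $\pi_0(\cL_\sigma(K))\cong\pi_0(\Omega_\sigma K)$ and $\pi_1(\cL_\sigma(K))$ is a quotient of $\pi_1(\Omega_\sigma K)$, so the theorem reduces to showing that $\Omega_\sigma K$ is $1$-connected.

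The key step is to identify the homotopy type of $\Omega_\sigma K$ with that of the ordinary based loop space $\Omega K$. Restricting a based twisted loop to the fundamental domain $[0,1]$ gives a homeomorphism $f\mapsto f|_{[0,1]}$ onto the space of smooth paths $\gamma$ with $\gamma(0)=\gamma(1)=\one$ (note $f(1)=\sigma^{-1}f(0)=\sigma^{-1}(\one)=\one$) whose jets at the two endpoints are matched by $\sigma$; the twist enters \emph{only} through these jet-matching conditions, the inverse being $f(t)=\sigma^{-\lfloor t\rfloor}\gamma(t-\lfloor t\rfloor)$. Since loops with sitting instants at the basepoint satisfy all such conditions automatically (all derivatives vanish and $\sigma$ fixes $\one$), and since the inclusion of sitting-instant loops into all smooth based loops is a homotopy equivalence by reparametrisation, a two-out-of-three argument exhibits $\Omega_\sigma K\simeq \Omega K$; moreover the smooth based loop space has the same homotopy groups as its continuous counterpart (the smooth/continuous comparison already used in Proposition~\ref{regularisderived}, cf.\ \cite[Thm.~A.3.7]{Ne02}, \cite[Prop.~12.2(a)]{Gl04}). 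One then invokes $\pi_n(\Omega K)\cong\pi_{n+1}(K)$ together with $\pi_1(K)=0$ and the classical vanishing $\pi_2(K)=0$ for finite-dimensional Lie groups to obtain $\pi_0(\Omega_\sigma K)=\pi_1(K)=0$ and $\pi_1(\Omega_\sigma K)=\pi_2(K)=0$.

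Feeding $\pi_0(\Omega_\sigma K)=\pi_1(\Omega_\sigma K)=0$ back into the exact sequence immediately gives $\pi_0(\cL_\sigma(K))=\pi_1(\cL_\sigma(K))=0$, and the semidirect-product reduction of the first paragraph finishes the proof. The main obstacle I anticipate is the careful bookkeeping in the infinite-dimensional smooth category: verifying that $\ev_0$ is genuinely a locally trivial bundle (so that the long exact sequence is legitimate) by exhibiting a twist-compatible smooth local section, and rigorously justifying the homotopy equivalence $\Omega_\sigma K\simeq\Omega K$ in spite of the $\sigma$-twisted jet conditions. Once these soft-topology points are secured, the computation of the homotopy groups is purely formal.
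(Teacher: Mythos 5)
Your proof is correct and follows essentially the same route as the paper: both reduce to $\cL_{\sigma}(K)$ via contractibility of $\R$, and both run the long exact homotopy sequence of the evaluation fibration $\ev_0 \colon \cL_{\sigma}(K) \rightarrow K$, with the conclusion ultimately resting on $\pi_1(K)=0$ and the classical vanishing $\pi_2(K)=0$. The only difference is one of packaging: the paper cites \cite[\S 3]{NW09} for the local triviality of $\ev_0$ (whose image is in general only an open subgroup $K^{[\sigma]}$, equal to $K$ here since $K$ is connected) and for $\pi_1(\ker \ev_0) \cong \pi_2(K)$, whereas you establish these facts directly through bump-function local sections and the sitting-instant identification $\Omega_{\sigma}K \simeq \Omega K$.
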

\begin{proof}
Since $\R$ is contractible, it suffices to consider $\cL_{\sigma}(K)$.
The evaluation in zero, $\mathrm{ev_0} \colon \cL_{\sigma}(K) \rightarrow K$, is a smooth Lie group extension
whose image $K^{[\sigma]}$ is an open
subgroup  of $K$, and whose kernel satisfies $\pi_{1}(\mathrm{Ker}(\mathrm{ev}_0)) \simeq \pi_2(K) = \{1\}$
\cite[\S 3]{NW09}. 
It follows that if $K$ is simply connected, then
$\pi_1(K^{[\sigma]})$ and $\pi_2(K)$ are trivial, so that the long homotopy sequence
\[
\ldots \rightarrow 
\pi_1(\mathrm{Ker}(\mathrm{ev}_0)) \rightarrow \pi_1(\cL_{\sigma}(K)) \rightarrow \pi_1(K^{[\sigma]})
\rightarrow \ldots
\] 
implies $\pi_1(\cL_{\sigma}(K)) = \{0\}$. If $K$ is 1-connected, then a similar line of reasoning 
implies that $\cL_{\sigma}(K)$ is 1-connected
(cf.~\cite[Rk.~3.6.A]{NW09}).
\end{proof}
It follows that if $K$ is 1-connected, then
smooth equivariant  projective unitary representations 
$(\ol{\rho}, \ol{U}, \cH)$ of $(\mathcal{L}_{\sigma}(K), \R)$ correspond to 
regular unitary representations
$(\pi, V)$ of the completed affine Kac--Moody algebra
$\widehat{\cL}_{\sigma}(\fk)$
at integral level $c$, that is, with $\pi(1) = c \one$ for some $c\in \Z$.

See \cite{Se81, PS86} for
the positive energy representations of $\widehat{G}_c$, and
\cite{Ka80, Ka90} for the (corresponding) highest weight representations of 
$\widehat{\g}_{c}$ (in the algebraic context).

\bibliographystyle{alpha}

\end{document}